\theoremstyle{definition}
\newtheorem{theorem}{Theorem}[section]
\newtheorem{conjecture}[theorem]{Conjecture}
\newtheorem{lemma}[theorem]{Lemma}
\newtheorem{remark}[theorem]{Remark}
\newtheorem{proposition}[theorem]{Proposition}
\newtheorem{definition}[theorem]{Definition}
\newtheorem{example}[theorem]{Example}
\DeclareMathOperator{\Irr}{Irr}                                                                      
\DeclareMathOperator{\ch}{\text{supp}}                                                        
\DeclareMathOperator{\Char}{Char}                                                             
\DeclareMathOperator{\seq}{Seq}                                                                
\newcommand{\phcyc}[2]{\ensuremath{\p^{\Lambda_{#1}}_{#2}}}              
\newcommand{\phcycall}[1]{\ensuremath{\p^{\Lambda_{#1}}}}                    
\newcommand{\ep}[1]{\varepsilon_{#1}}                                                       
\newcommand{\epch}[1]{\varepsilon^\vee_{#1}}                                           
\newcommand{\w}{\widetilde}                                                                       
\newcommand{\Li}{L}                                                                                    
\newcommand{\Ti}{T}                                                                                    
\newcommand{\Tii}[2]{\ensuremath{\Ti_{#1;#2}}}                                          
\newcommand{\Tiip}[1]{\ensuremath{\Ti_{#1}}}                                             
\newcommand{\Sign}{S}                                                                               
\newcommand{\Sii}[2]{\ensuremath{\Sign_{#1;#2}}}                                      
\DeclareMathOperator{\Repr}{Rep}                                                              
\newcommand{\rep}[1]{\Repr \ensuremath{^{#1}}}                                        
\newcommand{\repL}[1]{\rep{{\Lambda_{#1}}} }
\newcommand{\Bkr}{B^{1,1}}
\newcommand{\Rcal}[1]{\mathcal{R}(#1)}                                                     
\newcommand{\Rcalj}[2]{\mathcal{R}_{#1}(#2)}                                            
\DeclareMathOperator{\pro}{pr}                                                                    
\newcommand{\pr}[1]{\pro \ensuremath{_{\Lambda_{#1}}}}                          
\DeclareMathOperator{\infltxt}{infl}                                                                
\newcommand{\infl}[1]{\infltxt \ensuremath{_{\Lambda_{#1}}}}                     
\newcommand{\e}[1]{\ensuremath{e_{#1}}}                                                   
\newcommand{\etil}[1]{\ensuremath{\w{e}_{#1}}}                                          
\newcommand{\ech}[1]{\ensuremath{e^\vee_{#1}}}                                      
\newcommand{\etilch}[1]{\ensuremath{\w{e}^\vee_{#1}}}                              
\newcommand{\ftil}[1]{\ensuremath{\w{f}_{#1}}}                                            
\newcommand{\ftilch}[1]{\ensuremath{\w{f}^\vee_{#1}}}                               
\newcommand{\und}[1]{\underline{\mathbf{#1}}}                                                   
\newcommand{\jump}[1]{\text{jump}_{#1}}                                                    
\newcommand{\wti}[1]{\text{wt}_{#1}}                                                           
\newcommand{\Sy}[1]{\ensuremath{\mathcal{S}_{#1}}}                               
\newcommand{\cycloI}[1]{\ensuremath{\mathcal{I}^{#1}}}                            
\newcommand{\maps}{\colon}                                                                       
\def\Shuffle{\,\raise 1pt\hbox{$\scriptscriptstyle\cup{\mskip                           
               -4mu}\cup$}\,}
\newcommand{\supp}[1]{\text{supp}(\ensuremath{#1})}                                 
\newcommand{\gammaplus}[2]{\ensuremath{\gamma_{#1;#2}}}                
\newcommand{\gammaminus}[2]{\ensuremath{\gamma^-_{#1;#2}}}              
\newcommand{\Lii}[1]{\ensuremath{\Li(#1)}}                                                   
\newcommand{\crystalmap}{\mathcal{T}}                                                       
\newcommand{\cryphi}[1]{\p_{#1}}                                                                  
\newcommand{\adjac}[3]{\eta_{#1}^{#2}(#3)}                                                          
\newcommand{\trivseq}[2]{\tau_{#1;#2}}                                                       
\newcommand{\signseq}[2]{\tau^-_{#1;#2}}                                                      
\newcommand{\Nextterm}[2]{\p(#1;#2)}                                                              
\newcommand{\Nexttermplus}[2]{\p^{+}(#1;#2)}                                              
\newcommand{\Nexttermminus}[2]{\p^{-}(#1;#2)}                                              
\newcommand{\phcyctriv}[3]{\p_{#1}(#2;#3)}                                                    
\newcommand{\phcyctrivminus}[3]{\p_{#1}^-(#2;#3)}                                           
\newcommand{\phcyctrivplus}[3]{\p_{#1}^+(#2;#3)}                                           
\newcommand{\pextplus}[1]{\text{ext}^+_{#1}}                                                
\newcommand{\pextminus}[1]{\text{ext}^-_{#1}}                                                
\newcommand{\exttailby}[1]{\pi(#1) \star }                                                        
\newcommand{\extheadby}[1]{\star \pi(#1) }                                                    
\newcommand{\Lcal}[1]{\mathcal{L}(#1)}                                                         
\newcommand{\ClassA}{\mathcal{A}}                                                              
\newcommand{\ClassB}{\mathcal{B}}                                                              
\newcommand{\ClassD}{\mathcal{D}}                                                              
\newcommand{\redj}{\textcolor{red}{i}}                                                             
\newcommand{\blueh}{\textcolor{blue}{h}}                                                        
\newcommand{\bi}[1]{b_{\Lambda_{#1}}}
\newcommand{\Bp}{\mathcal{B}}                      		
\newcommand{\F}{\mathbb{F}}                            	 
\newcommand{\Z}{\mathbb{Z}}                           	 
\newcommand{\wh}{\widehat}                              	 
\newcommand{\Fr}{\mathfrak}                            		 
\newcommand{\MB}{\mathbb}                            		 
\newcommand{\p}{\varphi}                                 		 
\DeclareMathOperator{\Hom}{Hom}                    	 
\DeclareMathOperator{\ind}{Ind}                          	 
\DeclareMathOperator{\coind}{coInd}                  	 
\DeclareMathOperator{\res}{Res}                        	 
\DeclareMathOperator{\soc}{soc}                        	 
\DeclareMathOperator{\cosoc}{cosoc}                	 
\DeclareMathOperator{\Mod}{\dash \text{mod}}          
\DeclareMathOperator{\wt}{wt}                           		 
\DeclareMathOperator{\HOM}{HOM}                 	 	 
\DeclareMathOperator{\dash}{-}                        		 
\DeclareMathOperator{\UnitModule}{\mathds{1}}         
\newcommand{\zero}{\bm{0}}                                       
\newcommand{\omitt}[1]{}
\tikzstyle{new}=[fill =white, circle, inner sep=.5pt]
\tikzstyle{old}=[fill =blue!20, circle, inner sep=.5pt]
\newcommand{\Part}[1]{
 \foreach \xx [count=\ss from 1] in {#1}{
 	{\ifnum\ss=1
		\draw (0,\ss-1)--(\xx,\ss-1); 
		\fi}
   \draw (0,\ss) to (\xx,\ss);
   \foreach \y in {0, ..., \xx} {\draw (\y,\ss)--(\y,\ss-1);}
 }}
 \newcommand{\fPart}[2]{
 \foreach \xx [count=\ss from 1] in {#1}{
  \filldraw[#2] (0,\ss) -- (\xx,\ss)--(\xx,\ss-1)--(0,\ss-1);
 }
 \foreach \xx [count=\ss from 1] in {#1}{
 	{\ifnum\ss=1
		\draw (0,\ss-1)--(\xx,\ss-1); 
		\fi}
   \draw (0,\ss) to (\xx,\ss);
   \foreach \y in {0, ..., \xx} {\draw (\y,\ss)--(\y,\ss-1);}
 }}
\def\UNIT{.2} 
\newcounter{r}
\newcounter{c}
\newcommand\youngDiagram[2]{
        \setcounter{r}{0}
        \foreach \row in {#1} {
                \setcounter{c}{0}
                \foreach \b in \row {
                        \node at (\value{c}*#2 + .5*#2, \value{r}*#2+.5*#2)
{\b};
                        \addtocounter{c}{1}

                }
                \draw[step = {#2}] (0,\value{r}*#2) grid
(\value{c}*#2,\value{r}*#2+#2);
                \addtocounter{r}{-1}
        }
}
\def\Lattice{
	\coordinate (0) at (0,0);
	\coordinate (11) at (0,1.8);
	\coordinate (21) at (-1.2,3.8);\coordinate (22) at (1.2,3.9);
	\coordinate (31) at (-1.2,6.1);\coordinate (32) at (1.2,6.1);
	\foreach \xx in {1, ..., 4}{\coordinate (4\xx) at (-7.1+2.9*\xx,8.5);}

	\draw (0)--(11) (11)--(21) (11)--(22);
	\draw (21)--(31) (22)--(32);
	\draw (31)--(41)  (31)--(42)  (32)--(43)  (32)--(44);
\begin{scope}[every node/.style={fill=white}]
	\node at (0) {$\emptyset$};
	\node at (11) {};
	\node at (21) {};
	\node at (31) {};
	\node at (41) {};
	\node at (22) {};
	\node at (32) {};
	\node at (42) {};
	\node at (43) {};
	\node at (44) {};
\end{scope}
\foreach \xx in {-3,0,3} {\node at (\xx , 9.5) {$\vdots$}; }
}
\def\LatticeMV{
	\coordinate (0) at (0,0);
	\coordinate (11) at (0,1.8);
	\coordinate (21) at (-1.2,3.8);\coordinate (22) at (1.2,3.9);
	\coordinate (31) at (-1.2,6.1);\coordinate (32) at (1.2,6.1);
	\foreach \xx in {1, ..., 4}{\coordinate (4\xx) at (-7.1+2.9*\xx,8.5);}

	\draw (0)--(11) (11)--(21) (11)--(22);
	\draw (21)--(31) (22)--(32);
\begin{scope}[every node/.style={fill=white}]
	\node at (0) {$\emptyset$};
	\node at (11) {};
	\node at (21) {};
	\node at (31) {};
	\node at (22) {};
	\node at (32) {};
\end{scope}
\foreach \xx in {-3,0,3} {\node at (\xx , 7.5) {$\vdots$}; }
}
\begin{document}
\title{Categorifying the tensor product of the Kirillov-Reshetikhin
 crystal $B^{1,1} $ and a fundamental crystal}


\author{Henry Kvinge  and Monica Vazirani}


\date{\today}

\maketitle

\begin{abstract}
We use Khovanov-Lauda-Rouquier (KLR) algebras to categorify a crystal
isomorphism between a fundamental crystal and the tensor product of a
Kirillov-Reshetikhin
crystal and another fundamental crystal, all in affine type.
 The nodes
of the Kirillov-Reshetikhin crystal correspond
to a family of ``trivial" modules.
The nodes of the fundamental crystal
 correspond to simple modules of the corresponding
cyclotomic KLR algebra.
The crystal operators correspond to socle of restriction
and behave compatibly with the rule for tensor product
of crystal graphs.
\end{abstract}

\tableofcontents
\section{Introduction }
\label{sec-intro}

Kang-Kashiwara \cite{KK12} and Webster \cite{Web}
show the cyclotomic
Khovanov-Lauda-Rouquier (KLR)
algebra $R^\Lambda$
categorifies the highest weight representation $V(\Lambda)$
in arbitrary symmetrizable type.
(KLR algebras are also known in the literature as quiver Hecke
algebras.) By a slight abuse of language, we will
say the {\em combinatorial} version of this statement is that
$R^\Lambda$ categorifies the crystal $B(\Lambda)$, where
simple modules correspond to nodes, and functors that take socle
of restriction correspond to arrows, i.e.~the Kashiwara crystal
operators \cite{LV11}. 
Webster \cite{Web} and Losev-Webster \cite{LW} 
categorify the tensor product of highest weight modules,
and hence the tensor product of highest weight crystals.
However, one can consider a tensor product of crystals
\begin{gather}
\label{eq-tensor}
\Bp \otimes B(\Lambda) \simeq B(\Lambda')
\end{gather}
where $\Lambda, \Lambda' \in P^+$ are of level $k$ and
$\Bp$ is a perfect crystal of level $k$.
In this paper, we (combinatorially) categorify the crystal isomorphism 
\eqref{eq-tensor} in the case
$\Lambda = \Lambda_i$ is a fundamental weight and 
$\Bp = \Bkr$ is a Kirillov-Reshetikhin crystal. 
In other words, our main theorems give a purely module-theoretic
construction of this crystal isomorphism.
(One must modify the form of the crystal isomorphism
in the case $\Bkr$ is not perfect or when $\Lambda_i$ is
not of level $1$. See Section \ref{sec-general}.)
Each node of $\Bkr$ corresponds to an infinite
 family of ``trivial" modules,
but note this does {\em not} give a categorification of $\Bp$.
These ``trivial" modules $\Tii{p}{k}$ are the KLR analogues
of the nodes in highest weight crystals studied in \cite{V07}
and are completely described in Section \ref{Description-of-family}.

We note that this gives a construction of simple modules,
starting from the $\Tii{p}{k}$.
In type $A$, this is somewhat intermediate between the crystal operator
construction and the Specht module construction.  See 
\cite{V15} for details. 
\omitt{ 
  Combinatorially, the former corresponds to building a
($\ell$-restricted) partition one (good) box at a time.  Our
construction builds a partition one row at a time, or dually one column
at a time.  The Specht module construction (at least for $\F_\ell
\Sy{n}$ or the Hecke algebra of type $A$) starts from the whole
partition,  building a simple as a subquotient of an induced trivial
module from a parabolic subalgebra.
} 
 This paper also describes how socle of restriction
interacts with the construction.
One can also recover this for finite type
whose Dynkin diagram is a subdiagram of that
of type $X_\ell$ studied here.
For a construction of simple modules
related to the crystal $B(\infty)$
for finite type KLR algebras see \cite{BKOP}.

This paper 
generalizes the theorems and constructions  from \cite{V15}
for type $A$ affine, which were in turn originally
proved
for the affine Hecke algebra of type $A$
at an $(\ell+1)$st root of unity \cite{V03, Vnotes}.


\medskip

The authors welcome input on adding references if the ones
included in this paper are incomplete.

Acknowledgments:
We wish to thank Peter Tingley for interesting discussions and
for pointing out we were using KR crystals and not just level
$1$ perfect crystals. 
The second author would like to thank David Hill
for useful discussions and some initial jump computations.

The first author was partially supported by
NSA grant H98230-12-1-0232, ICERM, and the Simons Foundation.

\section{Background and notation}

\subsection{Cartan datum} \label{Cartan-datum-section}
We first review the Cartan datum associated with types $A^{(1)}_{\ell}$, $C^{(1)}_{\ell}$, $A^{(2)}_{2\ell}$, $A^{(2)\dagger}_{2\ell}$, $D^{(2)}_{\ell +1}$, $D^{(1)}_{\ell}$, $B^{(1)}_{\ell}$, and $A^{(2)}_{2\ell-1}$. Fix an integer $\ell \geq 2$. For each type $X_\ell$ listed above, $I = \{0,1, \dots, \ell\}$ will denote the indexing set.
Let $[a_{ij}]_{i,j \in I}$ denote the associated Cartan matrix. 
We direct the reader to \cite{Kac85} for the explicit matrices.
Following \cite{Kac85} we let $\Fr{h}$ be a Cartan subalgebra, $\prod = \{\alpha_0, \dots, \alpha_{\ell}\}$ its system of simple roots, $\prod^{\vee} = \{h_0, \dots, h_{\ell}\}$ its simple coroots, and $Q$ and $Q^\vee$ the root and coroot lattices respectively. Then set
\begin{equation}
Q^+ = \bigoplus_{i \in I} \mathbb{Z}_{\geq 0} \alpha_i.
\end{equation}
For an element $\nu \in Q^+$, we define its {\emph{height}}, $|\nu|$, to be the sum of the coefficients, i.e. if $\nu = \sum_{i \in I} \nu_i \alpha_i$ then
\begin{equation}
|\nu| = \sum_{i \in I} \nu_i.
\end{equation}
We also have a symmetric bilinear form
\begin{equation}
(\; , \; ) : \Fr{h}^* \times \Fr{h}^* \rightarrow \mathbb{C},
\end{equation}
satisfying the property that 
\begin{equation}
a_{ij} = \langle h_i, \alpha_j \rangle =  
\frac{2 (\alpha_i, \alpha_j) }{(\alpha_i, \alpha_i)}
\end{equation}
where $\langle \; , \; \rangle: \Fr{h} \times \Fr{h}^* \rightarrow \mathbb{C}$ is the canonical pairing. Using this pairing we define the fundamental weights $\{\Lambda_i \; | \; i \in I\}$ via
\begin{equation}
\langle h_i, \Lambda_j \rangle = \delta_{ij}.
\end{equation}
The weight lattice is $ \bigoplus_{i \in I} \MB{Z} \Lambda_i $ and the integral dominant weights are
\begin{equation}
P^+ = \bigoplus_{i \in I} \mathbb{Z}_{\geq 0} \Lambda_i.
\end{equation}
For all types considered in this paper, the associated Lie algebra $\Fr{g}$ has 1-dimensional center generated by the canonical central element $c = c_0h_0 + \dots + c_{\ell}h_{\ell}$ where $c_i \in \MB{Z}_{\geq 0}$. The level of a weight
$\Lambda \in P^+$ is then defined to be $\langle c,\Lambda \rangle$. 

For each type $X_\ell$ we draw below the associated Dynkin diagram and list the level of the fundamental weights. 

\begin{itemize}

\item $A^{(1)}_1, \;\; I = \{0,1\}, \;\; \text{Level 1 weights:} \; \Lambda_0, \Lambda_1,$
\begin{center}
\begin{tikzpicture}
\draw (0,0) circle (.1cm);
\draw (2,0) circle (.1cm);
\node at (0,-.75) {$0$};
\node at (2,-.75) {$1$};
\draw (0.1,.05) -- (1.9,.05);
\draw (0.1,-.05) -- (1.9,-.05);
\draw (0.2,0) -- (0.3,.2);
\draw (0.2,0) -- (0.3,-.2);
\draw (1.8,0) -- (1.7,.2);
\draw (1.8,0) -- (1.7,-.2);
\end{tikzpicture}
\end{center}

\item $A^{(1)}_{\ell}, \;\; \ell \ge 2,
\;\; \text{Level 1 weights:} \; \Lambda_i, \; i \in I,$
\begin{center}
\begin{tikzpicture} [scale=.7]  
\draw (0,0) circle (.15cm);
\node at (0,-.75) {$1$};
\draw (1.5,0) circle (.15cm);
\node at (1.5,-.75) {$2$};
\draw (3,0) circle (.15cm);
\node at (3,-.75) {$3$};
\draw (6,0) circle (.15cm);
\node at (6,-.75) {$\ell-1$};
\draw (7.5,0) circle (.15cm);
\node at (7.5,-.75) {$\ell$};
\draw (3,2) circle (.15cm);
\node at (3.5,2.5) {$0$};
\draw (.13,.071) -- (2.85,1.95);
\draw (.15,0) -- (1.35,0);
\draw (1.65,0) -- (2.85,0);
\draw (3.15,0) -- (3.65,0);
\draw (5.35,0) -- (5.85,0);
\draw (6.15,0) -- (7.35,0);
\draw (7.35,0.071) -- (3.15,1.95);
\node at (4.1,0) {$\cdot$};
\node at (4.5,0) {$\cdot$};
\node at (4.9,0) {$\cdot$};
\end{tikzpicture}
\end{center}

\item $C^{(1)}_{\ell}, \;\; \ell \geq 2,\;\; \text{Level 1 weights:} \; \Lambda_i, i \in I,$
\begin{center}
\begin{tikzpicture} [scale=.7]  
\draw (0,0) circle (.15cm);
\node at (0,-.75) {$0$};
\draw (1.5,0) circle (.15cm);
\node at (1.5,-.75) {$1$};
\draw (3,0) circle (.15cm);
\node at (3,-.75) {$2$};
\draw (6,0) circle (.15cm);
\node at (6,-.75) {$\ell -1$};
\draw (7.5,0) circle (.15cm);
\node at (7.5,-.75) {$\ell$};
\draw (.1,.1) -- (1.4,.1);
\draw (.1,-.1) -- (1.4,-.1);
\draw (1.65,0) -- (2.85,0);
\draw (3.15,0) -- (3.65,0);
\draw (5.35,0) -- (5.85,0);
\draw (6.1,.1) -- (7.4,.1);
\draw (6.1,-.1) -- (7.4,-.1);
\draw (.5,.3) -- (.7,0);
\draw (.5,-.3) -- (.7,0);
\draw (7,.3) -- (6.8,0);
\draw (7,-.3) -- (6.8,0);
\node at (4.1,0) {$\cdot$};
\node at (4.5,0) {$\cdot$};
\node at (4.9,0) {$\cdot$};
\end{tikzpicture}
\end{center}

\item $A^{(2)}_{2\ell}, \;\; \ell \geq 2, \;\; \text{Level 1 weight:} \; \Lambda_{0}, \; \text{Level 2 weights:} \; \Lambda_1, \dots, \Lambda_\ell$
\begin{center}
\begin{tikzpicture} [scale=.7]  
\draw (0,0) circle (.15cm);
\node at (0,-.75) {$0$};
\draw (1.5,0) circle (.15cm);
\node at (1.5,-.75) {$1$};
\draw (3,0) circle (.15cm);
\node at (3,-.75) {$2$};
\draw (6,0) circle (.15cm);
\node at (6,-.75) {$\ell-1$};
\draw (7.5,0) circle (.15cm);
\node at (7.5,-.75) {$\ell$};
\draw (.1,.1) -- (1.4,.1);
\draw (.1,-.1) -- (1.4,-.1);
\draw (1.65,0) -- (2.85,0);
\draw (3.15,0) -- (3.65,0);
\draw (5.35,0) -- (5.85,0);
\draw (6.1,.1) -- (7.4,.1);
\draw (6.1,-.1) -- (7.4,-.1);
\draw (.7,.3) -- (.5,0);
\draw (.7,-.3) -- (.5,0);
\draw (7,.3) -- (6.8,0);
\draw (7,-.3) -- (6.8,0);
\node at (4.1,0) {$\cdot$};
\node at (4.5,0) {$\cdot$};
\node at (4.9,0) {$\cdot$};
\end{tikzpicture}
\end{center}

\item $A^{(2)\dagger}_{2\ell}, \;\; \ell \geq 2, \;\; \text{Level 1 weight:} \; \Lambda_{\ell}, \; \text{Level 2 weights:} \; \Lambda_0, \dots, \Lambda_{\ell-1}$
\begin{center}
\begin{tikzpicture} [scale=.7]  
\draw (0,0) circle (.15cm);
\node at (0,-.75) {$0$};
\draw (1.5,0) circle (.15cm);
\node at (1.5,-.75) {$1$};
\draw (3,0) circle (.15cm);
\node at (3,-.75) {$2$};
\draw (6,0) circle (.15cm);
\node at (6,-.75) {$\ell-1$};
\draw (7.5,0) circle (.15cm);
\node at (7.5,-.75) {$\ell$};
\draw (.1,.1) -- (1.4,.1);
\draw (.1,-.1) -- (1.4,-.1);
\draw (1.65,0) -- (2.85,0);
\draw (3.15,0) -- (3.65,0);
\draw (5.35,0) -- (5.85,0);
\draw (6.1,.1) -- (7.4,.1);
\draw (6.1,-.1) -- (7.4,-.1);
\draw (.5,.3) -- (.7,0);
\draw (.5,-.3) -- (.7,0);
\draw (6.8,.3) -- (7,0);
\draw (6.8,-.3) -- (7,0);
\node at (4.1,0) {$\cdot$};
\node at (4.5,0) {$\cdot$};
\node at (4.9,0) {$\cdot$};
\end{tikzpicture}
\end{center}

\item $D^{(2)}_{\ell+1}, \;\; \ell \geq 2, \;\; \text{Level 1 weights:} \; \Lambda_{0}, \; \Lambda_{\ell}, \;\; \text{Level 2 weights:} \; \Lambda_1, \dots, \Lambda_{\ell-1}$
\begin{center}
\begin{tikzpicture} [scale=.7]  
\draw (0,0) circle (.15cm);
\node at (0,-.75) {$0$};
\draw (1.5,0) circle (.15cm);
\node at (1.5,-.75) {$1$};
\draw (3,0) circle (.15cm);
\node at (3,-.75) {$2$};
\draw (6,0) circle (.15cm);
\node at (6,-.75) {$\ell-1$};
\draw (7.5,0) circle (.15cm);
\node at (7.5,-.75) {$\ell$};
\draw (.1,.1) -- (1.4,.1);
\draw (.1,-.1) -- (1.4,-.1);
\draw (1.65,0) -- (2.85,0);
\draw (3.15,0) -- (3.65,0);
\draw (5.35,0) -- (5.85,0);
\draw (6.1,.1) -- (7.4,.1);
\draw (6.1,-.1) -- (7.4,-.1);
\draw (.7,.3) -- (.5,0);
\draw (.7,-.3) -- (.5,0);
\draw (6.8,.3) -- (7,0);
\draw (6.8,-.3) -- (7,0);
\node at (4.1,0) {$\cdot$};
\node at (4.5,0) {$\cdot$};
\node at (4.9,0) {$\cdot$};
\end{tikzpicture}
\end{center}

\item $D^{(1)}_{\ell}, \;\; \ell \geq 4, \;\; \text{Level 1 weights:} \; \Lambda_{0}, \; \Lambda_1, \; \Lambda_{\ell-1}, \; \Lambda_{\ell}, \;\; \text{Level 2 weights:} \; \Lambda_2, \dots, \Lambda_{\ell-2}$
\begin{center}
\begin{tikzpicture} [scale=.7]  
\draw (0,0) circle (.15cm);
\node at (0,-.75) {$1$};
\draw (1.5,0) circle (.15cm);
\node at (1.5,-.75) {$2$};
\draw (3,0) circle (.15cm);
\node at (3,-.75) {$3$};
\draw (6,0) circle (.15cm);
\node at (6,-.75) {$\ell-2$};
\draw (7.5,0) circle (.15cm);
\node at (7.5,-.75) {$\ell-1$};
\draw (1.5,1.35) circle (.15cm);
\node at (1.5,1.9) {$0$};
\draw (6,1.35) circle (.15cm);
\node at (6.1,1.9) {$\ell$};
\draw (1.5,.15) -- (1.5,1.2);
\draw (6,.15) -- (6,1.2);
\draw (.15,0) -- (1.35,0);
\draw (1.65,0) -- (2.85,0);
\draw (3.15,0) -- (3.65,0);
\draw (5.35,0) -- (5.85,0);
\draw (6.15,0) -- (7.35,0);
\node at (4.1,0) {$\cdot$};
\node at (4.5,0) {$\cdot$};
\node at (4.9,0) {$\cdot$};
\end{tikzpicture}
\end{center}

\item $B^{(1)}_{\ell}, \;\; \ell \geq 3, \;\; \text{Level 1 weights:} \; \Lambda_{0}, \; \Lambda_{1}, \; \Lambda_{\ell}, \;\; \text{Level 2 weights:} \; \Lambda_2, \dots, \Lambda_{\ell-1}$
\begin{center}
\begin{tikzpicture} [scale=.7]  
\draw (0,0) circle (.15cm);
\node at (0,-.75) {$1$};
\draw (1.5,0) circle (.15cm);
\node at (1.5,-.75) {$2$};
\draw (3,0) circle (.15cm);
\node at (3,-.75) {$3$};
\draw (6,0) circle (.15cm);
\node at (6,-.75) {$\ell-1$};
\draw (7.5,0) circle (.15cm);
\node at (7.5,-.75) {$\ell$};
\draw (1.5,1.3) circle (.15cm);
\node at (.9,1.4) {$0$};
\draw (1.5,.15) -- (1.5,1.15);
\draw (.15,0) -- (1.35,0);
\draw (1.65,0) -- (2.85,0);
\draw (3.15,0) -- (3.65,0);
\draw (5.35,0) -- (5.85,0);
\draw (6.1,.1) -- (7.4,.1);
\draw (6.1,-.1) -- (7.4,-.1);
\draw (6.8,.3) -- (7,0);
\draw (6.8,-.3) -- (7,0);
\node at (4.1,0) {$\cdot$};
\node at (4.5,0) {$\cdot$};
\node at (4.9,0) {$\cdot$};
\end{tikzpicture}
\end{center}

\item $A^{(2)}_{2\ell-1}, \;\; \ell \geq 3, \;\; \text{Level 1 weights:} \; \Lambda_0, \; \Lambda_1, \;\; \text{Level 2 weights:} \; \Lambda_2, \dots, \Lambda_\ell$
\begin{center}
\begin{tikzpicture} [scale=.7]  
\draw (0,0) circle (.15cm);
\node at (0,-.75) {$1$};
\draw (1.5,0) circle (.15cm);
\node at (1.5,-.75) {$2$};
\draw (3,0) circle (.15cm);
\node at (3,-.75) {$3$};
\draw (6,0) circle (.15cm);
\node at (6,-.75) {$\ell-1$};
\draw (7.5,0) circle (.15cm);
\node at (7.5,-.75) {$\ell$};
\draw (1.5,1.3) circle (.15cm);
\node at (.9,1.4) {$0$};
\draw (1.5,.15) -- (1.5,1.15);
\draw (.15,0) -- (1.35,0);
\draw (1.65,0) -- (2.85,0);
\draw (3.15,0) -- (3.65,0);
\draw (5.35,0) -- (5.85,0);
\draw (6.1,.1) -- (7.4,.1);
\draw (6.1,-.1) -- (7.4,-.1);
\draw (7,.3) -- (6.8,0);
\draw (7,-.3) -- (6.8,0);
\node at (4.1,0) {$\cdot$};
\node at (4.5,0) {$\cdot$};
\node at (4.9,0) {$\cdot$};
\end{tikzpicture}
\end{center}

\end{itemize}

In most of the following theorems, 
we omit type $A^{(1)}_1$. All theorems and constructions in this paper hold in this type but require special arguments. These are presented in \cite{V15}. 

%
\subsection{Review of crystals} \label{Crystal-Review}
%

We recall the tensor category of crystals following
Kashiwara \cite{Kas95}, see also \cite{Kas90b,Kas91,KS97}.

A {\emph{crystal}} is a set $B$ together with maps
\begin{itemize}
  \item $\wt \maps B \longrightarrow P$,
  \item $\ep{i}, \p_i : B \longrightarrow \MB{Z} \sqcup \{ -\infty\}$ \quad for $i \in I$,
  \item $\etil{i}, \ftil{i} \maps B \longrightarrow B \sqcup \{0\}$ \quad for $i \in I$,
\end{itemize}
such that
\begin{enumerate}[C1.]
 \item $\p_{i}(b) =\ep{i}(b)+ \langle h_i, \wt(b) \rangle$ \quad for any $i$.
 \item If $b \in B$ satisfies $\etil{i} b \neq 0$, then
  \begin{align}
 &  \ep{i}(\etil{i}b) = \ep{i}(b)-1, & \p_{i}(\etil{i}b) = \p_{i}(b) +1, & & \wt(\etil{i}b) = \wt(b)+\alpha_i.
  \end{align}
 \item If $b \in B$ satisfies $\ftil{i} b \neq 0$, then
  \begin{align}
 &  \ep{i}(\ftil{i}b) = \ep{i}(b)+1,
 & \p_{i}(\ftil{i}b) = \p_{i}(b)-1,
 & &\wt(\ftil{i}b) = \wt(b)-\alpha_i.
  \end{align}
 \item For $b_1$, $b_2 \in B$, $b_2=\ftil{i}b_1$ if and only if
$
\etil{i}b_2
= b_1 
$. 
\item If $\p_{i}(b) = -\infty$, then
$\etil{i}b=\ftil{i}b=0$.
\end{enumerate} \medskip

For $b \in B$ we also define
\begin{equation}
\ep{}(b) = \sum_{i \in I} \ep{i}(b)\Lambda_i
\end{equation}
and
\begin{equation}
\p(b) = \sum_{i \in I} \p_i(b)\Lambda_i.
\end{equation}

If $B_1$ and $B_2$ are two crystals, then a {\emph{morphism}} $\psi \maps B_1 \rightarrow B_2$ of crystals is a map $$\psi \maps B_1 \sqcup \{0\} \rightarrow B_2 \sqcup \{0\}$$ satisfying the following properties:
\begin{enumerate}[M1.]
  \item $\psi(0) = 0$.
  \item If $\psi(b) \neq 0$ for $b \in B_1$, then
\begin{align}
  & \wt(\psi(b)) = \wt(b),
  & \ep{i}(\psi(b)) = \ep{i}(b),
  & &\p_{i}(\psi(b)) = \p_{i}(b).
\end{align}
 \item For $b \in B_1$ such that $\psi(b) \neq 0$ and $\psi(\etil{i}b) \neq 0$, we have $\psi(\etil{i}b) = \etil{i}(\psi(b))$.
 \item For $b \in B_1$ such that $\psi(b) \neq 0$ and $\psi(\ftil{i}b) \neq 0$, we have $\psi(\ftil{i}b) = \ftil{i}(\psi(b))$.
\end{enumerate}
A morphism $\psi$ of crystals is called {\emph{strict}} if
\begin{equation}
  \psi \etil{i} = \etil{i}\psi, \qquad \quad \psi \ftil{i} = \ftil{i}\psi,
\end{equation}
and an {\emph{embedding}} if $\psi$ is injective. \medskip

Given two crystals $B_1$ and $B_2$ their tensor product
$B_1 \otimes B_2$ (using the reverse Kashiwara convention)
 has underlying set $\{b_1 \otimes b_2 \; | \; b_1 \in B_1 \;
\text{and} \; b_2 \in B_2 \}$ where we identify $b_1 \otimes 0 = 0
\otimes b_2 = 0$. The crystal structure is given as follows:
\begin{align} \label{tensor-product-crystals}
  & \wt(b_1 \otimes b_2) = \wt(b_1) + \wt (b_2), \\
  & \ep{i}(b_1 \otimes b_2) = \max\{ \ep{i}(b_2), \ep{i}(b_1)-\langle h_i, \wt(b_2) \rangle\},
\\ &
\cryphi{i}(b_1 \otimes b_2) = \max\{ \cryphi{i}(b_2) + \langle h_i,\wt(b_1) \rangle, \cryphi{i}(b_1)\}, 
\end{align}
\begin{align}
  &\etil{i}(b_1 \otimes b_2 ) = 
  \begin{cases}
    \etil{i}b_1 \otimes b_2 & \text{if $\ep{i}(b_1) > \cryphi{i}(b_2)$}\\
    b_1 \otimes \etil{i}b_2 & \text{if $\ep{i}(b_1) \leq \cryphi{i}(b_2)$},
  \end{cases}
\label{eq_ei_tensor}
\\
& \ftil{i}(b_1 \otimes b_2 ) =
  \begin{cases}
    \ftil{i}b_1 \otimes b_2 &  \text{if $\ep{i}(b_1) \geq \cryphi{i}(b_2)$}\\
    b_1 \otimes \ftil{i}b_2 & \text{if $\ep{i}(b_1) < \cryphi{i}(b_2)$}.
\label{eq_fi_tensor}
  \end{cases}
\end{align}

Given a crystal $B$, we can draw its  associated crystal graph
with nodes (or vertices) $B$ and $I$-colored arrows (directed edges)
as follows.
When $\etil{i}b = a$ (so $b = \ftil{i}a$) we draw an $i$-colored arrow $a \xrightarrow{i} b$. We also say $b$ has an incoming $i$-arrow and $a$ has an outgoing $i$-arrow.


\subsection{Perfect crystals and Kirillov-Reshetikhin crystals}
\label{perfect-crystal-section}


\subsubsection{Type $A$}
In type $A^{(1)}_{\ell}$, the highest weight crystal $B(\Lambda_i)$ has a model
(see Figure \ref{fig-fundamental}) with nodes the
$(\ell+1)$-{\em restricted}
partitions, i.e. $\lambda = (\lambda_1, \ldots, \lambda_t)$
such that $\lambda_{r} \in \Z_{\ge 0}$, 
$ 0 \le \lambda_{r} - \lambda_{r+1} < \ell+1$ for all $r$.
Let $B^{1,1}$ be the crystal graph 

\begin{center}
\begin{tikzpicture} [scale=.5]
\draw (0,.25) ellipse (.65cm and .32cm);
\draw (3,.25) ellipse (.65cm and .32cm);
\draw (8,.25) ellipse (.65cm and .32cm);
\draw (11,.25) ellipse (.65cm and .32cm);
\draw[->,thick] (.8,.25) -- (2.2,.25);
\draw[->,thick] (8.8,.25) -- (10.1,.25);
\draw[->,thick] (3.8,.25) -- (4.6,.25);
\draw[->,thick] (6.3,.25) -- (7.2,.25);
\draw [fill] (5,0.25) circle [radius=0.03];
\draw [fill] (5.5,0.25) circle [radius=0.03];
\draw [fill] (6,0.25) circle [radius=0.03];
\node at (0,.25) {\scriptsize{0}};
\node at (3,.25) {\scriptsize{1}};
\node at (8,.25) {\scriptsize{$\ell$-1}};
\node at (11,.25) {\scriptsize{$\ell$}};
\node at (1.4,.7) {\scriptsize{1}};
\node at (4.2,.7) {\scriptsize{2}};
\node at (6.7,.7) {\scriptsize{$\ell$-1}};
\node at (9.4,.7) {\scriptsize{$\ell$}};
\node at (5.5,3.5) {\scriptsize{0}};
\draw[->,thick] (10.8,.75) to [out = 130, in = 50] (0.1,.75);
\end{tikzpicture}
\end{center}
$B^{1,1}$ is also drawn in \eqref{perfect-crystal-A} without the node labels. 

$B^{1,1}$ is an example of a \emph{perfect} crystal (see \cite{KKMMNN} for the definition
and important properties).
One key property this level $1$ perfect crystal has
is that tensoring it with a fundamental (or highest weight
level $1$) crystal
 yields an isomorphism to another
level $1$ highest weight crystal.
In particular,
for $i \in I$ there exists an isomorphism of crystals,
$$\crystalmap:B(\Lambda_{i})
\xrightarrow{\simeq}
 B^{1,1} \otimes B(\Lambda_{i-1}).$$
The isomorphism is pictured in Figure
\ref{crystal-iso} for $i = 0$ and $\ell = 2$.
Note the underlying graph of $B(\Lambda_i)$ is identical to
that of $B(\Lambda_0)$, but the colors of the arrows
are obtained from those of $B(\Lambda_0)$ by adding
$i \bmod (\ell\!+\!1)$.

\begin{figure}[h]
\begin{center}

\begin{tikzpicture} [scale=.1]
\node at (0,0) {\begin{tikzpicture}[xscale=3*\UNIT, yscale=-4.5*\UNIT]
\begin{scope}
	\LatticeMV 
\end{scope} 
\begin{scope}[every node/.style={fill=white}]
	\node (a) at (0) {$\emptyset$};
	\node (b) at (11) {\begin{tikzpicture}\youngDiagram{{\scriptsize{0}}}{.5};\end{tikzpicture}};
	\node (c) at (21) {\begin{tikzpicture}\youngDiagram{{\scriptsize{0},\scriptsize{\color{red}{1}}}}{.5};\end{tikzpicture}};
	\node (d) at (31) {\begin{tikzpicture}\youngDiagram{{\scriptsize{0},\scriptsize{\color{red}{1}}},{\scriptsize{\color{blue}{2}}}}{.5};\end{tikzpicture}};
	\node (g) at (22) {\begin{tikzpicture}\youngDiagram{{\scriptsize{0}},{\scriptsize{\color{blue}{2}}}}{.5};\end{tikzpicture}};
	\node (i) at (32) {\begin{tikzpicture}\youngDiagram{{\scriptsize{0}},{\scriptsize{\color{blue}{2}}},{\scriptsize{\color{red}{1}}}}{.5};\end{tikzpicture}};
\end{scope}
\begin{scope}[black, thick,->]
\draw (a)--(b) node[midway,left] {0};
\draw[red] (b)--(c) node[midway,left] {\color{red}{1}};
\draw[blue] (b)--(g) node[midway,left] {\color{blue}{2}};
\draw[red] (g)--(i) node[midway,left] {\color{red}{1}};
\draw[blue] (c)--(d) node[midway,left] {\color{blue}{2}};
\end{scope}
\end{tikzpicture}};

\node at (80,0) {\begin{tikzpicture}[xscale=3*\UNIT, yscale=-4.5*\UNIT]
\begin{scope}
	\LatticeMV 
\end{scope} 
\begin{scope}[every node/.style={fill=white}]
	\node (a) at (0) {$\emptyset$};
	\node (b) at (11) {\begin{tikzpicture}\youngDiagram{{\scriptsize{\color{blue}{2}}}}{.5};\end{tikzpicture}};
	\node (c) at (21) {\begin{tikzpicture}\youngDiagram{{\scriptsize{{\color{blue}{2}}},\scriptsize{\color{black}{0}}}}{.5};\end{tikzpicture}};
	\node (d) at (31) {\begin{tikzpicture}\youngDiagram{{\scriptsize{{\color{blue}{2}}},\scriptsize{\color{black}{0}}},{\scriptsize{\color{red}{1}}}}{.5};\end{tikzpicture}};
	\node (g) at (22) {\begin{tikzpicture}\youngDiagram{{\scriptsize{{\color{blue}{2}}}},{\scriptsize{\color{red}{1}}}}{.5};\end{tikzpicture}};
	\node (i) at (32) {\begin{tikzpicture}\youngDiagram{{\scriptsize{{\color{blue}{2}}}},{\scriptsize{\color{red}{1}}},{\scriptsize{\color{black}{0}}}}{.5};\end{tikzpicture}};
\end{scope}
\begin{scope}[black, thick,->]
\draw[blue] (a)--(b) node[midway,left] {{\color{blue}{2}}};
\draw (b)--(c) node[midway,left] {\color{black}{0}};
\draw[red] (b)--(g) node[midway,left] {\color{red}{1}};
\draw (g)--(i) node[midway,left] {\color{black}{0}};
\draw[red] (c)--(d) node[midway,left] {\color{red}{1}};
\end{scope}
\end{tikzpicture}};

\end{tikzpicture}

\caption{ \label{fig-fundamental}
$B(\Lambda_0)$ and $ B(\Lambda_2)$
for $\ell = 2$.
}
\end{center}
\end{figure}
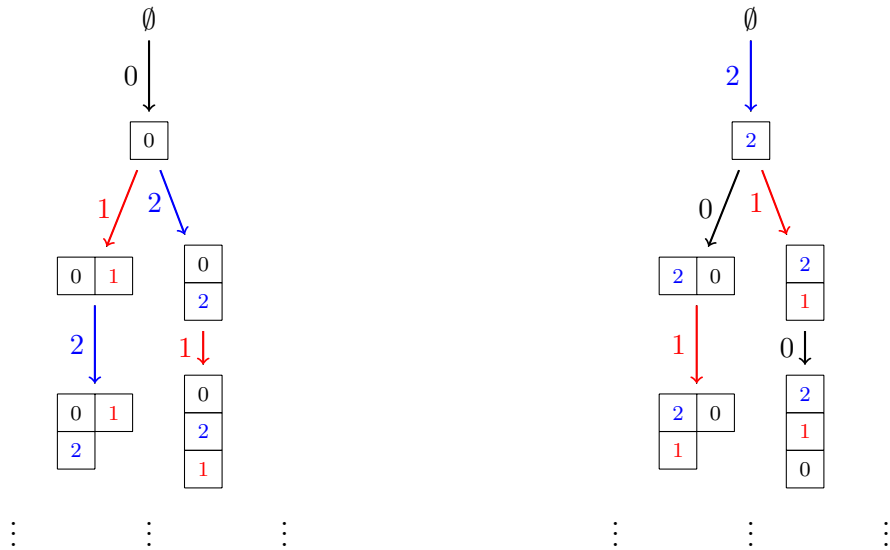

Combinatorially, $\crystalmap(\lambda) = $
\begin{tikzpicture}[baseline=-2pt] 
	\node at (0,0) {\;\;$k$\;\;}; 
        \draw (0,0) ellipse (.38cm and .2cm);
\end{tikzpicture}$ \; \otimes\; \mu$
where
$k\equiv \lambda_1 +i-1 \bmod (\ell+1)$
and $\mu = (\lambda_2,
\dots, \lambda_t)$ if $\lambda =(\lambda_1, \lambda_2, \cdots,
\lambda_t)$.
So we obtain $\mu$ from $\lambda$ by removing its top row.
In Figure \ref{crystal-iso}, we draw
\begin{equation}
\begin{tikzpicture}
			\node at (0,0) {$\crystalmap(\lambda) = $};
			\node (z) at (1,.33) {\scriptsize{$k$}};
                       	\draw (z) ellipse (.35cm and .15cm);
			\node at (1,0) {$\otimes$};
			\node at (1,-.3) {$\mu$};
\end{tikzpicture}
\end{equation}
so the visual of the top row removal stands out.
Note $\lambda_1 - \mu_1 < \ell+1$ means that
 $\crystalmap$ has a well-defined inverse.

When drawing $B(\Lambda_i)$, we label each box of a partition with 
$k \in I$, such that the main diagonal gets label $i$, 
and labels increase by $1 \bmod (\ell + 1)$ as one increases
diagonals (moving right).
In this manner, the last box in the top row of $\lambda$
is labeled $k$ when
 $\crystalmap(\lambda) = $
\begin{tikzpicture}[baseline=-2pt] 
	\node at (0,0) {\;\;$k$\;\;}; 
        \draw (0,0) ellipse (.38cm and .2cm);
\end{tikzpicture}$ \; \otimes\; \mu$.
Note further that
if we have a $k$-arrow $\gamma \xrightarrow{k} \lambda$
then the box $\lambda/\gamma$ is labeled $k$ (though not necessarily
conversely). This box is often called a ``good" $k$-box.
In fact, once one knows the structure of $B^{1,1}$ and
the tensor product rule for crystals, one can obtain
the rule for which $k$-box $\etil{k}$ removes by
iterating $\crystalmap$.

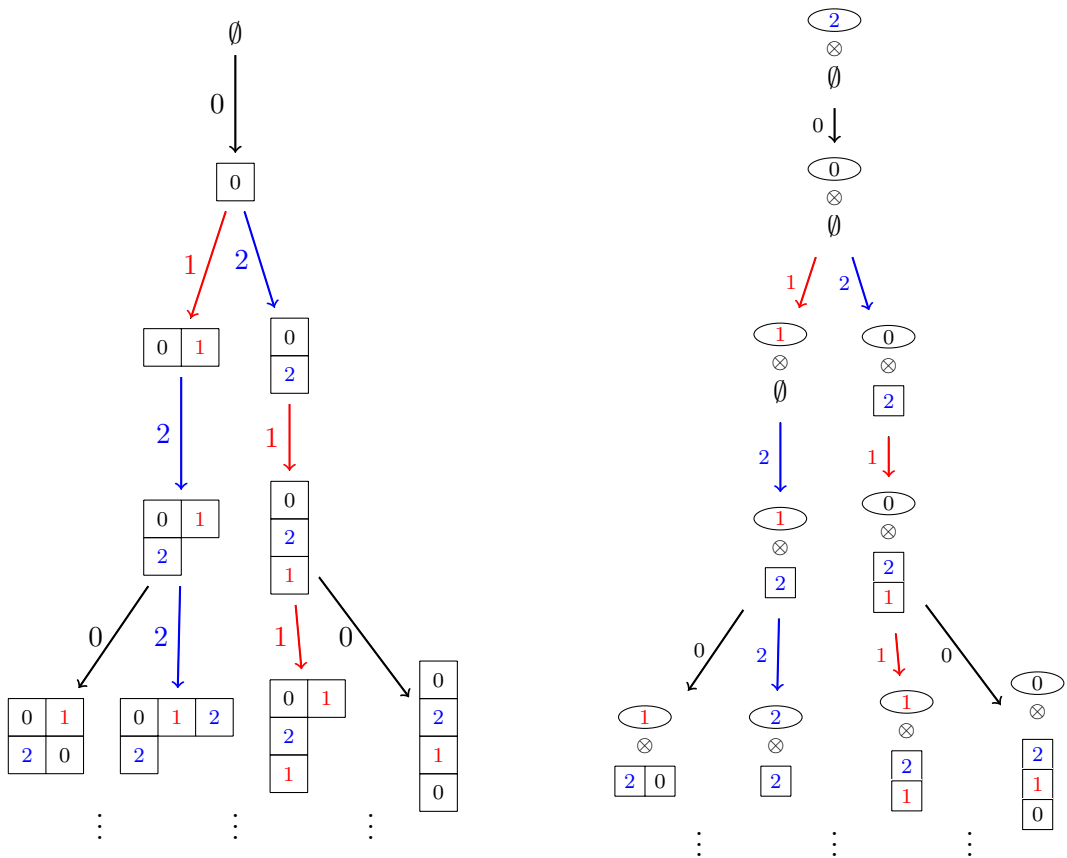
\begin{figure}[h]
\begin{center}
\begin{tikzpicture} [scale=.1]
\node at (0,0) {\begin{tikzpicture}[xscale=3*\UNIT, yscale=-5.5*\UNIT]
\begin{scope}
	\Lattice 
\end{scope} 
\begin{scope}[every node/.style={fill=white}]
	\node (a) at (0) {$\emptyset$};
	\node (b) at (11) {\begin{tikzpicture}\youngDiagram{{\scriptsize{0}}}{.5};\end{tikzpicture}};
	\node (c) at (21) {\begin{tikzpicture}\youngDiagram{{\scriptsize{0},\scriptsize{\color{red}{1}}}}{.5};\end{tikzpicture}};
	\node (d) at (31) {\begin{tikzpicture}\youngDiagram{{\scriptsize{0},\scriptsize{\color{red}{1}}},{\scriptsize{\color{blue}{2}}}}{.5};\end{tikzpicture}};
	\node (e) at (41) {\begin{tikzpicture}\youngDiagram{{\scriptsize{0},\scriptsize{\color{red}{1}}},{\scriptsize{\color{blue}{2}},\scriptsize{0}}}{.5};\end{tikzpicture}};
	\node (g) at (22) {\begin{tikzpicture}\youngDiagram{{\scriptsize{0}},{\scriptsize{\color{blue}{2}}}}{.5};\end{tikzpicture}};
	\node (i) at (32) {\begin{tikzpicture}\youngDiagram{{\scriptsize{0}},{\scriptsize{\color{blue}{2}}},{\scriptsize{\color{red}{1}}}}{.5};\end{tikzpicture}};
	\node (k) at (42) {\begin{tikzpicture}\youngDiagram{{\scriptsize{0},\scriptsize{\color{red}{1}},\scriptsize{\color{blue}{2}}},{\scriptsize{\color{blue}{2}}}}{.5};\end{tikzpicture}};
	\node (l) at (43) {\begin{tikzpicture}\youngDiagram{{\scriptsize{0},\scriptsize{\color{red}{1}}},{\scriptsize{\color{blue}{2}}},{\scriptsize{\color{red}{1}}}}{.5};\end{tikzpicture}};
	\node (m) at (44) {\begin{tikzpicture}\youngDiagram{{\scriptsize{0}},{\scriptsize{\color{blue}{2}}},{\scriptsize{\color{red}{1}}},{\scriptsize{0}}}{.5};\end{tikzpicture}};
\end{scope}
\begin{scope}[black, thick,->]
\draw (a)--(b) node[midway,left] {0};
\draw[red] (b)--(c) node[midway,left] {\color{red}{1}};
\draw (d)--(e) node[midway,left] {0};
\draw[blue] (b)--(g) node[midway,left] {\color{blue}{2}};
\draw[red] (g)--(i) node[midway,left] {\color{red}{1}};
\draw[blue] (c)--(d) node[midway,left] {\color{blue}{2}};
\draw[blue] (d)--(k) node[midway,left] {\color{blue}{2}};
 \draw[red] (i)--(l) node[midway,left] {\color{red}{1}};
\draw (i)--(m) node[midway,left] {0};
\end{scope}
\end{tikzpicture}};

\node at (80,0) {\begin{tikzpicture}[xscale=3*\UNIT, yscale=-5.5*\UNIT]
\begin{scope}
	\Lattice 
\end{scope} 
\begin{scope}[every node/.style={fill=white}]
	\node (a) at (0) {\begin{tikzpicture} 
	                           \node at (0,0.1) {$\emptyset$};
	                           \node at (0,.47) {\scriptsize{$\otimes$}}; 
	                           \node (z) at (0,.85) {\scriptsize{\color{blue}{2}}};
	                           \draw (z) ellipse (.35cm and .15cm);
	                           \end{tikzpicture}};
	\node (b) at (11) {\begin{tikzpicture}
	                           \node at (0,0.1) {$\emptyset$};
	                           \node at (0,.47) {\scriptsize{$\otimes$}};
	                           \node (z) at (0,.85) {\scriptsize{0}};
	                           \draw (z) ellipse (.35cm and .15cm);
	                           \end{tikzpicture}};
	\node (c) at (21) {\begin{tikzpicture}
				 \node at (0,0.1) {$\emptyset$};				  
				  \node at (0,.47) {\scriptsize{$\otimes$}};
				 \node (z) at (0,.85) {\scriptsize{\color{red}{1}}};
	                           \draw (z) ellipse (.35cm and .15cm);
	                           \end{tikzpicture}};
	\node (d) at (31) {\begin{tikzpicture}
				  \node at (0,0) {\begin{tikzpicture}\youngDiagram{{\scriptsize{\color{blue}{2}}}}{.4}\end{tikzpicture}};
				  \node at (0,.47) {\scriptsize{$\otimes$}};
				  \node (z) at (0,.85) {\scriptsize{\color{red}{1}}};
	                           \draw (z) ellipse (.35cm and .15cm);
	                           \end{tikzpicture}};
	\node (e) at (41) {\begin{tikzpicture}
	                            \node at (0,0) {\begin{tikzpicture}\youngDiagram{{\scriptsize{\color{blue}{2}},\scriptsize{0}}}{.4};\end{tikzpicture}};
	                            \node at (0,.47) {\scriptsize{$\otimes$}};
	                           \node (z) at (0,.85) {\scriptsize{\color{red}{1}}};
	                           \draw (z) ellipse (.35cm and .15cm);
	                           \end{tikzpicture}};
	\node (g) at (22) {\begin{tikzpicture}
	                          \node at (0,0) {\begin{tikzpicture}\youngDiagram{{\scriptsize{\color{blue}{2}}}}{.4};\end{tikzpicture}};
				\node at (0,.47) {\scriptsize{$\otimes$}};
				\node (z) at (0,.85) {\scriptsize{0}};
	                         \draw (z) ellipse (.35cm and .15cm);
	                          \end{tikzpicture}};
	\node (i) at (32) {\begin{tikzpicture}
				\node at (0,-.2) {\begin{tikzpicture}\youngDiagram{{\scriptsize{\color{blue}{2}}},{\scriptsize{\color{red}{1}}}}{.4};\end{tikzpicture}};
				\node at (0,.47) {\scriptsize{$\otimes$}};
				\node (z) at (0,.85) {\scriptsize{0}};
	                          \draw (z) ellipse (.35cm and .15cm);
	                          \end{tikzpicture}};
	\node (k) at (42) {\begin{tikzpicture}
				\node at (0,0) {\begin{tikzpicture}\youngDiagram{{\scriptsize{\color{blue}{2}}}}{.4};\end{tikzpicture}};
				 \node at (0,.47) {\scriptsize{$\otimes$}};
				\node (z) at (0,.85) {\scriptsize{\color{blue}{2}}};
	                          \draw (z) ellipse (.35cm and .15cm);
	                           \end{tikzpicture}};
	\node (l) at (43) {\begin{tikzpicture}
				\node at (0,-.2) {\begin{tikzpicture}\youngDiagram{{\scriptsize{\color{blue}{2}}},{\scriptsize{\color{red}{1}}}}{.4};\end{tikzpicture}};
				\node at (0,.47) {\scriptsize{$\otimes$}};
				\node (z) at (0,.85) {\scriptsize{\color{red}{1}}};
	                           \draw (z) ellipse (.35cm and .15cm);
	                           \end{tikzpicture}};
	\node (m) at (44) {\begin{tikzpicture}
				\node at (0,.6) {};
				\node at (0,-.5) {\begin{tikzpicture}\youngDiagram{{\scriptsize{\color{blue}{2}}},{\scriptsize{\color{red}{1}}},{\scriptsize{0}}}{.4};\end{tikzpicture}};
				\node at (0,.47) {\scriptsize{$\otimes$}};
				\node (z) at (0,.85) {\scriptsize{0}};
	                           \draw (z) ellipse (.35cm and .15cm);
	                           \end{tikzpicture}};
\end{scope}
\begin{scope}[black, thick,->]
\draw (a)--(b) node[midway,left] {\scriptsize{0}};
 \draw[red] (b)--(c) node[midway,left] {\scriptsize{\color{red}{1}}};
\draw (d)--(e) node[midway,left] {\scriptsize{0}};
\draw[blue] (b)--(g) node[midway,left] {\scriptsize{\color{blue}{2}}};
\draw[red] (g)--(i) node[midway,left] {\scriptsize{\color{red}{1}}};
\draw[blue] (c)--(d) node[midway,left] {\scriptsize{\color{blue}{2}}};
\draw[blue] (d)--(k) node[midway,left] {\scriptsize{\color{blue}{2}}}; 
\draw[red] (i)--(l) node[midway,left] {\scriptsize{\color{red}{1}}};
\draw (i)--(m) node[midway,left] {\scriptsize{0}};
\end{scope}
\end{tikzpicture}};

\end{tikzpicture}
\caption{ \label{crystal-iso}
The isomorphism $B(\Lambda_0) \simeq B^{1,1} \otimes B(\Lambda_2)$
for $\ell = 2$.
}
\end{center}
\end{figure}


\begin{remark} \label{B-ell-1-remark}
There is another model of $B(\Lambda_i)$
for which nodes are indexed by
$(\ell+1)$-regular partitions.
A partition
$\lambda $ is $(\ell+1)$-{\em regular} iff its transpose
$\lambda^T$ is $(\ell+1)$-restricted. 
(However one does not obtain this model by 
merely transposing the partition indexing each node.)
There is a similar isomorphism
$\crystalmap':B(\Lambda_{i})
\xrightarrow{\simeq}
 B^{\ell,1} \otimes B(\Lambda_{i+1})$
corresponding to 
column removal.

For type $A^{(1)}_\ell$ the diagram for $B^{\ell,1}$ is 

\begin{center}
\begin{equation} \label{perfect-crystal-A-reverse}
\begin{tikzpicture} [scale=.5]
\draw (0,.25) ellipse (.65cm and .32cm);
\draw (3,.25) ellipse (.65cm and .32cm);
\draw (8,.25) ellipse (.65cm and .32cm);
\draw (11,.25) ellipse (.65cm and .32cm);
\draw[<-,thick] (.8,.25) -- (2.2,.25);
\draw[<-,thick] (8.8,.25) -- (10.1,.25);
\draw[<-,thick] (3.8,.25) -- (4.6,.25);
\draw[<-,thick] (6.3,.25) -- (7.2,.25);
\draw [fill] (5,0.25) circle [radius=0.03];
\draw [fill] (5.5,0.25) circle [radius=0.03];
\draw [fill] (6,0.25) circle [radius=0.03];
\node at (0,.25) {\scriptsize{}};
\node at (3,.25) {\scriptsize{}};
\node at (8,.25) {\scriptsize{}};
\node at (11,.25) {\scriptsize{}};
\node at (1.4,.7) {\scriptsize{1}};
\node at (4.2,.7) {\scriptsize{2}};
\node at (6.7,.7) {\scriptsize{$\ell$-1}};
\node at (9.4,.7) {\scriptsize{$\ell$}};
\node at (5.5,3.5) {\scriptsize{0}};
\draw[<-,thick] (10.8,.75) to [out = 130, in = 50] (0.1,.75);
\end{tikzpicture}
\end{equation}
\end{center}
This can be obtained from $B^{1,1}$ in type $A^{(1)}_\ell$ by reversing the direction of all arrows.

\end{remark}

See Section \ref{sec-LV} for another model of $B(\Lambda_i)$.

\subsubsection{General type}
\label{sec-general}
The perfect crystal $B^{1,1}$ in \eqref{perfect-crystal-A} is also an
example of a Kirillov-Reshetikhin (KR) crystal. For a quantized affine
algebra $U'_q(\Fr{g})$, the KR crystals $B^{r,s}$ correspond to a
special family of finite dimensional modules $W^{r,s}$ indexed by a
positive integer $s$ and a Dynkin node $r$ from the classical
subalgebra $\Fr{g}_0$ of $\Fr{g}$ \cite{KR90}, \cite{OS08}. In this
paper we work with the crystals $B^{1,1}$ which have particularly
simple graphs.
In all of the types we consider,
with the exception of $C^{(1)}_\ell$, the crystal $B^{1,1}$ is perfect
of level 1 \cite{FOS10}. When $B^{1,1}$ is perfect and $\Lambda_i$ is a
level 1 fundamental weight for $i \in I$, $B^{1,1}$ has a unique node
$b_i$ such that $\ep{}(b_i) = \Lambda_i$ and $\p(b_i) =
\Lambda_{\sigma(i)}$ for some $\sigma(i) \in I$. There then exists a
crystal isomorphism \cite{KKMMNN}
\begin{equation}
\label{eq-perfect}
\crystalmap: B(\Lambda_{\sigma(i)}) \xrightarrow{\sim} B^{1,1} \otimes B(\Lambda_i).
\end{equation}
In this paper we will also consider the tensor product $B^{1,1} \otimes
B(\Lambda_i)$ when $\Lambda_i$ is a level 2 weight and 
consider type
$C^{(1)}_\ell$ where $B^{1,1}$ is not perfect. In those cases, $B^{1,1}
\otimes B(\Lambda_i)$ decomposes as
\begin{equation}
\label{eq-nonperfect}
B^{1,1} \otimes B(\Lambda_i) \cong \bigoplus_{j \in I} k_j
B(s_j\Lambda_j), \quad \quad k_j \in \MB{Z}_{\geq 0}, s_j \in \{1,2\}
 \end{equation}
for appropriate $k_j, s_j$.
See \cite{KKMMNN92}, \cite{KM98} for the case where $\Lambda_i$ is not level 1, and \cite{SL13} for type $C^{(1)}_\ell$, $i = 0$ which can be generalized to $i \in I$ by similar methods.
(When the level of $\Lambda_i$ is $1$, all $s_j=1$; but when
the level of $\Lambda_i$ is $2$ and the level of $\Lambda_j$
is only $1$, we can have $s_j=2$.)

\begin{example} \label{ex-typeC}

In type $C_\ell^{(1)}$, one can check 
\begin{equation}
B^{1,1} \otimes B(\Lambda_1) \simeq B(\Lambda_0) \oplus B(\Lambda_2).
\end{equation}
Recall $B^{1,1}$ is {\em not} a perfect crystal in type $C_\ell^{(1)}$.

If we call the crystal isomorphism $\crystalmap$,
label the nodes of $B^{1,1}$ as
\begin{equation} \label{KR-crystal-C}
\begin{tikzpicture} [scale=.5]
\draw (0,.25) ellipse (.65cm and .32cm);
\draw (3,.25) ellipse (.65cm and .32cm);
\draw (8,.25) ellipse (.65cm and .32cm);
\draw (11,.25) ellipse (.65cm and .32cm);
\draw (16,.25) ellipse (.65cm and .32cm);
\draw (19,.25) ellipse (.65cm and .32cm);
\draw[->,thick] (.8,.25) -- (2.2,.25);
\draw[->,thick] (8.8,.25) -- (10.1,.25);
\draw[->,thick] (3.8,.25) -- (4.6,.25);
\draw[->,thick] (6.3,.25) -- (7.2,.25);
\draw[->,thick] (11.8,.25) -- (12.6,.25);
\draw[->,thick] (14.4,.25) -- (15.2,.25);
\draw[->,thick] (16.8,.25) -- (18.2,.25);
\draw [fill] (5,0.25) circle [radius=0.03];
\draw [fill] (5.5,0.25) circle [radius=0.03];
\draw [fill] (6,0.25) circle [radius=0.03];
\draw [fill] (13,0.25) circle [radius=0.03];
\draw [fill] (13.5,0.25) circle [radius=0.03];
\draw [fill] (14,0.25) circle [radius=0.03];
\node at (0,.25) {\scriptsize{}};
\node at (3,.25) {\scriptsize{}};
\node at (8,.25) {\scriptsize{}};
\node at (11,.25) {\scriptsize{}};
\node at (1.4,.7) {\scriptsize{1}};
\node at (4.2,.7) {\scriptsize{2}};
\node at (6.7,.7) {\scriptsize{$\ell$-1}};
\node at (9.4,.7) {\scriptsize{$\ell$}};
\node at (16,.25) {\scriptsize{}};
\node at (19,.25) {\scriptsize{}};
\node at (12.2,.7) {\scriptsize{$\ell$-1}};
\node at (14.8,.7) {\scriptsize{2}};
\node at (17.4,.7) {\scriptsize{1}};
\node at (9,3.5) {\scriptsize{0}};
\node at (9.4,3) {\scriptsize{}};
\draw[->,thick] (9.2,3) to [out = 180, in = 50] (0.1,.75);
\draw[->,thick] (18.8,.75) to [out = 140, in = 0] (9.2,3);
\node at (0,.25) {\scriptsize{0}};
\node at (3,.25) {\scriptsize{1}};
\node at (8,.25) {\scriptsize{$\ell$-1}};
\node at (11,.25) {\scriptsize{$\ell$}};
\node at (16,.25) {\scriptsize{$\overline 2$}};
\node at (19,.25) {\scriptsize{$\overline 1$}};
\end{tikzpicture}
\end{equation}

and the highest weight node of $B(\Lambda_i)$ as $\bi{i}$ then
\begin{gather}
 \crystalmap \left( \right.
\begin{tikzpicture}[baseline=-2pt]
        \node at (0,0) {\;\;{\scriptsize{$1$}}\;\;};
        \draw (0,0) ellipse (.38cm and .2cm);
\end{tikzpicture} \left. \; \otimes\; \bi{1} \right) = \bi{2}
\\
 \crystalmap \left( \right.
\begin{tikzpicture}[baseline=-2pt]
        \node at (0,0) {\;\;{\scriptsize{$\overline 1$}}\;\;};
        \draw (0,0) ellipse (.38cm and .2cm);
\end{tikzpicture} \left. \; \otimes\; \bi{1} \right) = \bi{0}
\end{gather}
as $\ep{}(\begin{tikzpicture}[baseline=-2pt]
        \node at (0,0) {\;\;{\scriptsize{$1$}}\;\;};
        \draw (0,0) ellipse (.38cm and .2cm);
\end{tikzpicture}) = \ep{}(\begin{tikzpicture}[baseline=-2pt]
        \node at (0,0) {\;\;{\scriptsize{$\overline 1$}}\;\;};
        \draw (0,0) ellipse (.38cm and .2cm);
\end{tikzpicture}) = \Lambda_1$ but
$\p(\begin{tikzpicture}[baseline=-2pt]
        \node at (0,0) {\;\;{\scriptsize{$1$}}\;\;};
        \draw (0,0) ellipse (.38cm and .2cm);
\end{tikzpicture}) = \Lambda_2, \p(\begin{tikzpicture}[baseline=-2pt]
        \node at (0,0) {\;\;{\scriptsize{$\overline 1$}}\;\;};
        \draw (0,0) ellipse (.38cm and .2cm);
\end{tikzpicture}) = \Lambda_0.$

How this difference manifests in  the main theorems 
of the paper is also addressed in Remark \ref{rem-typeC}
and Example \ref{ex-modC}.

Note, one can also check
$B^{1,1} \otimes B(\Lambda_0) \simeq B(\Lambda_1).$

In type $A_{2\ell}^{(2)}$ one can check that for similar reasons
as above that
\begin{equation}
\label{eq-A2-2}
B^{1,1} \otimes B(\Lambda_2) \simeq B(\Lambda_1) \oplus B(\Lambda_3),
\end{equation}
if $\ell \ge 3$.
Recall that $\Lambda_1, \Lambda_2, \Lambda_3$ are all of level $2$,
whereas $\Lambda_0$ is level $1$.
One can also check
\begin{equation}
\label{eq-A2-1}
B^{1,1} \otimes B(\Lambda_1) \simeq B(2\Lambda_0) \oplus B(\Lambda_2).
\end{equation}

\end{example}


Below we draw the crystal graphs for the KR crystals $B^{1,1}$ for each type $X_\ell$ we are considering. The nodes in $B^{1,1}$ are labelled with 
\begin{tikzpicture}[baseline=-2pt] 
        \draw (0,0) ellipse (.25cm and .13cm);
\end{tikzpicture} 
to distinguish them from the Dynkin nodes 
\begin{tikzpicture}[baseline=-2pt] 
        \draw (0,0) circle (.13cm);
\end{tikzpicture} .
The conditions $\ell \ge 2,3,4$ hold below in each respective
type as they do for the Dynkin diagrams listed in Section
\ref{Cartan-datum-section}.

\begin{itemize}

\item $A^{(1)}_{\ell}$ 
\begin{center}
\begin{equation} \label{perfect-crystal-A}
\begin{tikzpicture} [scale=.5]
\draw (0,.25) ellipse (.65cm and .32cm);
\draw (3,.25) ellipse (.65cm and .32cm);
\draw (8,.25) ellipse (.65cm and .32cm);
\draw (11,.25) ellipse (.65cm and .32cm);
\draw[->,thick] (.8,.25) -- (2.2,.25);
\draw[->,thick] (8.8,.25) -- (10.1,.25);
\draw[->,thick] (3.8,.25) -- (4.6,.25);
\draw[->,thick] (6.3,.25) -- (7.2,.25);
\draw [fill] (5,0.25) circle [radius=0.03];
\draw [fill] (5.5,0.25) circle [radius=0.03];
\draw [fill] (6,0.25) circle [radius=0.03];
\node at (0,.25) {\scriptsize{}};
\node at (3,.25) {\scriptsize{}};
\node at (8,.25) {\scriptsize{}};
\node at (11,.25) {\scriptsize{}};
\node at (1.4,.7) {\scriptsize{1}};
\node at (4.2,.7) {\scriptsize{2}};
\node at (6.7,.7) {\scriptsize{$\ell$-1}};
\node at (9.4,.7) {\scriptsize{$\ell$}};
\node at (5.5,3.5) {\scriptsize{0}};
\draw[->,thick] (10.8,.75) to [out = 130, in = 50] (0.1,.75);
\end{tikzpicture}
\end{equation}
\end{center}

\item $C^{(1)}_{\ell}$ 
\begin{center}
\begin{equation} \label{perfect-crystal-C}
\begin{tikzpicture} [scale=.5]
\draw (0,.25) ellipse (.65cm and .32cm);
\draw (3,.25) ellipse (.65cm and .32cm);
\draw (8,.25) ellipse (.65cm and .32cm);
\draw (11,.25) ellipse (.65cm and .32cm);
\draw (16,.25) ellipse (.65cm and .32cm);
\draw (19,.25) ellipse (.65cm and .32cm);
\draw[->,thick] (.8,.25) -- (2.2,.25);
\draw[->,thick] (8.8,.25) -- (10.1,.25);
\draw[->,thick] (3.8,.25) -- (4.6,.25);
\draw[->,thick] (6.3,.25) -- (7.2,.25);
\draw[->,thick] (11.8,.25) -- (12.6,.25);
\draw[->,thick] (14.4,.25) -- (15.2,.25);
\draw[->,thick] (16.8,.25) -- (18.2,.25);
\draw [fill] (5,0.25) circle [radius=0.03];
\draw [fill] (5.5,0.25) circle [radius=0.03];
\draw [fill] (6,0.25) circle [radius=0.03];
\draw [fill] (13,0.25) circle [radius=0.03];
\draw [fill] (13.5,0.25) circle [radius=0.03];
\draw [fill] (14,0.25) circle [radius=0.03];
\node at (0,.25) {\scriptsize{}};
\node at (3,.25) {\scriptsize{}};
\node at (8,.25) {\scriptsize{}};
\node at (11,.25) {\scriptsize{}};
\node at (1.4,.7) {\scriptsize{1}};
\node at (4.2,.7) {\scriptsize{2}};
\node at (6.7,.7) {\scriptsize{$\ell$-1}};
\node at (9.4,.7) {\scriptsize{$\ell$}};
\node at (16,.25) {\scriptsize{}};
\node at (19,.25) {\scriptsize{}};
\node at (12.2,.7) {\scriptsize{$\ell$-1}};
\node at (14.8,.7) {\scriptsize{2}};
\node at (17.4,.7) {\scriptsize{1}};
\node at (9,3.5) {\scriptsize{0}};
\node at (9.4,3) {\scriptsize{}};
\draw[->,thick] (9.2,3) to [out = 180, in = 50] (0.1,.75);
\draw[->,thick] (18.8,.75) to [out = 140, in = 0] (9.2,3);
\end{tikzpicture}
\end{equation}
\end{center}

\item $A^{(2)}_{2\ell}$
\begin{center}
\begin{equation} \label{perfect-crystal-A22l}
\begin{tikzpicture} [scale=.5]
\draw (0,.25) ellipse (.65cm and .32cm);
\draw (3,.25) ellipse (.65cm and .32cm);
\draw (8,.25) ellipse (.65cm and .32cm);
\draw (11,.25) ellipse (.65cm and .32cm);
\draw (9.4,3) ellipse (.65cm and .32cm);
\draw (16,.25) ellipse (.65cm and .32cm);
\draw (19,.25) ellipse (.65cm and .32cm);
\draw[->,thick] (.8,.25) -- (2.2,.25);
\draw[->,thick] (8.8,.25) -- (10.1,.25);
\draw[->,thick] (3.8,.25) -- (4.6,.25);
\draw[->,thick] (6.3,.25) -- (7.2,.25);
\draw[->,thick] (11.8,.25) -- (12.6,.25);
\draw[->,thick] (14.4,.25) -- (15.2,.25);
\draw[->,thick] (16.8,.25) -- (18.2,.25);
\draw [fill] (5,0.25) circle [radius=0.03];
\draw [fill] (5.5,0.25) circle [radius=0.03];
\draw [fill] (6,0.25) circle [radius=0.03];
\draw [fill] (13,0.25) circle [radius=0.03];
\draw [fill] (13.5,0.25) circle [radius=0.03];
\draw [fill] (14,0.25) circle [radius=0.03];
\node at (0,.25) {\scriptsize{}};
\node at (3,.25) {\scriptsize{}};
\node at (8,.25) {\scriptsize{}};
\node at (11,.25) {\scriptsize{}};
\node at (1.4,.7) {\scriptsize{1}};
\node at (4.2,.7) {\scriptsize{2}};
\node at (6.7,.7) {\scriptsize{$\ell$-1}};
\node at (9.4,.7) {\scriptsize{$\ell$}};
\node at (5.5,3.5) {\scriptsize{0}};
\node at (16,.25) {\scriptsize{}};
\node at (19,.25) {\scriptsize{}};
\node at (12.2,.7) {\scriptsize{$\ell$-1}};
\node at (14.8,.7) {\scriptsize{2}};
\node at (17.4,.7) {\scriptsize{1}};
\node at (13,3.5) {\scriptsize{0}};
\node at (9.4,3) {\scriptsize{}};
\draw[->,thick] (8.6,3) to [out = 180, in = 50] (0.1,.75);
\draw[->,thick] (18.8,.75) to [out = 140, in = 0] (10.2,3);
\end{tikzpicture}
\end{equation}
\end{center}

\item $A^{(2)\dagger}_{2\ell}$

\begin{center}
\begin{equation} \label{perfect-crystal-A2ldagger}
\begin{tikzpicture} [scale=.5]
\draw (0,.25) ellipse (.65cm and .32cm);
\draw (3,.25) ellipse (.65cm and .32cm);
\draw (8,.25) ellipse (.65cm and .32cm);
\draw (11,.25) ellipse (.65cm and .32cm);
\draw (14,.25) ellipse (.65cm and .32cm);
\draw (19,.25) ellipse (.65cm and .32cm);
\draw (22,.25) ellipse (.65cm and .32cm);
\draw[->,thick] (.8,.25) -- (2.2,.25);
\draw[->,thick] (8.8,.25) -- (10.1,.25);
\draw[->,thick] (3.8,.25) -- (4.6,.25);
\draw[->,thick] (6.3,.25) -- (7.2,.25);
\draw[->,thick] (11.8,.25) -- (13.2,.25);
\draw[->,thick] (14.8,.25) -- (15.6,.25);
\draw[->,thick] (17.4,.25) -- (18.2,.25);
\draw[->,thick] (20,.25) -- (21,.25);
\draw [fill] (5,0.25) circle [radius=0.03];
\draw [fill] (5.5,0.25) circle [radius=0.03];
\draw [fill] (6,0.25) circle [radius=0.03];
\draw [fill] (16,0.25) circle [radius=0.03];
\draw [fill] (16.5,0.25) circle [radius=0.03];
\draw [fill] (17,0.25) circle [radius=0.03];
\node at (0,.25) {\scriptsize{}};
\node at (3,.25) {\scriptsize{}};
\node at (8,.25) {\scriptsize{}};
\node at (11,.25) {\scriptsize{}};
\node at (1.4,.7) {\scriptsize{1}};
\node at (4.2,.7) {\scriptsize{2}};
\node at (6.7,.7) {\scriptsize{$\ell$-1}};
\node at (9.4,.7) {\scriptsize{$\ell$}};
\node at (14,.25) {\scriptsize{}};
\node at (19,.25) {\scriptsize{}};
\node at (22,.25) {\scriptsize{}};
\node at (12.4,.7) {\scriptsize{$\ell$}};
\node at (15.2,.7) {\scriptsize{$\ell$-1}};
\node at (17.8,.7) {\scriptsize{2}};
\node at (20.5,.7) {\scriptsize{1}};
\node at (11,3.7) {\scriptsize{0}};
\draw[->,thick] (11,3.2) to [out = 180, in = 50] (0.1,.75);
\draw[->,thick] (21.5,.75) to [out = 130, in = 0] (11,3.2);
\end{tikzpicture}
\end{equation}
\end{center}

\item $D^{(2)}_{\ell+1}$

\begin{center}
\begin{equation} \label{perfect-crystal-D2l+1}
\begin{tikzpicture} [scale=.5]
\draw (0,.25) ellipse (.65cm and .32cm);
\draw (3,.25) ellipse (.65cm and .32cm);
\draw (8,.25) ellipse (.65cm and .32cm);
\draw (11,.25) ellipse (.65cm and .32cm);
\draw (11,3.2) ellipse (.65cm and .32cm);
\draw (14,.25) ellipse (.65cm and .32cm);
\draw (19,.25) ellipse (.65cm and .32cm);
\draw (22,.25) ellipse (.65cm and .32cm);
\draw[->,thick] (.8,.25) -- (2.2,.25);
\draw[->,thick] (8.8,.25) -- (10.1,.25);
\draw[->,thick] (3.8,.25) -- (4.6,.25);
\draw[->,thick] (6.3,.25) -- (7.2,.25);
\draw[->,thick] (11.8,.25) -- (13.2,.25);
\draw[->,thick] (14.8,.25) -- (15.6,.25);
\draw[->,thick] (17.4,.25) -- (18.2,.25);
\draw[->,thick] (20,.25) -- (21,.25);
\draw [fill] (5,0.25) circle [radius=0.03];
\draw [fill] (5.5,0.25) circle [radius=0.03];
\draw [fill] (6,0.25) circle [radius=0.03];
\draw [fill] (16,0.25) circle [radius=0.03];
\draw [fill] (16.5,0.25) circle [radius=0.03];
\draw [fill] (17,0.25) circle [radius=0.03];
\node at (0,.25) {\scriptsize{}};
\node at (3,.25) {\scriptsize{}};
\node at (8,.25) {\scriptsize{}};
\node at (11,.25) {\scriptsize{}};
\node at (1.4,.7) {\scriptsize{1}};
\node at (4.2,.7) {\scriptsize{2}};
\node at (6.7,.7) {\scriptsize{$\ell$-1}};
\node at (9.4,.7) {\scriptsize{$\ell$}};
\node at (14,.25) {\scriptsize{}};
\node at (19,.25) {\scriptsize{}};
\node at (22,.25) {\scriptsize{}};
\node at (12.4,.7) {\scriptsize{$\ell$}};
\node at (15.2,.7) {\scriptsize{$\ell$-1}};
\node at (17.8,.7) {\scriptsize{2}};
\node at (20.5,.7) {\scriptsize{1}};
\node at (11,3.2) {\scriptsize{}};
\node at (5.5,3.7) {\scriptsize{0}};
\node at (16.5,3.7) {\scriptsize{0}};
\draw[->,thick] (10.2,3.2) to [out = 180, in = 50] (0.1,.75);
\draw[->,thick] (21.5,.75) to [out = 130, in = 0] (11.8,3.2);
\end{tikzpicture}
\end{equation}
\end{center}

\item $D^{(1)}_{\ell}$

\begin{center}
\begin{equation} \label{perfect-crystal-Dl}
\begin{tikzpicture} [scale=.5]
\draw (0,.25) ellipse (.65cm and .32cm);
\draw (3,.25) ellipse (.65cm and .32cm);
\draw (8,.25) ellipse (.65cm and .32cm);
\draw (11,1.7) ellipse (.65cm and .32cm);
\draw (11,-1.2) ellipse (.65cm and .32cm);
\draw (14,.25) ellipse (.65cm and .32cm);
\draw (19,.25) ellipse (.65cm and .32cm);
\draw (22,.25) ellipse (.65cm and .32cm);
\draw[->,thick] (.8,.25) -- (2.2,.25);
\draw[->,thick] (8.8,.6) -- (10.1,1.3);
\draw[->,thick] (8.8,-.1) -- (10.1,-.8);
\draw[->,thick] (3.8,.25) -- (4.6,.25);
\draw[->,thick] (6.3,.25) -- (7.2,.25);
\draw[->,thick] (11.8,1.3) -- (13.2,.6);
\draw[->,thick] (11.8,-.8) -- (13.2,-.1);
\draw[->,thick] (14.8,.25) -- (15.6,.25);
\draw[->,thick] (17.4,.25) -- (18.2,.25);
\draw[->,thick] (20,.25) -- (21,.25);
\draw [fill] (5,0.25) circle [radius=0.03];
\draw [fill] (5.5,0.25) circle [radius=0.03];
\draw [fill] (6,0.25) circle [radius=0.03];
\draw [fill] (16,0.25) circle [radius=0.03];
\draw [fill] (16.5,0.25) circle [radius=0.03];
\draw [fill] (17,0.25) circle [radius=0.03];
\node at (0,.25) {\scriptsize{}};
\node at (3,.25) {\scriptsize{}};
\node at (8,.25) {\scriptsize{}};
\node at (11,-1.2) {\scriptsize{}};
\node at (1.4,.7) {\scriptsize{1}};
\node at (4.2,.7) {\scriptsize{2}};
\node at (6.7,.7) {\scriptsize{$\ell$-2}};
\node at (9.2,1.5) {\scriptsize{$\ell$-1}};
\node at (9.2,-1) {\scriptsize{$\ell$}};
\node at (14,.25) {\scriptsize{}};
\node at (19,.25) {\scriptsize{}};
\node at (12.6,1.5) {\scriptsize{$\ell$}};
\node at (12.6,-1) {\scriptsize{$\ell$-1}};
\node at (15.2,.7) {\scriptsize{$\ell$-2}};
\node at (17.8,.7) {\scriptsize{2}};
\node at (20.5,.7) {\scriptsize{1}};
\node at (11,3.9) {\scriptsize{0}};
\node at (8,-3.4) {\scriptsize{0}};
\draw[->,thick] (11,3.4) to [out = 180, in = 50] (3,.75);
\draw[->,thick] (21.5,.75) to [out = 130, in = 0] (11,3.4);
\draw[->,thick] (18.5,-.25) to [out = 230, in = 0] (8,-2.9);
\draw[->,thick] (8,-2.9) to [out = 180, in = 310] (.3,-.25);
\end{tikzpicture}
\end{equation}
\end{center}

\item $B^{(1)}_{\ell}$

\begin{center}
\begin{equation} \label{perfect-crystal-Bl}
\begin{tikzpicture} [scale=.5]
\draw (0,.25) ellipse (.65cm and .32cm);
\draw (3,.25) ellipse (.65cm and .32cm);
\draw (8,.25) ellipse (.65cm and .32cm);
\draw (11,.25) ellipse (.65cm and .32cm);
\draw (14,.25) ellipse (.65cm and .32cm);
\draw (19,.25) ellipse (.65cm and .32cm);
\draw (22,.25) ellipse (.65cm and .32cm);
\draw[->,thick] (.8,.25) -- (2.2,.25);
\draw[->,thick] (8.8,.25) -- (10.1,.25);
\draw[->,thick] (3.8,.25) -- (4.6,.25);
\draw[->,thick] (6.3,.25) -- (7.2,.25);
\draw[->,thick] (11.8,.25) -- (13.2,.25);
\draw[->,thick] (14.8,.25) -- (15.6,.25);
\draw[->,thick] (17.4,.25) -- (18.2,.25);
\draw[->,thick] (20,.25) -- (21,.25);
\draw [fill] (5,0.25) circle [radius=0.03];
\draw [fill] (5.5,0.25) circle [radius=0.03];
\draw [fill] (6,0.25) circle [radius=0.03];
\draw [fill] (16,0.25) circle [radius=0.03];
\draw [fill] (16.5,0.25) circle [radius=0.03];
\draw [fill] (17,0.25) circle [radius=0.03];
\node at (0,.25) {\scriptsize{}};
\node at (3,.25) {\scriptsize{}};
\node at (8,.25) {\scriptsize{}};
\node at (11,.25) {\scriptsize{}};
\node at (1.4,.7) {\scriptsize{1}};
\node at (4.2,.7) {\scriptsize{2}};
\node at (6.7,.7) {\scriptsize{$\ell$-1}};
\node at (9.4,.7) {\scriptsize{$\ell$}};
\node at (14,.25) {\scriptsize{}};
\node at (19,.25) {\scriptsize{}};
\node at (12.4,.7) {\scriptsize{$\ell$}};
\node at (15.2,.7) {\scriptsize{$\ell$-1}};
\node at (17.8,.7) {\scriptsize{2}};
\node at (20.5,.7) {\scriptsize{1}};
\node at (11,3.9) {\scriptsize{0}};
\node at (8,-3.4) {\scriptsize{0}};
\draw[->,thick] (11,3.4) to [out = 180, in = 50] (3,.75);
\draw[->,thick] (21.5,.75) to [out = 130, in = 0] (11,3.4);
\draw[->,thick] (18.5,-.25) to [out = 230, in = 0] (8,-2.9);
\draw[->,thick] (8,-2.9) to [out = 180, in = 310] (.1,-.25);
\end{tikzpicture}
\end{equation}
\end{center}

\item $A^{(2)}_{2\ell-1}$

\begin{center}
\begin{equation} \label{perfect-crystal-A2(2l-1)}
\begin{tikzpicture} [scale=.5]
\draw (0,.25) ellipse (.65cm and .32cm);
\draw (3,.25) ellipse (.65cm and .32cm);
\draw (8,.25) ellipse (.65cm and .32cm);
\draw (11,.25) ellipse (.65cm and .32cm);
\draw (16,.25) ellipse (.65cm and .32cm);
\draw (19,.25) ellipse (.65cm and .32cm);
\draw[->,thick] (.8,.25) -- (2.2,.25);
\draw[->,thick] (8.8,.25) -- (10.1,.25);
\draw[->,thick] (3.8,.25) -- (4.6,.25);
\draw[->,thick] (6.3,.25) -- (7.2,.25);
\draw[->,thick] (11.8,.25) -- (12.6,.25);
\draw[->,thick] (14.4,.25) -- (15.2,.25);
\draw[->,thick] (16.8,.25) -- (18.2,.25);
\draw [fill] (5,0.25) circle [radius=0.03];
\draw [fill] (5.5,0.25) circle [radius=0.03];
\draw [fill] (6,0.25) circle [radius=0.03];
\draw [fill] (13,0.25) circle [radius=0.03];
\draw [fill] (13.5,0.25) circle [radius=0.03];
\draw [fill] (14,0.25) circle [radius=0.03];
\node at (0,.25) {\scriptsize{}};
\node at (3,.25) {\scriptsize{}};
\node at (8,.25) {\scriptsize{}};
\node at (11,.25) {\scriptsize{}};
\node at (1.4,.7) {\scriptsize{1}};
\node at (4.2,.7) {\scriptsize{2}};
\node at (6.7,.7) {\scriptsize{$\ell$-1}};
\node at (9.4,.7) {\scriptsize{$\ell$}};
\node at (16,.25) {\scriptsize{}};
\node at (12.2,.7) {\scriptsize{$\ell$-1}};
\node at (14.8,.7) {\scriptsize{2}};
\node at (17.4,.7) {\scriptsize{1}};
\node at (9.4,3.5) {\scriptsize{0}};
\node at (6.4,-3.2) {\scriptsize{0}};
\draw[->,thick] (9.4,3) to [out = 180, in = 50] (3,.75);
\draw[->,thick] (18.8,.75) to [out = 140, in = 0] (9.4,3);
\draw[->,thick] (6.4,-2.5) to [out = 180, in = 310] (0,-.25);
\draw[->,thick] (15.8,-.25) to [out = 230, in = 0] (6.4,-2.5);
\end{tikzpicture}
\end{equation}
\end{center}

\end{itemize}

\subsubsection{Paths on $B^{1,1}$}
\label{sec-path-defn}

\begin{definition}
Let $X_\ell$ be one of the affine types listed
in Section \ref{Cartan-datum-section}
and let $I$ be the
indexing set of its Dynkin nodes. A type $X_\ell$ \emph{path} $p$ of
length $k$, is a function $p: \{0, 1, \dots, k-1\} \rightarrow I$ such
that there is a directed walk in the type $X_\ell$ crystal $B^{1,1}$
whose $i$th step corresponds to a $p(i)$-arrow.  \end{definition}

We specify the edge color data
but not node data of the walk.
There might then appear to be some danger of
ambiguity of two different walks corresponding to the same path,
as $B^{1,1}$ may have more than one arrow of a given color.
Indeed, the single step paths $p: \{0\} \rightarrow I$ in general do
not specify a unique walk in $B^{1,1}$. 
However, for paths with length $k$ with $k
\geq 2$, the definition above is sufficient to specify a unique walk. 

\begin{proposition} \label{unique-path-prop}
Let $p: \{0, 1, \dots, k-1\} \rightarrow I$ be a type $X_\ell$ path of length $k$. If $k > 1$, then there is a unique walk in $B^{1,1}$ that corresponds to $p$.
\end{proposition}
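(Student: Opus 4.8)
The plan is to examine the structure of each $B^{1,1}$ crystal graph drawn in Section~\ref{perfect-crystal-section} and observe that the only ambiguity in reconstructing a walk from its sequence of edge colors occurs at a single step: in every type listed, each color $i \in I$ labels at most two arrows of $B^{1,1}$, and when there are two arrows of color $i$ they have the property that knowing the \emph{previous} step's color (or, for the first step, the \emph{next} step's color) pins down which of the two $i$-arrows was traversed. So the first thing I would do is set up notation for a walk $w = (v_0 \xrightarrow{p(0)} v_1 \xrightarrow{p(1)} \cdots \xrightarrow{p(k-1)} v_k)$ realizing $p$, and reduce the proposition to the claim that $v_0$ (equivalently, every $v_j$) is uniquely determined by the color sequence.

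Next I would go through the types in two groups. For the types where $B^{1,1}$ is a single oriented cycle with all distinct edge colors — namely $A^{(1)}_\ell$ (diagram~\eqref{perfect-crystal-A}), and the ``one big cycle'' cases such as $C^{(1)}_\ell$~\eqref{perfect-crystal-C} — a single step already determines the edge, hence uniqueness is immediate even for $k=1$; these are trivial. The remaining types are those whose $B^{1,1}$ graph has a ``fork'': either two arrows out of one node or two arrows into one node, carrying repeated colors. Concretely these are $A^{(2)}_{2\ell}$~\eqref{perfect-crystal-A22l}, $A^{(2)\dagger}_{2\ell}$~\eqref{perfect-crystal-A2ldagger}, $D^{(2)}_{\ell+1}$~\eqref{perfect-crystal-D2l+1}, $D^{(1)}_\ell$~\eqref{perfect-crystal-Dl}, $B^{(1)}_\ell$~\eqref{perfect-crystal-Bl}, and $A^{(2)}_{2\ell-1}$~\eqref{perfect-crystal-A2(2l-1)}. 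For each, I would identify exactly which color is repeated and exactly which pairs of arrows share it (e.g.\ in $D^{(1)}_\ell$ the colors $\ell-1$ and $\ell$ each appear twice, at the two ``split'' ends of the graph), then check the local picture: at a node where an $i$-arrow could be ambiguous, the arrow that \emph{enters} that node (if $j := p(\text{previous step})$ exists) has a color that occurs on only one of the two candidate $i$-arrows' source nodes; and symmetrically for the last ambiguous step one uses the outgoing color $p(j+1)$. Since $k > 1$ guarantees that every step has either a predecessor or a successor, the ambiguity is always resolved.

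The main obstacle — really the only content — is the bookkeeping: verifying that in \emph{each} of the six forked types, the repeated colors sit in positions where the adjacent edge in the walk disambiguates them, with no configuration slipping through. I would organize this as a short case analysis, one short paragraph per type, each reducing to inspection of the corresponding displayed diagram; a uniform way to phrase the check is: ``two $i$-arrows of $B^{1,1}$ never have the property that their sources carry a common outgoing-color nor their targets a common incoming-color,'' which is exactly what the pictures show. I would also remark that the hypothesis $k>1$ is sharp, pointing back to the sentence in the text noting that single-step paths need not specify a unique walk (e.g.\ the two distinct $0$-arrows in type $A^{(2)}_{2\ell}$), so no further argument is needed there.
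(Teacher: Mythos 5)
Your overall strategy is the same as the paper's (reduce to inspection of the $B^{1,1}$ diagrams, using that each color labels at most two arrows and that an adjacent step's color singles out which of the two was traversed, after which the walk is forced because no node has two outgoing, or two incoming, arrows of the same color). However, two concrete points in your write-up are wrong as stated. First, $C^{(1)}_{\ell}$ is \emph{not} a cycle with all distinct edge colors: in \eqref{perfect-crystal-C} each of the colors $1,2,\dots,\ell-1$ labels two arrows (one on each side of the cycle), so single-step ambiguity does occur in this type and it cannot be dismissed as trivial -- it must go through the same disambiguation check as the others. Relatedly, in the remaining types the repeated colors are mostly not located at the bifurcations at all (e.g.\ in $A^{(2)}_{2\ell}$ the colors $1,\dots,\ell-1$ repeat on the two long branches), so organizing the check around ``forks'' undercounts what has to be verified; the paper instead verifies, for every color $i$ with two $i$-arrows, the property below.

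Second, your uniform criterion is stated backwards and, as written, is trivially false: the sources of two $i$-arrows always share the outgoing color $i$, and their targets always share the incoming color $i$. What you need (and what the paper's third observation asserts) is the opposite pairing: the two \emph{targets} of the $i$-arrows never share an outgoing color, and the two \emph{sources} never share an incoming color. With that corrected, knowing $p(1)$ (or, symmetrically, the preceding color) determines which $i$-arrow is the first step, and then the rest of the walk is forced by the ``no two outgoing arrows of the same color at a node'' fact -- which you use implicitly in reducing to the choice of $v_0$ and should state explicitly. With these repairs (and with $C^{(1)}_{\ell}$ restored to the case list), your argument coincides with the paper's proof.
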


\begin{proof}
This proposition is based on three observations about the graph $B^{1,1}$: 
\begin{itemize}
\item A node $a$ never has two incoming (respectively outgoing) arrows of the same color (this of course is true of all crystals, even when $\p_{i}(a) > 1$ or $\ep{i}(a) > 1$).
\item There are at most two $i$-arrows in $B^{1,1}$ and hence at most two nodes $a$ and $b$ with an incoming (respectively outgoing) $i$-arrow. 
\item If $a$ and $b$ are nodes that each have an incoming (respectively outgoing) $i$-arrow, $a$ has an outgoing (respectively incoming) $j_1$-arrow and $b$ has an outgoing (respectively incoming) $j_2$-arrow, then $j_1 \neq j_2$.

\end{itemize}
By the first observation, a walk corresponding to $p$ in $B^{1,1}$ is uniquely determined once we have chosen its initial node. 

By the second observation, if $p(0) = i$ there are at most two choices ($a$ and $b$) for the second node of the walk. If $k > 1$ then there is a second $p(1)$-arrow leaving either $a$ or $b$. By the third observation, $a$ and $b$ cannot both have an outgoing $p(1)$-arrow. Assume without loss of generality that $a$ has an outgoing $p(1)$-arrow. Then $p$ uniquely corresponds to the walk in $B^{1,1}$ that starts by entering node $a$ via a $p(0)$-arrow and leaves $a$ by a $p(1)$-arrow.
\end{proof}

Because of the uniqueness implied by Proposition \ref{unique-path-prop} we will henceforth use the terms ``type $X_\ell$ path'' and ``walk in $B^{1,1}$'' interchangeably when $k > 1$.

\begin{example} \label{path-example}
Let $p: \{0,1,2\} \rightarrow I$ be the path of length 3 in type $D_{5}^{(1)}$, defined such that $p(0) = 0$, $p(1) = 2$,  $p(2) = 3$. We mark the walk corresponding to $p$ with a dotted line.
\begin{center}
\begin{equation} \label{walk-example} 
\begin{tikzpicture} [scale=.5]
\draw (0,.25) ellipse (.65cm and .32cm);
\draw (3,.25) ellipse (.65cm and .32cm);
\draw (8,.25) ellipse (.65cm and .32cm);
\draw (11,1.7) ellipse (.65cm and .32cm);
\draw (11,-1.2) ellipse (.65cm and .32cm);
\draw (14,.25) ellipse (.65cm and .32cm);
\draw (19,.25) ellipse (.65cm and .32cm);
\draw (22,.25) ellipse (.65cm and .32cm);
\draw[->,thick] (.8,.25) -- (2.2,.25);
\draw[->,thick] (8.8,.6) -- (10.1,1.3);
\draw[->,thick] (8.8,-.1) -- (10.1,-.8);
\draw[->,thick] (3.8,.25) -- (4.6,.25);
\draw[->,thick,dotted,red] (3.8,.45) -- (4.6,.45);
\draw[->,thick] (6.3,.25) -- (7.2,.25);
\draw[->,thick,dotted,red] (6.3,.45) -- (7.2,.45);
\draw[->,thick] (11.8,1.3) -- (13.2,.6);
\draw[->,thick] (11.8,-.8) -- (13.2,-.1);
\draw[->,thick] (14.8,.25) -- (15.6,.25);
\draw[->,thick] (17.4,.25) -- (18.2,.25);
\draw[->,thick] (20,.25) -- (21,.25);
\draw (5.4,.25) ellipse (.65cm and .32cm);
\draw (16.4,.25) ellipse (.65cm and .32cm);
\node at (0,.25) {\scriptsize{}};
\node at (3,.25) {\scriptsize{}};
\node at (8,.25) {\scriptsize{}};
\node at (11,-1.2) {\scriptsize{}};
\node at (1.4,.7) {\scriptsize{1}};
\node at (4.2,.9) {\scriptsize{2}};
\node at (6.7,.9) {\scriptsize{3}};
\node at (9.2,1.5) {\scriptsize{4}};
\node at (9.2,-1) {\scriptsize{5}};
\node at (14,.25) {\scriptsize{}};
\node at (19,.25) {\scriptsize{}};
\node at (12.6,1.5) {\scriptsize{5}};
\node at (12.6,-1) {\scriptsize{4}};
\node at (15.2,.7) {\scriptsize{3}};
\node at (17.8,.7) {\scriptsize{2}};
\node at (20.5,.7) {\scriptsize{1}};
\node at (11,4.1) {\scriptsize{0}};
\node at (8,-3.4) {\scriptsize{0}};
\draw[->,thick] (11,3.4) to [out = 180, in = 50] (3,.75);
\draw[->,thick] (21.5,.75) to [out = 130, in = 0] (11,3.4);
\draw[->,thick,dotted,red] (11,3.6) to [out = 180, in = 50] (2.7,.75);
\draw[->,thick,dotted,red] (21.7,.75) to [out = 130, in = 0] (11,3.6);
\draw[->,thick] (21.5,.75) to [out = 130, in = 0] (11,3.4);
\draw[->,thick] (18.5,-.25) to [out = 230, in = 0] (8,-2.9);
\draw[->,thick] (8,-2.9) to [out = 180, in = 310] (.3,-.25);
\end{tikzpicture}
\end{equation}
\end{center}
\end{example}

\begin{definition}
For a path $p: \{0,1, \dots, k-1\} \rightarrow I$ we call the arrow corresponding to $p(0)$ the \emph{tail} of $p$, and the arrow corresponding to $p(k-1)$ the \emph{head} of $p$. An extension to the tail of $p$ by a $j$-arrow is a path $p':\{0,1,\dots, k\} \rightarrow I$ such that $p'(t) = p(t-1)$ for $1 \leq t \leq k$ and $p'(0) = j$. An extension to the head of $p$ by a $j$-arrow, is a path $p'':\{0,1, \dots, k\} \rightarrow I$ such that $p''(t) = p(t)$ for $0 \leq t \leq k-1$ and $p''(k) = j$.
\end{definition}

Let $\pi(j)$ be the length 1 path $\pi(j): \{0\} \rightarrow I$,
$\pi(j)(0) = j$. For a path $p$, we denote the extension of its tail by
a $j$-arrow by $\exttailby{j} p$ and the extension of its head by a
$j$-arrow by $p \extheadby{j}$. We can think of extension as
concatenation of paths. If the tail (respectively head) of $p$ cannot
be extended by a $j$-arrow then we set
$\exttailby{j} p = \zero$
(respectively $p \extheadby{j} = \zero$).

The set of colors of arrows that can extend the tail of a path $p$ of length $k > 1$ is denoted $\pextminus{p}$,
\begin{equation}
\pextminus{p} := \{ \; j \; | \; \exttailby{j} p \neq \zero \; \}.
\end{equation}
The set of colors of arrows that can extend the head of $p$ is denoted $\pextplus{p}$,
\begin{equation}
\pextplus{p} := \{ \; j \; | \;p \extheadby{j} \neq \zero \; \}.
\end{equation}
When $\pextminus{p}$ (respectively $\pextplus{p}$) contains a single element $i$, we allow ourselves the convenience of writing $p(-1) = i$ (respectively $p(k) = i$).

\begin{example}
In example \ref{path-example} 
\begin{equation}
\pextplus{p} = \{4,5\} \quad\quad \pextminus{p} = \{1\}.
\end{equation}
so we write $p(-1) = 1$.
\end{example}

\begin{remark} \label{extension-of-length-1-path}
In general, when $p$ has length 1, then $\pextminus{p}$ and $\pextplus{p}$ are not well-defined. However, the symmetry of the $B^{1,1}$ graphs means that $\pextminus{p}\cup \pextplus{p}$ is well-defined even for length 1 paths.
\end{remark}

\section{Key definitions: class $\ClassA$, $\ClassB$, $\ClassD$
nodes and cyclotomic paths}

Let $i, j, j',a,a',b,b' \in I$. We classify the arrows of $B^{1,1}$ into three classes: $\ClassA$, $\ClassB$, and $\ClassD$. 

\begin{itemize}

\item An $i$-arrow belongs to class $\ClassB$ if it is adjacent to another $i$-arrow. That is, it either enters a node that has an outgoing $i$-arrow or it departs from a node with an incoming $i$-arrow. For instance, in the diagram below both $i$-arrows are of class $\ClassB$.

\begin{center}
\begin{tikzpicture}
\draw (0,0) ellipse (.3cm and .2cm);
\draw (-1.65,0) ellipse (.3cm and .2cm);
\draw (1.65,0) ellipse (.3cm and .2cm);
\draw[->,thick] (-1.3,0) -- (-.35,0);
\draw[->,thick] (.35,0) -- (1.3,0);
\draw[->,thick] (-2.8,0)--(-2,0);
\draw[->,thick] (2,0)--(2.8,0);
\node at (-.8,.25) {$i$};
\node at (.8,.25) {$i$};
\node at (-2.3,.25) {$j$};
\node at (2.3,.25) {$j'$};

\end{tikzpicture}
\end{center}

\item A pair of arrows $a$ and $a'$ are said to be a class $\ClassD$ pair if they are the first and second step respectively of a bifurcation (a bifurcation occurs at a node of degree 3). Below both $(a,a')$ and $(b,b')$ are class $\ClassD$ pairs.

\begin{center}
\begin{tikzpicture}
\draw (0,0) ellipse (.3cm and .2cm);
\draw (4,0) ellipse (.3cm and .2cm);
\draw (2,1) ellipse (.3cm and .2cm);
\draw (2,-1) ellipse (.3cm and .2cm);
\draw[->,thick] (.35,0) -- (1.65,.95);
\draw[->,thick] (.35,0) -- (1.65,-.95);
\draw[->,thick] (2.35,.95) -- (3.65,.05);
\draw[->,thick] (2.35,-.95) -- (3.65,-.05);
\draw[->,thick] (-1.2,0) -- (-.35,0);
\draw[->,thick] (4.35,0) -- (5.2,0);
\node at (.9,.7) {$a$};
\node at (.9,-.7) {$b$};
\node at (3.18,.7) {$a'$};
\node at (3.1,-.7) {$b'$};
\node at (-.7,.25) {$j$};
\node at (4.7,.25) {$j'$};

\end{tikzpicture}
\end{center}

For all $B^{1,1}$ it will actually be the case that $a = b'$, $a' = b$, and $j = j'$,
whenever $(a,a')$ is a class $\ClassD$ pair.
\begin{center}
\begin{tikzpicture}
\draw (0,0) ellipse (.3cm and .2cm);
\draw (4,0) ellipse (.3cm and .2cm);
\draw (2,1) ellipse (.3cm and .2cm);
\draw (2,-1) ellipse (.3cm and .2cm);
\draw[->,thick] (.35,0) -- (1.65,.95);
\draw[->,thick] (.35,0) -- (1.65,-.95);
\draw[->,thick] (2.35,.95) -- (3.65,.05);
\draw[->,thick] (2.35,-.95) -- (3.65,-.05);
\draw[->,thick] (-1.2,0) -- (-.35,0);
\draw[->,thick] (4.35,0) -- (5.2,0);
\node at (.9,.7) {$a$};
\node at (.9,-.7) {$a'$};
\node at (3.18,.7) {$a'$};
\node at (3.1,-.7) {$a$};
\node at (-.7,.25) {$j$};
\node at (4.7,.25) {$j$};

\end{tikzpicture}
\end{center}

\item All other arrows in $B^{1,1}$ are class $\ClassA$. For large $\ell$, the vast majority of arrows will belong to class $\ClassA$.

\end{itemize}

\begin{remark}
It is not hard to see from inspection that within a given type, our classification of arrows in $B^{1,1}$ descends to a classification of elements of $I$ in that type. In other words within a $B^{1,1}$ diagram, there are never two $i$-arrows that belong to different classes. So it makes sense to talk about $i \in I$ being of a particular class. Table \ref{i-class-table} classifies the elements of $I$ for all types $X_\ell$ considered in this paper. 
\end{remark}

\begin{table}
\begin{center}
{\tabulinesep=1.2mm
 \begin{tabu}{| l | c | c | c |}
 \hline
 Type & class $\ClassA$ & class $\ClassB$ & class $\ClassD$ pairs  \\
 \hline \hline
 $A^{(1)}_{\ell}$ & $0,1,\dots, \ell-1, \ell$ & & \\
 $C^{(1)}_{\ell}$ & $0,1,\dots, \ell-1, \ell$ & & \\
 $A^{(2)}_{2\ell }$ & $1,\dots, \ell-1, \ell$ & $0$ & \\
 $A^{(2)\dagger}_{2\ell}$ & $0,1,\dots, \ell-1$ & $\ell$ & \\
 $D^{(2)}_{\ell+1}$ & $1,\dots, \ell-1$ & $0,\ell$ & \\
 $D^{(1)}_{\ell}$ & $2,3,\dots, \ell-2$ & & $(0,1),(\ell-1,\ell)$ \\
 $B^{(1)}_{\ell}$ & $2,\dots,\ell-1$ & $\ell$ & $(0,1)$ \\
 $A^{(2)}_{2\ell-1}$ & $2,\dots,\ell-1,\ell$ & & $(0,1)$ \\
\hline
\end{tabu}}
\end{center}
\caption{\label{i-class-table} Classification of elements of $I$ for each type $X_\ell$}
\label{node-classification}
\end{table}

Below is one of our key definitions.
\begin{definition}
Let $p$ be a path of length $k \geq 1$ in $B^{1,1}$. $p$ is called a \emph{cyclotomic path of tail weight} $(\Lambda_{i_1},\Lambda_{i_2})$ if the following hold:
\begin{enumerate} 
\item $p(0) = i_2$ and $p(1) \neq i_2$. If $p$ has length 1 then
$ p \extheadby{i_2} = \zero$, i.e. the head of $p$
cannot be extended by an $i_2$-arrow.  \item $p(0)$ and $p(1)$ are not
a class $\ClassD$ pair. If $p$ has length 1 then there
is no $i \in I$ such that the head of $p$ can be extended by an
$i$-arrow and $(p(0),i)$ is a class $\ClassD$ pair.
\item $\pextminus{p} = \{i_1\}$, i.e. $p$ has a unique extension by an $i_1$-arrow to its tail.
\item $(\exttailby{i_1}(\exttailby{i_1} p)) = \zero$, i.e.
 $p$ cannot have its tail extended twice by $i_1$-arrows.
\end{enumerate}
\end{definition}

The following proposition follows from inspection of the $B^{1,1}$ 
crystal graphs.

\begin{proposition}\label{cyclotomic-path-lemma}
\begin{enumerate} 
\item Let $X_\ell$ be any type not equal to $D^{(2)}_3$, $D^{(1)}_4$ and $B^{(1)}_3$. Then for all $i \in I$ there exists some $j \in I$ such that there is a cyclotomic path $p$ of tail weight $(\Lambda_{j},\Lambda_{i})$. 
\item In type $D^{(2)}_3$ so long as $i \neq 1$, i.e. $i \in \{0,2\}$, there is $j \in I$ such that there exists a cyclotomic path $p$ of tail weight $(\Lambda_j,\Lambda_i)$.
\item In type $D^{(1)}_4$ and $B^{(1)}_3$ so long as $i \neq 2$, i.e. $i \in \{0,1,3\}$, there is some $j \in I$ such that there exists a cyclotomic path $p$ of tail weight $(\Lambda_j,\Lambda_i)$.
\end{enumerate}
When the above paths exists they can be of any length $k \in \MB{Z}_{\geq 1}$.

\end{proposition}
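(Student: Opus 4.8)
The plan is to verify the statement by direct inspection of the eight $B^{1,1}$ crystal graphs drawn in \eqref{perfect-crystal-A}--\eqref{perfect-crystal-A2(2l-1)}, organized by the node classification of Table \ref{i-class-table}. The four conditions in the definition of a cyclotomic path of tail weight $(\Lambda_j,\Lambda_i)$ are local conditions on the walk in $B^{1,1}$ near its tail, so the argument splits cleanly according to whether $i$ is of class $\ClassA$, $\ClassB$, or belongs to a class $\ClassD$ pair. First I would dispose of the class $\ClassA$ case: if $i$ is class $\ClassA$ then there is a unique $i$-arrow in $B^{1,1}$, and by definition of class $\ClassA$ neither the node it enters has an outgoing $i$-arrow nor does the node it leaves sit at a bifurcation in the relevant way; so starting the walk by traversing this $i$-arrow as step $p(0)$ and then continuing along the graph gives $p(0)=i$, $p(1)\neq i$ (condition 1), $(p(0),p(1))$ not a $\ClassD$ pair (condition 2), while the tail of $p$ can be extended by exactly the one arrow $j$ that enters the source node of the $i$-arrow, giving $\pextminus{p}=\{j\}$ (condition 3), and since that $j$-arrow is not adjacent to a second $j$-arrow (one checks this is never the configuration at such a node) we get condition 4. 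One must also check one can continue the walk to any prescribed length $k\geq 1$: since every node of $B^{1,1}$ has an outgoing arrow (the graphs are ``cyclic'' — each has a cycle through all nodes), the walk can always be prolonged, which also settles the final sentence of the proposition once we know a length-$1$ or length-$2$ example exists.

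Next I would handle the class $\ClassB$ nodes, which occur as node $0$ in $A^{(2)}_{2\ell}$, node $\ell$ in $A^{(2)\dagger}_{2\ell}$, and nodes $0,\ell$ in $D^{(2)}_{\ell+1}$. Here there are two $i$-arrows forming a chain $\bullet \xrightarrow{i}\bullet\xrightarrow{i}\bullet$. The key point is to start the walk on the \emph{second} $i$-arrow of the chain: then $p(0)=i$ and $p(1)\neq i$ (the node after the second $i$-arrow has no further outgoing $i$-arrow), condition 2 is immediate since this node is not a bifurcation point of the right type, and $\pextminus{p}$ is the singleton $\{i\}$ itself — so $j=i$ — because the only arrow entering the source of the second $i$-arrow is the \emph{first} $i$-arrow. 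Finally, since the first $i$-arrow is not preceded by a third $i$-arrow (there are only two), condition 4 holds. Again any length $k\geq 1$ is achievable by prolonging forward.

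For the class $\ClassD$ pairs — $(0,1)$ and $(\ell-1,\ell)$ in $D^{(1)}_\ell$, $(0,1)$ in $B^{(1)}_\ell$ and $A^{(2)}_{2\ell-1}$ — the excluded indices in parts (2) and (3) of the proposition are exactly the bifurcation nodes: in $D^{(2)}_3$ the index $1$, and in $D^{(1)}_4$, $B^{(1)}_3$ the index $2$. For the non-excluded members of a $\ClassD$ pair one again picks the $i$-arrow appropriately (choosing, when $i$ is the ``second step'' $a'$ of the pair, to start on that arrow so that $p(1)$ leaves the non-bifurcation node, making condition 2 hold), and reads off $\pextminus{p}=\{j\}$ from the graph. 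The only genuine content — and the step I expect to be the main obstacle — is the \textbf{case analysis for the small-rank types} $D^{(2)}_3$, $D^{(1)}_4$, and $B^{(1)}_3$, where the graphs are short enough that bifurcations and doubled arrows collide: one must check carefully that for the bifurcation index itself ($1$ or $2$) \emph{no} choice of $i$-arrow can simultaneously satisfy conditions 1 and 2, justifying the exclusions, while for every other index an explicit short walk works. I would draw each of these three small graphs out fully and tabulate, for each $i\in I$, the candidate walk and the verification of all four conditions (or the obstruction), which is a finite and routine — if slightly tedious — check. Everything else is uniform in $\ell$ because the large-$\ell$ graphs all have the same local shape near any fixed node.
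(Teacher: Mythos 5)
Your proposal is essentially the paper's own argument: the paper disposes of this proposition by saying it "follows from inspection of the $B^{1,1}$ crystal graphs," and your class-by-class organization (class $\ClassA$, class $\ClassB$, class $\ClassD$ members, plus explicit tabulation of the small-rank types $D^{(2)}_3$, $D^{(1)}_4$, $B^{(1)}_3$) is a fleshed-out version of exactly that inspection, with the head-prolongation remark correctly handling "any length $k \geq 1$." One small correction that does not affect the method: a class $\ClassA$ label $i$ need not have a unique $i$-arrow (e.g.\ $i = 1,\dots,\ell-1$ in type $C^{(1)}_{\ell}$ or in $A^{(2)}_{2\ell}$ have two), so in those cases you must choose the $i$-arrow whose source node is not a merge node of a bifurcation, since otherwise the tail has two incoming colors and condition (3) ($\pextminus{p}$ a singleton) fails.
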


\begin{definition}
We refer to $1 \in I$ in type $D^{(2)}_3$ and $2 \in I$ in types $D^{(1)}_4$ and $B^{(1)}_3$ as \emph{forbidden} elements of $I$.
\end{definition}

\begin{remark}
Above we saw that the one-to-one correspondence between length $k$ paths in $B^{1,1}$ and length k directed walks in $B^{1,1}$ breaks down when $k = 1$. It is important to note that this does not occur for length 1 cyclotomic paths because of the extra information imposed by the unique tail extension.
\end{remark}


\section{The KLR algebra $R$ and some functors} \label{sec-KLR}
\subsection{Definition of the KLR algebra $R(\nu)$}
\label{basic-definitions}

Fix an indeterminant $q$.
In what follows we let $q_i = q^{\frac{(\alpha_i,\alpha_i)}{2}}$, 
\begin{equation}
[k]_i = q_i^{k-1} + q_i^{k-3} + \dots + q_i^{1-k} \quad \text{and} \quad [k]_i! = [k]_i [k-1]_i \dots [1]_i.
\end{equation}
For $\nu = \sum_{i \in I} \nu_i \alpha_i$ in $Q^+$ with $|\nu| = m$, we define $\seq(\nu)$ to be all sequences 
\begin{equation}
\und{i} = (i_{1}, i_{2}, \dots, i_{m})
\end{equation}
such that $i_k$ appears $\nu_k$ times. For $\und{i} \in \seq(\nu)$ and $\und{j} \in \seq(\mu)$, $\und{ij}$, will denote the concatenation of the two sequences unless otherwise specified. It follows that $\und{ij} \in \seq(\nu + \mu)$. We write 
\begin{equation}
i^n = (\underbrace{i,i, \dots ,i}_{n}).
\end{equation}
There is a left  action of the symmetric group, $\Sy{m}$, on $\seq(\nu)$ defined by,
\begin{equation} \label{sym-grp-action-seq}
s_{k}(\und{i}) = s_k\Big( i_{1}, i_{2}, \dots ,i_{k}, i_{k+1}, \dots ,i_m\Big) = (i_{1}, i_{2}, \dots, i_{k+1}, i_{k}, \dots, i_{m})
\end{equation}
where $s_k$ is the adjacent transposition in $\Sy{m}$ that interchanges $k$ and $k+1$. 

Our presentation of KLR algebras below follows that found in \cite{KL09} 
and \cite{KL11}. Using the more general definition with Rouquier's parameters $Q_{i,j}(u,v)$ will not change the results or proofs in this paper, as they concern crystal-theoretic phenomena. There is also a
useful diagrammatic presentation of KLR algebras which can be found in
\cite{KL09}, \cite{KL11}. By results of Brundan-Kleshchev \cite{BK09a},
\cite{BK09b}, there is an isomorphism between the cyclotomic KLR algebra, $R^{\Lambda}(\nu)$, of type $A^{(1)}_{\ell}$, and
$H^{\Lambda}_{\nu}$ where $H^{\Lambda}_\nu$ is a block of the
cyclotomic Hecke algebra $H^{\Lambda}_m$ as defined in
\cite{AK94, BM94, Che87}. Hence readers unfamiliar with KLR algebras
can translate all statements and proofs for the type $A^{(1)}_\ell$,
$\Lambda = \Lambda_i$
case in terms of Hecke algebras
throughout the paper (historically, this is the original setting in
which the theorems from this paper were proved). In fact for type
$A^{(1)}_\ell$ the reader can think of all results as being stated for
$\MB{F}_{\ell+1} \Sy{m}$ in the case that $\ell+1$ is prime.

 For $\nu \in Q^+$ with $|\nu| = m$, the {\emph{KLR
algebra}}
$R(\nu)$ associated with Cartan matrix $[a_{ij}]_{i,j \in I}$ is the associative, graded, unital $\MB{C}$-algebra generated
by \begin{equation}
1_{\und{i}} \;\; \text{for} \;\; \und{i} \in \seq(\nu),
\quad x_r \;\; \text{for} \;\; 1 \leq r \leq m,
\quad  \psi_r \;\; \text{for} \;\; 1 \leq r \leq m-1,
\end{equation}
subject to the following relations, where $\und{i}, \und{j} \in
\seq(\nu)$:
\begin{equation}
\label{eq-idem}
1_{\und{i}}1_{\und{j}} = \delta_{\und{i},\und{j}}1_{\und{i}}, \quad\quad x_r1_{\und{i}} = 1_{\und{i}}x_r, \quad\quad \psi_r 1_{\und{i}} = 1_{s_r(\und{i})}\psi_r,  \quad\quad x_rx_t = x_tx_r,
\end{equation}
\begin{align}
 \psi_r\psi_t &= \psi_t \psi_r \qquad \qquad
 \text{if} \; |r-t| > 1, \\
\label{square-relation}
 \psi_r\psi_r1_{\und{i}} &= 
   \begin{cases}
    0 & \text{if $i_r = i_{r+1}$,}\\
    1_{\und{i}} & \text{if} \; (\alpha_{i_r},\alpha_{i_{r+1}}) = 0,\\
    (x_r^{-a_{i_r i_{r+1}}}+ x_{r+1}^{-a_{i_{r+1} i_r}})1_{\und{i}} &
	\text{if $(\alpha_{i_r}, \alpha_{i_{r+1}}) \neq 0$
	and $i_r \neq i_{r+1}$},
  \end{cases} 
\\
(\psi_r \psi_{r+1} \psi_r - \psi_{r+1}\psi_r \psi_{r+1})1_{\und{i}}&=
  \label{braid-relation}
   \begin{cases}
\begin{displaystyle}
\sum^{-a_{i_r i_{r+1}} -1}_{t = 0}
x_r^tx_{r+2}^{-a_{i_r i_{r+1}}-1-t}1_{\und{i}}
\end{displaystyle}
& \text{if $i_r = i_{r+2}$ and $(\alpha_{i_r},\alpha_{i_{r+1}}) \neq 0$,}\\
     \quad 0 & \text{otherwise,}
  \end{cases} 
\\
 \label{dot-past-crossing}
(\psi_rx_t - x_{s_r(t)}\psi_r)1_{\und{i}} &= 
   \begin{cases}
    1_{\und{i}} & \quad \quad \text{if $t = r$ and $i_r = i_{r+1}$},\\
   -1_{\und{i}} & \quad \quad \text{if $t = r+1$ and $i_r = i_{r+1}$},\\
    0 & \quad \quad \text{otherwise.}
  \end{cases} 
\end{align}

The elements $1_{\und{i}}$ are idempotents in $R(\nu)$
by \eqref{eq-idem} and the identity element is
\begin{equation}
1_\nu = \sum_{\und{i} \in \seq(\nu)} 1_{\und{i}}.
\end{equation}
Thus, as a vector space $R(\nu)$ decomposes as, 
\begin{equation}
R(\nu) = \bigoplus_{\und{i},\und{j} \in \seq(\nu)} 1_{\und{i}} R(\nu) 1_{\und{j}}.
\end{equation}
The generators of $R(\nu)$ are graded via
\begin{equation}
\deg(1_{\und{i}}) = 0, \;\;\; \deg(x_r1_{\und{i}}) = (\alpha_{i_r},\alpha_{i_r}), \;\;\; \deg(\psi_r1_{\und{i}}) = -(\alpha_{i_r}, \alpha_{i_{r+1}}).
\end{equation}
We define
\begin{equation}
R = \bigoplus_{\nu \in Q^+}R(\nu).
\end{equation}
Notice that while $R(\nu)$ is unital, $R$ is not. 

For each $w \in \Sy{m}$ we fix once and for all a reduced expression
\begin{equation}
\wh{w} = s_{i_1}s_{i_2} \dots s_{i_t}.
\end{equation} 
Here the $s_k$ are Coxeter generators of $\Sy{m}$, and $t$ is the Coxeter length. Let $\psi_{\wh{w}} = \psi_{i_1} \psi_{i_2} \dots \psi_{i_t}$ correspond to the chosen reduced expression $\wh{w}$ of $w$. For $\und{i},\und{j} \in \seq(\nu)$, let $_{\und{j}}\Sy{\und{i}}$ be the permutations
in  $\Sy{m}$ that take $\und{i}$ to $\und{j}$.

\begin{theorem} \label{basis-theorem} \cite[Theorem 2.5]{KL09} 
As a $\mathbb{C}$-vector space $1_{\und{j}}R(\nu)1_{\und{i}}$ has basis,
\begin{equation}
\{\psi_{\wh{w}} x_1^{b_1} \dots x_{m}^{b_m} 1_{\und{i}} \; | \; w \in
\, _{\und{j}}\!{\Sy{}}_{\und{i}}, \; b_r \in \mathbb{Z}_{\geq 0}\}.
\end{equation}
\end{theorem}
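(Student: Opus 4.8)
The statement has two halves --- that the displayed set \emph{spans} $1_{\und j}R(\nu)1_{\und i}$ over $\C$, and that it is \emph{linearly independent} --- and I would establish them by quite different means. For spanning I would run a straightening algorithm using only the defining relations. Given an arbitrary product of generators lying in $1_{\und j}R(\nu)1_{\und i}$, use the idempotent relations \eqref{eq-idem} in the forms $x_r1_{\und k}=1_{\und k}x_r$ and $\psi_r1_{\und k}=1_{s_r\und k}\psi_r$, together with $x_rx_t=x_tx_r$, to rewrite it as a $\C$-combination of monomials $\psi_{a_1}\cdots\psi_{a_t}\,x_1^{c_1}\cdots x_m^{c_m}\,1_{\und i}$ with all $\psi$'s to the left. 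Moving an $x$ leftward past a $\psi_r$ is handled by \eqref{dot-past-crossing}: an exact commutation unless the two adjacent colours coincide, in which case the error term has one fewer $\psi$. So, modulo monomials of strictly smaller $\psi$-length, the $\psi$'s collect on the left. If the word $s_{a_1}\cdots s_{a_t}$ in $\Sy{m}$ is not reduced, then by standard Coxeter-group combinatorics, after some braid moves two equal Coxeter generators become adjacent, and \eqref{square-relation} either annihilates the monomial or turns $\psi_r\psi_r$ into a polynomial in the $x$'s --- either way the $\psi$-length strictly drops. A single braid move is governed by the far-commutation $\psi_r\psi_t=\psi_t\psi_r$ for $|r-t|>1$ (exact) and by \eqref{braid-relation}, which replaces $\psi_r\psi_{r+1}\psi_r$ by $\psi_{r+1}\psi_r\psi_{r+1}$ plus terms of strictly smaller $\psi$-length. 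Inducting on $\psi$-length, we reduce to monomials with $s_{a_1}\cdots s_{a_t}$ reduced, representing some $w\in\Sy{m}$; since any two reduced words for $w$ are braid-equivalent, $\psi_{a_1}\cdots\psi_{a_t}=\psi_{\wh w}+(\text{strictly shorter }\psi\text{-words})$, and one more induction finishes. Finally $\psi_{\wh w}1_{\und i}=1_{w\und i}\psi_{\wh w}$ must equal $1_{\und j}\psi_{\wh w}$, forcing $w\in{}_{\und{j}}{\Sy{}}_{\und{i}}$, so the displayed set spans.

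For linear independence I would use a faithful polynomial representation. Put
\[
P_\nu\;=\;\bigoplus_{\und i\in\seq(\nu)}\C[y_1,\dots,y_m]\,1_{\und i},
\]
let $1_{\und k}$ act as the projection onto the $\und k$-summand, let $x_r$ act on the $\und i$-summand as multiplication by $y_r$, and let $\psi_r$ carry the $\und i$-summand into the $s_r\und i$-summand by an explicit operator: the Demazure operator $f\mapsto(f-s_rf)/(y_r-y_{r+1})$ when $i_r=i_{r+1}$, and a suitable polynomial twist of the transposition $s_r$ when $i_r\neq i_{r+1}$, the polynomials being chosen so that \eqref{square-relation} holds. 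One verifies that these operators satisfy all of \eqref{eq-idem}--\eqref{dot-past-crossing}, giving an algebra homomorphism $\rho\colon R(\nu)\to\End_{\C}(P_\nu)$. It then suffices to see that $\rho$ already separates the displayed set. Applying $\rho(\psi_{\wh w}x_1^{b_1}\cdots x_m^{b_m}1_{\und i})$ to a monomial $y_1^{c_1}\cdots y_m^{c_m}1_{\und i}$ produces an element of the $(w\und i)$-summand, and a leading-term analysis of how products of Demazure operators and twisted transpositions act on monomials --- with respect to a suitable degree/dominance filtration --- shows that the family $\{\rho(\psi_{\wh w}x_1^{b_1}\cdots x_m^{b_m}1_{\und i})\}$, over all $w\in{}_{\und{j}}{\Sy{}}_{\und{i}}$ and all exponent vectors $(b_1,\dots,b_m)$, is triangular: distinct members have distinct leading data. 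Hence they are linearly independent in $\End_{\C}(P_\nu)$, so the displayed elements are linearly independent in $R(\nu)$, and together with spanning they form a basis.

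The one genuinely delicate step is the consistency of the polynomial representation, above all the braid relation \eqref{braid-relation}: when $i_r=i_{r+2}$, composing the three relevant $\psi$-operators must reproduce exactly $\sum_{t=0}^{-a_{i_ri_{r+1}}-1}y_r^{t}\,y_{r+2}^{-a_{i_ri_{r+1}}-1-t}$, and must vanish in the remaining cases; this is precisely what pins down the polynomial twists used in the $i_r\neq i_{r+1}$ case. By contrast the spanning argument and the triangularity bookkeeping are routine. The complete details are in \cite{KL09}.
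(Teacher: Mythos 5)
The paper does not give its own proof of this statement---it is imported verbatim from \cite[Theorem 2.5]{KL09}---and your sketch follows essentially the argument of that cited source: spanning by a straightening induction on $\psi$-length using the defining relations \eqref{eq-idem}--\eqref{dot-past-crossing}, and linear independence via the polynomial representation (Demazure operators when the adjacent colours agree, twisted transpositions otherwise) together with a leading-term triangularity argument. This is correct in outline and is the same approach as the proof the paper relies on.
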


It is known that all simple $R(\nu)$-modules are finite dimensional \cite{KL09}. For this reason, in this paper we only consider the category of finite-dimensional KLR-modules denoted by $R(\nu) \Mod$ and $R \Mod$.
We write $G_0(R)$ for the Grothendieck group.

We often refer to $1_{\und{i}}M$ as the $\und{i}$-weight space
of $M$ and any $0 \neq v \in 1_{\und{i}}M$ as a weight vector.
A weight basis is a basis consisting of weight vectors.

We define the graded character of an $R(\nu)$-module to be
\begin{equation}
\Char(M) = \sum_{\und{i} \in \seq(\nu)} \text{gdim}(1_{\und{i}}M) \cdot [\und{i}].
\end{equation}
Here $\text{gdim}(1_{\und{i}}M)$ is an element of
$\MB{Z}[q,q^{-1}]$, and hence $\Char(M)$ is an element of the free
$\MB{Z}[q,q^{-1}]$-module generated by all $[\und{i}]$ for $\und{i} \in
\seq(\nu)$. We will let $\ch(M)$ denote the multiset that is the
support of $\Char(M)|_{q =1}$ so that 
\begin{equation}
\Char(M)|_{q=1} = \sum_{[\und{i}] \in \ch(M)} [\und{i}].
\end{equation}
Our notational convention
is to write $\und{i} \in \seq(\nu)$ but write $[\und{i}] \in
\ch(M)$. Since characters are an important combinatorial tool, it is
worthwhile to set a special notation for them. 

\begin{remark} \label{character-determines-simple}
A simple module is determined by its character, hence $[M] \in G_0(R)$
the Grothendieck group, is also determined by its character \cite{KL09}, \cite{KL11}.
\end{remark}

Because $R$ is a graded algebra, we will only work with homomorphisms between $R$-modules that are either degree preserving or degree homogeneous. We denote the $\mathbb{C}$-vector space of degree preserving homomorphisms between $R(\nu)$-modules $M$ and $N$ by $\Hom(M,N)$. Since any homogeneous homomorphism can be interpreted as degree preserving by shifting the grading on our target or source module, we can write the $\mathbb{C}$-vector space of homogeneous homomorphisms between $M$ and $N$, $\HOM(M,N)$, by
\begin{equation}
\HOM(M,N) = \bigoplus_{k \in \mathbb{Z}} \Hom(M,N\{k\}).
\end{equation}
While the grading is important, it is shown in \cite{KL09} that there is a unique grading on a simple $R$-module up to overall grading shift. Since this paper concerns simple modules, we will rarely use or discuss the grading.
All isomorphisms between modules will be taken up to overall
grading shift.

\begin{remark} \label{x-nilpotent}
Because $x_r1_{\und{i}} \in R(\nu)$ is always positively graded for $1 \leq r \leq |\nu|$ and $\und{i} \in \seq(\nu)$, then on a finite dimensional $R(\nu)$-module, $M$, $x_r1_{\und{i}}$ will always act nilpotently.
\end{remark}



\subsection{Induction, co-induction, and restriction}

It was shown in \cite{KL09} and \cite{KL11} that for $\nu, \mu \in Q^+$ there is a non-unital embedding
\begin{equation}
R(\nu) \otimes R(\mu) \hookrightarrow R(\nu + \mu).
\end{equation}
This map sends the idempotent $1_{\und{i}} \otimes 1_{\und{j}}$ to $1_{\und{ij}}$. The identity $1_{\nu} \otimes 1_{\mu}$ of $R(\nu) \otimes R(\mu)$ has as its image
\begin{equation}
\sum_{\und{i} \in \seq(\nu)} \sum_{\und{j} \in \seq(\mu)} 1_{\und{ij}}.
\end{equation}
Using this embedding one can define induction and restriction functors, 
\begin{align}
\ind_{\nu, \mu}^{\nu + \mu}: (R(\nu) \otimes R(\mu)) \Mod & \rightarrow R(\nu + \mu) \Mod\\
M & \mapsto R(\nu + \mu) \otimes_{R(\nu) \otimes R(\mu)} M
\end{align}
and
\begin{equation}
\res_{\nu, \mu}^{\nu + \mu}: R(\nu + \mu) \Mod \rightarrow (R(\nu) \otimes R(\mu)) \Mod.
\end{equation}
In the future we will write $\ind_{\nu,\mu}^{\nu+\mu} = \ind$ and $\res_{\nu,\mu}^{\nu+\mu} = \res$ when the algebras are understood from the context. More generally we can extend this embedding to finite tensor products 
\begin{equation}
R(\nu^{(1)}) \otimes R(\nu^{(2)}) \otimes \dots \otimes R(\nu^{(k)}) \hookrightarrow R(\nu^{(1)} + \nu^{(2)} + \dots + \nu^{(k)}).
\end{equation}
We refer to the image of this embedding as a \emph{parabolic
subalgebra} and denote it by
$R(\und{\nu}) \subset R(\nu^{(1)} + \dots + \nu^{(k)})$.
We denote the image of the identity under this embedding
as $1_{\und{\nu}}$.
It follows from Theorem \ref{basis-theorem} that
$R(\nu^{(1)} + \nu^{(2)} + \dots + \nu^{(k)})1_{\und{\nu}}$
is a free right $R(\und{\nu})$-module,
 and $1_{\und{\nu}}R(\nu^{(1)} + \nu^{(2)} + \dots + \nu^{(k)})$
is a free left $R(\und{\nu})$-module.

 Let $m_i = |\nu^{(i)}|$ and set
\begin{equation}
P = (m_1, \dots, m_k) \quad \text{and} \quad \Sy{P} = \Sy{m_1} \times \Sy{m_1} \times \dots \times \Sy{m_k}.
\end{equation}
Let $\Sy{m_1 + \dots + m_k} / \Sy{P}$ be the collection of minimal
length left coset representatives of $\Sy{P}$ in $\Sy{m_1 + \dots +
m_k}$ and $\Sy{P}\backslash \Sy{m_1 + \dots + m_k}$ be the collection
of minimal length right coset representatives of
$\Sy{P}$ in $\Sy{m_1 + \dots + m_k}$.
We can construct a weight basis for an induced module as follows.
If $M$ is an $R(\und{\nu})$-module with
weight basis $U$ then
$\ind_{\und{\nu}}^{\nu^{(1)} + \dots + \nu^{(k)}} M$
has weight basis
\begin{equation} \label{ind-basis}
\{\psi_{\wh{w}} \otimes u  \; | \; w \in \Sy{m_1 + \dots + m_k} /
\Sy{P}, u \in U \}.  \end{equation}
Induction is left adjoint to restriction (a property known as Frobenius reciprocity),
\begin{equation}
\HOM_{R(\nu^{(1)} + \dots + \nu^{(k)})}(\ind_{\und{\nu}}^{\nu^{(1)} + \dots + \nu^{(k)}} M, N) \cong \HOM_{R(\und{\nu})}(M,\res_{\und{\nu}}^{\nu^{(1)} + \dots + \nu^{(k)}}N).
\end{equation}
The right adjoint to restriction is the co-induction functor $\coind \maps R(\und{\nu}) \Mod \rightarrow R(\nu^{(1)} + \dots + \nu^{(k)}) \Mod$,
\begin{equation}
\coind_{\und{\nu}}^{\nu^{(1)} + \dots + \nu^{(k)}} - := \HOM_{R(\und{\nu})}(R(\nu^{(1)} + \dots + \nu^{(k)}),-),
\end{equation} 
so there is an isomorphism of $\mathbb{C}$-vectors spaces
\begin{equation}
\HOM_{R(\nu^{(1)} + \dots + \nu^{(k)})}(N, \coind_{\und{\nu}}^{\nu^{(1)} + \dots + \nu^{(k)}}M) \cong \HOM_{R(\und{\nu})}(\res_{\und{\nu}}^{\nu^{(1)} + \dots + \nu^{(k)}}N,M).
\end{equation}
For KLR algebras there exists a useful connection between induced and co-induced modules.

\begin{proposition} \cite{LV11} \label{ind-and-coind}
Let $M$ be a finite dimensional $R(\mu)$-module and $N$ a finite dimensional $R(\nu)$-module. Then up to grading shift
\begin{equation}
\ind^{\mu + \nu}_{\mu,\nu} M \boxtimes N \cong \coind^{\mu + \nu}_{\nu,\mu}N \boxtimes M.
\end{equation}
\end{proposition}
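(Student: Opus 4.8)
The statement to prove is that, for a finite-dimensional $R(\mu)$-module $M$ and finite-dimensional $R(\nu)$-module $N$, one has $\ind^{\mu+\nu}_{\mu,\nu} M \boxtimes N \cong \coind^{\mu+\nu}_{\nu,\mu} N \boxtimes M$ up to grading shift. Since this is attributed to \cite{LV11}, the plan is to reproduce the argument there rather than invent a new one. The natural approach is to exhibit both modules concretely as $R(\mu+\nu)$-modules built on the same underlying vector space (the free module over the parabolic, up to coset representatives) and then match up the actions; equivalently, to produce an explicit nonzero $R(\mu+\nu)$-module homomorphism between them and argue it is an isomorphism by a dimension count.

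\textbf{Step 1: Compare dimensions.} First I would record that, by the weight basis \eqref{ind-basis}, $\ind^{\mu+\nu}_{\mu,\nu} M\boxtimes N$ has basis $\{\psi_{\wh w}\otimes(m\boxtimes n)\}$ with $w$ ranging over minimal-length left coset representatives of $\Sy{m}\times\Sy{n}$ in $\Sy{m+n}$, where $m=|\mu|$, $n=|\nu|$, and $m,n$ range over bases of $M,N$. Dually, $\coind^{\mu+\nu}_{\nu,\mu} N\boxtimes M = \HOM_{R(\nu,\mu)}(R(\mu+\nu), N\boxtimes M)$; using that $R(\mu+\nu)$ is a free left $R(\nu,\mu)$-module (stated in the excerpt, via Theorem \ref{basis-theorem}) of rank equal to the number of minimal-length \emph{right} coset representatives of $\Sy{n}\times\Sy{m}$ in $\Sy{n+m}$, this coinduced module has the same total dimension as the induced one. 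So it suffices to produce an injective (or surjective) $R(\mu+\nu)$-map in one direction.

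\textbf{Step 2: Construct the comparison map via adjunction.} I would use the adjunctions recorded just above the Proposition. A map $\ind^{\mu+\nu}_{\mu,\nu} M\boxtimes N \to \coind^{\mu+\nu}_{\nu,\mu} N\boxtimes M$ of $R(\mu+\nu)$-modules corresponds, by the coinduction adjunction, to an $R(\nu,\mu)$-map $\res^{\mu+\nu}_{\nu,\mu}\ind^{\mu+\nu}_{\mu,\nu}(M\boxtimes N)\to N\boxtimes M$. The key input is a Mackey-type description of $\res^{\mu+\nu}_{\nu,\mu}\ind^{\mu+\nu}_{\mu,\nu}$ — a filtration whose subquotients are indexed by double cosets $\Sy{n}\times\Sy{m}\backslash\Sy{m+n}/\Sy{m}\times\Sy{n}$ — and to observe that one distinguished double coset (the long element $w_0$ that swaps the two blocks, which is the unique shortest one making the two factors disjoint) contributes exactly a copy of $N\boxtimes M$ as a subquotient; projecting onto that subquotient gives the desired $R(\nu,\mu)$-map, hence the $R(\mu+\nu)$-map. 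Concretely one can write the image of a generator $1\otimes(m\boxtimes n)$ as $\psi_{w_0}(n\boxtimes m)$-type element and check $R(\mu+\nu)$-linearity using the defining relations \eqref{eq-idem}–\eqref{dot-past-crossing} (the $x_r$ act nilpotently by Remark \ref{x-nilpotent}, which is what forces the top-degree/leading-term computation to close).

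\textbf{Step 3: Show the map is an isomorphism.} Having the map, I would show it is injective by analyzing its effect on the weight basis \eqref{ind-basis}: on the "leading" basis vectors (those indexed by $w_0$ composed with the $\Sy{m}\times\Sy{n}$-cosets), the map is given, up to the nilpotent $x_r$-corrections, by an invertible triangular matrix, so no nonzero element of the induced module maps to zero. Combined with the dimension equality from Step 1, injectivity forces bijectivity, and tracking degrees of the $\psi$'s and $x$'s pins down the grading shift. The \textbf{main obstacle} is Step 2: making the Mackey filtration of $\res\,\ind$ precise for KLR algebras and isolating the single double-coset subquotient cleanly — i.e. verifying that the "crossing all strands past each other" element $\psi_{w_0}$ genuinely intertwines the $R(\nu,\mu)$-actions modulo lower terms, and that those lower terms (involving dots $x_r$) do not obstruct the argument. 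Once that combinatorial/diagrammatic bookkeeping is done, Steps 1 and 3 are routine. As an alternative to Step 2 one could instead directly define the map diagrammatically (reading the KLR diagram of a fully-crossed braid) and verify the relations pictorially, which sidesteps the explicit Mackey filtration but requires the diagrammatic calculus; either route reaches the same conclusion.
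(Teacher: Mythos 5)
The paper itself offers no proof of Proposition \ref{ind-and-coind} — it is quoted from \cite{LV11} — so there is no in-text argument to compare against; your plan is, however, a sound reconstruction of the standard argument behind the cited result. Your three steps are the right ones: use that $\coind$ is right adjoint to restriction, identify $N \boxtimes M$ (up to grading shift) as the piece of $\res_{\nu,\mu}\ind_{\mu,\nu}\, M \boxtimes N$ attached to the block-swapping double coset, and finish with injectivity plus the dimension count coming from the basis \eqref{ind-basis} and the freeness statement after Theorem \ref{basis-theorem}. Two refinements are worth making explicit. First, for the adjunction to produce the desired map you need the swap piece to be a \emph{quotient} of the restricted module, not merely a subquotient; this does hold, because the block-swap permutation is the unique longest minimal-length coset representative, so left multiplication by the parabolic subalgebra never pushes shorter basis elements $\psi_{\wh{w}}\otimes u$ into its component, making the span of the remaining basis vectors a submodule. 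Second, the mechanism that closes the leading-term computation in Steps 2 and 3 is length (Bruhat) triangularity of the straightening rules from Theorem \ref{basis-theorem} — correction terms always involve strictly shorter braid words — rather than the nilpotency of the $x_r$ from Remark \ref{x-nilpotent}, which plays no essential role here; with that substitution your injectivity argument (multiply a maximal-length component up to the swap component, using that lengths add for minimal coset representatives) goes through and the dimension count completes the proof.
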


This proposition immediately tells us that most properties that we can prove about induced modules can be transferred to co-induced modules with appropriate modifications.

\begin{remark} \label{factoring-through-convention}
Suppose we have
\begin{equation}
\ind M \boxtimes D \overset{f}{\twoheadrightarrow} A
\end{equation}
with $A$ a simple $R(\mu+\nu)$-module, $D$ an $R(\nu)$-module, and $M$ an $R(\mu)$-module with a filtration 
\begin{equation}
0 = M_0 \subseteq M_1 \subseteq \dots \subseteq M_t = M.
\end{equation}
Consider the restriction 
\begin{equation}
\ind M_i \boxtimes D \xrightarrow{f} A
\end{equation}
and suppose this is nonzero, hence surjective. Suppose further that $f(\ind M_{i-1} \boxtimes D) = 0$; then we get a nonzero map
\begin{equation}
\ind M_i/M_{i-1} \boxtimes D \xrightarrow{f} A.
\end{equation}
By abuse of notation we will say $f$ \emph{factors through} $\ind M_i/M_{i-1} \boxtimes D$ in this case (note we have extended the usual notation of factoring through $M/M_{i-1}$ to $M_i/M_{i-1}$).
\end{remark}

Given $\und{i} \in \seq(\nu)$ and $\und{j} \in \seq(\mu)$, a shuffle of
$\und{i}$ and $\und{j}$ is an element $\und{k}$ of $\seq(\nu + \mu)$
such that $\und{k}$ has $\und{i}$ as a subsequence and $\und{j}$ as the
complementary subsequence.
We denote by $\und{i} \shuffle
\und{j}$ the formal sum of all shuffles of $\und{i}$ and $\und{j}$.
The multi-set of all shuffles of $\und{i}$ and
$\und{j}$ are in bijection with the minimal length left coset
representatives of $\Sy{|\nu| + |\mu|}/\Sy{|\nu|} \times \Sy{|\mu|}$.
Using the definition of degree from KLR algebras, we can associate to
any shuffle a degree which we denote as
$\deg(\und{i},\und{j},\und{k})$.
Then the quantum shuffle of $\und{i}$ and $\und{j}$ is
\begin{equation}
\und{i} \Shuffle \und{j} =
\sum_{\sigma \in \Sy{|\nu| + |\mu|}/\Sy{|\nu|} \times \Sy{|\mu|}}
q^{\deg(\und{i},\und{j},\sigma(\und{ij}))} \sigma(\und{ij}),
\end{equation}
so that $\und{i} \shuffle \und{j} = (\und{i} \Shuffle \und{j})|_{q=1}$. Note that we will usually shuffle characters, hence we write $[\und{i}] \shuffle [\und{j}]$. For an $R(\mu)$-module $M$ and $R(\nu)$-module $N$ it was shown in \cite{KL09} that
\begin{equation}
\Char(\ind_{\mu,\nu}^{\mu + \nu} M \boxtimes N) = \Char(M) \Shuffle \Char(N).
\end{equation}
This identity is referred to as the {\bf Shuffle Lemma}.


\subsection{Simple modules of \protect$R(n \alpha_{i}) $}
\label{R(alpha)-modules}

For $\nu = n\alpha_i$, induction allows for a particularly easy description of all simple $R(n\alpha_i)$-modules. Let $\Lii{i}$ be the 1-dimensional $R(\alpha_i)$-module where $x_11_{i}$ acts as zero. Then up to overall grading shift the unique simple $R(n\alpha_i)$-module is 
\begin{equation}
\Lii{i^n} = \ind_{\alpha_i, \alpha_i,  \dots, \alpha_i}^{n\alpha_i} \Lii{i} \boxtimes \dots \boxtimes \Lii{i}.
\end{equation}
Some authors prefer to shift the degree so that the character is
\begin{equation}
\Char(\Lii{i^n}) = [n]_i! [i,i,\dots, i].
\end{equation}


\subsection{Crystal operators}

In the previous section we defined induction and restriction for KLR
algebras. Following the work of Grojnowski \cite{Gro99} where crystal
operators were developed as functors on the category of modules
over affine Hecke algebras of type $A$ (or \cite{Klesh} for
$\MB{F}_{\ell\!+\!1} \Sy{m}$),
the KLR analogues of crystal operators were
introduced in \cite{KL09}, and further developed in \cite{LV11},
\cite{KK12}. For each $i \in I$, if $M \in R(\nu)\Mod$
and  $\nu -\alpha_i \in Q^+$,
 define the functor $\Delta_i:
R(\nu)\Mod \rightarrow R(\nu - \alpha_i) \otimes R(\alpha_i) \Mod$ as
the restriction
\begin{equation}
\Delta_i M := \res^{\nu}_{\nu-\alpha_i,\alpha_i}M
\end{equation}
Note that this is equivalent to multiplying $M$ by $1_{\nu-\alpha_i}
\otimes 1_{\alpha_i}$. It is also sometimes useful to think of this
functor as killing all weight spaces corresponding to elements of
$\seq(\nu)$ that do not end in $i$. 
If $\nu -\alpha_i \not\in Q^+$ then $\Delta_i M = \zero$.
 We similarly define
\begin{equation}
\Delta_{i^n} M := \res_{\nu-n\alpha_i,n\alpha_i}^\nu M.
\end{equation}
Next define the functor $\e{i}: R(\nu) \Mod \rightarrow R(\nu-\alpha_i) \Mod$ as the restriction,
\begin{equation}
\e{i}M := \res^{R(\nu-\alpha_i)\otimes R(\alpha_i)}_{R(\nu-\alpha_i)} \circ \Delta_i M,
\end{equation}
hence it is an exact functor.
When $M$ is simple we can further refine this functor by setting
\begin{equation}
\etil{i}M := \soc \e{i}M.
\end{equation}
We measure how many times we can apply $\etil{i}$ to $M$ by
\begin{equation}
\ep{i}(M) := \max\{ n \geq 0 \; | \; (\etil{i})^n M \neq \zero \;\}.
\end{equation}
Let $\ftil{i}:R(\nu) \Mod \rightarrow R(\nu+\alpha_i) \Mod$ be defined by
\begin{equation}
\ftil{i}M := \cosoc \ind M \boxtimes L(i)
\end{equation}
(still assuming $M$ is simple). Some of the most important facts about $\e{i}, \etil, \ftil{i}$ 
stated in \cite{KL09} are given in the following proposition

\begin{proposition} \label{crystal-op-facts} \mbox{}
Let $i \in I$, $\nu \in Q^+$, $n \in \Z_{>0}$.
\begin{enumerate}
\item
Let $M \in R(\nu)\Mod$. Then
\begin{equation*}
\Char(\Delta_{i^n}M) = \sum_{\und{j} \in \seq(\nu - n\alpha_i)} \text{gdim}(1_{\und{j}i^n}M) \cdot \und{j}i^n,
\end{equation*}

\item \label{ftil-and-ep} Let $N \in R(\nu) \Mod$ be irreducible and $M = \ind_{\nu,n\alpha_i}^{\nu + n\alpha_i}N \boxtimes \Lii{i^n}$. Let $\ep{} = \ep{i}(N).$ Then
\begin{enumerate}
\item \label{pull-off-i's} $\Delta_{i^{\ep{} +n}}M \cong (\etil{i})^\ep{}N \boxtimes \Lii{i^{\ep{}+n}}$.
\item \label{cosoc-is-simple} $\cosoc M$ is irreducible, and $\cosoc M \cong (\ftil{i})^n N$, $\Delta_{i^{\ep{}+n}}(\ftil{i})^n N \cong (\etil{i})^{\ep{}}N \boxtimes \Lii{i^{\ep{}+n}}$, and $\ep{i}((\ftil{i})^n N) = \ep{} + n$.
\item $(\ftil{i})^n N$ occurs with multiplicity one as a composition factor of $M$.
\item \label{all-comp-have-less-i} All other composition factors $K$ of $M$ have $\ep{i}(K) < \ep{}+n$.
\end{enumerate}

\item Let
$\und{\mu} = ({\mu_1} \alpha_i, \dots, {\mu_r} \alpha_i)$
with $\sum^r_{k=1} \mu_k = n$.
\begin{enumerate}

\omitt{\item The module $\Lii{i^n}$ over the algebra $R(n\alpha_i)$ is the only graded irreducible module, up to isomorphism.
} 

\item All composition factors of $\res_{\und{\mu}}^{n\alpha_i}\Lii{i^n}$ are isomorphic to $\Lii{i^{\mu_1}} \boxtimes \dots \boxtimes \Lii{i^{\mu_r}}$, and $\soc(\res_{\und{\mu}}^{n \alpha_i} \Lii{i^n})$ is irreducible.
\item $\etil{i} \Lii{i^n} \cong \Lii{i^{n-1}}.$
\end{enumerate}

\item \label{etil-is-only-special-compfactor} Let $M \in R(\nu) \Mod$ be irreducible with $\ep{i}(M) > 0$. Then $\etil{i}M = \soc(\e{i}M)$ is irreducible and $\ep{i}(\etil{i}M) = \ep{i}(M) - 1$. Furthermore if $K$ is a composition factor of $\e{i}M$ and $K \not\cong \etil{i}M$,
then $\ep{i}(K) < \ep{i}(M) -1$.

\item \label{e-etil-relationship}
For irreducible $M \in R(\nu) \Mod$ let $m = \ep{i}(M)$. Then
$\e{i}^m M$ is isomorphic to $(\etil{i})^m M^{\oplus [m]_i!}$.
In particular, if $m =1$ then $\e{i}M = \etil{i}M$.

\item \label{e-and-f-undo-eachother} For irreducible modules $N \in R(\nu) \Mod$ and $M \in R(\nu+\alpha_i) \Mod$ we have $\ftil{i}N \cong M$ if and only if $N \cong \etil{i}M$.

\item Let $M, N \in R(\nu) \Mod$ be irreducible. Then $\ftil{i}M \cong \ftil{i}N$ if and only if $M \cong N$. Assuming $\ep{i}(M), \ep{i}(N) > 0$, $\etil{i}M \cong \etil{i}N$ if and only if $M \cong N$.

\end{enumerate}
\end{proposition}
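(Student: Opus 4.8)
The plan is to establish Proposition~\ref{crystal-op-facts} the way it is done in \cite[\S3]{KL09} (see also \cite{LV11}, \cite{KK12}): parts (1) and (3) are direct computations, part (2) is the technical heart, and parts (4)--(7) are formal consequences of (2) together with exactness of the $\e{i}$. I would run the whole argument as a simultaneous induction on $|\nu|$, so that part (2) for a given target module may invoke parts (4)--(6) for modules of strictly smaller height. First, (1) is immediate: $\Delta_{i^n}M = (1_{\nu-n\alpha_i}\otimes 1_{n\alpha_i})M$ is the sum of the weight spaces $1_{\und{j}i^n}M$, and since $\Lii{i^n}$ is the unique simple $R(n\alpha_i)$-module (Section~\ref{R(alpha)-modules}), recording graded dimensions gives the stated formula. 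For (3), the same uniqueness forces every composition factor of $\res_{\und\mu}^{n\alpha_i}\Lii{i^n}$ to be $\Lii{i^{\mu_1}}\boxtimes\cdots\boxtimes\Lii{i^{\mu_r}}$; its socle is irreducible because this restriction is self-dual under the standard duality (equivalently, its head is irreducible by part (2) applied to $\Lii{i}$), and $\etil{i}\Lii{i^n}\cong\Lii{i^{n-1}}$ follows from $\Char(\e{i}\Lii{i^n}) = [n]_i\,\Char(\Lii{i^{n-1}})$, computed with the Shuffle Lemma.

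For part (2), write $M = \ind N\boxtimes\Lii{i^n}$ and put $m := \ep{} + n$ with $\ep{} = \ep{i}(N)$. I would first use the Shuffle Lemma: among the terms of $\Char(M) = \Char(N)\Shuffle\Char(\Lii{i^n})$, those ending in the maximal trailing power $i^m$ are exactly the shuffles carrying all $n$ letters of $\Lii{i^n}$ to the right of a terminal $i^{\ep{}}$-block coming from $N$ --- possible precisely because $\ep{i}(N) = \ep{}$, so by the inductive (1) and (5) the relevant weights of $\Delta_{i^{\ep{}}}N$ contribute $[\ep{}]_i!\,\Char((\etil{i})^{\ep{}}N)$. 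This identifies $\Delta_{i^m}M \cong (\etil{i})^{\ep{}}N\boxtimes\Lii{i^m}$, which is irreducible. Now if $K$ is any simple quotient of $M$, Frobenius reciprocity gives a nonzero map $N\boxtimes\Lii{i^n}\to\res_{\nu,n\alpha_i}K$, hence (both factors being simple) an embedding $N\boxtimes\Lii{i^n}\hookrightarrow\res_{\nu,n\alpha_i}K$, from which $\Delta_{i^m}K\neq0$; since $\Delta_{i^m}$ is exact, $\Delta_{i^m}K$ is a nonzero quotient of the simple module $\Delta_{i^m}M$, so $\Delta_{i^m}K\cong(\etil{i})^{\ep{}}N\boxtimes\Lii{i^m}$ for \emph{every} simple quotient $K$. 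If $\cosoc M$ were not simple, $M$ would have a length-two semisimple quotient $S\oplus S'$ with $S,S'$ simple, giving a length-two semisimple quotient $\cong((\etil{i})^{\ep{}}N\boxtimes\Lii{i^m})^{\oplus 2}$ of the simple module $\Delta_{i^m}M$ --- a contradiction. Hence $\cosoc M$ is irreducible, and I set $(\ftil{i})^nN := \cosoc M$. The same exactness argument shows every other composition factor $K$ of $M$ has $\Delta_{i^m}K = 0$, i.e.\ $\ep{i}(K)<m$, that $(\ftil{i})^nN$ occurs once, and that $\ep{i}((\ftil{i})^nN) = m$ with the claimed $\Delta_{i^m}$-formula.

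Parts (4)--(7) I would then read off formally. For (4), given irreducible $M$ with $\ep{i}(M) = m > 0$: since $M$ is a quotient of $\ind(\soc\,\e{i}M)\boxtimes\Lii{i}$, the multiplicity and $\ep{i}$-bounds from (2) force $\soc\,\e{i}M$ to be irreducible with $\ep{i} = m-1$ and all other composition factors of $\e{i}M$ to have strictly smaller $\ep{i}$. Part (5) follows by iterating (4) and comparing characters through the Shuffle Lemma (the factor $[m]_i!$ being the terminal $i^m$-block contribution from (3)). Part (6) is immediate from the construction in (2): $\ftil{i}N\cong M$ gives $\etil{i}M\cong N$, and conversely by the uniqueness of the simple quotient of $\ind N\boxtimes\Lii{i}$. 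Finally (7) is formal: if $\ftil{i}M\cong\ftil{i}N$, applying $\etil{i}$ and using (6) gives $M\cong N$; and when $\ep{i}(M),\ep{i}(N)>0$, the modules $\etil{i}M$, $\etil{i}N$ are irreducible by (4), so $\etil{i}M\cong\etil{i}N$ yields $\ftil{i}\etil{i}M\cong\ftil{i}\etil{i}N$, whence $M\cong N$ by (6).

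The hard part will be part (2)(b), the irreducibility of $\cosoc M$: it rests on combining two ingredients --- the precise shuffle/Mackey computation of the top $\Delta_{i^m}$-layer of $M$, and the exactness of $\Delta_{i^m}$ --- and the simultaneous induction on $|\nu|$ that entangles (2)(a) with (4)--(5) must be organized so that each statement is only ever applied to modules of strictly smaller height than the one currently under consideration, lest the argument become circular.
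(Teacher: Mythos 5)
This proposition is not proved in the paper at all: it is imported as background, ``stated in \cite{KL09}'' (with antecedents in Grojnowski and Kleshchev, and further development in \cite{LV11}, \cite{KK12}), so there is no internal proof to compare against. Your outline is, in substance, the standard argument of those sources: shuffle-lemma identification of the top layer $\Delta_{i^{\ep{i}(N)+n}}M \cong (\etil{i})^{\ep{i}(N)}N \boxtimes \Lii{i^{\ep{i}(N)+n}}$ (a simple module, so equality of characters suffices), Frobenius reciprocity into any simple quotient plus exactness of $\Delta_{i^m}$ to get irreducibility of the cosocle, multiplicity one, and the $\ep{i}$-bound, and then the formal deduction of parts (4)--(7); the simultaneous induction on $|\nu|$ is also how one organizes it. So the architecture is correct and matches the cited proof.

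Two steps are justified too loosely as written. First, in (3) the irreducibility of $\soc(\res_{\und{\mu}}^{n\alpha_i}\Lii{i^n})$ does not follow from ``part (2) applied to $\Lii{i}$'': part (2) controls the cosocle of an \emph{induced} module, not the head or socle of a \emph{restriction}; the standard justification is the explicit nilHecke-type computation of $\res_{\und{\mu}}^{n\alpha_i}\Lii{i^n}$ in \cite{KL09} (or a coinduction-adjunction count), and the self-duality remark only transfers simplicity of the head to the socle once the head is known to be simple. Second, in (4) your sentence ``the multiplicity and $\ep{i}$-bounds from (2) force $\soc\e{i}M$ to be irreducible'' hides the actual mechanism: every irreducible submodule $A$ of $\e{i}M$ gives $A\boxtimes\Lii{i}\subseteq\Delta_iM$ and hence, by adjunction, a surjection $\ind A\boxtimes\Lii{i}\twoheadrightarrow M$, so all such $A$ are isomorphic to $\etil{i}M$ with $\ep{i}(A)=\ep{i}(M)-1$; but to exclude a socle of the form $A\oplus A$ you must use $\dim\HOM(A\boxtimes\Lii{i},\Delta_iM)\cong\dim\HOM(\ind A\boxtimes\Lii{i},M)=1$, which is exactly where the multiplicity-one statement (2)(c) enters. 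Spelling out these two points would make your reconstruction complete; as a sketch of the cited proof it is otherwise sound.
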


On the level of characters, $\e{i}$ roughly removes an $i$ from the rightmost entry of a module's character. We can construct analogous functors for removal of $i$ from the left side of a module's character, as well as an analogue to $\ftil{i}$. These are denoted by $\ech{i}$, $\etilch{i}$, $\ftilch{i}$ and we will use them extensively in this paper. We use the involution $\sigma$ introduced below to define them. Let $w_0$ be the longest element of $\Sy{|\nu|}$. Then $\sigma: R(\nu) \rightarrow R(\nu)$ is defined as follows:
\begin{align*}
1_{\und{i}} &\mapsto 1_{w_0(\und{i})}
\\
x_r &\mapsto x_{|\nu|+1-r}
\\
\psi_r 1_{\und{i}} &\mapsto
(-1)^{\delta_{i_r,i_{r+1}}}\psi_{|\nu|-r}1_{w_0(\und{i})}.
\end{align*}
For an $R(\nu)$-module $M$, let $\sigma^*M$ be the $R(\nu)$-module $M$ but with the action of $R(\nu)$ twisted by $\sigma$,
\begin{equation}
r \cdot u = \sigma(r)u.
\end{equation}

Now let $\ech{i}: R(\nu) \Mod \rightarrow R(\nu-\alpha_i) \Mod$ be the restriction functor defined as
\begin{align}
&\ech{i} := \sigma^*\e{i} \sigma = \res^{R(\alpha_i)\otimes R(\nu-\alpha_i)}_{R(\nu - \alpha_i)} \circ \res_{\alpha_i,\nu-\alpha_i}^\nu,
\intertext{
and similarly,}
&\etilch{i}M := \sigma^*(\etil{i}(\sigma^*M)) = \soc \ech{i}M,
\\
&\ftilch{i}M := \sigma^*(\ftil{i}(\sigma^*M)) = \cosoc \ind^{\nu+\alpha_i}_{\alpha_i,\nu} \Lii{i} \boxtimes M,
\\
&\epch{i}(M) := \ep{i}(\sigma^*M) = \max\{m \geq 0 \; | \; (\etilch{i})^m M \neq \zero \}.
\end{align}

It is important to note that by the exactness of restriction, $\e{i},
\ech{i}$ are exact functors, while $\etil{i}$ and $\etilch{i}$ are only
left exact, and $\ftil{i}$ and $\ftilch{i}$ are only right exact.

If $M \in R(\nu)\Mod$, we define $\wt(M)  = -\nu$ and $\wt_i(M) = -\langle h_i, \nu \rangle$. The functors $\etil{i}$, $\ftil{i}$, as well as $\ep{i}$ and $\wt$ will be part of a crystal structure with simple $R$-modules as nodes \cite{LV11}.

\begin{remark} \label{rem-char-epsilon}
There is a convenient character theoretic interpretation of $\ep{i}$ and $\epch{i}$. Let $M$ be a simple $R(\nu)$-module with $|\nu| = m$. Then
\begin{enumerate}[a.)]
\item $\ep{i}(M) = c$ implies that there exists 
\begin{equation}
\und{i} = (i_1, \dots, i_{m-c},\underbrace{i,i, \dots ,i}_{c})
\end{equation}
such that $1_{\und{i}}A \neq \zero$.
In other words $[\und{i}]$ is in the support of $M$; however no $[\und{j}]$ such that 
\begin{equation}
\und{j} = (i_{1}, \dots, i_{m-c-1}, \underbrace{i ,i ,\dots, i}_{c+1}).
\end{equation}
is in $\supp{M}$.

\item $\epch{i}(M) = c$ implies that there exists $[\und{i}]$ in the support of $M$ of the form
\begin{equation}
\und{i} = (\underbrace{i,i,\dots ,i}_{c},i_{c+1}, \dots ,i_m)
\end{equation}
but no $[\und{j}]$ of the form
\begin{equation}
\und{j} = (\underbrace{i,i,\dots ,i}_{c+1},i_{c+2}, \dots ,i_m).
\end{equation}

\end{enumerate}
\end{remark}

\begin{proposition} \label{cosoc-soc-isomorphic} \cite{V02}, \cite{K14}
Suppose that $M$ is a simple $R(\nu)$-module with $\ep{i}(M) > 0$. Then up to grading shift $\cosoc(\e{i}M) \cong \etil{i}M$.
\end{proposition}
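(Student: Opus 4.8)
The plan is to deduce the statement from a general duality argument. I will use the standard contravariant graded duality $\circledast$ on $R(\nu)\Mod$, coming from the graded anti-automorphism of $R(\nu)$ that fixes each generator $1_{\und{i}}$, $x_r$, $\psi_r$ (its existence follows from the fact that the defining relations of $R(\nu)$ are symmetric under reversing the order of multiplication; cf.\ \cite{KL09}): for finite-dimensional $M$ one sets $M^\circledast := \Hom_\C(M,\C)$ with $R(\nu)$ acting through this anti-automorphism. First I would record two formal properties. Since the anti-automorphism fixes every idempotent $1_{\und{i}}$, one has $1_{\und{i}}M^\circledast\cong(1_{\und{i}}M)^{*}$, so $\supp{M^\circledast}=\supp{M}$ and $\Char(M^\circledast)$ is obtained from $\Char(M)$ by $q\mapsto q^{-1}$; it is standard (cf.\ \cite{KL09}) that for a simple $M$ this forces $M^\circledast\cong M$ up to grading shift. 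Second, $\circledast$ is exact and contravariant, so it interchanges socle and cosocle: $\cosoc(N)\cong\bigl(\soc(N^\circledast)\bigr)^\circledast$ for every finite-dimensional $N$.

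The key step is to show that $\circledast$ commutes with $\e{i}$ up to grading shift. The parabolic subalgebra $R(\nu-\alpha_i)\otimes R(\alpha_i)\subset R(\nu)$ is generated by the idempotents $1_{\und{j}i}$, the $x_r$, and the $\psi_r$ with $r\le|\nu|-2$; since the anti-automorphism of $R(\nu)$ fixes all of these, it restricts on this parabolic to the tensor product of the corresponding anti-automorphisms of $R(\nu-\alpha_i)$ and $R(\alpha_i)$. Consequently $\res^\nu_{\nu-\alpha_i,\alpha_i}$ intertwines $\circledast$ on $R(\nu)\Mod$ with $\circledast$ on $(R(\nu-\alpha_i)\otimes R(\alpha_i))\Mod$, i.e.\ $(\Delta_i M)^\circledast\cong\Delta_i(M^\circledast)$; restricting further along $R(\nu-\alpha_i)\hookrightarrow R(\nu-\alpha_i)\otimes R(\alpha_i)$, which likewise commutes with $\circledast$, yields $(\e{i}M)^\circledast\cong\e{i}(M^\circledast)$ as $R(\nu-\alpha_i)$-modules, up to grading shift.

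Finally I would assemble these. Since $M$ is simple, $M^\circledast\cong M$ up to grading shift, so
\[
(\e{i}M)^\circledast\;\cong\;\e{i}(M^\circledast)\;\cong\;\e{i}M
\]
up to grading shift; that is, $\e{i}M$ is self-dual. Using that $\circledast$ swaps socle and cosocle, we get
\[
\cosoc(\e{i}M)\;\cong\;\bigl(\soc\bigl((\e{i}M)^\circledast\bigr)\bigr)^\circledast\;\cong\;\bigl(\soc(\e{i}M)\bigr)^\circledast\;=\;(\etil{i}M)^\circledast .
\]
By Proposition \ref{crystal-op-facts}(\ref{etil-is-only-special-compfactor}), the hypothesis $\ep{i}(M)>0$ guarantees that $\etil{i}M=\soc(\e{i}M)$ is irreducible, so $(\etil{i}M)^\circledast\cong\etil{i}M$ up to grading shift, and the proposition follows.

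The part I expect to require the most care is the compatibility $(\e{i}M)^\circledast\cong\e{i}(M^\circledast)$, namely verifying that the generator-fixing anti-automorphism restricts to the parabolic subalgebra as asserted and then correctly tracking the $R(\alpha_i)$-tensor factor; the remaining steps are formal. One could instead run the argument with the explicit involution $\sigma$ introduced above composed with the naive $\C$-dual, but then one must carry along the word-reversal, which exchanges $\e{i}$ with $\ech{i}$, making the argument less transparent.
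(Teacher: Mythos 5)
Your proof is correct, and since the paper gives no argument of its own for this proposition (it simply cites \cite{V02} and \cite{K14}), your route — the $\circledast$-duality built from the anti-automorphism fixing the generators, which fixes the idempotent $1_{\nu-\alpha_i}\otimes 1_{\alpha_i}$ and hence commutes with $\Delta_i$ and $\e{i}$, together with self-duality of simples and the fact that $\circledast$ exchanges socle and cosocle — is exactly the standard argument underlying those references. The one point to keep honest is that $M^\circledast\cong M$ up to grading shift for simple $M$ does not follow merely from the character being sent to its bar-image; it is itself a standard fact (cf.\ \cite{KL09}), and since you invoke it as such rather than derive it, the proof stands.
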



The following proposition describes how
the restriction functors $\e{i}$ interact with induction.

\begin{proposition} \label{e-and-short-exact-sequence} \cite[Theorem 5.3]{Vaz1}
Let $M$ be a simple $R(\mu)$-module and $N$ be a simple $R(\nu)$-module. Then for $i \in I$ there is a short exact sequences
\begin{equation}
\zero \rightarrow \ind M \boxtimes \e{i}N \rightarrow \e{i}\Big(\ind M \boxtimes N\Big) \rightarrow \ind \e{i}M \boxtimes N \rightarrow \zero
\end{equation}
and
\begin{equation}
\zero \rightarrow \ind \ech{i}M \boxtimes N \rightarrow \ech{i}\Big(\ind M \boxtimes N \Big) \rightarrow \ind M \boxtimes \ech{i}N \rightarrow \zero.
\end{equation}
\end{proposition}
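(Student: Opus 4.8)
The plan is to deduce both sequences from the Mackey-type filtration describing the restriction of an induced module over a KLR algebra \cite{KL09}. Recall that $\e{i} = F\circ\Delta_i$, where $\Delta_i = \res^{\mu+\nu}_{\mu+\nu-\alpha_i,\,\alpha_i}$ and $F$ forgets the final $R(\alpha_i)$-tensor factor (equivalently $\Delta_i X = (1_{\mu+\nu-\alpha_i}\otimes 1_{\alpha_i})X$, regarded as an $R(\mu+\nu-\alpha_i)$-module). First I would apply the Mackey filtration to $\Delta_i\bigl(\ind_{\mu,\nu}^{\mu+\nu} M\boxtimes N\bigr)$. Because the distinguished right-hand block has weight exactly $\alpha_i$, the only splittings $\mu = \mu'+\mu''$, $\nu = \nu'+\nu''$ with $\mu''+\nu'' = \alpha_i$ are $(\mu'',\nu'') = (0,\alpha_i)$ and $(\mu'',\nu'') = (\alpha_i,0)$, so the filtration has at most two layers. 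The $(\mu'',\nu'') = (0,\alpha_i)$ layer requires no crossing of the distinguished block past the $M$-slot: it is a submodule, and up to grading shift it is $\ind_{\mu,\,\nu-\alpha_i,\,\alpha_i}\bigl(M\boxtimes \e{i}N\boxtimes \Lii{i}\bigr)$. The $(\mu'',\nu'') = (\alpha_i,0)$ layer is the quotient and, after the standard reordering of tensor factors, is $\ind_{\mu-\alpha_i,\,\nu,\,\alpha_i}\bigl(\e{i}M\boxtimes N\boxtimes \Lii{i}\bigr)$. In both layers the $R(\alpha_i)$-factor is $\Lii{i}$, so applying $F$ collapses them to $\ind M\boxtimes\e{i}N$ (submodule) and $\ind \e{i}M\boxtimes N$ (quotient); this is the first sequence.

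Concretely, this is simply the splitting of the weight basis \eqref{ind-basis} of $\ind M\boxtimes N$: a basis of $\e{i}\bigl(\ind M\boxtimes N\bigr)$ consists of those $\psi_{\wh{w}}\otimes(u\boxtimes v)$ whose weight ends in $i$, and one sorts these according to whether the minimal coset representative $w$ fixes the last position, i.e.\ whether the trailing $i$ is read off the $N$-slot or off the $M$-slot. Two easy points then remain. If $\nu-\alpha_i\not\in Q^+$ (resp.\ $\mu-\alpha_i\not\in Q^+$) only one layer is present and $\e{i}N = \zero$ (resp.\ $\e{i}M=\zero$), so the sequence degenerates to an isomorphism and is still exact; likewise if neither $M$ nor $N$ has a weight ending in $i$ all three terms vanish. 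One must also check that the connecting maps of the filtration are the evident inclusion and projection: this verification carries essentially all of the bookkeeping and is the only genuinely delicate part of the argument, the rest being formal. Note that semisimplicity is never needed — a two-step filtration is precisely a short exact sequence.

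Finally, the $\ech{i}$-sequence follows by applying the duality $\sigma^{*}$. It is an exact auto-equivalence with $\sigma^{*}\bigl(\ind A\boxtimes B\bigr)\cong \ind \sigma^{*}B\boxtimes\sigma^{*}A$ up to grading shift, with $\sigma^{*}\sigma^{*}\cong\id$, and with $\ech{i} = \sigma^{*}\e{i}\sigma^{*}$ on modules. Applying $\sigma^{*}$ to the first sequence for the simple modules $\sigma^{*}N$ (an $R(\nu)$-module) and $\sigma^{*}M$ (an $R(\mu)$-module) turns the submodule $\ind \sigma^{*}N\boxtimes\e{i}\sigma^{*}M$ into the quotient $\ind M\boxtimes\ech{i}N$ and the quotient $\ind \e{i}\sigma^{*}N\boxtimes\sigma^{*}M$ into the submodule $\ind \ech{i}M\boxtimes N$, yielding exactly the second sequence.
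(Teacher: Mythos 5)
The paper does not actually prove this proposition itself---it cites \cite[Theorem 5.3]{Vaz1}---and your Mackey-filtration argument is exactly the standard proof given in that reference and in the KLR literature (cf.\ \cite{KL09}), so your route is essentially the expected one: the two-layer filtration of $\Delta_i(\ind M\boxtimes N)$ according to whether the trailing $i$-strand comes from $N$ (submodule) or from $M$ (quotient), followed by forgetting the $R(\alpha_i)$-factor, and then a $\sigma^*$-twist for the $\ech{i}$-sequence. Two small corrections to your bookkeeping, neither of which damages the argument. First, the submodule layer is $\ind_{\mu,\nu-\alpha_i}(M\boxtimes -)$ applied to $\Delta_i N$, and the $R(\alpha_i)$-factor of $\Delta_i N$ need not be a sum of copies of $\Lii{i}$ (the dot may act by a nonzero nilpotent), so your identification of that layer as $\ind(M\boxtimes\e{i}N\boxtimes\Lii{i})$ is not literally correct; it becomes correct only after applying the forgetful functor $F$, which is all you use, so this is harmless. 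Second, in the $\ech{i}$ step your description of what $\sigma^*$ does is wrong as written: $\sigma^*$ is a covariant exact equivalence, so it preserves sub and quotient; using $\sigma^*(\ind A\boxtimes B)\cong\ind\sigma^*B\boxtimes\sigma^*A$ it sends the submodule $\ind\sigma^*N\boxtimes\e{i}\sigma^*M$ to the submodule $\ind\ech{i}M\boxtimes N$ (not to the quotient $\ind M\boxtimes\ech{i}N$), and the quotient $\ind\e{i}\sigma^*N\boxtimes\sigma^*M$ to the quotient $\ind M\boxtimes\ech{i}N$. Your two misidentifications cancel, and the sequence you end up with is indeed the one stated in the proposition.
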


Note that it follows from this result that if $\e{i}M = \zero$ then
\begin{equation}
\e{i}\Big(\ind M \boxtimes N\Big) \cong \ind M \boxtimes \e{i}N
\end{equation}
and similarly if $\e{i}N = \zero$ then
\begin{equation}
\e{i}\Big(\ind M \boxtimes N\Big) \cong \ind \e{i}M \boxtimes N.
\end{equation}
An analogous remark applies to $\ech{i}$.

\begin{proposition} \label{when-can-apply-etil-directly}
Suppose that $M$ is a simple $R(\mu)$-module, $N$ is a simple $R(\nu)$-module, $A$ is simple $R(\mu + \nu)$-module and there is a surjection
\begin{equation}
\label{eq-indMN}
\ind M \boxtimes N \twoheadrightarrow A.
\end{equation} 
\begin{enumerate}
\item \label{ei-case-for-direct-application-of-etili} If $\ep{i}(A) \neq 0$ then: 
\begin{enumerate}
\item \label{ei-case-for-direct-application-of-etili-a} If $\ep{i}(M) = 0$ then there is a surjection
\begin{equation}
\ind M \boxtimes \etil{i}N \twoheadrightarrow \etil{i}A.
\end{equation}
\item \label{ei-case-for-direct-application-of-etili-b} If $\ep{i}(N) = 0$, then there is a surjection
\begin{equation}
\ind \etil{i}M \boxtimes N \twoheadrightarrow \etil{i}A.
\end{equation}
\end{enumerate}
\item \label{eich-case-for-direct-application-of-etili} If $\epch{i}(A) \neq 0$ then:
\begin{enumerate}
\item If $\epch{i}(M) = 0$ there is a surjection
\begin{equation}
\ind M \boxtimes \etilch{i}N \twoheadrightarrow \etilch{i}A.
\end{equation}
\item If $\epch{i}(N) = 0$ there is a surjection
\begin{equation}
\ind \etilch{i}M \boxtimes N \twoheadrightarrow \etilch{i}A.
\end{equation}
\end{enumerate}
\end{enumerate}
\end{proposition}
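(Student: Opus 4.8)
The plan is to prove part~(1)(a) first; part~(1)(b) is the mirror statement (with the roles of the two tensor factors interchanged), and part~(2) follows from part~(1) by twisting with $\sigma$. So assume $\ep{i}(M)=0$ and $\ep{i}(A)\neq 0$. Throughout one uses that $\ind$ is exact (by Theorem~\ref{basis-theorem}), that $\e{i}$, and hence each $\e{i}^{\,n}$, is exact, and that an external tensor product of simple modules over a parabolic subalgebra is simple.

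The first move is to apply $\e{i}$ to the given surjection $\ind M\boxtimes N\twoheadrightarrow A$. Since $\ep{i}(M)=0$ means $\e{i}M=\zero$, Proposition~\ref{e-and-short-exact-sequence} (and the remark after it) gives $\e{i}(\ind M\boxtimes N)\cong\ind M\boxtimes\e{i}N$, so we obtain $\ind M\boxtimes\e{i}N\twoheadrightarrow\e{i}A$. In particular $\e{i}N\neq\zero$, i.e.\ $\ep{i}(N)>0$, for otherwise $\e{i}A=\zero$. Composing with $\e{i}A\twoheadrightarrow\cosoc(\e{i}A)\cong\etil{i}A$ from Proposition~\ref{cosoc-soc-isomorphic} yields a surjection $g\colon\ind M\boxtimes\e{i}N\twoheadrightarrow\etil{i}A$. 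Because $\etil{i}N=\soc(\e{i}N)$ and $\ind$ is exact, $\ind M\boxtimes\etil{i}N$ embeds in $\ind M\boxtimes\e{i}N$, and the whole proposition comes down to showing that $g$ does not annihilate this submodule: if $g|_{\ind M\boxtimes\etil{i}N}\neq 0$ it is surjective, $\etil{i}A$ being simple, and we are done.

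To establish this I would induct on $\ep{i}(N)$. When $\ep{i}(N)=1$, Proposition~\ref{crystal-op-facts} gives $\e{i}N\cong\etil{i}N$, so $\ind M\boxtimes\etil{i}N=\ind M\boxtimes\e{i}N$ and there is nothing to prove. For $\ep{i}(N)=\varepsilon\geq 2$, suppose $g$ kills $\ind M\boxtimes\etil{i}N$; then it factors through $\ind M\boxtimes(\e{i}N/\etil{i}N)\twoheadrightarrow\etil{i}A$, so by the ``factors through'' bookkeeping of Remark~\ref{factoring-through-convention} applied to a composition series of $\e{i}N/\etil{i}N$, the simple module $\etil{i}A$ is a quotient of $\ind M\boxtimes C$ for some composition factor $C$ of $\e{i}N/\etil{i}N$. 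By Proposition~\ref{crystal-op-facts}, every composition factor of $\e{i}N$ other than $\etil{i}N$ has $\ep{i}<\varepsilon-1$. Hence either $C\cong\etil{i}N$ --- in which case $\etil{i}A$ is exhibited as a quotient of $\ind M\boxtimes\etil{i}N$ and we are done --- or $\ep{i}(C)<\varepsilon-1$, and then, using the bound $\ep{i}(K)\leq\ep{i}(X)+\ep{i}(Y)$ for every composition factor $K$ of $\ind X\boxtimes Y$ (which follows by iterating Proposition~\ref{e-and-short-exact-sequence}), $\ep{i}(\etil{i}A)\leq\ep{i}(C)<\varepsilon-1$, i.e.\ $\ep{i}(A)<\ep{i}(N)$. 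In this remaining case I would feed the argument back through the surjection $\ind\etil{i}N\boxtimes\Lii{i}\twoheadrightarrow N$ coming from $N\cong\ftil{i}\etil{i}N$ (Proposition~\ref{crystal-op-facts}): inducing and composing gives $\ind W\boxtimes\Lii{i}\twoheadrightarrow A$ with $W=\ind M\boxtimes\etil{i}N$, and filtering $W$ by simple subquotients, the ``factors through'' argument together with the $\ftil{i}/\cosoc$ statement of Proposition~\ref{crystal-op-facts} (which forces $A\cong\ftil{i}V$, hence $V\cong\etil{i}A$, for the relevant layer $V$) shows $\etil{i}A$ occurs as a simple \emph{subquotient} of $W$; a secondary induction on $\ep{i}(N)-\ep{i}(A)$, stripping one $\etil{i}N$ off $N$ at a time, then promotes this to a genuine quotient.

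\emph{The main obstacle} is precisely this promotion from ``subquotient'' to ``quotient'': because $\etil{i}N$ can occur in $\e{i}N$ with multiplicity greater than one (already for $\F_{\ell+1}\Sy{m}$), one cannot simply quotient out the socle copy of $\etil{i}N$, and must instead track which composition factor of $W$ carries $g$ onto $\etil{i}A$, leaning on the uniqueness up to isomorphism of the $\ep{i}$-extremal composition factor in Proposition~\ref{crystal-op-facts}. Once part~(1)(a) is in hand, part~(1)(b) follows by the symmetric argument with $N$ in place of $M$ (now $\ep{i}(N)=0$ forces $\e{i}N=\zero$ and $\ep{i}(M)>0$, and one works with $\soc(\e{i}M)=\etil{i}M$). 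For part~(2), apply the self-equivalence $\sigma^*$ to $\ind_{\mu,\nu}M\boxtimes N\twoheadrightarrow A$ to get $\ind_{\nu,\mu}\sigma^*N\boxtimes\sigma^*M\twoheadrightarrow\sigma^*A$; since $\ep{i}(\sigma^*X)=\epch{i}(X)$ and $\etilch{i}X=\sigma^*\etil{i}\sigma^*X$ by definition, invoking part~(1)(b) (resp.\ part~(1)(a)) for this surjection and twisting back by $\sigma^*$ gives part~(2)(a) (resp.\ part~(2)(b)).
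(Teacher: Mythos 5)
Your setup coincides with the paper's: apply $\e{i}$ to the surjection (using $\e{i}M=\zero$ and Proposition \ref{e-and-short-exact-sequence}), compose with $\e{i}A\twoheadrightarrow\etil{i}A$, and then use Remark \ref{factoring-through-convention} to find a composition factor of $\e{i}N$ through which the map factors. But at the decisive point your argument has a genuine gap. The missing observation is the paper's very first step: Frobenius reciprocity applied to the \emph{original} surjection $\ind M\boxtimes N\twoheadrightarrow A$ embeds $M\boxtimes N$ into $\res A$, giving $\ep{i}(N)\leq\ep{i}(A)$; combined with the Shuffle Lemma bound $\ep{i}(A)\leq\ep{i}(M)+\ep{i}(N)=\ep{i}(N)$ this yields $\ep{i}(A)=\ep{i}(N)$. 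With this equality in hand, the composition factor $K$ of $\e{i}N$ carrying the surjection onto $\etil{i}A$ satisfies $\ep{i}(K)=\ep{i}(\etil{i}A)=\ep{i}(N)-1$ (Frobenius reciprocity and the Shuffle Lemma again), and Proposition \ref{crystal-op-facts}.\ref{etil-is-only-special-compfactor} forces $K\cong\etil{i}N$, finishing the proof in one stroke.

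Because you never establish $\ep{i}(A)=\ep{i}(N)$, you are led into a ``remaining case'' $\ep{i}(A)<\ep{i}(N)$ which in fact cannot occur, and your treatment of it is not a proof: the secondary induction via $\ind\etil{i}N\boxtimes\Lii{i}\twoheadrightarrow N$ only produces $\etil{i}A$ as a subquotient of $\ind M\boxtimes\etil{i}N$, and you explicitly acknowledge that the promotion from subquotient to quotient is unresolved. Relatedly, your worry about the multiplicity of $\etil{i}N$ in $\e{i}N$ is a red herring: Remark \ref{factoring-through-convention} only requires some filtration layer isomorphic to $\etil{i}N$, not the socle copy, so once the $\ep{i}$-bookkeeping above is done there is nothing left to track. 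Your reductions of part (1)(b) by symmetry and of part (2) via $\sigma^*$ are fine and match the paper's ``analogous arguments''; the core of (1)(a) is what needs the Frobenius reciprocity step to close.
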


\begin{proof}
We prove Part \ref{ei-case-for-direct-application-of-etili-a}, the other parts follow from analogous arguments.
By Frobenius reciprocity it follows that $\ep{i}(N) \leq \ep{i}(A)$. By
the exactness of the functor $\e{i}$ and Proposition
\ref{e-and-short-exact-sequence}, $\ep{i}(N) \geq \ep{i}(A)$ so that
$\ep{i}(N) = \ep{i}(A)$.
Also by Proposition \ref{e-and-short-exact-sequence},
the map of \eqref{eq-indMN} induces a surjection
\begin{equation}
\ind M \boxtimes \e{i}N \twoheadrightarrow \e{i}A.
\end{equation}
Composing this with the surjection from Proposition \ref{cosoc-soc-isomorphic} gives
\begin{equation}
\ind M \boxtimes \e{i}N \twoheadrightarrow \etil{i}A.
\end{equation}
From Remark \ref{factoring-through-convention} there exists a composition factor $K$ of $\e{i}N$ such that there is a surjection,
\begin{equation}
\ind M \boxtimes K \twoheadrightarrow \etil{i}A.
\end{equation}
We must have
\begin{equation}
\ep{i}(K) = \ep{i}(\etil{i}A) = \ep{i}(A) - 1 = \ep{i}(N) -1.
\end{equation}
Proposition \ref{crystal-op-facts}.\ref{etil-is-only-special-compfactor}, then forces $K \cong \etil{i}N$.
\end{proof}


\subsubsection{The Serre relations}


Because the functors $\e{i}$, $i \in I$, are exact, they descend to well-defined linear operators on the Grothendieck group of $R$, $G_0(R)$. By abuse of notation, we will also call these operators $\e{i}$. We define divided powers
\begin{align*}
e^{(r)}_i : G_0(R) &\rightarrow G_0(R)
\\
[M] &\mapsto \frac{1}{ [r]_i !}[e^r_i M].
\end{align*}
It is shown in \cite{KL09,KL11} that these operators satisfy the quantum Serre relations, and that these relations are in fact minimal. We have
\begin{equation}
\sum^{-a_{ij}+1}_{r = 0} (-1)^r \e{i}^{(-a_{ij}+1-r)}\e{j}\e{i}^{(r)}[M] = \zero
\end{equation}
for all $i \neq j \in I$,  and $M \in R\Mod$.
(Recall $a_{ij} = \langle h_i, \alpha_j \rangle$.)
 The minimality of these relations imply that, for $0 < c < -a_{ij}+1$, 
\begin{equation} \label{Serre-relations}
\sum^c_{r=0} (-1)^r\e{i}^{(c-r)}e_je_i^{(r)}
\end{equation}
is never the zero operator on $G_0(R)$ by the quantum Gabber-Kac Theorem \cite{Lus10} and the work of \cite{KL09,KL11}, which essentially computes the kernel of the map from the free algebra on generators $\e{i}$ to $G_0(R)$.


\subsection{Simple {\protect$R(c \alpha_{i} + \alpha_{j})$-}modules}
\label{R(c-alpha)-modules}


Assume that $i \neq j$ and set $a = a_{ij} = \langle h_i,\alpha_j\rangle$. We introduce the notation
\begin{equation}
\Lcal{i^{c-n}ji^n} 
\end{equation}
for the irreducible $R(c\alpha_i + \alpha_j)$-modules when $c \leq -a$. 

\begin{theorem} \cite{LV11}
Let $c \leq -a$ and let $\nu = c\alpha_i + \alpha_j$. For each $n$ with $0 \leq n \leq c$, there exists a unique irreducible $R(\nu)$-module denoted $\Lcal{i^{c-n}ji^n}$ with
\begin{equation}
\ep{i}(\Lcal{i^{c-n}ji^{n}}) = n.
\end{equation}
Furthermore
\begin{equation}
\epch{i}(\Lcal{i^{c-n}ji^n}) = c-n
\end{equation}
and
\begin{equation}
\Char(\Lcal{i^{c-n}ji^n}) = [c-n]_i ! [n]_i ! i^{c-n}ji^n.
\end{equation}
\end{theorem}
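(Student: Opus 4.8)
The plan is to induct on $c$. The base case $c=0$ is immediate: $R(\alpha_j)$ has the single simple $\Lii{j}$, which is one-dimensional, has $\ep{i}(\Lii{j})=\epch{i}(\Lii{j})=0$ since $\alpha_j-\alpha_i\notin Q^+$, and $\Char(\Lii{j})=[j]$, matching the formula (with $n=c=0$). So assume the theorem for all $c'<c$ and set $\nu=c\alpha_i+\alpha_j$. The algebra $R(\nu)$ has exactly $c+1$ weight spaces, indexed by the sequences $i^sji^{c-s}$, $0\le s\le c$, and the strategy is to identify the simples with these in a triangular fashion using the crystal operators and their character-theoretic incarnations.

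First I would classify the simples. Either invoke that $G_0(R(\nu))$ has rank $c+1$ (equal to $\dim(U^-_q(\Fr{g}))_\nu$, which follows from \cite{KL09,KL11} precisely because $c\le -a_{ij}$ lies strictly below the degree $(-a_{ij}+1)\alpha_i+\alpha_j$ of the first quantum Serre relation), or argue directly that any simple with $\ep{i}>0$ is $\ftil{i}$ of a simple of $R((c-1)\alpha_i+\alpha_j)$ (Proposition~\ref{crystal-op-facts}.\ref{e-and-f-undo-eachother}), while there is a unique simple $M$ with $\ep{i}(M)=0$: indeed $\ep{i}(M)=0$ forces every weight sequence of $M$ to end in $j$, and since no simple of $R(k\alpha_i+\alpha_j)$ with $k\ge 1$ can have $\ep{i}=\epch{i}=0$ (only one non-$i$ letter is available), an easy induction on $\ech{i}$ gives $\epch{i}(M)=c$, whence $M\cong\ftilch{i}^{c}\Lii{j}$. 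Either way, using Proposition~\ref{crystal-op-facts}.\ref{cosoc-is-simple} to track $\ep{i}$, the $c+1$ simples are $D_0,\dots,D_c$ with $\ep{i}(D_n)=n$, and $D_n\cong\ftil{i}^{\,n}N_n$ where $N_n$ is the unique $\ep{i}=0$ simple of $R((c-n)\alpha_i+\alpha_j)$, for which $\Char(N_n)=[c-n]_i!\,i^{c-n}j$ by the inductive hypothesis (for $n<c$; and $N_c=\Lii{j}$, $N_0=\ftilch{i}^{c}\Lii{j}$). By Proposition~\ref{crystal-op-facts}.\ref{ftil-and-ep} we get $\Delta_{i^n}D_n\cong N_n\boxtimes\Lii{i^n}$, hence $\mathrm{gdim}\bigl(1_{i^{c-n}ji^n}D_n\bigr)=[c-n]_i!\,[n]_i!$ (using the dual statement $\Delta^{\vee}_{i^{c}}D_0\cong\Lii{i^{c}}\boxtimes\Lii{j}$ in the degenerate case $n=0$), and in particular $i^{c-n}ji^n\in\supp{D_n}$ and $\ep{i}(D_n)=n$; the Shuffle Lemma bounds $\supp{D_n}\subseteq\{\,i^sji^{c-s}:c-n\le s\le c\,\}$.

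The crux is to pin $\epch{i}(D_n)$, equivalently to kill the sequences $i^sji^{c-s}$ with $s>c-n$. The involution $\sigma^*$ permutes the simples $D_0,\dots,D_c$ and interchanges $\ep{i}$ with $\epch{i}$, so $\tau(n):=\epch{i}(D_n)$ defines an involution of $\{0,\dots,c\}$. From $i^{c-n}ji^n\in\supp{D_n}$ we have $\tau(n)\ge c-n$ for every $n$; since $\tau$ is a permutation, $\sum_n\tau(n)=\sum_n n=\sum_n(c-n)$, and a family of integers each $\ge c-n$ with this sum must satisfy $\tau(n)=c-n$ for all $n$. Thus $\epch{i}(D_n)=c-n$, so every weight sequence of $D_n$ has at most $c-n$ leading $i$'s, i.e.\ $s\le c-n$; combined with the Shuffle bound $s\ge c-n$ this forces $\supp{D_n}=\{i^{c-n}ji^n\}$. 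Therefore $\Char(D_n)=[c-n]_i!\,[n]_i!\,i^{c-n}ji^n$, and renaming $D_n=:\Lcal{i^{c-n}ji^n}$ yields all three assertions, completing the induction.

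I expect the main obstacle to be exactly this concentration step. Everything else — counting the simples, building $D_n=\ftil{i}^{\,n}N_n$, extracting the leading weight space via $\Delta_{i^n}$, and the crude Shuffle support bound — is a direct application of the stated machinery (Propositions~\ref{crystal-op-facts}, \ref{e-and-short-exact-sequence}, the Shuffle Lemma, and Remark~\ref{rem-char-epsilon}). What genuinely requires an argument is that $\supp{D_n}$ collapses to a single weight space, and the clean way I see to get it is the $\sigma^*$-permutation/summation trick above; the one point to be careful about is establishing $\tau(n)\ge c-n$ for \emph{all} $n$ simultaneously (this is where one must know $i^{c-n}ji^n$ really is in the support, which in turn rests on the $\Delta_{i^n}$ computation and on $\Char(N_n)$ being known by the induction on $c$). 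A minor routine check is that $\sigma^*$ preserves $R(\nu)$ and reverses residue sequences, which is immediate from the definition of $\sigma$.
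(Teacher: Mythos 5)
Your argument is correct, but note that the paper itself does not prove this statement: it is imported wholesale from \cite{LV11}, so there is no internal proof to compare against. Measured against the standard route in \cite{LV11} (which pins down $\Char(\Lcal{i^{c-n}ji^n})$ and $\epch{i}$ by a direct analysis of the induced modules $\ind \Lii{i^{c-n}}\boxtimes\Lii{j}\boxtimes\Lii{i^n}$, their cosocles, and Frobenius reciprocity/co-induction), your proof is genuinely different at the crux: after the routine steps (classifying the simples as $D_n=\ftil{i}^{\,n}N_n$, computing $\mathrm{gdim}(1_{i^{c-n}ji^n}D_n)=[c-n]_i![n]_i!$ via $\Delta_{i^n}$, and the Shuffle-Lemma support bound), you obtain $\epch{i}(D_n)=c-n$ by observing that $n\mapsto\epch{i}(D_n)=\ep{i}(\sigma^*D_n)$ is a permutation of $\{0,\dots,c\}$ bounded below by $c-n$, and that equality of the two sums forces termwise equality. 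This is a clean global trick that replaces the case-by-case computation of $\epch{i}$ and buys you the concentration of the character on the single sequence $i^{c-n}ji^n$ essentially for free; the price is that it only works because all $c+1$ simples are in play simultaneously.

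One point you should tighten: the \emph{existence} of the $\ep{i}=0$ simple of $R(c\alpha_i+\alpha_j)$ itself (the case $n=0$, not covered by the induction on $c$). Your ``direct'' alternative only yields uniqueness: it shows that any simple $M$ with $\ep{i}(M)=0$ has $\supp{M}=\{i^cj\}$, hence $M\cong\ftilch{i}^{\,c}\Lii{j}$, but it does not by itself show that $\ep{i}(\ftilch{i}^{\,c}\Lii{j})=0$. So you must commit to the $G_0$-rank count (rank $c+1$ because $c\le -a_{ij}$ keeps $\nu$ below the degree of the first Serre relation) and run the pigeonhole: the $c$ isomorphism classes with $\ep{i}\ge 1$ are accounted for by the inductive hypothesis, so some simple has $\ep{i}=0$. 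With that wired in, the induction closes and all three assertions follow as you state.
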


Observe the support of $\Lcal{i^{c-n}ji^n}$ is exactly $i^{c-n}ji^n$.

\begin{proposition} \cite{LV11} \label{when-jump-zero-for-Lcal-modules}
For $m > 0$,
\begin{equation}
\ind \Lcal{i^{-a-n}ji^n} \boxtimes \Lii{i^m} \cong \ind \Lii{i^m} \boxtimes \Lcal{i^{-a-n}ji^n}
\end{equation}
is irreducible.
\end{proposition}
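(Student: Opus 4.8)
The plan is to reduce the statement to a single irreducibility claim and then pin down the composition structure of the induced module using the crystal functors and the Shuffle Lemma. Set $X:=\ind\Lcal{i^{-a-n}ji^n}\boxtimes\Lii{i^m}$ and $Y:=\ind\Lii{i^m}\boxtimes\Lcal{i^{-a-n}ji^n}$. By the Shuffle Lemma $\Char X$ and $\Char Y$ have the same specialization at $q=1$, since the multiset of shuffles of $i^{-a-n}ji^n$ with $i^m$ does not depend on the order; so by Remark \ref{character-determines-simple} it suffices to prove that both $X$ and $Y$ are irreducible, and then $X\cong Y$ up to grading shift. Moreover $Y$ is irreducible as soon as $X$ is: $\sigma$ is an automorphism reversing characters, so $\sigma^*\Lcal{i^{-a-n}ji^n}\cong\Lcal{i^{n}ji^{-a-n}}$ and $\sigma^*Y\cong\ind\Lcal{i^{n}ji^{-a-n}}\boxtimes\Lii{i^m}$; since $-a-n\in\{0,\dots,-a\}$ this is again an instance of $X$, so the case of $X$ (for all admissible $n$) gives the case of $Y$. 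Thus everything reduces to showing $X$ is irreducible.

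The next step is to locate two distinguished composition factors of $X$, one extremal for $\ep{i}$ and one for $\epch{i}$. Applying Proposition \ref{crystal-op-facts}(\ref{ftil-and-ep}) with $N=\Lcal{i^{-a-n}ji^n}$, so that $\ep{i}(N)=n$, gives $\cosoc X\cong A:=(\ftil{i})^m\Lcal{i^{-a-n}ji^n}$, which is irreducible, occurs in $X$ with multiplicity one, has $\ep{i}(A)=n+m$, and is the only composition factor of $X$ with $\ep{i}\ge n+m$; moreover Proposition \ref{crystal-op-facts}(\ref{pull-off-i's}) yields $\Delta_{i^{n+m}}A\cong(\etil{i})^{n}\Lcal{i^{-a-n}ji^n}\boxtimes\Lii{i^{n+m}}\cong\Lcal{i^{-a-n}j}\boxtimes\Lii{i^{n+m}}$. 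Twisting by $\sigma$ turns $X$ into $\ind\Lii{i^m}\boxtimes\Lcal{i^{n}ji^{-a-n}}$, and the $\sigma$-dual of the same proposition produces a composition factor $B$ of $X$, again of multiplicity one, with $\epch{i}(B)=-a-n+m$ and with every other composition factor $K$ of $X$ satisfying $\epch{i}(K)<-a-n+m$. By Remark \ref{rem-char-epsilon}, no composition factor of $X$ other than $A$ meets the weight space $1_{i^{-a-n}ji^{n+m}}$, and none other than $B$ meets $1_{i^{-a-n+m}ji^{n}}$.

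Now one brings in the explicit character. The Shuffle Lemma gives $\supp X=\{[i^{t}ji^{s}]:t+s=-a+m,\ n\le s\le n+m\}$, a staircase of $m+1$ weight spaces, and a short computation with the quantum shuffle identifies the graded dimensions of its two extremal weight spaces; matching these against $\Delta_{i^{n+m}}A\cong\Lcal{i^{-a-n}j}\boxtimes\Lii{i^{n+m}}$ (and its $\sigma$-dual for $B$) shows that $A$ and $B$ already saturate the two ends of the staircase. Hence $\operatorname{rad}X$ is supported strictly inside the staircase. For $m=1$ the strict interior is empty, so $\operatorname{rad}X=0$ and $X=A$ is irreducible. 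For $m\ge2$ one proceeds by induction on $m$, writing $\Lii{i^m}\cong\ind\Lii{i^{m-1}}\boxtimes\Lii{i}$ so that $X\cong\ind X_{m-1}\boxtimes\Lii{i}$ with $X_{m-1}$ simple by the inductive hypothesis, then applying the exact functor $\ech{i}$ together with Proposition \ref{e-and-short-exact-sequence} and $\ech{i}\Lii{i}\cong\UnitModule$ to express $\ech{i}X$ as an extension involving $X_{m-1}$ and $\ind\ech{i}X_{m-1}\boxtimes\Lii{i}$, and feeding this back against the simplicity of $\cosoc X=A$ and the values of $\ep{i}(X),\epch{i}(X)$ carried along by the induction to force $\operatorname{rad}X=0$.

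I expect the genuinely delicate point to be the last one: ruling out an ``interior'' composition factor of $X$, i.e.\ one with $\ep{i}<n+m$ and $\epch{i}<-a-n+m$ simultaneously. Paragraphs two and three cheaply control only the extremal factors; the interior is exactly where the hypothesis $c=-a_{ij}$ is essential, because it places $\Lcal{i^{-a-n}ji^n}$ at the very top of the range of single-weight (``thin'') modules, so that adjoining one more string of $i$'s leaves no room for a new simple. At bottom this is the minimality of the quantum Serre relations, equivalently the quantum Gabber--Kac theorem as used around \eqref{Serre-relations}. Carrying the induction on $m$ through while keeping honest track of the extensions and the multiplicities ($[m]_i$, $[-a-n]_i$) produced by the $\ech{i}$ short exact sequences is the main bookkeeping burden of the argument.
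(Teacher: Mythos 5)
Your argument has a genuine gap at precisely the step that carries all of the content, and you say so yourself: you never rule out an ``interior'' composition factor of $X=\ind \Lcal{i^{-a-n}ji^n}\boxtimes\Lii{i^m}$, i.e.\ one with $\ep{i}<n+m$ and $\epch{i}<-a-n+m$. The extremal analysis in your second and third paragraphs is cheap (it works verbatim for $\ind \Lcal{i^{c-n}ji^n}\boxtimes\Lii{i^m}$ with any $c\le -a$, where the induced module is in general \emph{not} irreducible), so the hypothesis $c=-a_{ij}$ must enter exactly at the missing step. The gesture toward ``minimality of the quantum Serre relations / Gabber--Kac'' is not an argument: minimality says that the operators \eqref{Serre-relations} are nonzero somewhere on $G_0(R)$, which does not exclude a composition factor of this particular induced module; and the induction on $m$ via $\ech{i}$ and Proposition \ref{e-and-short-exact-sequence} is left as a plan (``force $\operatorname{rad}X=0$'') rather than carried out. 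A secondary unproved point: your factor $B$ is extremal for $\epch{i}$ in $Y$, not in $X$; transporting it into $X$ with multiplicity one quietly uses $[X]=[Y]$ in $G_0(R)$, i.e.\ linear independence of ungraded characters of simples, which you assert from Remark \ref{character-determines-simple} but which that remark does not literally give.

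For comparison, the paper does not reprove this statement at all: it is quoted from \cite{LV11}, and the Example following Lemma \ref{jump-lemma} records the intended two-line deduction. One computes from \eqref{eqn-jump} that
\begin{equation*}
\jump{i}\bigl(\Lcal{i^{-a-n}ji^n}\bigr)=\wti{i}+\ep{i}+\epch{i}=a+n+(-a-n)=0 ,
\end{equation*}
and then the equivalence of \ref{jump-lemma-1}, \ref{jump-0-simple}, and \ref{jump-lemma-4} in Lemma \ref{jump-lemma} gives simultaneously that $\ind \Lcal{i^{-a-n}ji^n}\boxtimes\Lii{i^m}$ is irreducible and that it is isomorphic to $\ind\Lii{i^m}\boxtimes\Lcal{i^{-a-n}ji^n}$ for all $m\ge 1$. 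The hard work (the formula \eqref{eqn-jump} and the jump lemma itself) lives in \cite{LV11}; your proposal in effect attempts to redo that work from scratch for this family, and the part you omit is the part that is actually difficult. If you want a self-contained argument at the level of this paper, the efficient route is the jump computation above, not a direct composition-series analysis.
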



\subsection{Jump} \label{jump-section}

When we apply $\ftil{i}$ to irreducible $R(\nu)$-module $M$ for $i \in I$, then Proposition \ref{crystal-op-facts}.\ref{ftil-and-ep} tells us that $\ftil{i}M$ is an irreducible $R(\nu+\alpha_i)$-module with
\begin{equation}
\ep{i}(\ftil{i}M) = \ep{i}(M) + 1.
\end{equation}
We could also ask whether $\epch{i}(\ftil{i}M)$ and $\epch{i}(M)$ differ. Questions like this motivate the introduction of the function $\jump{i}$, which is based on a concept for Hecke algebras in \cite{Gro99}, and was introduced for KLR algebras and studied extensively in \cite{LV11}.

\begin{definition}
Let $M$ be a simple $R(\nu)$-module, and let $i \in I$. Then
\begin{equation}
\jump{i}(M) := \max \{ J \geq 0 \; | \; \epch{i}(M) = \epch{i}(\ftil{i}^J M)\}.
\end{equation}
\end{definition}

\begin{lemma} \cite{LV11} \label{jump-lemma}
Let $M$ be a simple $R(\nu)$-module. The following are equivalent:
\begin{enumerate}
\item \label{jump-lemma-1} $\jump{i}(M) = 0$
\item \label{switch-ftil-ftilch} $\ftil{i} M \cong \ftilch{i}M$
\item \label{jump-0-simple} $\ind M \boxtimes \Lii{i^m}$ is irreducible for all $m \geq 1$
\item \label{jump-lemma-4} $\ind M \boxtimes \Lii{i^m} \cong \ind \Lii{i^m} \boxtimes M$ for all $m \geq 1$
\item \label{jump-formula} $\wti{i}(M) + \ep{i}(M) + \epch{i}(M) = 0$
(recall that $\wti{i}(M) = -\langle h_i, \nu \rangle$),
\item \label{jump-lemma-6} $\ep{i}(\ftilch{i}M) = \ep{i}(M) + 1$
\item $\epch{i}(\ftil{i}M) = \epch{i}(M) + 1$
\end{enumerate}
\end{lemma}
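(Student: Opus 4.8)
The plan is to prove the seven statements pairwise equivalent by running the cycle $(5)\Rightarrow(3)\Rightarrow(4)\Rightarrow(2)\Rightarrow(7)\Rightarrow(1)\Rightarrow(5)$, and then to handle condition $(6)$ by symmetry: since $\sigma^*$ fixes $\wti{i}$, interchanges $\ep{i}\leftrightarrow\epch{i}$ and $\ftil{i}\leftrightarrow\ftilch{i}$, statement $(6)$ for $M$ is exactly statement $(7)$ for $\sigma^*M$, while statement $(5)$ is $\sigma^*$-invariant; so the cycle applied to $\sigma^*M$ yields $(6)\Rightarrow(5)$, and $(2)\Rightarrow(6)$ is immediate, placing $(6)$ inside the web. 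Throughout, all modules are finite-dimensional and simple.

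The links $(2)\Rightarrow(7)$, $(2)\Rightarrow(6)$, $(7)\Rightarrow(1)$, $(3)\Rightarrow(4)$ and $(4)\Rightarrow(2)$ are formal consequences of the machinery already assembled. If $\ftil{i}M\cong\ftilch{i}M$ then the two modules have equal crystal invariants, and since $\ep{i}(\ftil{i}M)=\ep{i}(M)+1$ and $\epch{i}(\ftilch{i}M)=\epch{i}(M)+1$ (part~\ref{cosoc-is-simple} of Proposition~\ref{crystal-op-facts} and its $\sigma$-twist), one reads off both $(6)$ and $(7)$. Statement $(7)$ asserts $\epch{i}(\ftil{i}M)\neq\epch{i}(M)$, which by the definition of $\jump{i}$ forces $\jump{i}(M)=0$, giving $(7)\Rightarrow(1)$. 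For $(3)\Rightarrow(4)$: the classical shuffle product is commutative, so by the Shuffle Lemma $\ch(\ind M\boxtimes\Lii{i^m})=\ch(\ind\Lii{i^m}\boxtimes M)$ as multisets; hence these modules have the same class in $G_0(R)$ (Remark~\ref{character-determines-simple}), so when one is simple so is the other and they coincide. Finally $(4)\Rightarrow(2)$: taking $m=1$, the isomorphism $\ind M\boxtimes\Lii{i}\cong\ind\Lii{i}\boxtimes M$ identifies their simple heads, namely $\ftil{i}M$ and $\ftilch{i}M$.

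The first substantive step is $(5)\Rightarrow(3)$. Fix $m\ge1$ and assume $\wti{i}(M)+\ep{i}(M)+\epch{i}(M)=0$. By part~\ref{ftil-and-ep} of Proposition~\ref{crystal-op-facts}, $\cosoc(\ind M\boxtimes\Lii{i^m})\cong(\ftil{i})^mM$ occurs with multiplicity one and every other composition factor $K$ has $\ep{i}(K)<\ep{i}(M)+m$; moreover $K$ is an $R(\nu+m\alpha_i)$-module, so $\wti{i}(K)=\wti{i}(M)-2m$, and the Shuffle Lemma with Remark~\ref{rem-char-epsilon} gives $\epch{i}(K)\le\epch{i}(M)+m$. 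The key input is the positivity inequality $\wti{i}(N)+\ep{i}(N)+\epch{i}(N)\ge0$ for every simple $N$ -- equivalently $\ep{i}(N)+\epch{i}(N)\ge\langle h_i,\nu\rangle$ -- which one proves by a direct analysis of $\Char(N)$ via the Shuffle Lemma and Remark~\ref{rem-char-epsilon}. Applying it to $K$ and substituting $\wti{i}(M)=-\ep{i}(M)-\epch{i}(M)$ forces $\ep{i}(K)\ge\ep{i}(M)+m$, a contradiction; hence $\ind M\boxtimes\Lii{i^m}$ is simple for all $m$, which is $(3)$.

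The remaining link $(1)\Rightarrow(5)$ is the combinatorial heart, amounting to the \emph{jump formula} $\jump{i}(M)=\wti{i}(M)+\ep{i}(M)+\epch{i}(M)$, and it is where I expect the real difficulty to lie. The approach I would take is a telescoping argument: a one-step computation using $\wti{i}(\ftil{i}N)=\wti{i}(N)-2$, $\ep{i}(\ftil{i}N)=\ep{i}(N)+1$ and the dichotomy $\epch{i}(\ftil{i}N)\in\{\epch{i}(N),\epch{i}(N)+1\}$ (lower bound from the crystal structure, upper bound from the Shuffle Lemma) shows that $d(N):=\wti{i}(N)+\ep{i}(N)+\epch{i}(N)$ drops by $1$ under $\ftil{i}$ exactly when $\jump{i}(N)\ge1$ and is otherwise unchanged -- the same recursion governing $\jump{i}$. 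Using that $\etil{i}$ undoes $\ftil{i}$ on simples (part~\ref{e-and-f-undo-eachother} of Proposition~\ref{crystal-op-facts}) one peels off $\ep{i}(M)$ applications of $\etil{i}$, and then, after checking the analogous behaviour on the $\epch{i}$-side (which hinges on $\ftil{i}$ commuting with $\etilch{i}$ on simples), one peels off $\etilch{i}$'s, reducing the identity to the base case $\ep{i}(M)=\epch{i}(M)=0$, where it asserts $\jump{i}(M)=\wti{i}(M)=-\langle h_i,\nu\rangle$. This base case -- that a simple module with no exposed $i$ on either side has $\langle h_i,\nu\rangle\ge0$ and that $\ind M\boxtimes\Lii{i^m}$ then decomposes in the way predicted by the formula -- is the main obstacle: it is where one must genuinely control the $i$-part of $\Char(M)$ rather than merely shuffle invariants around, and it subsumes the positivity inequality used above. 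Once $(1)\Rightarrow(5)$ is established the cycle closes and the lemma follows.
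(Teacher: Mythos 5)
The paper itself gives no argument for this lemma---its ``proof'' is literally the citation to \cite{LV11}---so the only question is whether your blind attempt is self-contained and correct, and it is not. Your web of soft implications is fine: (2)$\Rightarrow$(6),(7), (7)$\Rightarrow$(1) (granting monotonicity of $\epch{i}$ under $\ftil{i}$), (3)$\Rightarrow$(4)$\Rightarrow$(2), the $\sigma$-twist handling of (6), and the derivation of (5)$\Rightarrow$(3) \emph{assuming} the positivity inequality is a nice argument. But the two load-bearing ingredients are exactly the ones you do not prove. First, the inequality $\wti{i}(N)+\ep{i}(N)+\epch{i}(N)\geq 0$ for every simple $N$ is asserted to follow ``by a direct analysis of $\Char(N)$ via the Shuffle Lemma,'' with no analysis given; this is not a formal character manipulation---it is precisely $\jump{i}(N)\geq 0$ together with the jump formula \eqref{eqn-jump}, i.e.\ the substance of the lemma you are trying to prove. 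Second, for (1)$\Rightarrow$(5) you offer only a telescoping sketch and explicitly leave its base case ($\ep{i}=\epch{i}=0$) open as ``the main obstacle''; a proof that defers its hardest step is not a proof.

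Beyond the acknowledged gaps, the sketch of (1)$\Rightarrow$(5) leans on inputs that are themselves of the same depth as the statement, or are stated too strongly. The dichotomy $\epch{i}(\ftil{i}N)\in\{\epch{i}(N),\epch{i}(N)+1\}$ and the recursion \eqref{ftil-and-jump-ob} are unproven LV11 facts, and the claim that the $\epch{i}$-side peeling ``hinges on $\ftil{i}$ commuting with $\etilch{i}$ on simples'' is false as stated for equal indices: by Lemma \ref{commuting-functors}, $\etilch{i}\ftil{i}M\cong\ftil{i}\etilch{i}M$ holds if and only if $\jump{i}(\etilch{i}M)\neq 1$, and the paper's proof of that criterion itself invokes Lemma \ref{jump-lemma}, so the route as sketched is circular unless these auxiliary facts are established independently. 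In short, the architecture (a cycle of equivalences reduced to the jump formula) is reasonable and close in spirit to how \cite{LV11} organizes the material, but the genuinely hard content---the positivity inequality and the identity $\jump{i}(M)=\wti{i}(M)+\ep{i}(M)+\epch{i}(M)$, which require real control of $\Delta_{i^n}M$ and the $\mathfrak{sl}_2$-type analysis of $\ind M\boxtimes\Lii{i^m}$---is missing.
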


\begin{proof}
See \cite{LV11}.
\end{proof}

\begin{example}
For $c \leq -a_{ij}$ we can calculate $\jump{i}(\Lcal{i^{c-n}ji^n}) = -a_{ij} - c$. Proposition \ref{when-jump-zero-for-Lcal-modules} follows from the equivalence of \ref{jump-lemma-1}, \ref{jump-0-simple}, and \ref{jump-lemma-4} in Lemma \ref{jump-lemma}.
\end{example}

It is shown in \cite{LV11} that 
\begin{equation} \label{ftil-and-jump-ob}
\jump{i}(\ftil{i}M) = \max\{0,\jump{i}(M) -1\} = \jump{i}(\ftilch{i}M).
\end{equation}
This means that if we know $\jump{i}(M)$, we can easily calculate $\jump{i}(\ftil{i}^k M)$ for any $k \geq 0$. 
It is also shown in \cite{LV11} that 
\begin{equation} \label{eqn-jump}
\jump{i}(M) =  
\wti{i}(M) + \ep{i}(M) + \epch{i}(M).
\end{equation}
Using information from $\jump{i}$ we can also determine when the crystal operators commute with their $\sigma$-symmetric versions.

\begin{example}
In type $A^{(1)}_{\ell}$ for $\ell >1$, observe that $\jump{1}(\Lii{0}) = 1$ and 
\begin{equation}
\ftilch{1}\ftil{1} \Lii{0} \cong \ind \Lii{1} \boxtimes \Lcal{01}
\end{equation}
whose character has support $\{[1,0,1],[0,1,1],[0,1,1]\}$. However 
\begin{equation}
\ftil{1}\ftilch{1} \Lii{0} \cong \ind \Lcal{10} \boxtimes \Lii{1}
\end{equation}
whose character has support $\{[1,0,1],[1,1,0],[1,1,0]\}$.
\end{example}

In $A^{(1)}_{1}$, note that $\jump{1}(\Lcal{01}) = 1$ and we can similarly calculate $\ftilch{1} \ftil{1} \Lcal{01} \not\cong \ftil{1}\ftilch{1} \Lcal{01}$ (in fact the former is 8-dimensional while the latter is 4-dimensional).

We shall see below that this phenomenon is special to $\jump{i}(M) = 1$.

\begin{lemma} \label{commuting-functors}
Let $M$ be a simple $R(\nu)$-module. 
\begin{enumerate} 
\item \label{diff-i-j} \cite{LV11}
If $i \neq j$, then
\begin{enumerate} 
\item \label{2-fs} $\ftil{i}\ftilch{j}M \cong \ftilch{j}\ftil{i}M.$ 
\item \label{e-and-f} If $\etilch{j}M \neq \zero$ then $\ftil{i}\etilch{j}M \cong \etilch{j}\ftil{i}M$.
\item \label{f-and-e} If $\etil{j}M \neq \zero$ then $\ftilch{i}\etil{j}M \cong \etil{j}\ftilch{i}M$.
\item \label{2-es} If further $\etil{i}M \neq \zero$ then, $\etil{i}\etilch{j}(M) \cong \etilch{j}\etil{i}(M)$.
\end{enumerate}

\omitt{\item $\jump{i}(\etilch{i}M) = 0$ then $\ftil{i}\etilch{i}(M) \cong \etilch{i}\ftil{i}(M).$
} 

\item \label{commuting-f-same-i} 
\begin{enumerate}
\item \label{2-fs-same} $\jump{i}(M) \neq 1$ if and only if $\ftilch{i} \ftil{i} M \cong \ftil{i}\ftilch{i} M$. 
\item \label{etilchi-ftil-same} If $\etilch{i}M \neq \zero$, then $\jump{i}(\etilch{i}M) \neq 1$ if and only if $\etilch{i}\ftil{i}M \cong \ftil{i}\etilch{i}M$.
\item \label{etil-ftilch-same} If $\etil{i}M \neq \zero$, then $\jump{i}(\etil{i}M) \neq 1$ if and only if $\etil{i}\ftilch{i}M \cong \ftilch{i}\etil{i}M$.
\end{enumerate}
\end{enumerate}
\end{lemma}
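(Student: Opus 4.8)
Part \ref{diff-i-j} is \cite{LV11}, so the work is in Part \ref{commuting-f-same-i}. My plan has two stages: first reduce \ref{etilchi-ftil-same} and \ref{etil-ftilch-same} to \ref{2-fs-same}, and then prove \ref{2-fs-same} by cases on the value of $\jump{i}(M)$. For the reduction, given $M$ with $\etilch{i}M \neq \zero$ I would set $M' := \etilch{i}M$, so that $M \cong \ftilch{i}M'$ by Proposition \ref{crystal-op-facts}.\ref{e-and-f-undo-eachother}. Since $\etilch{i}\ftilch{i}N \cong N$ for every simple $N$, and $\ftilch{i}\etilch{i}N \cong N$ whenever $\epch{i}(N) > 0$ (both instances of Proposition \ref{crystal-op-facts}.\ref{e-and-f-undo-eachother}), applying $\etilch{i}$, respectively $\ftilch{i}$, turns an isomorphism $\ftilch{i}\ftil{i}M' \cong \ftil{i}\ftilch{i}M'$ into an isomorphism $\etilch{i}\ftil{i}M \cong \ftil{i}\etilch{i}M$ and conversely; the side condition $\epch{i}(\ftil{i}\ftilch{i}M') > 0$ needed for the backward direction holds because $\epch{i}(\ftilch{i}M') \geq 1$. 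As $\jump{i}(M') = \jump{i}(\etilch{i}M)$, this matches \ref{2-fs-same} for $M'$ with \ref{etilchi-ftil-same} for $M$; statement \ref{etil-ftilch-same} then follows by applying the automorphism $\sigma$, which swaps $\etil{i} \leftrightarrow \etilch{i}$ and $\ftil{i} \leftrightarrow \ftilch{i}$.

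To prove \ref{2-fs-same}, write $e = \ep{i}(M)$ and $j = \jump{i}(M)$, and recall the facts I will lean on: $\ep{i}(\ftil{i}N) = \ep{i}(N) + 1$ and $\epch{i}(\ftilch{i}N) = \epch{i}(N) + 1$ (Proposition \ref{crystal-op-facts}.\ref{ftil-and-ep} and its $\sigma$-twist); $\ep{i}(\ftilch{i}N) = \ep{i}(N) + 1$ exactly when $\jump{i}(N) = 0$ and $\ep{i}(\ftilch{i}N) = \ep{i}(N)$ otherwise (Lemma \ref{jump-lemma}.\ref{jump-lemma-6}); and $\jump{i}(\ftil{i}N) = \jump{i}(\ftilch{i}N) = \max\{0, \jump{i}(N) - 1\}$ by \eqref{ftil-and-jump-ob}. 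When $j = 0$, Lemma \ref{jump-lemma} gives $\ftil{i}M \cong \ftilch{i}M$ and $\jump{i}$ of this common module is again $0$, so the four modules $\ftil{i}\ftil{i}M$, $\ftilch{i}\ftil{i}M$, $\ftil{i}\ftilch{i}M$, $\ftilch{i}\ftilch{i}M$ all coincide and \ref{2-fs-same} holds. When $j = 1$, then $\jump{i}(\ftil{i}M) = 0$, so $\ep{i}(\ftilch{i}\ftil{i}M) = \ep{i}(\ftil{i}M) + 1 = e + 2$, whereas $j \neq 0$ forces $\ep{i}(\ftilch{i}M) = e$ and hence $\ep{i}(\ftil{i}\ftilch{i}M) = e + 1$; since $e + 2 \neq e + 1$ the two modules are non-isomorphic. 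This settles all of \ref{2-fs-same} except the implication $j \geq 2 \Rightarrow \ftilch{i}\ftil{i}M \cong \ftil{i}\ftilch{i}M$.

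For the case $j \geq 2$ the plan is to work directly with induced modules. From the surjection $\ind \Lii{i} \boxtimes M \twoheadrightarrow \ftilch{i}M$ and exactness of $\ind(-) \boxtimes \Lii{i}$ one gets $\ind \Lii{i} \boxtimes M \boxtimes \Lii{i} \twoheadrightarrow \ftil{i}\ftilch{i}M$, and symmetrically $\ind \Lii{i} \boxtimes M \boxtimes \Lii{i} \twoheadrightarrow \ftilch{i}\ftil{i}M$; hence it suffices to show that $\ind \Lii{i} \boxtimes M \boxtimes \Lii{i}$ has simple cosocle whenever $\jump{i}(M) \geq 2$. By Remark \ref{factoring-through-convention}, applied along a composition series of $\ind \Lii{i} \boxtimes M$, every cosocle constituent of $\ind \Lii{i} \boxtimes M \boxtimes \Lii{i}$ is a quotient of $\ind K \boxtimes \Lii{i}$ for some composition factor $K$ of $\ind \Lii{i} \boxtimes M$, hence has the form $\ftil{i}K$; the factor $K \cong \ftilch{i}M$ occurs with multiplicity one and contributes $\ftil{i}\ftilch{i}M$, while every other composition factor $K$ has $\epch{i}(K) \leq \epch{i}(M)$ by the $\sigma$-twist of Proposition \ref{crystal-op-facts}.\ref{all-comp-have-less-i}. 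The remaining point is to show that, under $\jump{i}(M) \geq 2$, none of these other $\ftil{i}K$ actually appears in the cosocle; I would extract this from a Frobenius-reciprocity analysis of $\Hom\big(\ind \Lii{i} \boxtimes M \boxtimes \Lii{i},\, \ftil{i}K\big)$ via the restriction to $R(\alpha_i)\otimes R(\mu)\otimes R(\alpha_i)$, the hypothesis $\jump{i}(M) \geq 2$ being exactly what is needed to separate $\ftil{i}\ftilch{i}M$ from all competing $\ftil{i}K$.

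The main obstacle is precisely this last case. Every attempt to deduce $j \geq 2 \Rightarrow \ftilch{i}\ftil{i}M \cong \ftil{i}\ftilch{i}M$ purely from the crystal-operator formalism of Proposition \ref{crystal-op-facts} and Lemma \ref{jump-lemma} turns out to be circular — it reduces to \ref{2-fs-same} for $M$ itself — so a genuinely module-theoretic input is required, namely a sufficiently precise control of the composition factors of $\ind \Lii{i} \boxtimes M$ (equivalently of $\ind M \boxtimes \Lii{i}$) to conclude that $\ind \Lii{i} \boxtimes M \boxtimes \Lii{i}$ has simple cosocle when $\jump{i}(M) \neq 1$. Pinning down that analysis is the step I expect to require the most care.
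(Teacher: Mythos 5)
Your reduction of parts \ref{etilchi-ftil-same} and \ref{etil-ftilch-same} to \ref{2-fs-same}, and your treatment of the cases $\jump{i}(M)=0$ and $\jump{i}(M)=1$, match the paper's proof (the paper deduces \ref{etilchi-ftil-same} and \ref{etil-ftilch-same} from \ref{2-fs-same} via Proposition \ref{crystal-op-facts}.\ref{e-and-f-undo-eachother}, and separates the $\jump{i}(M)=1$ case by an $\ep{i}$/$\epch{i}$ count, exactly as you do). But the decisive implication $\jump{i}(M)\geq 2 \Rightarrow \ftilch{i}\ftil{i}M\cong\ftil{i}\ftilch{i}M$ is not proved in your proposal: you only sketch a strategy (show that $\ind \Lii{i}\boxtimes M\boxtimes \Lii{i}$ has simple cosocle) and explicitly defer the analysis that would carry it out. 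That is the heart of the lemma, so as it stands the proposal has a genuine gap. Note also that simplicity of the full cosocle of the triple induction is stronger than what is needed and is not something you establish; all that is required is that the \emph{particular} surjection onto $\ftil{i}\ftilch{i}M$ cannot factor through $\ind \Lii{i}\boxtimes D$ for any composition factor $D\neq\ftil{i}M$ of $\ind M\boxtimes\Lii{i}$.

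Your closing assessment — that the crystal-operator formalism is insufficient here and some new module-theoretic input is needed — is also not accurate: the paper closes this case with precisely the tools you dismiss. Starting from $\ind \Lii{i}\boxtimes M\boxtimes \Lii{i}\twoheadrightarrow \ftil{i}\ftilch{i}M$ and Remark \ref{factoring-through-convention} applied to a composition series of $\ind M\boxtimes\Lii{i}$, one rules out $D\cong\ftilch{i}M$ because $\cosoc(\ind\Lii{i}\boxtimes\ftilch{i}M)\cong(\ftilch{i})^2M$ has $\epch{i}=c+2\neq c+1=\epch{i}(\ftil{i}\ftilch{i}M)$; and for any other factor $D$ (which has $\ep{i}(D)\leq m$, $\epch{i}(D)\leq c$ by Proposition \ref{crystal-op-facts}), a nonzero map would force $\ftilch{i}D\cong\ftil{i}\ftilch{i}M$, hence $\epch{i}(D)=c$ and, by the Shuffle Lemma, $\ep{i}(D)=m$ with $\ep{i}(\ftilch{i}D)=\ep{i}(D)+1$; Lemma \ref{jump-lemma}.\ref{jump-lemma-6} then gives $\jump{i}(D)=0$, so $\ftil{i}D\cong\ftilch{i}D\cong\ftil{i}\ftilch{i}M$ and injectivity of $\ftil{i}$ forces $D\cong\ftilch{i}M$, a contradiction. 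Hence the surjection passes through $\ind\Lii{i}\boxtimes\ftil{i}M$, whose simple cosocle is $\ftilch{i}\ftil{i}M$, yielding the isomorphism. This is exactly the $\ep{i}$, $\epch{i}$, $\jump{i}$ bookkeeping (plus the Shuffle Lemma) that you feared would be circular; it is not, and it is the missing step you would need to supply.
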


\begin{proof} \mbox{}
\begin{enumerate}
\item Consider the short exact sequence,
\begin{equation}
\zero \rightarrow K \rightarrow \ind M \boxtimes \Lii{i} \rightarrow \ftil{i}M \rightarrow \zero
\end{equation}
and recall $\ftil{i}M$ is the unique composition factor of $\ind M \boxtimes \Lii{i}$ such that $\ep{i}(\ftil{i}M) = \ep{i}(M) + 1$, and
that for all composition factors $N$ of $K$, $\ep{i}(N) \leq \ep{i}(M)$. By the exactness of induction there is a second short exact sequence
\begin{equation} \label{induced-exact-sequence}
\zero \rightarrow \ind \Lii{j} \boxtimes K \rightarrow \ind \Lii{j} \boxtimes M \boxtimes \Lii{i} \rightarrow \ind \Lii{j} \boxtimes \ftil{i}M \rightarrow \zero,
\end{equation}
and since $i \neq j$ the Shuffle Lemma tells us that for all
composition factors $N'$ of $\ind \Lii{j} \boxtimes K$, $\ep{i}(N')
\leq \ep{i}(M)$. 
By the Shuffle Lemma and  Frobenius reciprocity
 \begin{equation}
\ep{i}(\ftilch{j}\ftil{i}M) =
\ep{i}(\ftil{i}\ftilch{j}M) =
\ep{i}(M) + 1.
\end{equation}
Hence there can be no nonzero map
\begin{equation}
\ind \Lii{j} \boxtimes K \rightarrow
\ftil{i}
\ftilch{j}
M,
\end{equation} 
so that
the submodule $\ind \Lii{j} \boxtimes K$ is contained in the kernel of
$\beta$, as pictured in \eqref{commuting-ftils-diagram}.

\begin{equation}
 \label{commuting-ftils-diagram}
\begin{tikzpicture}
\node at (0,0) {$\ind \Lii{j} \boxtimes M \boxtimes \Lii{i}$};
\node at (5,.7) {$\ind \Lii{j} \boxtimes \ftil{i}M$};
\node at (5,-.7) {$\ind \ftilch{j}M \boxtimes \Lii{i}$};
\node at (9,.7) {$\ftilch{j}\ftil{i}M$};
\node at (9,-.7) {$\ftil{i}\ftilch{j}M$};
\node at (0,-1.5) {};
\node at (5,2.5) {$\alpha$};
\node at (5,-2.5) {$\beta$};
\draw[thick,->>] (2,.2) -- (3.2,.5);
\draw[thick,->>] (2,0) -- (3.2,-.5);
\draw[thick,->>] (7,.73) -- (8,.73);
\draw[thick,->>] (7,-.73) -- (8,-.73);
\draw[thick,->>] (1,.3) to [out = 50, in = 150] (8.8,1.2);
\draw[thick,->>] (1,-.3) to [out = -50, in = -150] (8.8,-1.2);
\end{tikzpicture} 
\end{equation}
Hence $\beta$ induces 
 a nonzero map (necessarily surjective)
\begin{equation}
\ind \Lii{j} \boxtimes \ftil{i}M \twoheadrightarrow \ftil{i}\ftilch{j}M.
\end{equation}
Because $\ind \Lii{j} \boxtimes \ftil{i} M$ has unique simple quotient $\ftilch{j}\ftil{i}M$, then $\ftilch{j}\ftil{i}M \cong \ftil{i}\ftilch{j}M$. This proves \ref{2-fs}.

The three isomorphisms in \ref{e-and-f}, \ref{f-and-e}, and \ref{2-es} all follow from
\ref{2-fs}. For example, if $\etilch{j}M$ is nonzero, then
\begin{equation}
\ftil{i}M \cong \ftil{i}\ftilch{j}\etilch{j}M \cong \ftilch{j}\ftil{i}\etilch{j}M.
\end{equation}
Applying $\etilch{j}$ to both sides we get \ref{e-and-f}. \ref{f-and-e} and \ref{2-es} follow similarly.

\item
We prove \ref{2-fs-same}. Let $c = \epch{i}(M), m = \ep{i}(M)$.
\begin{itemize}

  \item Suppose $\jump{i}(M) = 0$. Then also $\jump{i} (\ftil{i} M) = \jump{i} (\ftilch{i} M) = 0$ by \eqref{ftil-and-jump-ob}. Thus by Lemma \ref{jump-lemma}
\begin{equation}
\ftilch{i}\ftil{i}M \cong \ftil{i}\ftil{i}M \cong \ftil{i}\ftilch{i}M.
\end{equation}
  
  \item Suppose $\jump{i}(M) = 1$. By Lemma \ref{jump-lemma} and Proposition \ref{crystal-op-facts}, $\ep{i}(\ftil{i}M) = m+1$ but $\epch{i}(\ftil{i}M) = c$. While $\ep{i}(\ftilch{i}M) = m$ but $\epch{i}(\ftilch{i}M) = c+1$. Further by \eqref{ftil-and-jump-ob} $\jump{i}(\ftil{i}M) = \jump{i} (\ftilch{i} M) = 0$. Hence $\ep{i}(\ftilch{i}\ftil{i}M) = m+2$, $\epch{i}(\ftilch{i} \ftil{i} M) = c+1$ whereas $\ep{i}(\ftil{i}\ftilch{i}M) = m+1$, $\epch{i}(\ftil{i} \ftilch{i}M) = c+2$. Thus the two modules cannot be isomorphic.
  
\item Suppose $\jump{i}(M) \geq 2$. Then $\jump{i}(\ftil{i}M) = \jump{i}(\ftilch{i}M) \geq 1$. We calculate 
\begin{align}
\ep{i}(\ftil{i}\ftilch{i} M) = m+1 = \ep{i}(\ftilch{i} \ftil{i} M) \\
\epch{i}(\ftil{i} \ftilch{i} M) = c+1 = \epch{i}(\ftilch{i} \ftil{i} M).
\end{align}
We will show there is no nonzero map 
\begin{equation}
\ind \Lii{i} \boxtimes K \rightarrow \ftil{i} \ftilch{i} M 
\end{equation}
for any proper submodule $K \subseteq \ind M \boxtimes \Lii{i}$. Given we have a surjection 
\begin{equation}
\ind \Lii{i} \boxtimes M \boxtimes \Lii{i} \twoheadrightarrow \ftil{i} \ftilch{i} M
\end{equation}
this means we must have a nonzero map
\begin{equation}
\ind \Lii{i} \boxtimes \ftil{i}M \rightarrow \ftil{i}\ftilch{i}M,
\end{equation}
which will prove the lemma as 
\begin{equation}
\ftilch{i} \ftil{i} M = \cosoc \ind \Lii{i} \boxtimes \ftil{i}M.
\end{equation}
First note there is no nonzero map
\begin{equation}
\ind \Lii{i} \boxtimes \ftilch{i}M \rightarrow \ftil{i} \ftilch{i}M
\end{equation}
as $\cosoc( \ind \Lii{i} \boxtimes \ftilch{i}M) = (\ftilch{i})^2 M$ and $\epch{i}((\ftilch{i})^2 M) = c+ 2 \neq c+1 = \epch{i}(\ftil{i} \ftilch{i}M)$. Let $D$ be any other composition factor of $\ind M \boxtimes \Lii{i}$ apart from $\ftil{i}M$ or $\ftilch{i}M$ (recall the latter occur with multiplicity one as composition factors). Then by Proposition \ref{crystal-op-facts}, $\ep{i}(D) \leq m$,  $\epch{i}(D) \leq c$. If there were a nonzero map $\ind \Lii{i} \boxtimes D \rightarrow \ftil{i} \ftilch{i}M$, it would imply $\ftilch{i} D \cong \ftil{i}\ftilch{i} M$ and so $\epch{i}(\ftilch{i}D) = c+1$ meaning $\epch{i}(D) = c$. Also $m+1 = \ep{i}(\ftilch{i}D) \leq \ep{i}(D) + 1$ by the Shuffle Lemma, forcing $\ep{i}(D) = m$. By Lemma \ref{jump-lemma} this forces $0 = \jump{i}(D)$ and $\ftil{i}D \cong \ftilch{i}D \cong \ftil{i}\ftilch{i} M$ from above, forcing $D \cong \ftilch{i}M$, which we already ruled out. Hence there must be a nonzero map
\begin{equation}
\ind \Lii{i} \boxtimes \ftil{i} M \rightarrow \ftil{i} \ftilch{i} M.
\end{equation}
Now that we have established $\ftilch{i} \ftil{i} M \cong
\ftil{i}\ftilch{i} M$ if and only if $\jump{i}(M) \neq 1$,
statements \ref{etilchi-ftil-same} and \ref{etil-ftilch-same}
follow directly from Proposition
\ref{crystal-op-facts}.\ref{e-and-f-undo-eachother}.

\end{itemize}

\end{enumerate}
\end{proof}

\begin{remark} \label{ep-and-ftil-for-i-neq-j}
Because $\etil{i}$ and $\ftilch{j}$ commute for $i \neq j$, then $\ep{i}(\ftilch{j}M) = \ep{i}(M)$. An equivalent statement holds for $\etilch{i}$, $\ftil{j}$, and $\epch{i}$.
When $\jump{i}(M) \neq 0$, $\ep{i}(\ftilch{i}M) = \ep{i}(M)$.
\end{remark}

\begin{proposition} \label{when-ei-ej-commute}
Let $M$ be a simple $R(\nu)$-module. Let $i,j \in I$ and $a_{ij} = 0$. Then
\begin{enumerate}
\item \label{when-ei-ej-commute1} $\ftil{i}\ftil{j}M \cong \ftil{j}\ftil{i}M$.
\item \label{when-ei-ej-commute2} $\etil{i}\etil{j}M \cong \etil{j}\etil{i}M$.
\end{enumerate}
\end{proposition}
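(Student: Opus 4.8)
I would prove the $\ftil{}$-statement $\ftil{i}\ftil{j}M \cong \ftil{j}\ftil{i}M$ first and then deduce the $\etil{}$-statement from it by a purely formal argument. For that deduction, suppose the $\ftil{}$-statement holds and $N$ is simple. If $\etil{i}\etil{j}N \neq 0$, put $A = \etil{i}\etil{j}N$; then $\ftil{i}A \cong \etil{j}N$ and hence $\ftil{j}\ftil{i}A \cong N$ by Proposition \ref{crystal-op-facts}.\ref{e-and-f-undo-eachother}, so $\ftil{i}\ftil{j}A \cong N$ by hypothesis, whence $\ftil{j}A \cong \etil{i}N$ and finally $A \cong \etil{j}\etil{i}N$ (again Proposition \ref{crystal-op-facts}.\ref{e-and-f-undo-eachother}); in particular $\etil{j}\etil{i}N \neq 0$ and $\etil{i}\etil{j}N \cong \etil{j}\etil{i}N$. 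Interchanging $i$ and $j$ shows $\etil{i}\etil{j}N = 0 \iff \etil{j}\etil{i}N = 0$, so the remaining case reads $0 \cong 0$.

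\textbf{Commutation of $\etil{i}$ with $\ftil{j}$.} The heart of the matter is: \emph{if $a_{ij}=0$, $i\neq j$, $N$ is simple and $\ep{i}(\ftil{j}N) > 0$, then $\etil{i}\ftil{j}N \cong \ftil{j}\etil{i}N$.} This is quick: since $j\neq i$ we have $\ep{i}(L(j)) = 0$, so applying Proposition \ref{when-can-apply-etil-directly}.\ref{ei-case-for-direct-application-of-etili}.\ref{ei-case-for-direct-application-of-etili-b} to the surjection $\ind N\boxtimes L(j)\twoheadrightarrow\ftil{j}N$ (legitimate as $\ep{i}(\ftil{j}N)\neq 0$) gives a surjection $\ind \etil{i}N\boxtimes L(j)\twoheadrightarrow\etil{i}\ftil{j}N$; but $\ind \etil{i}N\boxtimes L(j)$ has \emph{simple} cosocle $\ftil{j}\etil{i}N$ by Proposition \ref{crystal-op-facts}.\ref{ftil-and-ep}.\ref{cosoc-is-simple} (here $\etil{i}N\neq 0$ since $\ep{i}(N)\geq\ep{i}(\ftil{j}N)>0$ by the Shuffle Lemma), so $\etil{i}\ftil{j}N\cong\ftil{j}\etil{i}N$.

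\textbf{Key Lemma and its use.} I would next establish $\ep{i}(\ftil{j}N) = \ep{i}(N)$ for $a_{ij}=0$, $i\neq j$, $N$ simple. The inequality ``$\leq$'' is a Shuffle Lemma count ($\ftil{j}N$ is a quotient of $\ind N\boxtimes L(j)$, and inserting one letter $j\neq i$ cannot lengthen the terminal run of $i$'s). For ``$\geq$'' it suffices, iterating the commutation above, to prove the positivity statement $\ep{i}(N)>0 \Rightarrow \ep{i}(\ftil{j}N)>0$: then $(\etil{i})^k\ftil{j}N \cong \ftil{j}(\etil{i})^k N$ for $0\le k\le\ep{i}(\ftil{j}N)$, and applying positivity to $(\etil{i})^{\ep{i}(\ftil{j}N)}N$ forces $\ep{i}(\ftil{j}N) = \ep{i}(N)$. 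Granting the Key Lemma, the $\ftil{}$-statement follows: $\ftil{i}\ftil{j}M$ and $\ftil{j}\ftil{i}M$ are simple, $\ep{i}(\ftil{i}\ftil{j}M) = \ep{i}(\ftil{j}M)+1\geq 1$ gives $\etil{i}(\ftil{i}\ftil{j}M)\cong\ftil{j}M$, and $\ep{i}(\ftil{i}M)\geq 1$ together with the Key Lemma gives $\ep{i}(\ftil{j}\ftil{i}M)\geq 1$, so the commutation above (with $N=\ftil{i}M$) yields $\etil{i}(\ftil{j}\ftil{i}M)\cong\ftil{j}(\etil{i}\ftil{i}M)=\ftil{j}M$. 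As both modules have $\ep{i}>0$ and equal images under $\etil{i}$, the injectivity of $\etil{i}$ on such simples (Proposition \ref{crystal-op-facts}) gives $\ftil{i}\ftil{j}M\cong\ftil{j}\ftil{i}M$; the argument is symmetric since $a_{ij}=0$ forces $a_{ji}=0$.

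\textbf{What remains / the main obstacle.} For the positivity statement I would argue in two ranges. When $\jump{i}(N) < \ep{i}(N)$ it is automatic: from $\jump{i}(\ftil{j}N)\geq 0$, the jump formula \eqref{eqn-jump}, $\wti{i}(\ftil{j}N)=\wti{i}(N)$ (as $a_{ij}=0$), and $\epch{i}(\ftil{j}N)=\epch{i}(N)$ (Remark \ref{ep-and-ftil-for-i-neq-j}), one gets $\ep{i}(\ftil{j}N) \geq \ep{i}(N)-\jump{i}(N) > 0$. The remaining range $\jump{i}(N)\geq\ep{i}(N)$ is where I expect the real work: here one must analyze $\ind N\boxtimes L(j)$ directly, using $\e{i}L(j)=0$ and the isomorphism $\Delta_{i^m}(\ind N\boxtimes L(j)) \cong \bigl(\ind (\etil{i})^m N\boxtimes L(j)\bigr)\boxtimes L(i^m)$ of $R(\nu-m\alpha_i+\alpha_j)\otimes R(m\alpha_i)$-modules, $m=\ep{i}(N)$ (which itself comes from $\Delta_{i^m}N\cong(\etil{i})^m N\boxtimes L(i^m)$, Proposition \ref{crystal-op-facts}.\ref{ftil-and-ep}.\ref{cosoc-is-simple}, and an iterated application of Proposition \ref{e-and-short-exact-sequence}), and then show that the composition factor of $\ind N\boxtimes L(j)$ supporting a weight space ending in $i^m$ must be $\ftil{j}N$. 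Everything outside this last point is a formal manipulation of the functor identities assembled in Section \ref{sec-KLR}.
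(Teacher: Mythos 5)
Your reductions are sound, and part of your plan coincides with the paper's: deducing the $\etil{}$-statement from the $\ftil{}$-statement via Proposition \ref{crystal-op-facts}.\ref{e-and-f-undo-eachother} is exactly what the paper does. Your route to the $\ftil{}$-statement, however, is left with a genuine gap. Everything in your argument funnels into the claim $\ep{i}(N)>0\Rightarrow\ep{i}(\ftil{j}N)>0$ (equivalently, via your iteration, $\ep{i}(\ftil{j}N)\geq\ep{i}(N)$), and you only prove it in the range $\jump{i}(N)<\ep{i}(N)$; the complementary range is explicitly deferred to an unexecuted analysis of $\ind N\boxtimes \Lii{j}$, with the decisive step (``show that the composition factor supporting a weight ending in $i^{m}$ must be $\ftil{j}N$'') stated but not proved. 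This is not formal bookkeeping: the claim genuinely uses $a_{ij}=0$, since for $a_{ij}<0$ it fails already in rank $2$ (e.g.\ $\ftil{j}\Lii{i}\cong\Lcal{ij}$ has $\ep{i}=0<1=\ep{i}(\Lii{i})$), so some argument specific to the orthogonal case is required, and your proposal does not supply it.

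The gap can in fact be closed quickly with a tool available in the paper that you did not invoke: the quantum Serre relation for $a_{ij}=0$, which says $\e{i}\e{j}=\e{j}\e{i}$ as operators on $G_0(R)$. Set $B=\ftil{j}N$ and $e=\ep{i}(B)$, so $\e{i}^{e+1}B=\zero$ and hence $\e{i}^{e+1}\e{j}[B]=\e{j}\e{i}^{e+1}[B]=0$ in $G_0(R)$. Since $N\cong\etil{j}B$ is the socle of $\e{j}B$, the class $[\e{j}B]-[N]$ is a non-negative combination of simples, and applying the exact functor $\e{i}^{e+1}$ preserves this, so $[\e{i}^{e+1}N]=0$ and thus $\ep{i}(N)\leq e$; combined with your shuffle count this gives your Key Lemma outright, after which your argument closes. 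For comparison, the paper proves the $\ftil{}$-statement by a different device: since $\jump{j}(\Lii{i})=0$ when $a_{ij}=0$, one has $\ind\Lii{i}\boxtimes\Lii{j}\cong\ind\Lii{j}\boxtimes\Lii{i}$, and the kernel-versus-cosocle argument from the proof of Lemma \ref{commuting-functors}.\ref{2-fs} is then repeated with this identification, avoiding your Key Lemma as a separately stated ingredient.
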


\begin{proof}
The proof of part \ref{when-ei-ej-commute1} is similar to the proof of Proposition \ref{commuting-functors}.\ref{2-fs} once we note that by formula \eqref{eqn-jump} and Lemma \ref{jump-lemma},
\begin{equation}
\ind \Lii{i} \boxtimes \Lii{j} \cong \ind \Lii{j} \boxtimes \Lii{i}.
\end{equation}
Part \ref{when-ei-ej-commute2} follows directly from part \ref{when-ei-ej-commute1} by Proposition \ref{crystal-op-facts}.\ref{e-and-f-undo-eachother}. Observe that Part \ref{when-ei-ej-commute2} is true even in the case $\etil{i}\etil{j}M = \zero$.
\end{proof}


\section{$\repL{}$ and the functor $\pr{}$}
\label{sec-pr}

All of the lemmas and propositions in this section
about the interaction of the functors $\pr{}$ and $\ind$ are
adapted from \cite{V15}.

For $\Lambda = \sum_{i \in I} \lambda_i \Lambda_i \in P^+$ define $\cycloI{\Lambda}_\nu$ to be the two-sided ideal of $R(\nu)$ generated by the elements $x_1^{\lambda_{i_{1}}}1_{\und{i}}$ for all $\und{i} \in \seq(\nu)$. When $\nu$ is clear from the context we write, $\cycloI{\Lambda}_\nu = \cycloI{\Lambda}$. The {\emph{cyclotomic KLR algebra of weight $\Lambda$}} is then defined as
\begin{equation}
R^\Lambda = \bigoplus_{\nu \in Q^+} R^{\Lambda}(\nu) \quad \text{where} \quad R^\Lambda(\nu) := R(\nu)/\cycloI{\Lambda}_\nu.
\end{equation}

 The algebra $R^\Lambda(\nu)$ is finite dimensional, \cite{BK09a, LV11}.
The category of finite dimensional
 $R^{\Lambda}(\nu)$-modules is denoted $R^{\Lambda}(\nu) \Mod$ and the
category of finite dimensional $R^\Lambda$-modules is denoted
$R^{\Lambda} \Mod$.
The category of finite dimensional $R$-modules on which
$\cycloI{\Lambda}$ vanishes is denoted
$$\rep{\Lambda}.$$
While we can identify $R^{\Lambda} \Mod$ with $\rep{\Lambda},$
we choose to work with $\rep{\Lambda}.$
We
construct a right-exact functor,
$\pr{}: R(\nu) \Mod \rightarrow R(\nu)\Mod$,  via
\begin{equation} \pr{}M := M/\cycloI{\Lambda}M
\end{equation}
and extend it to 
$\pr{}: R\Mod \rightarrow R\Mod$.
It is customary in the literature to interpret $\pr{}$ as being a
functor from $R\Mod$ to $R^{\Lambda}\Mod$, but in this
paper it will be more convenient to consider it as a functor
$R\Mod \to R\Mod$ or $R\Mod \to \rep{\Lambda}$.
The reader may keep in mind that the image of $\pr{}$ consists of
$R(\nu)$-modules which descend to $R^{\Lambda}(\nu)$-modules.
Observe
that in the opposite direction there is an exact functor $\infl{}:
R^{\Lambda}(\nu) \Mod \rightarrow R(\nu) \Mod$, where $R(\nu)$ acts on
$R^{\Lambda}(\nu)$-module $M$ through the projection map $R(\nu)
\twoheadrightarrow R^{\Lambda}(\nu)$.

\begin{remark} \label{sujrection-onto-pr-remark}
If $M$ is a $R(\nu)$-module and $A$ is a simple
module in $\rep{\Lambda}$ for $\Lambda \in P^+$, then since $\pr{}A \cong A$, the right exactness of $\pr{}$ implies that any surjection $M \twoheadrightarrow A$ gives a surjection $\pr{}M \twoheadrightarrow A$. Similarly, since there always exists a surjection $M \twoheadrightarrow \pr{}M$, given a surjection $\pr{}M \twoheadrightarrow A$ we immediately get a surjection $M \twoheadrightarrow A$. In such situations there is an equivalence between the two surjections $M \twoheadrightarrow A$ and $\pr{}M \twoheadrightarrow A$ which we will henceforth use freely.
\end{remark}

If $M$ is simple then either $\pr{}M = \zero$ or $\pr{}M = M$. There is a useful criterion for determining the action of $\pr{}$ on simple $R(\nu)$-modules given by the following proposition.

\begin{proposition} \label{cyclotomic-char} \cite{LV11}
Let $\Lambda = \sum_{i \in I} \lambda_i \Lambda_i \in P^+$,
$\nu \in Q^+$, and let $M$ be a simple $R(\nu)$-module. Then
$\cycloI{\Lambda} M = \zero$ if and only if $\pr{}M \cong M$
if and only if
$\pr{}M \neq \zero$
if and only if
$$\epch{i}(M) \leq \lambda_i$$
for all $i \in I$. When these
conditions hold $M \in \rep{\Lambda}$, and we may identify $M$ with
$\pr{}M$ 
(or as an $R^{\Lambda}(\nu)$-module).
 \end{proposition}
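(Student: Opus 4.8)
The plan is to separate the formal equivalences from the genuine content. Since $\cycloI{\Lambda}=\cycloI{\Lambda}_\nu$ is a two-sided ideal of $R(\nu)$, the subspace $\cycloI{\Lambda}M$ is an $R(\nu)$-submodule of the simple module $M$; as $M\neq\zero$, $\cycloI{\Lambda}M$ is either $\zero$ or $M$, so $\pr{}M=M/\cycloI{\Lambda}M$ is correspondingly either $M$ or $\zero$. This already gives ``$\cycloI{\Lambda}M=\zero\iff\pr{}M\cong M\iff\pr{}M\neq\zero$,'' and in the nonzero case $\cycloI{\Lambda}$ acts as $0$ on $M$, so $M$ descends to an $R^{\Lambda}(\nu)$-module that we identify with $\pr{}M=M$. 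It then remains to match these conditions with ``$\epch{i}(M)\leq\lambda_i$ for all $i\in I$.'' First I would reduce to the generators of $\cycloI{\Lambda}$: since $R(\nu)$ is unital, $\cycloI{\Lambda}M=\sum_{\und{i}\in\seq(\nu)}R(\nu)\cdot(x_1^{\lambda_{i_1}}1_{\und{i}}M)$, so $\cycloI{\Lambda}M=\zero$ if and only if $x_1^{\lambda_{i_1}}1_{\und{i}}$ annihilates $M$ for every $\und{i}\in\seq(\nu)$. Collecting the $\und{i}$ with a fixed first entry $i$, and using that $x_1$ commutes with the idempotents, this is equivalent to: for every $i\in I$, $x_1^{\lambda_i}$ annihilates $\res^{\nu}_{\alpha_i,\,\nu-\alpha_i}M$, the sum of the weight spaces of $M$ whose $\seq$-label begins with $i$.

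So everything reduces to the \emph{key claim}: for a simple $R(\nu)$-module $M$ and $i\in I$, the nilpotency index of $x_1$ acting on $\res^{\nu}_{\alpha_i,\,\nu-\alpha_i}M$ is exactly $\epch{i}(M)$; granting this, $x_1^{\lambda_i}$ kills $\res^{\nu}_{\alpha_i,\,\nu-\alpha_i}M$ for all $i$ precisely when $\lambda_i\geq\epch{i}(M)$ for all $i$. The structural input is the $\sigma$-symmetric form of parts \ref{cosoc-is-simple}, \ref{etil-is-only-special-compfactor} and \ref{e-and-f-undo-eachother} of Proposition \ref{crystal-op-facts}: writing $c=\epch{i}(M)$ and $D=(\etilch{i})^{c}M$, one has $\epch{i}(D)=0$, $M\cong(\ftilch{i})^{c}D$, and therefore $\res^{\nu}_{c\alpha_i,\,\nu-c\alpha_i}M\cong\Lii{i^{c}}\boxtimes D$, while $\res^{\nu}_{(c+1)\alpha_i,\,\nu-(c+1)\alpha_i}M=\zero$ by Remark \ref{rem-char-epsilon}. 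The lower bound on the nilpotency index — which gives ``$\pr{}M\neq\zero\Rightarrow\epch{i}(M)\leq\lambda_i$'' — is a nilHecke computation: on the unique simple $R(c\alpha_i)$-module $\Lii{i^{c}}$ the leftmost dot $x_1$ has nilpotency index exactly $c$ (the standard coinvariant-algebra computation for the nilHecke algebra, since $x_1$ satisfies $t^{c}$ modulo the vanishing positive-degree symmetric functions, but $x_1^{c-1}\neq\zero$). Hence $x_1^{c-1}$ is nonzero on $\Lii{i^{c}}\boxtimes D\subseteq\res^{\nu}_{\alpha_i,\,\nu-\alpha_i}M$, so if $\epch{i}(M)>\lambda_i$ then (as $\lambda_i\leq c-1$) $x_1^{\lambda_i}1_{\und{i}}M\neq\zero$ for some $\und{i}$ beginning with $i$, so a generator of $\cycloI{\Lambda}$ fails to kill $M$ and $\pr{}M=\zero$.

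The step I expect to be the main obstacle is the matching upper bound: that $x_1^{c}$ kills \emph{all} of $\res^{\nu}_{\alpha_i,\,\nu-\alpha_i}M$, not merely its ``full'' summand $\res^{\nu}_{c\alpha_i,\,\nu-c\alpha_i}M\cong\Lii{i^{c}}\boxtimes D$ — the leftmost weight spaces of $M$ may begin with fewer than $c$ copies of $i$, and such restrictions are typically non-split extensions on which $x_1$ is \emph{a priori} much more nilpotent. I would argue by induction on $c=\epch{i}(M)$, the case $c=0$ being vacuous. For $c\geq1$, $M\cong\ftilch{i}(\etilch{i}M)$ gives a surjection $\ind \Lii{i}\boxtimes\etilch{i}M\twoheadrightarrow M$ with $\epch{i}(\etilch{i}M)=c-1$. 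Working in the induced module with the basis \eqref{ind-basis} $\{\psi_{\sigma_j}\otimes v\}$, where $\sigma_j$ carries the distinguished leftmost strand of the parabolic to position $j$, one uses the dot-past-crossing relation \eqref{dot-past-crossing} to rewrite $x_1\psi_{\sigma_j}$; on the subspace of colour $i$ at strand $1$, $x_1$ then becomes block upper-triangular: it kills the $j=1$ block (strand $1$ there is the $\Lii{i}$-strand, where $x_1$ acts as $0$) and on the $j\geq2$ blocks it acts through the first dot of $\etilch{i}M$, up to correction terms landing in the $j=1$ block. Since by the inductive hypothesis $x_1^{c-1}$ annihilates $\res^{\nu-\alpha_i}_{\alpha_i,\,\nu-2\alpha_i}(\etilch{i}M)$, the operator $x_1^{c-1}$ maps the colour-$i$-at-strand-$1$ subspace into the $j=1$ block, and one more application of $x_1$ kills it. Pushing this through the surjection gives $x_1^{c}\res^{\nu}_{\alpha_i,\,\nu-\alpha_i}M=\zero$, hence $\cycloI{\Lambda}M=\zero$ whenever $\epch{i}(M)\leq\lambda_i$ for all $i$. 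The careful bookkeeping of the correction terms is the technical heart; alternatively one may simply invoke the corresponding statement of \cite{LV11}.
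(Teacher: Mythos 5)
Your proposal is essentially correct, but note that the paper itself gives no proof of this proposition: it is quoted verbatim from \cite{LV11}, so there is nothing internal to compare against. What you have written is a self-contained reconstruction of the standard argument. The formal part is fine: since $\cycloI{\Lambda}$ is a two-sided ideal, $\cycloI{\Lambda}M$ is a submodule of the simple module $M$, which gives the first three equivalences and the identification $M \cong \pr{}M$ in the nonvanishing case; and since $R(\nu)M = M$, annihilation by $\cycloI{\Lambda}$ reduces to annihilation by the generators $x_1^{\lambda_{i_1}}1_{\und{i}}$, i.e.\ to the statement that for each $i$ the nilpotence degree of $x_1$ on the sum of weight spaces $1_{\und{i}}M$ with $i_1 = i$ is exactly $\epch{i}(M)$. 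Your two halves of that key claim both check out. For the lower bound, the $\sigma$-twist of Proposition \ref{crystal-op-facts}.\ref{ftil-and-ep} gives $\res^{\nu}_{c\alpha_i,\nu-c\alpha_i}M \cong \Lii{i^{c}} \boxtimes (\etilch{i})^{c}M$ with $x_1$ acting through the first factor, and on $\Lii{i^{c}}$ the nilHecke/coinvariant computation gives $x_1^{c-1}\neq \zero$, $x_1^{c}=\zero$. For the upper bound, the triangularity induction you sketch does close up: writing the induced module $\ind \Lii{i}\boxtimes \etilch{i}M$ in the basis \eqref{ind-basis}, the only crossing that $x_1$ must pass is $\psi_1$, the correction term $\psi_{j-1}\cdots\psi_2\otimes(u\otimes v)$ lies in the parabolic and hence in the $j=1$ block, and $x_1$ kills that block because it acts by zero on $\Lii{i}$; since dots preserve weights, the inductive hypothesis applies to the $R(\nu-\alpha_i)$-component and one extra application of $x_1$ finishes. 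Pushing through the surjection onto $M \cong \ftilch{i}\etilch{i}M$ is legitimate because weight spaces map onto weight spaces. So the proof is complete in outline and the deferred bookkeeping is genuinely routine; the only caveat is that, as stated, you are re-proving the result of \cite{LV11} rather than matching an argument in this paper, and citing \cite{LV11} (as the paper does) is the intended reading.
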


In this paper we will primarily consider $\Lambda =
\Lambda_i$ in which case $\cycloI{\Lambda_i}_\nu$ is generated by
$x_11_{ii_2 \dots i_m}$ and $1_{ji_2 \dots i_m}, j \neq i$
 ranging over $\und{i}\in \seq(\nu)$.

Notice that Proposition \ref{cyclotomic-char} immediately tells us that if $p$ is a cyclotomic path of tail weight $(\Lambda_j,\Lambda_i)$ then the modules $\Tii{p}{k}$ belong to $\rep{\Lambda_i}$ for any $k \geq 0$. This is part of the motivation for the definition of cyclotomic path in Section \ref{Crystal-Review}.

For $\Lambda = \sum_{i \in I}\lambda_i \Lambda_i \in P^+$ and $M$ an
irreducible $R(\nu)$-module 
set
\begin{equation} \label{phcyclo-formula}
\phcyc{}{j}(M) = \lambda_j + \ep{j}(M) + \wti{j}(M).
\end{equation}
Notice that when $\Lambda = \Lambda_i$ this gives
\begin{equation} \label{special-phcyclo-formula}
\phcyc{i}{j}(M) = \delta_{ij} + \ep{j}(M) + \wti{j}(M).
\end{equation}

\begin{remark} \label{when-phcyc-jump-the-same}
By formula \eqref{special-phcyclo-formula} if $M$ is a simple
module in $\rep{\Lambda_i}$ it follows that
\begin{equation}
\phcyc{i}{j}(M) = \begin{cases}
\delta_{ij} & \text{if  }M = \UnitModule, \\
\jump{j}(M) & \text{otherwise.} \\
\end{cases}
\end{equation}
\end{remark}

\begin{proposition}\cite{LV11} \label{interp-of-phcyc}
Let $M$ be a simple $R(\nu)$-module with $\pr{}M \neq \zero$. Then
\begin{equation}
\phcyc{}{j}(M) = \max\{k \in \mathbb{Z} \; | \; \pr{}\ftil{j}^k M \neq \zero \}.
\end{equation}
\end{proposition}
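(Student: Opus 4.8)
The plan is to translate the statement entirely into the numerical invariants $\epch{j}$ and $\jump{j}$ and then compare two closed formulas. First I would invoke Proposition~\ref{cyclotomic-char}: for a simple module $N$ one has $\pr{}N\neq\zero$ if and only if $\epch{i}(N)\le\lambda_i$ for all $i\in I$. Applying this to $N=\ftil{j}^kM$, which is a nonzero simple module for every $k\ge 0$ by Proposition~\ref{crystal-op-facts}, the condition $\pr{}\ftil{j}^kM\neq\zero$ becomes a finite list of inequalities. The key reduction is that for $i\neq j$ the functors $\etilch{i}$ and $\ftil{j}$ commute, so by Remark~\ref{ep-and-ftil-for-i-neq-j} we get $\epch{i}(\ftil{j}^kM)=\epch{i}(M)\le\lambda_i$ for all $k$, the last inequality coming from $\pr{}M\neq\zero$. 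Hence the only inequality that can ever fail is the one at $i=j$, and the proposition reduces to computing $\max\{k\ge 0\mid \epch{j}(\ftil{j}^kM)\le\lambda_j\}$.

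Next I would track $\epch{j}(\ftil{j}^kM)$ explicitly. Set $c=\epch{j}(M)$ and $r=\jump{j}(M)$, and note $c\le\lambda_j$ since $\pr{}M\neq\zero$. Iterating \eqref{ftil-and-jump-ob} gives $\jump{j}(\ftil{j}^kM)=\max\{0,\,r-k\}$. Now argue by induction on $k$: when $\jump{j}(\ftil{j}^kM)\ge 1$, i.e. $k<r$, the very definition of $\jump{j}$ forces $\epch{j}(\ftil{j}^{k+1}M)=\epch{j}(\ftil{j}^kM)$; when $\jump{j}(\ftil{j}^kM)=0$, i.e. $k\ge r$, the equivalence of conditions (1) and (7) in Lemma~\ref{jump-lemma} gives $\epch{j}(\ftil{j}^{k+1}M)=\epch{j}(\ftil{j}^kM)+1$. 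Therefore $\epch{j}(\ftil{j}^kM)=c$ for $0\le k\le r$ and $\epch{j}(\ftil{j}^kM)=c+(k-r)$ for $k\ge r$.

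Finally I would read off the answer. Since $c\le\lambda_j$, the inequality $\epch{j}(\ftil{j}^kM)\le\lambda_j$ holds automatically for all $k\le r$, while for $k>r$ it reads $c+(k-r)\le\lambda_j$, i.e. $k\le r+(\lambda_j-c)$. Hence the maximum is $r+\lambda_j-c=\jump{j}(M)+\lambda_j-\epch{j}(M)$. On the other hand, \eqref{eqn-jump} gives $\wti{j}(M)+\ep{j}(M)=\jump{j}(M)-\epch{j}(M)$, so $\phcyc{}{j}(M)=\lambda_j+\ep{j}(M)+\wti{j}(M)=\lambda_j+\jump{j}(M)-\epch{j}(M)$, which agrees. (Both quantities are $\ge 0$, and the set in question contains $k=0$ and is bounded above, so the maximum exists.)

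Everything here is bookkeeping once the right invariants are in play; the step I would call the crux is the transition at $k=r$, namely verifying that $\epch{j}$ stays constant as long as $\jump{j}$ is positive and then increases by exactly $1$ at each further application of $\ftil{j}$. This is precisely what the definition of $\jump{j}$ together with \eqref{ftil-and-jump-ob} and the equivalence (1)$\Leftrightarrow$(7) of Lemma~\ref{jump-lemma} encode, so no fresh estimate (for instance from the Shuffle Lemma) is needed — the only real care is in the indexing of the induction and in confirming that the $i\neq j$ inequalities are permanently inert.
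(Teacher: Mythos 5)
The paper offers no argument for this proposition: it is quoted from \cite{LV11} (and is then used, e.g., in the proof of Proposition \ref{pr-facts}\eqref{pr-fact-5}), so there is no in-paper proof to measure you against. Your derivation is a correct, self-contained reconstruction from facts the paper does state: Proposition \ref{cyclotomic-char} converts nonvanishing of $\pr{}\ftil{j}^k M$ into the inequalities $\epch{i}(\ftil{j}^k M)\le\lambda_i$; Remark \ref{ep-and-ftil-for-i-neq-j} makes the $i\neq j$ inequalities inert; and the string $\epch{j}(\ftil{j}^k M)=c$ for $k\le r$, $=c+(k-r)$ for $k\ge r$, together with \eqref{eqn-jump}, gives $\lambda_j+r-c=\phcyc{}{j}(M)$. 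The bookkeeping at the end checks out, including the $\Rcal{}$-free edge cases ($M=\UnitModule$, and $k=0$ always lying in the set since $\pr{}M\neq\zero$).

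The only place you lean on something not literally contained in the cited statements is the claim that ``the very definition of $\jump{j}$'' forces $\epch{j}(\ftil{j}^{k+1}M)=\epch{j}(\ftil{j}^k M)$ when $\jump{j}(\ftil{j}^k M)\ge 1$. The definition of $\jump{j}$ only says equality of $\epch{j}$ holds at the top of the string, not at every intermediate step; what you actually need is the dichotomy $\epch{j}(\ftil{j}N)\in\{\epch{j}(N),\epch{j}(N)+1\}$, after which Lemma \ref{jump-lemma} (1)$\Leftrightarrow$(7) does exactly what you want. That dichotomy is standard and one line each way: the upper bound is the Shuffle Lemma applied to $\ind N\boxtimes\Lii{j}\twoheadrightarrow\ftil{j}N$, and the lower bound follows from Frobenius reciprocity, since $N\boxtimes\Lii{j}$ embeds in $\res\,\ftil{j}N$, so $\supp{\ftil{j}N}$ contains a sequence with $\epch{j}(N)$ leading $j$'s (Remark \ref{rem-char-epsilon}). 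Make that explicit and the proof is complete; alternatively, a slightly slicker route is to note $\phcyc{}{j}(\ftil{j}M)=\phcyc{}{j}(M)-1$ directly from \eqref{phcyclo-formula} and Proposition \ref{crystal-op-facts}, and combine it with the characterization $\pr{}N\neq\zero$ iff $\phcyc{}{i}(N)\ge 0$ for all $i$ stated in Section \ref{sec-LV}, which avoids tracking $\epch{j}$ along the string altogether.
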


\subsubsection{Module-theoretic model of  $B(\Lambda)$}
\label{sec-LV}

Let $M$ be a simple $R(\nu)$-module.  Set
\begin{equation}
\wt(M) =
- \nu \quad \text{ and } \quad \wti{i}(M) = -\langle h_i, \nu
\rangle. 
\end{equation}
Let $\Irr R$ be the set of isomorphism
classes of simple $R$-modules and $\Irr R^\Lambda$ be the set of
isomorphism classes of simple
modules in $\rep{\Lambda}$.
In \cite{LV11}
it was shown that the tuple  $(\Irr R, \ep{i}, \p_i, \etil{i},
\ftil{i},  \wt)$ defines a crystal isomorphic to $B(\infty)$ and
$(\Irr R^\Lambda, \ep{i}, \phcyc{}{i}, \etil{i}, \ftil{i},  \wt)$
defines a crystal isomorphic to the highest weight crystal
$B(\Lambda)$.

\medskip

From property \eqref{ftil-and-jump-ob}
 of $\jump{i}$ it is clear that if we apply $\ftil{i}$ sufficiently many times to any simple module $M \in \rep{\Lambda}$,
then eventually we will reach an $n$ for which  
\begin{equation}
\epch{i}(\ftil{i}^n M) > \lambda_i
\end{equation}
and so
$\pr{}\ftil{i}^n M = \zero$. Proposition \ref{interp-of-phcyc} says 
that $\phcyc{}{i}$ measures this  for simple 
modules in $\rep{\Lambda}$.
In fact it
is true that $\pr{}M \neq \zero$ if and only if $\phcyc{}{i}(M) \geq
0$ for all $i \in I$. Above we saw that the function
$\phcyc{}{i}$ is part of a crystal datum.
With this is mind, we mimic the conventions usually used in the theory of crystals and define
\begin{equation}
\phcycall{}(M) := \sum_{i \in I} \phcyc{}{i}(M) \Lambda_i,
\end{equation}
and
\begin{equation}
\ep{}(M) := \sum_{i \in I} \ep{i}(M)\Lambda_i, \quad\quad \epch{}(M) := \sum_{i \in I} \epch{i}(M)\Lambda_i.
\end{equation}

\begin{lemma} \label{simpe-quotients-and-pr}
Let $M$ be an $R(\nu)$-module such that $\pr{}M \neq \zero$. Let $Q$ be a simple quotient of $\pr{}M$. Then $\pr{}Q = Q$.
\end{lemma}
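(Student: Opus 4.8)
The plan is to reduce the statement to the elementary observation that the ideal $\cycloI{\Lambda}$ annihilates $\pr{}M$ for \emph{every} $R(\nu)$-module $M$, and that this vanishing is inherited by quotients. So simplicity of $Q$ will play no role; what is really being used is that the essential image of $\pr{}$ (equivalently, the subcategory $\rep{\Lambda}$) is closed under taking quotient modules, which is immediate since it is defined by the vanishing of a two-sided ideal.

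First I would recall that $\cycloI{\Lambda}M$ is an $R(\nu)$-submodule of $M$ (because $\cycloI{\Lambda}$ is a two-sided ideal of $R(\nu)$), so that $\pr{}M = M/\cycloI{\Lambda}M$ is a genuine $R(\nu)$-module. Then for $a \in \cycloI{\Lambda}$ and $m \in M$ we have $am \in \cycloI{\Lambda}M$, hence $a$ acts as zero on the class of $m$ in $M/\cycloI{\Lambda}M$; that is, $\cycloI{\Lambda}\cdot \pr{}M = \zero$.

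Next, fix a surjection $\pi\colon \pr{}M \twoheadrightarrow Q$. Since $R(\nu)$ acts on $Q$ through $\pi$, any $a \in \cycloI{\Lambda}$ satisfies $a\,\pi(v) = \pi(av) = \pi(0) = 0$ for all $v \in \pr{}M$, so $\cycloI{\Lambda}Q = \zero$. Therefore $\pr{}Q = Q/\cycloI{\Lambda}Q = Q$, which is the claim. (Alternatively, once $\cycloI{\Lambda}Q = \zero$ is established, one may invoke Proposition \ref{cyclotomic-char}: $Q$ lies in $\rep{\Lambda}$, so $\pr{}Q \cong Q$; or appeal to Remark \ref{sujrection-onto-pr-remark}.)

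I do not expect any genuine obstacle: the argument is purely formal. The only points requiring a line of care are verifying that $\cycloI{\Lambda}M$ really is a submodule (so $\pr{}M$ makes sense) and that the $R(\nu)$-action on $Q$ factors through $\pi$ — both of which are routine, given that $\cycloI{\Lambda}$ is two-sided. If a slicker phrasing is preferred, one can simply say that $\pr{}M$, and hence its quotient $Q$, is naturally an $R^{\Lambda}(\nu)$-module inflated to $R(\nu)$, so $\pr{}Q = Q$.
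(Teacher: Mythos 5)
Your proof is correct, and it is essentially the paper's argument made explicit: the paper disposes of the lemma in one line by citing the right exactness of $\pr{}$, which is just the functorial packaging of your observation that $\cycloI{\Lambda}$ annihilates $\pr{}M$ and hence any quotient of it, so that $\pr{}Q = Q$. Your remark that simplicity of $Q$ is not needed is also accurate; nothing more is required.
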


\begin{proof}
This follows from the right exactness of $\pr{}$.
\end{proof}

\subsubsection{Interaction of $\pr{}$ and $\ind$}

The following is a list of
useful facts about the way that the functor $\pr{}$ interacts with
induction.

\begin{proposition}
\label{pr-facts}
Let $\mu, \nu \in Q^+$,
$\Lambda \in P^+$, $i,j \in I$,
and let $M$ be simple $R(\mu)$-module and $N$ a simple $R(\nu)$-module.
\begin{enumerate} [(a)]
\item \label{part-1-pr-facts} If $\pr{}M = \zero$ then $\pr{} \ind M \boxtimes N = \zero$.
\item \label{vanishing-pr-prop}
If $\pr{}\ind M \boxtimes \Lii{i^c} =
\zero$ and $\epch{i}(N) \ge c$ then $\pr{} \ind M \boxtimes N = \zero$.
\item \label{pr-fact-5} If $c > \phcyc{}{i}(M)$ then $\pr{}\ind M
\boxtimes \Lii{i^c} = \zero$.
\item \label{pr-ind-determine-rep} If $\pr{} \ind M \boxtimes N \neq \zero$ then $N \in \rep{\phcycall{}(M)}$.
\item \label{f-and-pr} Let $\p = \p_i^\Lambda (M)$, then $\pr{}\ind M \boxtimes \Lii{i^\p} \cong \ftil{i}^\p M$.
\item \label{pr-fact-4} If $\pr{}C = M$ then $\pr{}\ind C \boxtimes N \cong \pr{}\ind M \boxtimes N$.
\item \label{two-nodes-to-add} Suppose $a_{ij} < 0$, $\ep{}(M) = \Lambda_i$, and $\pr{}\ind \etil{i}M \boxtimes \Lii{j} = \zero$. Then $\pr{}\ind M \boxtimes \Lii{j} \cong \pr{}\ftil{j}M$. In particular, if $\pr{}M \neq \zero$ then $\ep{}(\ftil{j}M) = \Lambda_j$.
\end{enumerate}
\end{proposition}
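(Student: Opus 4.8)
The plan is to prove the seven statements largely in the order given, leveraging the fact that each is a ``cyclotomic'' refinement of a fact about plain induction, so that $\pr{}$ only interacts with the top (cosocle) layer. Throughout I would use Remark \ref{sujrection-onto-pr-remark} freely to pass between surjections $M \twoheadrightarrow A$ and $\pr{}M \twoheadrightarrow A$, and Proposition \ref{cyclotomic-char} to read off when a simple module survives $\pr{}$ in terms of $\epch{i}$.

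For part \eqref{part-1-pr-facts}: since $\pr{}M = \zero$, Proposition \ref{cyclotomic-char} gives $\epch{i}(M) > \lambda_i$ for some $i$. Any composition factor $K$ of $\ind M \boxtimes N$ has $\epch{i}(K) \geq \epch{i}(M) > \lambda_i$ by the Shuffle Lemma (the leftmost $i$'s of a weight $\und{k}$ in the support of $M$ persist to the left of $\und{kn}$), so $\pr{}K = \zero$ for every $K$, hence $\pr{}\ind M \boxtimes N = \zero$ by right-exactness of $\pr{}$. Part \eqref{pr-fact-5} is immediate from Proposition \ref{interp-of-phcyc} together with part \eqref{part-1-pr-facts}: write $\ind M \boxtimes \Lii{i^c} = \ind(\ind M \boxtimes \Lii{i^{\p}}) \boxtimes \Lii{i^{c-\p}}$ where $\p = \phcyc{}{i}(M)$; since $\pr{}\ftil{i}^{\p+1}M = \zero$ and $\cosoc \ind M \boxtimes \Lii{i^{\p+1}} = \ftil{i}^{\p+1}M$, every composition factor of $\ind M \boxtimes \Lii{i^{\p+1}}$ dies under $\pr{}$ by Proposition \ref{crystal-op-facts}.\ref{ftil-and-ep}.\ref{all-comp-have-less-i} and Proposition \ref{cyclotomic-char}; then apply part \eqref{part-1-pr-facts}. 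Part \eqref{f-and-pr} is Proposition \ref{crystal-op-facts}.\ref{ftil-and-ep}.\ref{cosoc-is-simple} combined with the observation that $\ftil{i}^{\p}M$ is the unique composition factor of $\ind M \boxtimes \Lii{i^{\p}}$ surviving $\pr{}$ (all others have $\ep{i} < \ep{i}(M)+\p$, hence $\epch{i}$ too large after the jump computation via \eqref{eqn-jump}); so $\pr{}\ind M \boxtimes \Lii{i^{\p}} \cong \ftil{i}^{\p}M$. Part \eqref{pr-fact-4} follows from right-exactness: the surjection $C \twoheadrightarrow M = \pr{}C$ induces $\ind C \boxtimes N \twoheadrightarrow \ind M \boxtimes N$ with kernel $\ind (\ker)\boxtimes N$, and $\ker$ has $\pr{}(\ker) = \zero$ arguments show the kernel contributes nothing after $\pr{}$; more cleanly, both $\pr{}\ind C \boxtimes N$ and $\pr{}\ind M \boxtimes N$ are the largest quotient of $\ind C \boxtimes N$ in $\rep{\Lambda}$ on which $\cycloI{\Lambda}$ acts as zero, and these coincide since $\cycloI{\Lambda}$ already kills $C$ down to $M$ in the relevant weight spaces.

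Parts \eqref{vanishing-pr-prop} and \eqref{pr-ind-determine-rep} are the heart of the matter. For \eqref{pr-ind-determine-rep}: if $\pr{}\ind M \boxtimes N \neq \zero$, pick a simple quotient $A$; then $\ind M \boxtimes N \twoheadrightarrow A$ with $A \in \rep{\Lambda}$ for $\Lambda = \phcycall{}(M)$ being the claim. By Frobenius reciprocity $A$ occurs in $\ind M \boxtimes N$, and Proposition \ref{crystal-op-facts} applied to the $\ftil{i}$-string from $M$, together with the formula $\phcyc{}{i}(M) = \lambda_i + \ep{i}(M) + \wti{i}(M)$, shows that $\epch{i}(A) \leq \epch{i}(N) + (\text{something bounded by } \phcyc{}{i}(M))$; the precise bookkeeping (which is exactly the content of the analogous lemma in \cite{V15}) yields $\epch{i}(A) \leq \phcyc{}{i}(M)$, i.e. $A \in \rep{\phycall}$. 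I expect the main obstacle to be precisely this inequality $\epch{i}(A) \leq \phcyc{}{i}(M)$: one must track how left-$i$'s in characters can accumulate under induction against $N$, and argue that the ``budget'' is exactly $\phcyc{}{i}(M) = \delta + \ep{i}(M) + \wti{i}(M)$ rather than something larger — this is a jump-style argument combining the Shuffle Lemma, \eqref{eqn-jump}, and Proposition \ref{crystal-op-facts}.\ref{all-comp-have-less-i}. Part \eqref{vanishing-pr-prop} then follows: if $\pr{}\ind M \boxtimes \Lii{i^c} = \zero$ then by part \eqref{pr-fact-5}'s contrapositive reasoning (or directly by Proposition \ref{interp-of-phcyc}) $c > \phcyc{}{i}(M)$, so if $\epch{i}(N) \geq c > \phcyc{}{i}(M)$ then by part \eqref{pr-ind-determine-rep} no simple quotient $A$ of $\ind M \boxtimes N$ can have $\epch{i}(A) \leq \phcyc{}{i}(M)$ while also surviving — more carefully, factor $N$ through its $\etilch{i}$-string: $N$ is a quotient-relevant piece of $\ind \Lii{i^{\epch{i}(N)}} \boxtimes \etilch{i}^{\epch{i}(N)}N$ up to the usual caveats, so $\ind M \boxtimes N$ has all composition factors with $\epch{i} \geq$ (the left $i$'s from $M$) $+\, c$ in a range forcing $\pr{} = \zero$; again invoke part \eqref{part-1-pr-facts}-style vanishing.

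Finally part \eqref{two-nodes-to-add}: assume $a_{ij} < 0$, $\ep{}(M) = \Lambda_i$ (so $\ep{i}(M) = 1$, $\ep{k}(M) = 0$ for $k \neq i$, in particular $\ep{j}(M) = 0$), and $\pr{}\ind \etil{i}M \boxtimes \Lii{j} = \zero$. By Proposition \ref{e-and-short-exact-sequence} there is a short exact sequence $\zero \to \ind M \boxtimes \e{j}\Lii{j} \to \e{j}(\ind M \boxtimes \Lii{j}) \to \ind \e{j}M \boxtimes \Lii{j} \to \zero$; since $\e{j}\Lii{j} = \zero$ and $\e{j}M = \zero$ (as $\ep{j}(M) = 0$), we get $\e{j}(\ind M \boxtimes \Lii{j}) = \zero$ — wait, that shows $\ind M \boxtimes \Lii{j}$ has $\ep{j} = 0$, which cannot be right; instead I use that $\cosoc \ind M \boxtimes \Lii{j} = \ftil{j}M$ is simple with $\ep{j}(\ftil{j}M) = \ep{j}(M)+1 = 1$ by Proposition \ref{crystal-op-facts}.\ref{ftil-and-ep}.\ref{cosoc-is-simple}. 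Since $\pr{}M \neq \zero$ and the only candidate simple quotient of $\ind M \boxtimes \Lii{j}$ surviving $\pr{}$ must be $\ftil{j}M$ (all other composition factors $K$ have $\ep{j}(K) < 1$, i.e. $\ep{j}(K) = 0$, and I must rule these out using the hypothesis $\pr{}\ind \etil{i}M \boxtimes \Lii{j} = \zero$: any such $K$ with $\pr{}K \neq \zero$ would, via $\etil{i}$ applied to $\ind M \boxtimes \Lii{j}$ — which has $\ep{i} \le \ep{i}(M) = 1$ — produce a surjection onto $\etil{i}K$ from $\ind \etil{i}M \boxtimes \Lii{j}$, contradiction unless $\ep{i}(K) = 0$; but then weight considerations from $a_{ij} < 0$ force... this is the delicate point), we conclude $\pr{}\ind M \boxtimes \Lii{j} \cong \ftil{j}M = \pr{}\ftil{j}M$. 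The ``in particular'' is then immediate: $\ep{}(\ftil{j}M)$ has $j$-component $1$, and for $k \neq j$ we have $\ep{k}(\ftil{j}M) = \ep{k}(M)$ when $k \neq i$ (this needs $a_{jk}$ considerations / Lemma \ref{commuting-functors}) and $\ep{i}(\ftil{j}M) = 0$ because... — so $\ep{}(\ftil{j}M) = \Lambda_j$. I expect the trickiest bookkeeping in \eqref{two-nodes-to-add} to be showing the $i$-component of $\ep{}(\ftil{j}M)$ vanishes, which is exactly where the hypothesis $\pr{}\ind \etil{i}M \boxtimes \Lii{j} = \zero$ and $a_{ij} < 0$ must be combined, presumably through a Serre-relation / jump argument as in \cite{V15}.
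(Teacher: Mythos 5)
Your vanishing arguments rest on a claim that is false: in part (a) you assert that every composition factor $K$ of $\ind M \boxtimes N$ satisfies $\epch{i}(K) \ge \epch{i}(M)$. Already for $M = \Lii{i}$, $N = \Lii{j}$ with $a_{ij} = -1$, the induced module has $\Lcal{ji}$ as a composition factor, and $\epch{i}(\Lcal{ji}) = 0 < 1 = \epch{i}(M)$. The correct statement, and the one the paper uses, concerns simple \emph{quotients}: if $\pr{}\ind M \boxtimes N \neq \zero$ it has a simple quotient $Q$ with $\pr{}Q = Q$, and Frobenius reciprocity places $M \boxtimes N$ inside $\res Q$, so $\epch{i}(Q) \ge \epch{i}(M) > \lambda_i$, a contradiction. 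The same caution applies wherever you invoke ``part (a)-style vanishing'' with a non-simple left factor, as in your (b) and (c); that extension is legitimate, but it requires the ideal containment $R(\mu+\nu)\cycloI{\Lambda}_{\mu} \subseteq \cycloI{\Lambda}_{\mu+\nu}$, which is exactly the computation the paper carries out in part (f). Relatedly, your first argument for (f) asserts $\pr{}(\ker)=\zero$ for $\ker = \cycloI{\Lambda}C$, which is neither justified nor true in general (e.g.\ $C = \mathbb{C}[x_1]/(x_1^2)$ over $R(\alpha_i)$ with $\Lambda = \Lambda_i$ has $\cycloI{\Lambda}C \cong \Lii{i}$, which survives $\pr{}$); it is the ideal containment, not vanishing of $\pr{}(\ker)$, that makes the kernel irrelevant after applying $\pr{}$.

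The logical structure of (b) and (d) is also off. You deduce (b) from (d), but your argument for (d) bounds $\epch{i}(A)$ for a simple quotient $A$, whereas (d) asserts a bound on $\epch{i}(N)$, i.e.\ $N \in \rep{\phcycall{}(M)}$; the ``precise bookkeeping'' is never supplied, so (d) --- and with it your (b) --- is not actually proved. The paper goes the other way: (b) follows from the surjection $\ind M \boxtimes \Lii{i^c} \boxtimes (\etilch{i})^c N \twoheadrightarrow \ind M \boxtimes N$ (which you do sketch) together with (a) and right exactness of $\pr{}$, and then (d) is simply the contrapositive combination of (c) and (b). Finally, (g) is left open exactly at its crux: you neither rule out composition factors $K \not\cong \ftil{j}M$ surviving $\pr{}$ nor show $\ep{i}(\ftil{j}M)=0$. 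The paper's mechanism is exactness of $\e{i}$: since $\e{i}(\ind M \boxtimes \Lii{j}) \cong \ind \etil{i}M \boxtimes \Lii{j}$ is irreducible (its $j$-jump is $0$) and is killed by $\pr{}$ by hypothesis, one gets $\e{i}(\pr{}\ind M \boxtimes \Lii{j}) = \zero$; because $\ep{}(M) = \Lambda_i$ forces every weight of $M$ to end in $i$, every composition factor $D \not\cong \ftil{j}M$ has $\e{i}D \neq \zero$ and hence cannot occur in $\pr{}\ind M \boxtimes \Lii{j}$, and the same computation yields $\ep{i}(\ftil{j}M)=0$. Your outlines of (c) and (e) are essentially sound.
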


\begin{proof} We write
\begin{equation}
\Lambda = \sum_{i \in I} \lambda_i \Lambda_i.
\end{equation}
\begin{enumerate} [(a)]

\item If $\pr{}M = \zero$, then by Proposition \ref{cyclotomic-char} there is some $i \in I$ such that $\epch{i}(M) > \lambda_i$. Suppose that $\pr{}\ind M \boxtimes N \neq \zero$. Then it has some
simple quotient $Q$, and there are surjections
\begin{equation} \label{double-surjection-to-simple}
\ind M \boxtimes N \twoheadrightarrow \pr{}\ind M \boxtimes N \twoheadrightarrow Q.
\end{equation}
By Lemma \ref{simpe-quotients-and-pr} $\pr{}Q = Q$. By Frobenius reciprocity $\res_{\mu,\nu}^{\mu + \nu}Q$ has $M \boxtimes N$ as a $(R(\mu) \otimes R(\nu))$-submodule. But Remark \ref{rem-char-epsilon} then implies $\epch{i}(Q) > \lambda_i$ so that $\pr{}Q = \zero$, a contradiction.
\\
\item If $\epch{i}(N) \ge c$ then there is a surjection,
\begin{equation}
\ind \Lii{i^c} \boxtimes (\etilch{i})^c N \twoheadrightarrow N
\end{equation}
and by the exactness of induction a surjection
\begin{equation}
\ind M \boxtimes \Lii{i^c} \boxtimes (\etilch{i})^cN \twoheadrightarrow \ind M \boxtimes N.
\end{equation}
If $\pr{}\ind M \boxtimes \Lii{i^c} = \zero$, then by part
\eqref{part-1-pr-facts} above and the right exactness of $\pr{}$,
$\pr{} \ind M \boxtimes N = \zero$.  \\
\item
This follows from Proposition \ref{interp-of-phcyc} and the fact that
the induced module has unique simple quotient $\ftil{i}^c M$;
or see \cite{LV11}.

\item We will show the contrapositive.
 Suppose $N \notin \rep{\phcycall{}(M)}$. Then by Proposition \ref{cyclotomic-char}, there is some $i \in I$ such that $\epch{i}(N) = c > \phcyc{}{i}(M)$. By part \eqref{pr-fact-5}, $\pr{}\ind M
\boxtimes \Lii{i^c} = \zero$. Part \eqref{vanishing-pr-prop} then implies that $\pr{}\ind M \boxtimes N = \zero$. 

\item Consider the exact sequence,
\begin{equation}
\zero \rightarrow K \rightarrow \ind M \boxtimes L(i^{\p}) \rightarrow \ftil{i}^{\p}M \rightarrow \zero.
\end{equation}
$\ftil{i}^\p M$ is the unique composition factor of $\ind M \boxtimes
\Lii{i^\p}$ such that $\ep{i}(\ftil{i}^\p M) = \p + \ep{i}(M)$, and 
$\ep{i}(D) < \p + \ep{i}(M)$ for all composition factors $D$ of $K$.
All composition factors $D$ of $K$ have the same weight as $\ftil{i}^\p
M$.
By \eqref{phcyclo-formula} and Proposition \ref{crystal-op-facts},
$\phcyc{}{i}(D) = \lambda_i + \ep{i}(D) + \wti{i}(D) 
	< \lambda_i + \ep{i}(\ftil{i}^\p M) + \wti{i}(\ftil{i}^\p M)
	= \phcyc{}{i}(\ftil{i}^\p M) = 0$.
In particular this shows $\pr{}K=\zero$ so by the right exactness
of $\pr{}$ we get \eqref{f-and-pr}.

\item Consider the diagram in \eqref{pr-diagram-1},
\begin{equation} \label{pr-diagram-1}
\begin{tikzpicture}
\node at (0,0) {$\zero$};
\node at (0,1.5) {$\ind M \boxtimes N$};
\node at (0,3) {$\ind C \boxtimes N$};
\node at (0,4.5) {$\ind \cycloI{\Lambda}C \boxtimes N$};
\node at (0,6) {${\zero}$};
\node at (-6.5,3) {${\zero}$};
\node at (-3.5,3) {$\cycloI{\Lambda}(\ind C \boxtimes N)$};
\node at (3.5,3) {$\pr{}(\ind C \boxtimes N)$};
\node at (6.5,3) {$\zero$};
\node at (-.45,3.75) {$\alpha$};
\node at (1.4,3.3) {$\beta$};
\node at (2.2,4.3) {$\beta \circ \alpha$};
\node at (1.8,1.4) {$g$};
\node at (-.3,2.3) {$\gamma$};
\draw[thick,<-] (0,.3) -- (0,1.1);
\draw[thick,<-] (0,3.3) -- (0,4.1);
\draw[thick,<-] (0,1.8) -- (0,2.6);
\draw[thick,<-] (0,4.8) -- (0,5.6);
\draw[thick,->] (-6.1,3) -- (-4.8,3);
\draw[thick,->] (-2.1,3) -- (-1.1,3);
\draw[thick,->] (1.1,3) -- (2.1,3);
\draw[thick,->] (4.8,3) -- (6.1,3);
\draw[thick,->] (1.2,4.3) -- (2.5,3.4);
\draw[thick,dotted,->] (1.1,1.5) -- (2.5,2.5);
\end{tikzpicture}
\end{equation}
where the horizontal and vertical sequences are exact. Recall that $\cycloI{\Lambda}_\mu$ in $R(\mu)$ is generated by the set $\{x_1^{\lambda_{i_{1}}}1_{\und{i}}\}_{\und{i} \in \seq(\mu)}$ where $\und{i} = i_1i_2 \dots i_m$ and $|\mu| = m$. Under the embedding
\begin{equation}
R(\mu) \hookrightarrow R(\mu) \otimes R(\nu) \hookrightarrow R(\mu + \nu),
\end{equation}
this set maps to the set 
\begin{equation}
\Big\{\sum_{\und{j} \in \seq(\nu)}x_1^{\lambda_{i_{1}}}1_{\und{ij}}\Big\}_{\und{i} \in \seq(\mu)}
\end{equation}
in $R(\mu + \nu)$. This set is contained in the ideal generated by $\{x_1^{\lambda_{i_{1}}}1_{\und{k}}\}_{\und{k} \in \seq(\mu + \nu)}$ which generates $\cycloI{\Lambda}_{\mu + \nu}$. It follows that
\begin{equation}
R(\mu + \nu)\cycloI{\Lambda}_\mu \subseteq \cycloI{\Lambda}_{\mu+\nu},
\end{equation}
and hence
\begin{equation}
\ind \cycloI{\Lambda}_\mu C \boxtimes N \subseteq \cycloI{\Lambda}_{\mu + \nu}(\ind C \boxtimes N).
\end{equation}
This tells us that the composition $\beta \circ \alpha$ from the
diagram in \eqref{pr-diagram-1} is zero, so there exists a
surjective homomorphism $g: \ind M \boxtimes N \rightarrow \pr{}\ind C
\boxtimes N$. Applying $\pr{}$ to the diagram in 
\eqref{pr-diagram-1}, and sending maps $\gamma$, $\beta$, and $g$ to
$\tilde{\gamma}$, $\tilde{\beta}$, and $\tilde{g}$ respectively,
right exactness yields
$\tilde{\gamma}$, $\tilde{\beta}$, and $\tilde{g}$ are surjections
as shown in diagram \eqref{pr-diagram-2}. It follows from considerations
of dimension and that $\pr{}C=M$
 that $\tilde{g}$ must be an isomorphism.
 \begin{equation}
\begin{tikzpicture} \label{pr-diagram-2}
\node at (0,1.5) {$\pr{}(\ind M \boxtimes N)$};
\node at (0,3) {$\pr{}(\ind C \boxtimes N)$};
\node at (3.5,3) {$\pr{}(\ind C \boxtimes N)$};
\node at (6.5,3) {$\zero$};
\node at (1.7,3.5) {$\tilde{\beta}$};
\node at (2.2,1.6) {$\tilde{g}$};
\node at (-.3,2.3) {$\tilde{\gamma}$};
\node at (0,0) {$\zero$};
\draw[thick,<-] (0,.3) -- (0,1.1);
\draw[thick,<-] (0,2) -- (0,2.6);
\draw[thick,->] (1.4,1.7) -- (2.5,2.5);
\draw[thick,->] (5,3) -- (6.1,3);
\draw[thick,->] (1.5,3) -- (2,3);
\end{tikzpicture}
\end{equation}

\item First, $\ep{}(M) = \Lambda_i$ implies $\e{i}M \cong \etil{i}M \neq \zero$. Further, if $\pr{}\etil{i}M = \zero$ then $\pr{}M = \zero$ so the conclusion follows as all modules in question are $\zero$. If $\pr{}\etil{i}M \neq \zero$ the hypotheses imply that $\ind \etil{i}M \boxtimes \Lii{j}$ is irreducible by Lemma \ref{jump-lemma}.

Suppose
\begin{equation}
\zero \rightarrow K \rightarrow \ind M \boxtimes \Lii{j} \rightarrow \pr{}\ind M \boxtimes \Lii{j} \rightarrow \zero
\end{equation}
is exact. By the exactness of $\e{i}$ then 
\begin{equation}
\zero \rightarrow \e{i}K \rightarrow \e{i}\Big( \ind M \boxtimes \Lii{j} \Big) \xrightarrow{\p} \e{i}\Big(\pr{}\ind M \boxtimes \Lii{j} \Big) \rightarrow \zero
\end{equation}
is also exact. By Proposition \ref{e-and-short-exact-sequence}, $\e{i}( \ind M \boxtimes \Lii{j} ) \cong \ind \e{i}M \boxtimes \Lii{j} \cong \ind \etil{i}M \boxtimes \Lii{j}$. But $\pr{}\ind \etil{i}M \boxtimes \Lii{j} = \zero$ by hypothesis so the surjection $\p$ then forces $\e{i}(\pr{}\ind M \boxtimes \Lii{j}) = \zero$. 
In particular $\e{i}(\pr{} \ftil{j} M) = \zero$ and so
$\ep{i}(\ftil{j}M) = 0$.

Note if $D$ is any composition factor of $\ind M \boxtimes \Lii{j}$ other than $\ftil{j}M$ then $\ep{j}(D) = 0$ by Proposition \ref{crystal-op-facts}.\ref{ftil-and-ep} and the fact that $\ep{j}(M) = 0$. Then $\ep{}(M) = \Lambda_i$ and the Shuffle Lemma forces $\e{i}D \neq \zero$. Hence $D$ cannot be a composition factor of $\pr{}\ind M \boxtimes \Lii{j}$. In particular $\pr{} \ind M \boxtimes \Lii{j} \cong \pr{} \ftil{j}M$.

If $\pr{}\ftil{j}M \neq \zero$ then $\pr{}\ftil{j}M = \ftil{j}M$ because $\ftil{j}M$ is simple and as $\ep{j}(\ftil{j}M) = 1$, $\ep{i}(\ftil{j}M) = 0$, and $\ep{}(M) = \Lambda_i$ we must have $\ep{}(\ftil{j}M) = \Lambda_j$.

\end{enumerate}
\end{proof}


\section{The family of modules \protect$\Tii{{p}}{{k}}$}
\label{Description-of-family}

In this section we describe certain modules, motivated by their
crystal-theoretic description in \cite{V07}, see Proposition
\ref{building-T} below.  
For each of the classical affine types listed in Section \ref{Cartan-datum-section} we specify a set
of elements $\{\gammaplus{p}{k}\}$ of $Q^{+}$ corresponding to paths in $B^{1,1}$. For path $p: \{0, 1, \dots, k-1\} \rightarrow I$ of length $k$ define
\begin{equation}
\gammaplus{p}{k} := \alpha_{p(0)} + \alpha_{p(1)} + \dots + \alpha_{p(k-1)}.
\end{equation}
Observe that $|\gammaplus{p}{k}| = k$. We also define $\und{\trivseq{p}{k}} \in \seq(\gammaplus{p}{k})$ to be 
\begin{equation}
\und{\trivseq{p}{k}} = (p(0), p(1), \dots, p(k-1)).
\end{equation}

Let $\adjac{j_1}{j_2}{\und{\trivseq{p}{k}}} \subseteq \{1,2, \dots, k\}$ be the collection of indices for adjacent $j_1, \; j_2 \in I$ in the sequence $\und{\trivseq{p}{k}}$,
\begin{equation}
\adjac{j_1}{j_2}{\und{\trivseq{p}{k}}} := \{\; t \; | \; p(t-1) = j_1, \; p(t) = j_2, \; 1 \leq t \leq k-1 \}.
\end{equation}
Let $|\adjac{j_1}{j_2}{\und{\trivseq{p}{k}}}|$ denote the cardinality of this set. 

\begin{example}
If $p$ is a type $A_6^{(2)}$ path such that
\begin{equation}
\und{\trivseq{p}{18}} = (0,1,2,3,2,1,0,0,1,2,3,2,1,0,0,1,2,3),
\end{equation}
then $\adjac{0}{0}{\und{\trivseq{p}{18}}} = \{7,14\}$.
\end{example}

To each path $p$ of length $k$, associate an $R(\gammaplus{p}{k})$-module denoted by $\Tii{p}{k}$. In all types $\Tii{p}{0} = \UnitModule$, the unique simple $R(0)$-module which we will refer to as the \emph{unit module}. 

\begin{itemize}

\item ($A^{(1)}_{\ell}$): \; $\Tii{p}{k}$ is a 1-dimensional simple module with character 
\begin{equation}
\Char(\Tii{p}{k}) = [\und{\trivseq{p}{k}}] = [p(0), p(1), \dots, p(k-1)].
\end{equation}
If $v$ spans $\Tii{p}{k}$, then the generators of $R(\gammaplus{p}{k})$ act by
\begin{equation}
1_{\und{i}} v = \delta_{\und{\raisebox{1pt}{\scriptsize$\bf{i}$}},\und{\raisebox{3pt}{\scriptsize$\bf{\trivseq{p}{k}}$}}} v, \quad\quad x_r 1_{\und{i}} v = 0, \quad\quad \psi_r 1_{\und{i}} v = 0. 
\end{equation}

\item ($C^{(1)}_{\ell}$): \; In this type $\Tii{p}{k}$ is also 1-dimensional, again with character $\Char(\Tii{p}{k}) = [\und{\trivseq{p}{k}}]$. The action of the generators of $R(\gammaplus{p}{k})$ is the same as in type $A^{(1)}_\ell$.

\item ($A^{(2)}_{2\ell}$): \; Let $d = |\adjac{0}{0}{\und{\trivseq{p}{k}}}|$. Then each $\Tii{p}{k}$ has ungraded dimension $2^d$ and character
\begin{equation}
\Char(\Tii{p}{k}) = q_0^{d}[2]_0^d[\und{\trivseq{p}{k}}].
\end{equation}
If 
\begin{equation}
\adjac{0}{0}{\und{\trivseq{p}{k}}} = \{t_1, \dots, t_d\}
\end{equation}
then $\Tii{p}{k}$ has homogeneous basis 
\begin{equation}
\Big\{v_{\beta} \; \Big| \; \beta = (\beta_{1},\dots,\beta_{d}) \in \{0,1\}^d \Big\}. 
\end{equation}
The basis element $v_{\beta}$ belongs to the degree $2(\beta_1 + \dots + \beta_{d})$ component of $\Tii{p}{k}$. The generators of $R(\gammaplus{p}{k})$ act on $v_{\beta}$ by
\begin{equation}
1_{\und{i}} v_\beta = \delta_{\und{\raisebox{1pt}{\scriptsize$\bf{i}$}},\und{\raisebox{3pt}{\scriptsize$\bf{\trivseq{p}{k}}$}}} v_\beta,
\end{equation} 
\begin{equation}
x_rv_\beta = x_r1_{\und{\trivseq{p}{k}}} v_\beta = \begin{cases}
v_{\beta + \delta_r} & \text{if} \; r \in \adjac{0}{0}{\und{\trivseq{p}{k}}}, \; \beta_r = 0 \\
-v_{\beta + \delta_{r}} & \text{if} \; r-1 \in \adjac{0}{0}{\und{\trivseq{p}{k}}}, \; \beta_r = 0 \\
0 & \text{otherwise}
\end{cases}
\end{equation}
(here $\delta_{r}$ is the element of $\{0,1\}^d$ with only $r$th coordinate non-zero). Finally,
\begin{equation}
\psi_r v_\beta = \psi_r 1_{\und{\trivseq{p}{k}}} v_\beta = \begin{cases}
v_{\beta - \delta_r} & \text{if} \; r \in \adjac{0}{0}{\und{\trivseq{p}{k}}}, \; \beta_r = 1 \\
0 & \text{otherwise}.
\end{cases}
\end{equation}

\item ($A^{(2)\dagger}_{2\ell}$): \; Let $d = |\adjac{\ell}{\ell}{\und{\trivseq{p}{k}}}|$. Then $\Tii{p}{k}$ has ungraded dimension $2^d$ and character
\begin{equation}
\Char(\Tii{p}{k}) = q_\ell^{d}[2]_\ell^d[\und{\trivseq{p}{k}}].
\end{equation}
There is a basis for $\Tii{p}{k}$ whose description is similar to the description of the basis $\{v_\beta\}$ for type $A^{(2)}_{2\ell}$, the only difference being that we replace $\adjac{0}{0}{\und{\trivseq{p}{k}}}$ with $\adjac{\ell}{\ell}{\und{\trivseq{p}{k}}}$ everywhere. 

\item ($D^{(2)}_{\ell+1}$): \; Let $d = |\adjac{0}{0}{\und{\trivseq{p}{k}}}| + |\adjac{\ell}{\ell}{\und{\trivseq{p}{k}}}|$. Here each $\Tii{p}{k}$ has ungraded dimension $2^d$ and character
\begin{equation} \label{type-D-char-formula}
\Char(\Tii{p}{k}) = q_0^{d}[2]_0^d[\und{\trivseq{p}{k}}]
\end{equation}
(note that in this type $[2]_0 = [2]_\ell$ since $(\alpha_0,\alpha_0) = (\alpha_\ell,\alpha_\ell) = 2$, so we may write formula \eqref{type-D-char-formula} in terms of $[2]_0$ instead of both $[2]_0$ and $[2]_\ell$). It has a homogeneous basis analogous to that for types $A^{(2)}_{2\ell}$ and $A^{(2)\dagger}_{2\ell}$ which we again call $\{v_\beta\}$. This time however we replace $\adjac{0}{0}{\und{\trivseq{p}{k}}}$ from the description for type $A^{(2)}_{2\ell}$ with $\adjac{0}{0}{\und{\trivseq{p}{k}}} \cup \adjac{\ell}{\ell}{\und{\trivseq{p}{k}}}$, and $v_\beta$ belongs to the degree $2(\beta_1 + \dots + \beta_{d})$ component of $\Tii{p}{k}$.
\item ($A^{(2)}_{2\ell-1}$)  
Let $d = |\adjac{0}{1}{\und{\trivseq{p}{k}}} \cup \adjac{1}{0}{\und{\trivseq{p}{k}}}|$. Here $\Tii{p}{k}$ has graded dimension $2^d$. If
\begin{equation}
\adjac{0}{1}{\und{\trivseq{p}{k}}} \cup \adjac{1}{0}{\und{\trivseq{p}{k}}} = \{t_1, \dots, t_d \}
\end{equation}
then $\Tii{p}{k}$ is concentrated in a single degree and has character
\begin{equation}
\Char(\Tii{p}{k}) = \sum_{(\beta_1, \dots, \beta_d) \in \{0,1\}^d} \Big[s_{t_1}^{\beta_1}s_{t_2}^{\beta_2} \dots s_{t_d}^{\beta_d}\und{\trivseq{p}{k}}\Big].
\end{equation}
Here $s_i$ is the adjacent transposition from $\Sy{k}$ that interchanges $i$ and $i+1$. $\Sy{k}$ acts on $\seq(\gammaplus{p}{k})$ as defined in \eqref{sym-grp-action-seq}. Let $v$ be a nonzero vector from $1_{\und{\trivseq{p}{k}}}\Tii{p}{k}$. Then a homogeneous weight basis for $\Tii{p}{k}$ is given by 
\begin{equation}
\{\psi_{t_1}^{\beta_1}\psi_{t_2}^{\beta_2} \dots \psi_{t_d}^{\beta_d}
v \mid (\beta_1, \dots, \beta_d) \in \{0,1\}^d\}.  \end{equation}
If $\beta = (\beta_1,\dots, \beta_d) \in \{0,1\}^d$ then write $v_\beta = \psi_{t_1}^{\beta_1}\psi_{t_2}^{\beta_2} \dots \psi_{t_d}^{\beta_d} v$. Also write 
\begin{equation}
\und{i_\beta} = s_{t_1}^{\beta_1}s_{t_2}^{\beta_2} \dots s_{t_d}^{\beta_d}\und{\trivseq{p}{k}}.
\end{equation}
Notice that the $\und{i_\beta}$ are distinct and
$\dim(1_{\und{i_\beta}}T) = 1$. Then the generators of $R(\gammaplus{p}{k})$ act by,
\begin{equation}
1_{\und{i}} v_\beta = \delta_{\und{\raisebox{1pt}{\scriptsize$\bf{i}$}},\und{\smash{\raisebox{1pt}{\scriptsize$\bf{i_\beta}$}}}} v_\beta. 
\end{equation}
\begin{equation}
x_r v_\beta = x_{r}1_{\und{i_\beta}}v_\beta = 0, \quad 1 \leq r \leq k,
\end{equation}
\begin{equation}
\psi_r v_\beta = \psi_r 1_{\und{i_\beta}}v_\beta = 
\begin{cases}
v_{\beta + \delta_r \text{mod}\; 2 } & \text{if $r \in \adjac{0}{1}{\und{\trivseq{p}{k}}}\cup \adjac{1}{0}{\und{\trivseq{p}{k}}}$},  \\
0, & \text{otherwise}.
\end{cases}
\end{equation}

\item ($B^{(1)}_{\ell}$) 
For this type write $d_1 = |\adjac{0}{1}{\und{\trivseq{p}{k}}}\cup \adjac{1}{0}{\und{\trivseq{p}{k}}}|$ and $d_2 = |\adjac{\ell}{\ell}{\und{\trivseq{p}{k}}}|$. Here $\Tii{p}{k}$ has ungraded dimension $2^{d_1 + d_2}$. If
\begin{equation}
\adjac{0}{1}{\und{\trivseq{p}{k}}}\cup \adjac{1}{0}{\und{\trivseq{p}{k}}} = \{t_1, \dots, t_{d_1} \}
\end{equation}
\begin{equation}
\adjac{\ell}{\ell}{\und{\trivseq{p}{k}}} = \{t_1', \dots, t_{d_2}' \}.
\end{equation}
Then $\Tii{p}{k}$ has character
\begin{equation}
\Char(\Tii{p}{k}) = \sum_{(\beta_1, \dots, \beta_{d_1}) \in
\{0,1\}^{d_1}}
q_{\ell}^{d_2}[2]_\ell^{d_2}\Big[s_{t_1}^{\beta_1}s_{t_2}^{\beta_2}
\dots s_{t_{d_1}}^{\beta_{d_1}}\und{\trivseq{p}{k}}\Big].
\end{equation}
We can describe a homogeneous weight basis $\{v_\beta\}$ for $\Tii{p}{k}$ and action by the generators of $R(\gammaplus{p}{k})$ in this type by mixing our descriptions for type $A^{(2)\dagger}_{2\ell}$ with type $A^{(2)}_{2\ell-1}$.

\item ($D^{(1)}_{\ell}$): \; Let $d = |\adjac{0}{1}{\und{\trivseq{p}{k}}}\cup \adjac{1}{0}{\und{\trivseq{p}{k}}} \cup \adjac{\ell-1}{\ell}{\und{\trivseq{p}{k}}}\cup \adjac{\ell}{\ell-1}{\und{\trivseq{p}{k}}}|$. Here $\Tii{p}{k}$ has graded dimension $2^d$. If
\begin{equation}
\adjac{0}{1}{\und{\trivseq{p}{k}}}\cup \adjac{1}{0}{\und{\trivseq{p}{k}}} \cup \adjac{\ell-1}{\ell}{\und{\trivseq{p}{k}}}\cup \adjac{\ell}{\ell-1}{\und{\trivseq{p}{k}}} = \{t_1, \dots, t_d \}
\end{equation}
then $\Tii{p}{k}$ is concentrated in a single degree which we can take to be zero and has character
\begin{equation}
\Char(\Tii{p}{k}) = \sum_{(\beta_1, \dots, \beta_d) \in \{0,1\}^d} \Big[s_{t_1}^{\beta_1}s_{t_2}^{\beta_2} \dots s_{t_d}^{\beta_d}\und{\trivseq{p}{k}}\Big].
\end{equation}
We can describe a homogeneous weight basis $\{v_\beta\}$ for $\Tii{p}{k}$ and action on it by the generators for $R(\gammaplus{p}{k})$ in this type in the same manner as we did for type $A^{(2)}_{2\ell-1}$ by replacing every instance of $\adjac{0}{1}{\und{\trivseq{p}{k}}}\cup \adjac{1}{0}{\und{\trivseq{p}{k}}}$ with $\adjac{0}{1}{\und{\trivseq{p}{k}}}\cup \adjac{1}{0}{\und{\trivseq{p}{k}}} \cup \adjac{\ell-1}{\ell}{\und{\trivseq{p}{k}}}\cup \adjac{\ell}{\ell-1}{\und{\trivseq{p}{k}}}$.

\end{itemize}

\begin{remark}
\label{rem-q}
In the above description of characters, we could have removed
global factors such as $q_i^d$, as we only care about the simples
up to overall grading shift. Further, in those cases above
we in fact have $q_i = q$.
\end{remark}

Showing that all these descriptions actually define modules involves checking that the actions agree with the KLR relations. For the $\Tii{p}{k}$ above which are 1-dimensional, it is obvious that $\Tii{p}{k}$ is simple. Showing that $\Tii{p}{k}$ is simple in each of the other types is an easy exercise.

By \cite{LV11}, a simple $R$-module corresponds to a node in a
highest weight crystal and the nodes corresponding to the
$\Tii{p}{k}$ were precisely those studied in \cite{V07}.
See Proposition \ref{building-T} below.

In type $A^{(1)}_\ell$ we could have also chosen to work with $B^{\ell,1}$. As noted in Remark \ref{B-ell-1-remark} in type $A^{(1)}_\ell$, $B^{\ell,1}$ is similar to $B^{1,1}$ but with all arrows reversed. If $p$ is a path in $B^{\ell,1}$ then as above define
\begin{equation}
\gammaminus{p}{k} := \alpha_{p(0)} + \dots + \alpha_{p(k-1)} \quad\quad \und{\signseq{p}{k}} = (p(0), \dots, p(k-1)),
\end{equation} 
and let $\Sii{p}{k}$ denote the simple 1-dimensional $R(\gammaminus{p}{k})$-module with character
\begin{equation}
\Char(\Sii{p}{k}) = [\und{\signseq{p}{k}}].
\end{equation}
In the Hecke algebra or symmetric group case, $\Tii{p}{k}$ are the trivial modules and $\Sii{p}{k}$ are the sign modules (see \cite{V15}).

\begin{remark}
In fact, given any 1-dimensional $R(\nu)$-module $M$ in type $A^{(1)}_\ell$, there is some path $p$ in either $B^{1,1}$ or $B^{\ell,1}$ such that $\nu = \gammaplus{p}{k}$ and $M \cong \Tii{p}{k}$.
\end{remark}

Returning to our family of modules $\Tii{p}{k}$ from Section \ref{basic-definitions} we note that the path $p$ that parametrizes $\Tii{p}{k}$ also determines the sequence of $\ftil{i}$ that we have to apply to $\UnitModule$ to construct to $\Tii{p}{k}$.

\begin{proposition}
\label{building-T}
For all types $X_\ell$ and for a fixed path $p$ of length $k$ in $B^{1,1}$
\begin{equation} \label{builiding-Tpk}
\ftil{p(k-1)} \ftil{p(k-2)} \dots \ftil{p(0)} \UnitModule \cong \Tii{p}{k}.
\end{equation}
\end{proposition}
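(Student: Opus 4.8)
The plan is to induct on the length $k$ of the path $p$. The base case $k=0$ holds by definition, since $\Tii{p}{0} = \UnitModule$. For the inductive step, write $p'$ for the path of length $k-1$ obtained by deleting the last step $p(k-1)$ of $p$ (so $p'(t)=p(t)$ for $0\le t\le k-2$), and set $i=p(k-1)$. By the inductive hypothesis $\ftil{p(k-2)}\cdots\ftil{p(0)}\UnitModule \cong \Tii{p'}{k-1}$, so it suffices to establish the single-step identity $\ftil{i}\,\Tii{p'}{k-1} \cong \Tii{p}{k}$. Since $\ftil{i}N = \cosoc\,\ind N\boxtimes\Li(i)$ for simple $N$, and by the Shuffle Lemma $\Char(\ind \Tii{p'}{k-1}\boxtimes \Li(i)) = \Char(\Tii{p'}{k-1})\Shuffle[i]$, the strategy is to compute this shuffle product, identify the cosocle, and compare its character with the prescribed character of $\Tii{p}{k}$ given in Section \ref{Description-of-family}.

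The main tool is Proposition \ref{crystal-op-facts}.\ref{ftil-and-ep}: with $N=\Tii{p'}{k-1}$ and $\ep{}=\ep{i}(N)$, we have $\cosoc\,\ind N\boxtimes\Li(i) \cong \ftil{i}N$, it is irreducible, and it occurs with multiplicity one; moreover $\ep{i}(\ftil{i}N)=\ep{}+1$. So the real content is to check two things. First, that the character of $\Tii{p}{k}$ (as prescribed) is consistent with being obtained from $\Tii{p'}{k-1}$ by $\ftil{i}$ — in particular one must verify $\ftil{i}\Tii{p'}{k-1}\ne\zero$, i.e. that $\Tii{p}{k}$ really lies in the image, and that $\Tii{p}{k}$ is a quotient of $\ind\Tii{p'}{k-1}\boxtimes\Li(i)$. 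Second, since a simple module is determined by its character (Remark \ref{character-determines-simple}), it is enough to show $\Char(\ftil{i}\Tii{p'}{k-1}) = \Char(\Tii{p}{k})$, which can be done by checking that $\Tii{p}{k}$ surjects off $\ind\Tii{p'}{k-1}\boxtimes\Li(i)$ and that no composition factor of larger character is forced. Concretely, one uses Frobenius reciprocity: a nonzero map $\ind\Tii{p'}{k-1}\boxtimes\Li(i)\to\Tii{p}{k}$ corresponds to a nonzero map $\Tii{p'}{k-1}\boxtimes\Li(i)\to\res_{\gammaplus{p'}{k-1},\alpha_i}\Tii{p}{k}$, and the latter follows by exhibiting in $\Tii{p}{k}$ a weight vector of weight $\und{\trivseq{p}{k}}=(\und{\trivseq{p'}{k-1}},i)$ on which $x_k$ acts nilpotently (automatic, Remark \ref{x-nilpotent}) and whose span generates a copy of $\Tii{p'}{k-1}\boxtimes \Li(i)$ under the parabolic; this is visible directly from the explicit bases in Section \ref{Description-of-family}.

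The step I expect to be the main obstacle is the character bookkeeping in the non-type-$A$ cases, where $\Tii{p}{k}$ is $2^d$-dimensional: one must track how the count $d$ of adjacent pairs (e.g. $\adjac{0}{0}{\cdot}$, $\adjac{\ell}{\ell}{\cdot}$, or $\adjac{0}{1}{\cdot}\cup\adjac{1}{0}{\cdot}$, etc., depending on type) changes when the last letter $i=p(k-1)$ is appended to $\und{\trivseq{p'}{k-1}}$. The number $d$ for $p$ exceeds that for $p'$ by exactly $1$ precisely when the new final adjacency $(p(k-2),p(k-1))$ is one of the distinguished pairs, and by $0$ otherwise; one must then check that the shuffle $\Char(\Tii{p'}{k-1})\Shuffle[i]$ has a unique simple quotient whose character is exactly the factor-of-$[2]_i$ (or sum-over-$\{0,1\}^d$) expression prescribed for $\Tii{p}{k}$, with the multiplicity-one statement of Proposition \ref{crystal-op-facts}.\ref{ftil-and-ep} ruling out the other shuffle terms. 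This is a type-by-type verification, but it is mechanical given the explicit module structures; the uniqueness of the grading up to shift (and Remark \ref{rem-q}) lets us ignore the global $q_i^d$ factors throughout.
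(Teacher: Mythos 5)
Your plan is sound, but it runs in the opposite direction from the paper's argument, so it is worth comparing. The paper inducts \emph{downward}: starting from the explicitly constructed module $\Tii{p}{k}$, it reads off from the character that $\ep{}:=\ep{p(k-1)}(\Tii{p}{k})\in\{1,2\}$, observes that $\Char(\e{p(k-1)}^{(\ep{})}\Tii{p}{k})$ coincides with $\Char(\Tii{p}{k-\ep{}})$, invokes the fact that a simple module is determined by its character to get $\etil{p(k-1)}^{\ep{}}\Tii{p}{k}\cong\Tii{p}{k-\ep{}}$, and then uses Proposition \ref{crystal-op-facts}.\ref{e-and-f-undo-eachother} to convert this into the $\ftil{}$ statement. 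This requires no construction of maps at all: applying $\e{i}$ to an explicit character is immediate, and the $\ep{}=2$ (class $\ClassB$) situation is absorbed by removing $\ep{}$ letters at once. Your \emph{upward} induction is also viable, and in fact simpler than you suggest at the end: once you produce any surjection $\ind \Tii{p'}{k-1}\boxtimes\Lii{i}\twoheadrightarrow\Tii{p}{k}$, Proposition \ref{crystal-op-facts}.\ref{cosoc-is-simple} says the induced module has \emph{irreducible} cosocle isomorphic to $\ftil{i}\Tii{p'}{k-1}$, so every simple quotient is that cosocle and you are done; no shuffle computation, no comparison of characters, and no multiplicity-one bookkeeping is needed. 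The place where your write-up is imprecise is the Frobenius reciprocity step: nilpotence of $x_k$ (Remark \ref{x-nilpotent}) is not enough, since $\Lii{i}$ requires $x_k$ to act by \emph{zero} on the copy of $\Tii{p'}{k-1}\boxtimes\Lii{i}$ inside $\res\,\Tii{p}{k}$, and when the final adjacency $(p(k-2),p(k-1))$ is a distinguished pair the generic weight vector of weight $\und{\trivseq{p}{k}}$ generates the whole $2^{d}$-dimensional restriction rather than a $2^{d-1}$-dimensional copy; one must choose the vectors annihilated by $x_k$ (e.g.\ those with the last $\beta$-coordinate equal to $1$ in the types with dot actions, or the $\beta_d=0$ span in the $\psi$-type cases) and check the parabolic action on that subspace. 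This is exactly the type-by-type verification you defer, and it is doable, but it is the analogue of — and somewhat more laborious than — the paper's one-line character inspection.
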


\begin{proof}
Recall that
\begin{equation}
[p(0), p(1), \dots, p(k-1)]
\end{equation}
is in the support of $\Tii{p}{k}$
For $k \geq 1$, let $i = p(k-1)$. Inspection of $[\und{i}] \in
\supp{\Tii{p}{k}}$ implies that $\ep{} := \ep{i}(\Tii{p}{k}) 
\in \{1,2\}$ and in particular is nonzero.
Also by inspection $\Char
(\etil{i}^{\ep{i}} \Tii{p}{k}) = \Char (\e{i}^{(\ep{})}\Tii{p}{k})$
agrees with $\Char(\Tii{p}{k-\ep{}})$ (up to overall grading shift).
Hence by Remark \ref{character-determines-simple},
$\etil{i}^{\ep{}}\Tii{p}{k} \cong \Tii{p}{k-\ep{}}$. Proposition
\ref{crystal-op-facts}.\ref{e-and-f-undo-eachother} then implies
$\Tii{p}{k} \cong \ftil{i}^{\ep{}}\Tii{p}{k-\ep{}}$. The identity
\eqref{builiding-Tpk} then follows by induction.

\end{proof}

Note that this proposition also implies that if $j \in \pextplus{p}$ then $\ftil{j} \Tii{p}{k} \cong \Tii{p \extheadby{j}}{k+1}$.


We will frequently need to compute $\jump{j}(\Tii{p}{k})$ and $\phcyc{p(0)}{j}(\Tii{p}{k})$ for a path $p$ in $B^{1,1}$.
In our main theorems of Section \ref{sec-main}, we will only
consider paths $p$ such that $\Tii{p}{k}\in \rep{\Lambda_{p(0)}}$. 
In this case, Remarks \ref{unique-ext-and-phi} and
\ref{when-phcyc-jump-the-same}  apply, so we can in fact just
consider $\jump{}$. However, for completeness, we give
data for other $p$.
Observe the right/left symmetry of $\jump{j}$ as in Table
\ref{jump-table} that is broken for $\phcyc{p(0)}{j}(\Tii{p}{k})$ in general.

We introduce special notation,
\begin{equation}
\phcyctriv{j}{p}{k} := \phcyc{p(0)}{j}(\Tii{p}{k}), \quad\quad \phcyctriv{}{p}{k} = \phcyc{p(0)}{}(\Tii{p}{k}).
\end{equation}
When $k = 0$, the unit module $\Tii{p}{0} = \UnitModule$ has
$$\jump{j}(\UnitModule) = 0 \quad \text{and} \quad
	\phcyc{i}{j}(\UnitModule) = \delta_{ij}.$$
When $k=1$,
\begin{gather*}
\jump{j}(\Lii{j}) = 0= \phcyc{j}{j}(\Lii{j})
 	\quad \text{and} \\
 \jump{j}(\Lii{i}) = -\langle h_j,\alpha_i \rangle
= - a_{ji} = \phcyc{i}{j}(\Lii{i}).
\end{gather*}

For $k \geq 2$, we return to the
classification of elements of $I$ summarized in Table
\ref{node-classification}. Tables
\ref{jump-table}
and
\ref{phcyc-table}
give the values of
$\jump{j}(\Tii{p}{k})$
and
$\phcyctriv{j}{p}{k}$
for $k \ge 2$,
according to the class that $j$ belongs to.
In these tables we use the notation that for $j \in I$
\begin{equation}
\delta_{j,\pextminus{p}} = \begin{cases}
1 & \text{if  $j \in \pextminus{p}$} \\
0 & \text{otherwise} \\
\end{cases}
\quad \quad \quad \delta_{j,\pextplus{p}} = \begin{cases}
1 & \text{if  $j \in \pextplus{p}$} \\
0 & \text{otherwise} \\
\end{cases}
\end{equation}

\begin{table}
\begin{center}
{\tabulinesep=1.2mm
 \begin{tabu}{| l | c | c |}
 \hline
 Class & $\jump{j}(\Tii{p}{k})$  \\
 \hline \hline
 class $\ClassA$ & $\delta_{j,\pextminus{p}} \;+\; \delta_{j,\pextplus{p}}$ \\
 class $\ClassB$ &
$\delta_{j,\pextminus{p}}(2  - \delta_{j,p(0)}) \; + \; \delta_{j,\pextplus{p}}(2 - \delta_{j,p(k-1)})$ \\
 class $\ClassD$ & $ \delta_{j,\pextminus{p}} \; + \; \delta_{j,\pextplus{p}} $     \\                                   
 \hline
\end{tabu}}
\caption{
Values of $\jump{j}(\Tii{p}{k})$, $k>1$, by class of $j \in I$.}
\label{jump-table}
\end{center}
\end{table}


\begin{table}
\begin{center}
{\tabulinesep=1.2mm
 \begin{tabu}{| l | c | }
 \hline
 Class & $\phcyctriv{j}{p}{k}$  \\
 \hline \hline
 class $\ClassA$ &  $\delta_{j,\pextminus{p}} \;+\; \delta_{j,\pextplus{p}}$ \\
 class $\ClassB$  & $2\delta_{j,\pextminus{p}} \;-\; \delta_{j,p(0)} \;+\; \delta_{j,\pextplus{p}}(2 - \delta_{j,p(k-1)})$ \\
 class $\ClassD$   & $ \delta_{j,\pextminus{p}} 
+ (\delta_{j,\pextminus{p}}-1) \delta_{j',p(0)}
\;+\; \delta_{j,\pextplus{p}} $     \\ 
 \hline
\end{tabu}}
\caption{Values of $\phcyctriv{j}{p}{k}$, $k>1$, by class of $j \in I$.
When $j$ is Class $\ClassD$, $(j,j')$ is a Class $\ClassD$ pair.
}
\label{phcyc-table}
\end{center}
\end{table}

For $k > 1$, we define $\phcyctrivminus{j}{p}{k}$ and
$ \phcyctrivplus{j}{p}{k}$
as in Table \ref{phcycplus-table}, so that
\begin{equation}
\phcyctriv{j}{p}{k} = \phcyctrivminus{j}{p}{k} + \phcyctrivplus{j}{p}{k}. 
\end{equation}
Note
$\phcyctrivminus{j}{p}{k}$ is ``controlled" by elements at the left end of the sequence $\und{\trivseq{p}{k}}$ and $\phcyctrivplus{j}{p}{k}$ is controlled by elements at the right end of $\und{\trivseq{p}{k}}$.
 We also define, 
\begin{equation}
\Nexttermminus{p}{k} := \sum_{i \in I}  \phcyctrivminus{i}{p}{k}\Lambda_i
\end{equation}
\begin{equation}
\Nexttermplus{p}{k} := \sum_{i \in I} \phcyctrivplus{i}{p}{k}\Lambda_i
\end{equation}
so that
\begin{equation}
\Nextterm{p}{k} = \Nexttermminus{p}{k} + \Nexttermplus{p}{k}.
\end{equation}

\begin{table}
\begin{center}
{\tabulinesep=1.2mm
 \begin{tabu}{| l | c | c | c |}
 \hline
 Class & $\phcyctrivminus{j}{p}{k}$ & $\phcyctrivplus{j}{p}{k}$  \\
 \hline \hline
 class $\ClassA$ &  $\delta_{j,\pextminus{p}}$ & $ \delta_{j,\pextplus{p}}$ \\
 class $\ClassB$ & $2\delta_{j,\pextminus{p}} \; - \; \delta_{j,p(0)}$ & $ \delta_{j,\pextplus{p}}(2 - \delta_{j,p(k-1)})$ \\
 class $\ClassD$ & $\delta_{j,\pextminus{p}} +
(\delta_{j,\pextminus{p}}-1) \delta_{j',p(0)} $
 & $\delta_{j,\pextplus{p}}$     \\ 
 \hline
\end{tabu}}
\caption{Values of $\phcyctrivminus{j}{p}{k}$ and $\phcyctrivplus{j}{p}{k}$, $k>1$, by class of $j \in I$.
When $j$ is Class $\ClassD$, $(j,j')$ is a Class $\ClassD$ pair.}
\label{phcycplus-table}
\end{center}
\end{table}

\begin{remark} \label{path-interp-phcyc}
There is a more visual interpretation of $\phcyctriv{j}{p}{k}$, $\phcyctrivminus{j}{p}{k}$, and $\phcyctrivplus{j}{p}{k}$ 
in terms of the path $p$ in $B^{1,1}$ 
when $k>1$. 
\begin{itemize}
\item  The value $\phcyctriv{j}{p}{k}$ is the maximum number of $j$-arrows that extend path $p$ from both its tail and head.  
\item 
When $\epch{ }(\Tii{p}{k}) = \Lambda_{p(0)}$, 
the value $\phcyctrivminus{j}{p}{k}$ is the maximum number of $j$-arrows that extend the tail of path $p$,
\begin{equation}
\phcyctrivminus{j}{p}{k} = \max\{k \in \MB{Z}_{\geq 0} \; | \; (\pi(j))^k \star p \neq \zero \; \}.
\end{equation}
\item The value $\phcyctrivplus{j}{p}{k}$ is the maximum number of $j$-arrows that extend the head of path $p$,
\begin{equation}
\phcyctrivplus{j}{p}{k} = \max\{k \in \MB{Z}_{\geq 0} \; | \; p \star (\pi(j))^k \neq \zero \; \}.
\end{equation}
\end{itemize}
Observe that under the above hypotheses
$\phcyctrivminus{j}{p}{k}$ and $\phcyctrivplus{j}{p}{k}$ take values in $\{0,1,2\}$. This agrees with the fact that for all types there is no $j \in I$ such that more than 2 $j$-arrows appear consecutively in $B^{1,1}$.
(More generally, we can also have $\phcyctrivplus{j}{p}{k} = -1$
when $\Tii{p}{k} \notin \rep{\Lambda_{p(0)}}$).
\end{remark}

When $\Tii{p}{k} \notin \rep{\Lambda_{p(0)}}$ 
then $\Tii{p}{k} \in \rep{\Lambda_{p(0)} + \Lambda_{p(1)}}$.
Since $\langle h_{p(1)}, \Lambda_{p(0)} \rangle = \epch{p(1)}(\Tii{p}{k}) -1$ in this case, by 
\eqref{special-phcyclo-formula}
and
\eqref{eqn-jump}
$\p_{p(1)}^{\Lambda_{p(0)}}(\Tii{p}{k}) = \jump{p(1)}(\Tii{p}{k})-1$.
In particular, when $j$ is of Class $\ClassB$ and
 $\epch{j}(\Tii{p}{k}) = 2$, then $\phcyctriv{j}{p}{k} = 
\jump{j}(\Tii{p}{k}) -1 =
\jump{j}(\Tii{p}{k}) - 
\delta_{j,p(0)}(1- \delta_{j,\pextminus{p}})$.
 When $j$ is of Class $\ClassD$, $(j,j')$ is a Class $\ClassD$ pair,
and $p(0)=j', p(1)=j$ then $\phcyctrivminus{j}{p}{k} = -1.$
 Note, for these modules
Remark \ref{when-phcyc-jump-the-same} does not apply. 
Further, such $p$ will not ever
arise in our main theorems. 
See Remark \ref{unique-ext-and-phi} below as well as Remark 
\ref{technical-aspects-multiple-paths-remark}.

\medskip 

Computing the values in Table \ref{jump-table} is an easy exercise using equation \eqref{eqn-jump}. Calculating $\jump{j}$ is a key ingredient to calculating $\phcyc{}{j}$ and from this one obtains Table \ref{phcyc-table}. Then it is easy to check Remark \ref{path-interp-phcyc} holds
for $k>1$.

\begin{remark} \label{unique-ext-and-phi}
Recall that a cyclotomic path $p$ of tail weight $(\Lambda_{i_1},\Lambda_{i_2})$ has a unique extension to its tail by an $i_1$-arrow, i.e. $\pextminus{p} = \{i_1\}$. By Remark \ref{path-interp-phcyc}, this implies $\Nexttermminus{p}{k} = \Lambda_{i_1}$. The condition on cyclotomic paths requires that $p(0) \neq p(1)$. When $k > 1$ this implies $\phcyctriv{j}{p}{k} = \jump{j}(\Tii{p}{k})$ for all $j \in I$ and we shall use these two expressions interchangeably. 
\end{remark}

We now apply the lemmas and propositions from Section \ref{sec-pr} 
to $\Tii{p}{k}$.

\begin{proposition} \label{triv-pr-lemma}
Let $p$ be a cyclotomic path of tail weight
$(\Lambda_{p(-1)},\Lambda_{p(0)})$ so that
$\Nexttermminus{p}{k} = \Lambda_{p(-1)}$.
When $k \geq 1$ then the following hold:
\begin{enumerate}

\item
$\Tii{p}{k} \in \rep{\Lambda_{p(0)}} $.

\item If $j \neq p(-1)$ and $\phcyctrivplus{j}{p}{k} = 0$, then
\begin{equation}
\pr{p(0)}\ind \Tii{p}{k} \boxtimes \Lii{j} = \zero.
\end{equation}

\item \label{triv-pr-lemma-p(-1)neqp(k)} If $j \neq p(-1)$ but $\phcyctrivplus{j}{p}{k} \geq 1$ then
\begin{equation}
\pr{p(0)} \ind \Tii{p}{k} \boxtimes \Lii{j} \cong \Tii{p \extheadby{j}}{k+1}.
\end{equation}

\item \label{triv-pr-when-j=p(-1)} When $j = p(-1)$ and $\phcyctrivplus{j}{p}{k} \geq 1$ then there is a surjection
\begin{equation}
\pr{p(0)} \ind \Tii{p}{k} \boxtimes \Lii{j} \twoheadrightarrow \Tii{p \extheadby{j}}{k+1}
\end{equation}
and for all composition factors $K$ of $\ind \Tii{p}{k} \boxtimes \Lii{j}$ such that $K \not\cong \Tii{p \extheadby{j}}{k+1}$, 
\begin{equation}
\phcyc{p(0)}{j}(K) \leq \phcyctriv{j}{p}{k} - 2.
\end{equation}

\end{enumerate}
\end{proposition}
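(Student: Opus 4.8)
Throughout write $\Lambda=\Lambda_{p(0)}$ and $M=\Tii{p}{k}$. The plan is to deduce the first assertion from the cyclotomic criterion and then feed $M$ into Proposition~\ref{pr-facts}. Assume first $k\ge 2$; the case $k=1$, where $M=\Lii{p(0)}$, is handled by direct inspection of $\Char(\ind\Lii{p(0)}\boxtimes\Lii{j})$ together with $\jump{j}(\Lii{p(0)})=-a_{j,p(0)}$. Once the first assertion is known, Remark~\ref{when-phcyc-jump-the-same} and Remark~\ref{unique-ext-and-phi} identify $\phcyc{p(0)}{j}(M)$ with $\jump{j}(M)=\phcyctriv{j}{p}{k}$, and Table~\ref{jump-table} together with $\Nexttermminus{p}{k}=\Lambda_{p(-1)}$ (so $\phcyctrivminus{j}{p}{k}=\delta_{j,p(-1)}$) gives $\phcyc{p(0)}{j}(M)=\phcyctrivplus{j}{p}{k}\in\{0,1,2\}$ when $j\ne p(-1)$, and $\phcyc{p(0)}{j}(M)=\phcyctriv{j}{p}{k}\ge 2$ when $j=p(-1)$ and $\phcyctrivplus{j}{p}{k}\ge 1$.

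For the first assertion: from the explicit descriptions in Section~\ref{Description-of-family}, every sequence in $\supp{M}$ is obtained from $\und{\trivseq{p}{k}}$ by possibly permuting entries at class-$\ClassD$ adjacency positions, and the cyclotomic conditions $p(1)\ne p(0)$ and ``$(p(0),p(1))$ is not a class $\ClassD$ pair'' prevent any such permutation from altering the first two entries, so $\epch{p(0)}(M)=1$ and $\epch{i}(M)=0$ for $i\ne p(0)$; Proposition~\ref{cyclotomic-char} then gives $M\in\rep{\Lambda_{p(0)}}$. For the second assertion, $j\ne p(-1)$ and $\phcyctrivplus{j}{p}{k}=0$ force $\phcyc{p(0)}{j}(M)=0<1$, so Proposition~\ref{pr-facts}\eqref{pr-fact-5} with $c=1$ gives $\pr{p(0)}\ind M\boxtimes\Lii{j}=\zero$. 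For Part~\ref{triv-pr-lemma-p(-1)neqp(k)} when $\phcyctrivplus{j}{p}{k}=1$: then $\p^{\Lambda_{p(0)}}_{j}(M)=1$, so Proposition~\ref{pr-facts}\eqref{f-and-pr} yields $\pr{p(0)}\ind M\boxtimes\Lii{j}\cong\ftil{j}M\cong\Tii{p\extheadby{j}}{k+1}$, the last isomorphism being the remark following Proposition~\ref{building-T} (applicable since $\phcyctrivplus{j}{p}{k}\ge 1$ gives $j\in\pextplus{p}$). For Part~\ref{triv-pr-when-j=p(-1)}: again $j\in\pextplus{p}$, so $\ftil{j}M\cong\Tii{p\extheadby{j}}{k+1}$, which lies in $\rep{\Lambda_{p(0)}}$ by the first assertion applied to the cyclotomic path $p\extheadby{j}$, and it is $\cosoc(\ind M\boxtimes\Lii{j})$ by Proposition~\ref{crystal-op-facts}.\ref{ftil-and-ep}; right-exactness of $\pr{p(0)}$ then gives the surjection $\pr{p(0)}\ind M\boxtimes\Lii{j}\twoheadrightarrow\Tii{p\extheadby{j}}{k+1}$. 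Any other composition factor $K$ of $\ind M\boxtimes\Lii{j}$ has $\ep{j}(K)\le\ep{j}(M)$ by Proposition~\ref{crystal-op-facts}.\ref{ftil-and-ep} and weight $\wt(M)-\alpha_j$, so $\wti{j}(K)=\wti{j}(M)-2$ and
\[
\phcyc{p(0)}{j}(K)=\delta_{p(0),j}+\ep{j}(K)+\wti{j}(K)\ \le\ \phcyc{p(0)}{j}(M)-2\ =\ \phcyctriv{j}{p}{k}-2 .
\]

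The remaining case, Part~\ref{triv-pr-lemma-p(-1)neqp(k)} with $\phcyctrivplus{j}{p}{k}=2$, is the crux: here $\pr{p(0)}\ind M\boxtimes\Lii{j}$ surjects onto $\Tii{p\extheadby{j}}{k+1}$ as above, but one must rule out surplus surviving composition factors, and the bound $\phcyc{p(0)}{j}(K)\le 0$ from the previous paragraph is not by itself strong enough. I propose to use Proposition~\ref{pr-facts}\eqref{two-nodes-to-add}. Inspection of the graphs $B^{1,1}$ shows $\phcyctrivplus{j}{p}{k}=2$ forces $j$ to be of class $\ClassB$ with $j\ne p(k-1)$, the head of $p$ to arrive at the node immediately before the unique pair of consecutive $j$-arrows, and $p(k-1)$ to be of class $\ClassA$; consequently $\ep{}(M)=\Lambda_i$ with $i=p(k-1)$, and $a_{i,j}<0$ since $i,j$ label consecutive edges of $B^{1,1}$, hence adjacent Dynkin nodes. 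The remaining hypothesis $\pr{p(0)}\ind\etil{i}M\boxtimes\Lii{j}=\zero$ holds because $\etil{i}M\cong\Tii{p'}{k-1}$ (deleting the last step of $p$, by the argument of Proposition~\ref{building-T}), $p'$ is again a cyclotomic path of tail weight $(\Lambda_{p(-1)},\Lambda_{p(0)})$, and $\phcyctrivplus{j}{p'}{k-1}=0$ because the arrow leaving the head of $p'$ is the $p(k-1)$-arrow, not a $j$-arrow --- so the already-established second assertion applies to $p'$. Proposition~\ref{pr-facts}\eqref{two-nodes-to-add} then gives $\pr{p(0)}\ind M\boxtimes\Lii{j}\cong\pr{p(0)}\ftil{j}M=\Tii{p\extheadby{j}}{k+1}$, completing Part~\ref{triv-pr-lemma-p(-1)neqp(k)}.

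The side verifications invoked above --- that $\epch{p(0)}(M)=1$, that $\ep{}(M)=\Lambda_{p(k-1)}$ in the case just treated, and the small-$k$ base cases --- are routine but do require the explicit module descriptions of Section~\ref{Description-of-family} together with the tables of $\jump{}$- and $\phcyc{}{}$-values. The one genuine obstacle is the surplus-composition-factor issue in Part~\ref{triv-pr-lemma-p(-1)neqp(k)}, which the appeal to Proposition~\ref{pr-facts}\eqref{two-nodes-to-add} --- bootstrapping off the second assertion for the truncated path $p'$ --- is designed to resolve.
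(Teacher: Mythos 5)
Your proposal is correct and follows essentially the same route as the paper's proof: part (1) by the cyclotomic-path conditions and the explicit characters, part (2) via Proposition \ref{pr-facts}\eqref{pr-fact-5}, part (3) split into $\phcyctriv{j}{p}{k}=1$ (via Proposition \ref{pr-facts}\eqref{f-and-pr} and Proposition \ref{building-T}) versus $\phcyctriv{j}{p}{k}=2$ (handled, exactly as in the paper, by inspecting $B^{1,1}$ to get $\ep{}(\Tii{p}{k})=\Lambda_{p(k-1)}$, $a_{p(k-1),j}<0$, vanishing of $\pr{p(0)}$ on the length-$(k-1)$ truncation, and then Proposition \ref{pr-facts}\eqref{two-nodes-to-add}), and part (4) via the cosocle surjection plus the $\ep{j}$/$\wti{j}$ bookkeeping of Proposition \ref{crystal-op-facts}. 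The extra detail you supply (the $k=1$ base case and the verification that class-$\ClassD$ swaps cannot disturb the first two entries of $\und{\trivseq{p}{k}}$) only fleshes out what the paper leaves to inspection.
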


\begin{proof} \mbox{}
\begin{enumerate}

\item This follows from  the definition of cyclotomic
path and can also be verified by inspecting 
$\Char(\Tii{p}{k})$
from Section \ref{Description-of-family}.

\item This follows from Proposition \ref{pr-facts}.\ref{pr-fact-5}.

\item If $p(-1) \neq j$ but $\phcyctrivplus{j}{p}{k} \geq 1$, then either $\phcyctriv{j}{p}{k} = 1$ or $2$. When $\phcyctriv{j}{p}{k} = 1$, the result is implied by Proposition \ref{pr-facts}.\ref{f-and-pr}, Remark \ref{path-interp-phcyc} and Proposition \ref{building-T}. When $\phcyctriv{j}{p}{k} = 2$ then $j$ must be class $\ClassB$ and the head of $p$ can be extended twice by $j$-arrows. An examination of paths in $B^{1,1}$ whose head can be extended twice by class $\ClassB$ arrows shows that $\ep{}(\Tii{p}{k}) = \Lambda_{p(k-1)}$, $a_{p(k-1),j} < 0$, and $\phcyctriv{j}{p}{k-1} = 0$. Hence $\pr{p(0)} \ind \Tii{p}{k-1} \boxtimes \Lii{j} = \zero$. The result then follows from Proposition \ref{pr-facts}.\ref{two-nodes-to-add} and Proposition \ref{building-T}.

\item By Proposition \ref{building-T} there is a surjection
\begin{equation} \label{trivial-surjection-to-path-module}
\ind \Tii{p}{k} \boxtimes \Lii{j} \twoheadrightarrow \Tii{p \extheadby{j}}{k+1} 
\end{equation}
since $\Tii{p \extheadby{j}}{k+1} \cong \ftil{j}\Tii{p}{k}$. Because $\pr{p(0)} \Tii{p \extheadby{j}}{k+1} = \Tii{p \extheadby{j}}{k+1}$, applying $\pr{p(0)}$ to \eqref{trivial-surjection-to-path-module} gives us
\begin{equation}
\pr{p(0)}\ind \Tii{p}{k} \boxtimes \Lii{j} \twoheadrightarrow \Tii{p \extheadby{j}}{k+1}.
\end{equation}
By Proposition \ref{crystal-op-facts}.\ref{all-comp-have-less-i} if $K$ is a composition factor of $\ind \Tii{p}{k} \boxtimes \Lii{j}$ not isomorphic to $\Tii{p \extheadby{j}}{k+1}$ then $\ep{j}(K) \leq \ep{j}(\Tii{p}{k})$ but $\wti{j}(K) = \wti{j}(\Tii{p}{k}) - 2$. By formula \eqref{phcyclo-formula} it follows that
\begin{equation} \label{phi-composition-factor-relation}
\phcyc{p(0)}{j}(K) \leq \phcyctriv{j}{p}{k} -2.
\end{equation}
It is possible $\pr{p(0)}K = \zero$; but \eqref{phi-composition-factor-relation} still holds even in this case.

\end{enumerate}
\end{proof}

%
\section{Main theorems}
\label{sec-main}
%


We introduce some shortcut notation. For a simple $R(\nu)$-module $A$, define
\begin{equation}
\etilch{i_k i_{k-1} \dots i_2 i_1} A := \etilch{i_k} \etilch{i_{k-1}} \dots \etilch{i_2} \etilch{i_1} A
\end{equation}
and analogously,
\begin{equation}
\ftilch{i_k i_{k-1} \dots i_2 i_1} A := \ftilch{i_k} \ftilch{i_{k-1}} \dots \ftilch{i_2} \ftilch{i_1} A.
\end{equation}
If $p$ is a path in $B^{1,1}$ such that $p(0) = i_1$, $p(1) = i_2,$ $\dots,$ $p(k-1) = i_k$, we will also write 
\begin{equation}
\etilch{p;k} := \etilch{i_k i_{k-1} \dots i_2 i_1} \quad\quad \text{and} \quad\quad \ftilch{p;k} := \ftilch{i_k i_{k-1} \dots i_2 i_1}.
\end{equation}
Similarly we will occasionally write
\begin{equation}
\Tiip{i_1i_2\dots i_k} := \Tii{p}{k},
\end{equation}
particularly for small $k$.

\subsection{Existence theorem}
\label{sec-exist}
This is the first of our main theorems, which we term
the ``existence" theorem, and also sometimes refer to as a construction
or algorithm.

\begin{theorem} \label{exist-theorem}
Let $A \in \repL{i}$ be a simple $R(\nu)$-module with $|\nu| \geq 1$ and $i$ not a forbidden element of $I$. 
\begin{enumerate}

\item \label{exist-theorem-1}
There exists a cyclotomic path $p: \{0,1,\dots,k-1\} \rightarrow I$
with $p(0) = i$ of tail weight $(\Lambda_{p(-1)},\Lambda_i)$ and length $k$ such that $\etilch{p;k} A$ is a
simple $R (\nu - \gammaplus{p}{k})$-module in $\repL {{p(-1)}}$.

\item Let $r(A) = k$ be the minimal $k$ such that statement \eqref{exist-theorem-1} holds and let $\Rcal{A} = \etilch{p;k} A$. Then there exists a surjection
\begin{equation} \label{exist-theorem-2-surj}
\pr{i}\ind \Tii{p}{k} \boxtimes \Rcal{A} \twoheadrightarrow A.
\end{equation}
\end{enumerate}
\end{theorem}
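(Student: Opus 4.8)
The plan is to prove the two parts by induction on $|\nu|$, peeling off one $\etilch{j}$ at a time and following the path backwards. First I would dispose of the base case: if $|\nu| = 1$ then $A \cong \Lii{i}$ (since $A \in \repL{i}$ forces $\epch{i}(A) \le 1$, so $A$ is an $R(\alpha_i)$-module), and $p$ is the length-$1$ cyclotomic path $\pi(i)$; since $i$ is not forbidden, Proposition \ref{cyclotomic-path-lemma} guarantees a cyclotomic path of tail weight $(\Lambda_{p(-1)}, \Lambda_i)$ exists, and $\etilch{p;1} A = \etilch{i}\Lii{i} = \UnitModule \in \repL{p(-1)}$, with the surjection $\pr{i}\ind \Tii{\pi(i)}{1} \boxtimes \UnitModule \cong \pr{i}\Lii{i} = \Lii{i} = A$ being trivial.

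For the inductive step of part \eqref{exist-theorem-1}, I would argue that since $A$ is simple with $|\nu| \ge 1$, there is some $j \in I$ with $\epch{j}(A) > 0$, so $\etilch{j}A \ne \zero$ and is simple by Proposition \ref{crystal-op-facts}.\ref{etil-is-only-special-compfactor} (applied via $\sigma^*$). The key point is to choose $j$ so that it \emph{continues} a cyclotomic path: I want $j = p(1)$ where $p$ starts $p(0) = i$, $p(1) = j$. The constraints defining a cyclotomic path --- $p(0) \ne p(1)$, $(p(0),p(1))$ not a class $\ClassD$ pair, the unique tail extension by $p(-1)$, and no double tail extension --- must all be arranged; Proposition \ref{cyclotomic-path-lemma} (together with the fact that $i$ is not forbidden) is exactly what makes this possible, and one checks the remaining steps $p(2), p(3), \dots$ can be prescribed freely once one knows (from Proposition \ref{unique-path-prop}) that a walk of length $\ge 2$ in $B^{1,1}$ is determined by its colour sequence. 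Then $\etilch{p(1)}A$ lies in $\repL{?}$; iterating and invoking the inductive hypothesis on $\etilch{p(1)} A$ (which has strictly smaller weight and lies in $\repL{p(1)}$, provided $p(1)$ is not forbidden --- a case that must be handled, but the forbidden elements are isolated and one shows the path can route around them) produces the full path and the simple module $\Rcal{A} = \etilch{p;k}A \in \repL{p(-1)}$. Minimality of $k = r(A)$ is then just the definition.

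For part (2), the surjection \eqref{exist-theorem-2-surj}, I would again induct, using the machinery of Section \ref{sec-pr} applied to $\Tii{p}{k}$ as developed in Proposition \ref{triv-pr-lemma}. Write $p'$ for $p$ with its tail step $p(0) = i$ removed (a cyclotomic path of tail weight $(\Lambda_{p(-1)},\Lambda_{p(1)})$ and length $k-1$), so $\Tii{p}{k} \cong \ftil{i}\Tii{p'}{k-1}$ by Proposition \ref{building-T} and $\Rcal{A} = \etilch{p;k}A = \etilch{p';k-1}(\etilch{i}A)$. Applying the inductive hypothesis to $B := \etilch{i}A \in \repL{p(1)}$ gives a surjection $\pr{p(1)}\ind \Tii{p'}{k-1} \boxtimes \Rcal{A} \twoheadrightarrow B$. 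The task is to "reattach" the $i$: from $B = \etilch{i}A$ we have, by Proposition \ref{cosoc-soc-isomorphic} applied through $\sigma^*$, a surjection $\ftilch{i}B = \cosoc\ind \Lii{i} \boxtimes B \twoheadrightarrow$ (a simple with the right $\epch{i}$), and since $A$ is simple with $\epch{i}(A) = \epch{i}(B)+1$ one identifies this target with $A$ itself (here one needs $\jump{i}$-type bookkeeping and Remark \ref{when-phcyc-jump-the-same}, using that $A \in \repL{i}$ so $\epch{i}(A) \le 1$, which pins down $\epch{i}(B) = 0$ and makes $\ftilch{i}B$ genuinely simple and $\cong A$). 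Then $\ind \Lii{i} \boxtimes B \twoheadrightarrow A$, and substituting the surjection onto $B$ and using exactness of $\ind$ gives $\ind \Lii{i} \boxtimes \pr{p(1)}\ind \Tii{p'}{k-1} \boxtimes \Rcal{A} \twoheadrightarrow A$; rearranging the parabolic, using Proposition \ref{pr-facts}.\ref{pr-fact-4} and the right-exactness of $\pr{i}$ together with Remark \ref{sujrection-onto-pr-remark} (and $A \in \repL{i}$ so $\pr{i}A = A$), and finally recognizing $\pr{i}\ind \Lii{i} \boxtimes \Tii{p'}{k-1} \cong \Tii{p}{k}$ via Proposition \ref{triv-pr-lemma}.\ref{triv-pr-lemma-p(-1)neqp(k)} (with the roles of head/tail dualized through $\sigma^*$) yields $\pr{i}\ind \Tii{p}{k}\boxtimes \Rcal{A} \twoheadrightarrow A$.

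The main obstacle I anticipate is part (1): simultaneously guaranteeing that the greedy choice $\etilch{p(1)}, \etilch{p(2)}, \dots$ keeps each intermediate module simple \emph{and} that the resulting colour sequence $p(0), p(1), \dots, p(k-1)$ genuinely forms a cyclotomic path --- in particular that one can always terminate at a tail weight $(\Lambda_{p(-1)}, \Lambda_i)$ with a \emph{unique} tail extension and no double extension, while steering clear of forbidden elements. This is where the type-by-type structure of $B^{1,1}$ and Proposition \ref{cyclotomic-path-lemma} do the real work; the module-theoretic half of the argument, by contrast, is largely formal bookkeeping with the functors $\etilch{i}$, $\ftilch{i}$, $\pr{}$ and the results of Sections \ref{sec-KLR}--\ref{sec-pr}.
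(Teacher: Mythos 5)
There is a genuine structural gap in your induction. You induct on $|\nu|$ and apply the inductive hypothesis to $B := \etilch{i}A$, asserting that $B \in \repL{p(1)}$. This is false in general: stripping the tail letter from a simple module in $\repL{i}$ typically produces a module whose $\epch{}$ has support on \emph{several} fundamental weights, so $B$ only lies in a higher-level category of the form $\rep{\Lambda_{p(-1)} + \Nexttermplus{p_1}{1}}$ (this is exactly what Lemma \ref{where-is-D} records for the intermediate modules $\Rcalj{t}{A}$), and the theorem's hypothesis ``$A \in \repL{i}$ simple'' is not available for $B$. The paper avoids this by not recursing on the theorem at all: it runs an induction on the number of head-extension steps of the path, proving parts (1) and (2) \emph{simultaneously} -- the surjection $\pr{i}\ind \Tii{p_t}{t}\boxtimes \Rcalj{t}{A}\twoheadrightarrow A$ at stage $t$, fed through Lemma \ref{where-is-D}, is precisely what constrains which colors $j$ can satisfy $\epch{j}(\Rcalj{t}{A})>0$, i.e.\ what guarantees the greedy choice of $\etilch{j}$'s actually traces a path in $B^{1,1}$ extending $p_t$ at its head. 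Your plan treats part (1) as a separate, purely combinatorial construction (flagged as ``the main obstacle'') without this mechanism, and your part (2) recursion strips the tail and reattaches $\Lii{i}$ on the left, which is not available once the fundamental-level hypothesis on $B$ fails.

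Even granting a corrected recursion, your sketch omits the cases where Lemma \ref{existence-of-surj} alone does not produce the next surjection: when $j=p(-1)$ is a class $\ClassB$ element with $\phcyctriv{j}{p_r}{r}=3$ and $\epch{j}(\Rcalj{r}{A})=2$, the paper needs the separate Lemma \ref{application-of-difficult-surjection} (built on Lemma \ref{existence-of-difficult-surj-lemma} and the rank-two classification in the Appendix), and for class $\ClassD$ pairs one needs the ``favoring'' convention together with Proposition \ref{taking-different-paths} to make $r(A)$ and $\Rcal{A}$ well defined. These are not formal bookkeeping; they are where the argument actually lives, and a proof along your lines would have to reconstruct them.
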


Note by Remark \ref{sujrection-onto-pr-remark} this is equivalent to proving $\ind \Tii{p}{k} \boxtimes \Rcal{A} \twoheadrightarrow A$. We further conjecture the following.

\begin{conjecture}
Under the same hypotheses as Theorem \ref{exist-theorem}
\begin{equation}
A \cong \cosoc \pr{i} \ind \Tii{p}{k} \boxtimes \Rcal{A}.
\end{equation}
\end{conjecture}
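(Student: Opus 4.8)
The plan is to promote the surjection $\ind \Tii{p}{k} \boxtimes \Rcal{A} \twoheadrightarrow A$ provided by Theorem~\ref{exist-theorem} to an identification of $A$ with the cosocle of $\pr{i}\ind \Tii{p}{k}\boxtimes\Rcal{A}$, and for this it suffices to establish two facts: (i) every simple quotient of $\ind \Tii{p}{k}\boxtimes\Rcal{A}$ that lies in $\repL{i}$ is isomorphic to $A$; and (ii) $A$ occurs at most once as a composition factor of $\ind \Tii{p}{k}\boxtimes\Rcal{A}$. Granting these, write $M' = \ind \Tii{p}{k}\boxtimes\Rcal{A}$. Any homomorphism $M' \to A$ sends $\cycloI{\Lambda_i} M'$ into $\cycloI{\Lambda_i} A = \zero$, so $\Hom(\pr{i} M', A) \cong \Hom(M', A)$, which by (ii) and Theorem~\ref{exist-theorem} is one-dimensional; and by Lemma~\ref{simpe-quotients-and-pr} together with (i), every simple quotient of $\pr{i} M'$ is $\cong A$. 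Hence $\cosoc \pr{i}\ind \Tii{p}{k}\boxtimes\Rcal{A} \cong A$.

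For the preparation I would record two identities. Since $\Rcal{A} = \etilch{p;k} A \neq \zero$ forces each intermediate module $\etilch{p(t-1)}\cdots\etilch{p(0)} A$ to be nonzero, $k$ applications of Proposition~\ref{crystal-op-facts}.\ref{e-and-f-undo-eachother} give $A \cong \ftilch{p(0)}\ftilch{p(1)}\cdots\ftilch{p(k-1)}\Rcal{A}$; likewise, using the $\sigma$-twisted form of Proposition~\ref{building-T}, $\Tii{p}{k} \cong \ftilch{p(0)}\ftilch{p(1)}\cdots\ftilch{p(k-1)}\UnitModule$, so $\Tii{p}{k}$ is a quotient of $\ind \Lii{p(0)}\boxtimes\cdots\boxtimes\Lii{p(k-1)}$ and hence $M'$ is a quotient of $E := \ind \Lii{p(0)}\boxtimes\cdots\boxtimes\Lii{p(k-1)}\boxtimes\Rcal{A}$. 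Thus (ii) reduces to showing $A$ occurs at most once in $E$, which is the $\sigma$-symmetric several-colour analogue of the multiplicity-one statement in Proposition~\ref{crystal-op-facts}.\ref{ftil-and-ep}: stripping $\Lii{p(0)}$ off the left, Proposition~\ref{crystal-op-facts}.\ref{all-comp-have-less-i} shows the only composition factor $C$ of $\ind \Lii{p(1)}\boxtimes\cdots\boxtimes\Rcal{A}$ for which $\ind \Lii{p(0)}\boxtimes C$ can contain $A$ with the correct value of $\epch{p(0)}$ is $C = \ftilch{p(1)}\cdots\ftilch{p(k-1)}\Rcal{A}$, and one induces on $k$; the $\epch{}$-bookkeeping along the path is the delicate point here.

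For (i), let $Q$ be a simple quotient of $M'$ with $Q \in \repL{i}$. By Frobenius reciprocity $\Tii{p}{k}\boxtimes\Rcal{A}$ embeds as a submodule of $\res Q$; since $(p(0),\dots,p(k-1)) \in \supp{\Tii{p}{k}}$, the word $(p(0),\dots,p(k-1))$ followed by a word of $\Rcal{A}$ lies in $\supp{Q}$, so $\epch{p(0)}(Q) > 0$. I would then peel the path off the left of $Q$ one colour at a time via Proposition~\ref{when-can-apply-etil-directly}.\ref{eich-case-for-direct-application-of-etili}: condition~(1) of a cyclotomic path gives $p(0) \neq p(1)$, whence inspection of $\Char(\Tii{p}{k})$ gives $\epch{p(0)}(\Tii{p}{k}) = 1$ and $\etilch{p(0)}\Tii{p}{k} \cong \Tii{p'}{k-1}$ with $p'$ the walk $p$ with its first step deleted, while $\Rcal{A} \in \repL{p(-1)}$ with $p(0) \neq p(-1)$ gives $\epch{p(0)}(\Rcal{A}) = 0$; so Proposition~\ref{when-can-apply-etil-directly}.\ref{eich-case-for-direct-application-of-etili} yields a surjection $\ind \Tii{p'}{k-1}\boxtimes\Rcal{A} \twoheadrightarrow \etilch{p(0)}Q$. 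Feeding this back through Frobenius keeps the relevant $\epch{}$ of the target positive, so iterating $k$ times gives $\etilch{p;k}Q \cong \Rcal{A} \cong \etilch{p;k}A$ with every intermediate $\epch{p(t)}$ positive for both $Q$ and $A$; then $k$ applications of the injectivity contained in Proposition~\ref{crystal-op-facts}.\ref{e-and-f-undo-eachother} force $Q \cong A$.

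The main obstacle is precisely the hypothesis ``$\epch{p(t)}$ of the second tensor factor vanishes'' needed to invoke Proposition~\ref{when-can-apply-etil-directly} at each peeling step. For $\Rcal{A} \in \repL{p(-1)}$ this demands that $p(-1)$ not occur among $p(0),\dots,p(k-1)$, or at least that $\epch{p(-1)}(\Rcal{A}) = 0$, and this can fail: the equality $p(0) = p(-1)$ forces the node $p(0)$ to be of class $\ClassB$, and in small rank (for instance type $A^{(1)}_2$) the path can revisit the colour $p(-1)$ by wrapping around $B^{1,1}$, in which case $\epch{p(-1)}(\Rcal{A})$ may equal $1$. (A consecutive repeat $p(t) = p(t+1)$, which again occurs only for class $\ClassB$ colours, is milder: that step is simply peeled by two successive applications of Proposition~\ref{when-can-apply-etil-directly} instead of one.) In these class $\ClassB$ and small-rank exceptional situations one must argue directly from the explicit $2^{d}$-dimensional model of $\Tii{p}{k}$ in Section~\ref{Description-of-family}, together with an explicit description of $\Rcal{A}$ inside $\repL{p(-1)}$ and, if needed, simultaneous peeling of $Q$ from the right; providing a uniform treatment of these exceptions is the crux of the argument, and is why the statement is at present only a conjecture.
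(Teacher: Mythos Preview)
The statement you are attempting to prove is labelled a \emph{Conjecture} in the paper; the authors do not give a proof, so there is nothing to compare your argument against. What follows is an assessment of your proposal on its own terms.

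Your reduction to (i) and (ii) is correct: if every simple quotient of $\pr{i}M'$ is isomorphic to $A$ and $A$ has composition multiplicity one in $M'$, then $\cosoc\pr{i}M'\cong A$. The logic connecting Lemma~\ref{simpe-quotients-and-pr}, the surjection $M'\twoheadrightarrow\pr{i}M'$, and Theorem~\ref{exist-theorem} is sound.

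The gap you flag in (i) is real, but it is broader than your discussion suggests. The obstruction is not confined to class~$\ClassB$ colours or to the equality $p(0)=p(-1)$: in \emph{every} type the path $p$ eventually revisits the colour $p(-1)$ once $k$ exceeds the length of $B^{1,1}$ (for instance in type $A^{(1)}_\ell$ with $p(0)=i$ one has $p(\ell)=p(-1)$), and at that step your peeling via Proposition~\ref{when-can-apply-etil-directly} needs $\epch{p(-1)}(\Rcal{A})=0$, which you cannot guarantee since $\Rcal{A}\in\repL{p(-1)}$ only gives $\epch{p(-1)}(\Rcal{A})\le 1$. (Your parenthetical about $A^{(1)}_2$ is slightly off: that type has no class~$\ClassB$ nodes, so the wraparound there is a pure class~$\ClassA$ phenomenon.) A second, related issue is that after deleting the first step the truncated walk $p'$ need not be a cyclotomic path---$p'(0),p'(1)$ may form a class~$\ClassD$ pair or may coincide---so the inductive setup does not self-replicate cleanly.

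Your sketch of (ii) is too thin to evaluate. The one-colour multiplicity-one statement in Proposition~\ref{crystal-op-facts}.\ref{ftil-and-ep} relies on $\ep{i}$ strictly increasing under $\ftil{i}$; for a multicoloured word $p(0)\cdots p(k-1)$ the analogous bookkeeping requires controlling $\epch{p(t)}$ of all intermediate composition factors simultaneously, and you have not indicated how to do this when the same colour recurs. This is essentially the same obstruction as in (i), and is presumably why the authors left the statement open.
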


Note that by Section \ref{sec-LV}, the crystal-theoretic consequence
of Theorem \ref{exist-theorem} is a map
$B(\Lambda_i) \to
\begin{displaystyle}
\bigoplus_{j}
\end{displaystyle}
\Bkr \otimes B(\Lambda_{j})$
given by 
$$[A] \mapsto 
\begin{tikzpicture}[baseline=-2pt]
        \node at (0,0) {\;\;{\tiny{$p(k\!-\!1)$}}\;\;};
        \draw (0,0) ellipse (.49cm and .26cm);
\end{tikzpicture}
\otimes [\Rcal{A}],$$
and where $j$ runs over all possibilities for $p(-1)$.
By abuse of notation, we let
\begin{tikzpicture}[baseline=-2pt]
        \node at (0,0) {\;\;{\tiny{$p(k\!-\!1)$}}\;\;};
        \draw (0,0) ellipse (.49cm and .26cm);
\end{tikzpicture}
be the node in $\Bkr$ that the path $p$ ends at.
In this way, each node of $\Bkr$ corresponds to the infinite
collection of paths $p$ that end at that node, and in turn
to the collection of modules $\Tii{p}{k}$.
(As remarked in the introduction, this is not a categorification,
but it is a useful correspondence.)
If we choose to specify $B(\Lambda_i)$ as above, that further specifies
that of that collection, we care about the $p$ with $p(0)=i$.

To recover the crystal isomorphism 
\eqref{eq-perfect} or \eqref{eq-nonperfect}
from Theorem \ref{exist-theorem}
one must actually fix $p(-1)$ 
and let $i$ vary
(whereas the theorem fixes $i$).
In many types $X_\ell$, $i \in I$,
 specifying $p(0)=i$ determines $p(-1)$  
(in particular when $\Lambda_i$ is of level $1$ and $\Bkr$ is perfect).
In type $A$ the relationship between \eqref{eq-perfect} and
\eqref{exist-theorem-2-surj} is transparent.
\cite{V15} discusses the crystal isomorphism in type $A$ in more depth.
Examples \ref{ex-typeC}, \ref{ex-modC} and Remark \ref{rem-typeC}
discuss a case when there are two choices for $p(-1)$,
even when $p$ is a cyclotomic path.
Note that $p(0)$ and $p(-1)$ must be ``adjacent" arrows in $\Bkr$.
At the moment, the above theorem  just gives us a map of nodes.
Our second main theorem
in Section \ref{sec-main2} below will show that it is a morphism
of crystals. 

\medskip

We will construct the path $p$ in the proof of Theorem \ref{exist-theorem}. We shall see $p$ is not always unique. For our choice of path $p$ we will also construct intermediate modules $\Rcalj{t}{A} = \etilch{p;t}A$ and refer to this process as our \emph{algorithm}. We first prove a series of lemmas.

\begin{lemma} \label{where-is-D}
Suppose that $A \in \repL{i}$ is a simple $R(\nu)$-module, $D$ is a simple $R(\nu - \gammaplus{p}{k})$-module, $p$ is a cyclotomic path of length $k$ and tail weight $(\Lambda_{p(-1)},\Lambda_i)$, and there is a surjection
\begin{equation}
\ind \Tii{p}{k} \boxtimes D \twoheadrightarrow A.
\end{equation}
Then $D \in \rep{\Lambda_{p(-1)} + \Nexttermplus{p}{k}}$. 
\end{lemma}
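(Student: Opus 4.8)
The plan is to deduce the required cyclotomic condition on $D$ directly from Proposition \ref{cyclotomic-char}, namely by showing $\epch{j}(D) \le \langle h_j, \Lambda_{p(-1)} + \Nexttermplus{p}{k}\rangle$ for every $j \in I$. Write $\Lambda_{p(-1)} + \Nexttermplus{p}{k} = \sum_{j\in I}\mu_j\Lambda_j$, so $\mu_j = \delta_{j,p(-1)} + \phcyctrivplus{j}{p}{k}$. By Frobenius reciprocity, the surjection $\ind \Tii{p}{k}\boxtimes D \twoheadrightarrow A$ means $\Tii{p}{k}\boxtimes D$ is an $(R(\gammaplus{p}{k})\otimes R(\nu-\gammaplus{p}{k}))$-submodule of $\res A$. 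The key character-theoretic tool is Remark \ref{rem-char-epsilon}: if $\epch{j}(D) = c$, then $\supp D$ contains some $\und{k} = (\underbrace{j,\dots,j}_c, k_{c+1},\dots)$, and concatenating the sequence $\und{\trivseq{p}{k}}$ of $\Tii{p}{k}$ in front produces a weight $(p(0),\dots,p(k-1),\underbrace{j,\dots,j}_c,k_{c+1},\dots)$ in $\supp A$. Since $A \in \repL{i}$, Proposition \ref{cyclotomic-char} forces $\epch{j}(A)\le \delta_{ij}$; but more is needed because the $j$'s in the concatenated sequence may be "shielded" by the tail $\und{\trivseq{p}{k}}$ of $\Tii{p}{k}$ unless $p(k-1) = j$ as well.

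So the core computation is: the largest $c$ such that $(p(0),\dots,p(k-1),\underbrace{j,\dots,j}_c,\dots)$ can appear at the left edge of the support of a module in $\repL{i}$ is exactly $\mu_j = \delta_{j,p(-1)} + \phcyctrivplus{j}{p}{k}$. First I would handle the contribution $\phcyctrivplus{j}{p}{k}$: by Remark \ref{path-interp-phcyc} (applicable since $p$ cyclotomic gives $\epch{}(\Tii{p}{k}) = \Lambda_{p(0)}$, via Proposition \ref{triv-pr-lemma}(1) and Remark \ref{unique-ext-and-phi}), this counts the maximal number of $j$-arrows extending the head of $p$, i.e. the number of $j$'s that can immediately follow $\und{\trivseq{p}{k}}$ while still giving a walk in $B^{1,1}$ — these $j$'s concatenate "for free" with the tail of $\Tii{p}{k}$ and pose no cyclotomic obstruction. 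Any further $j$'s (beyond $\phcyctrivplus{j}{p}{k}$ of them) cannot be absorbed into $\Tii{p}{k}$'s character, so they must come from $\epch{j}$ of a module genuinely in $\repL{i}$ or $\repL{\Lambda_{p(-1)}}$; the extra $\delta_{j,p(-1)}$ appears precisely because when $j = p(-1)$, the tail of $p$ itself extends by one $j$-arrow. I would make this rigorous by running the argument the other direction: apply $\etilch{p;k}$ (via Lemma \ref{where-is-D}'s setup, but here we are proving a prerequisite, so instead) — more cleanly, use Proposition \ref{e-and-short-exact-sequence} iteratively. Applying $\ech{j}$ repeatedly to $\ind \Tii{p}{k}\boxtimes D$ and tracking which composition factors survive, combined with $\epch{}(\Tii{p}{k}) = \Lambda_{p(0)}$, shows $\epch{j}(\ind \Tii{p}{k}\boxtimes D) = \phcyctrivplus{j}{p}{k} + \delta_{j,p(-1)}\cdot(\text{something}) + \epch{j}(D)$ once the head/tail shielding is accounted for; since this is $\ge \epch{j}$ of the quotient... wait, that inequality goes the wrong way for a quotient.

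Let me restructure: the cleanest route is via $\res$ and Remark \ref{rem-char-epsilon} as in the first paragraph. Given $A\in\repL{i}$, by Proposition \ref{cyclotomic-char} we know $A\in\repL{i}$, and I claim $\ind\Tii{p}{k}\boxtimes D \twoheadrightarrow A$ together with $A\in\repL{i}$ and $p$ cyclotomic forces the stated cyclotomic vanishing on $D$. Suppose for contradiction $\epch{j}(D) = c > \mu_j$ for some $j$. Take $[\und{k}]\in\supp D$ of the form $(\underbrace{j,\dots,j}_c,\dots)$. Then $[(\und{\trivseq{p}{k}},\und{k})]\in\supp(\ind\Tii{p}{k}\boxtimes D) \supseteq \supp A$ by the Shuffle Lemma (the unshuffled concatenation always appears). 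Now $(\und{\trivseq{p}{k}},\und{k}) = (p(0),\dots,p(k-1),\underbrace{j,\dots,j}_c,\dots)$. Using that $\pextplus{p}$ contains $j$ with multiplicity $\phcyctrivplus{j}{p}{k}$ of available head-extensions and that beyond those the walk structure of $B^{1,1}$ (Proposition \ref{unique-path-prop}) and the cyclotomic conditions (1),(2),(3),(4) pin down exactly how $j$'s at the far left of $\supp A$ relate to $\Lambda_i$: since $\epch{j}(A)\le\delta_{ij}$, and the number of leading $j$'s in $\supp A$ that are "structurally forced" by having $\Tii{p}{k}$ as a left-submodule is precisely $\delta_{j,p(-1)}+\phcyctrivplus{j}{p}{k}$ (the tail extension plus head extensions), we get a contradiction with $\epch{j}(A)\le\delta_{ij}$ once $c$ exceeds $\mu_j$. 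I expect the main obstacle to be exactly this bookkeeping — carefully verifying, type by type using Tables \ref{jump-table}–\ref{phcycplus-table} or a uniform argument via Proposition \ref{e-and-short-exact-sequence}, that the "shielding" contribution of $\Tii{p}{k}$'s character to the left edge of $\supp(\ind\Tii{p}{k}\boxtimes D)$ is precisely $\Nexttermplus{p}{k} + \delta_{j,p(-1)}$-worth of $j$'s and no more, so that the residual $\epch{j}$ genuinely lives on $D$ and is bounded by $\mu_j$; the class $\ClassB$ and class $\ClassD$ cases (where consecutive same-color arrows occur) will need the most care, and there the identity $\epch{}(\Tii{p}{k}) = \Lambda_{p(0)}$ from cyclotomicity is what saves us from a larger shield.
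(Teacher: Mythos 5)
There is a genuine gap. Your reduction to showing $\epch{j}(D) \leq \delta_{j,p(-1)} + \phcyctrivplus{j}{p}{k}$ and invoking Proposition \ref{cyclotomic-char} is the right target, but the mechanism you propose for the contradiction does not work. From Frobenius reciprocity you can indeed place the concatenated weight $(\und{\trivseq{p}{k}}, j^c, \dots)$ in $\supp{A}$ (your restructured version only places it in $\supp{\ind \Tii{p}{k} \boxtimes D}$, which says nothing about $A$), but even with the correct containment this weight begins with $p(0)=i$, not with $j$'s, so it imposes no bound via $\epch{j}(A) \leq \delta_{ij}$: by Remark \ref{rem-char-epsilon}, $\epch{j}$ only sees $j$'s at the extreme left of sequences, and here they sit behind $\und{\trivseq{p}{k}}$. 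Your ``shielding is exactly $\mu_j$-worth of $j$'s and no more'' is precisely the assertion that needs proof, and no amount of walk bookkeeping in $B^{1,1}$ supplies it, because the obstruction is not visible in characters alone; you also abandoned the alternative route through $\ech{j}$ after (correctly) noticing the inequality runs the wrong way for a quotient. The actual mechanism is module-theoretic: if $\epch{j}(D)=c$ one gets a surjection $\ind \Tii{p}{k} \boxtimes \Lii{j^c} \boxtimes (\etilch{j})^c D \twoheadrightarrow A$, and $\pr{i}\ind \Tii{p}{k} \boxtimes \Lii{j^c} = \zero$ whenever $c > \phcyc{i}{j}(\Tii{p}{k})$ (Proposition \ref{pr-facts}.\ref{pr-fact-5}, resting on Proposition \ref{interp-of-phcyc} and the cosocle structure of the induced module, not on supports), which kills $\pr{i}A$ and contradicts $A \in \repL{i}$.

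Once this is in place the lemma is immediate and needs none of your case analysis: since $p$ is cyclotomic, Remark \ref{unique-ext-and-phi} gives $\Nexttermminus{p}{k} = \Lambda_{p(-1)}$, so $\Lambda_{p(-1)} + \Nexttermplus{p}{k} = \Nextterm{p}{k} = \phcycall{\Lambda_i}(\Tii{p}{k})$, and the surjection onto the nonzero module $A \in \repL{i}$ forces $\pr{i}\ind \Tii{p}{k} \boxtimes D \neq \zero$; Proposition \ref{pr-facts}.\ref{pr-ind-determine-rep} (whose proof packages exactly the vanishing argument above via parts \ref{vanishing-pr-prop} and \ref{pr-fact-5}) then yields $D \in \rep{\Nextterm{p}{k}}$. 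This is the paper's two-line proof; your proposal circles around these ingredients but never closes the decisive step.
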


\begin{proof}
By Remark \ref{unique-ext-and-phi}, $\Nexttermminus{p}{k} = \Lambda_{p(-1)}$ so in particular $\Lambda_{p(-1)} + \Nexttermplus{p}{k} = \Nextterm{p}{k}$. Proposition \ref{pr-facts}.\ref{pr-ind-determine-rep} then implies the result.
\end{proof}

\begin{lemma} \label{existence-of-surj}
Let $p$ be a cyclotomic path of length $k$ and tail weight
$(\Lambda_{p(-1)},\Lambda_i)$. Suppose that $A \in \repL{i}$ is a
simple $R(\nu)$-module, $D$ is a simple
$R(\nu - \gammaplus{p}{k})$-module and there is a surjection
\begin{equation}
\ind \Tii{p}{k} \boxtimes D \twoheadrightarrow A.
\end{equation}
If $j \in \pextplus{p}$ such that $\epch{j}(D) \neq 0$ and either $j \neq p(-1)$ or $\epch{j}(D) = \phcyctriv{j}{p}{k}$, then there is a surjection
\begin{equation}
\label{eq-lemma-exist-surj}
\ind \Tii{p \extheadby{j}}{k+1} \boxtimes \etilch{j}D \twoheadrightarrow A.
\end{equation}
\end{lemma}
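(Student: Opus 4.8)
The plan is to reduce \eqref{eq-lemma-exist-surj} to a statement about $\pr{i}$ applied to a three-fold induced module, exploiting the factorization $\ind \Tii{p}{k}\boxtimes D \cong \ind\bigl(\ind\Tii{p}{k}\boxtimes\Lii{j}\bigr)\boxtimes \etilch{j}D$ (up to the intervening surjection $\ind\Lii{j}\boxtimes\etilch{j}D\twoheadrightarrow D$ coming from $\epch{j}(D)\ge 1$, Proposition~\ref{crystal-op-facts}.\ref{e-and-f-undo-eachother} read through $\etilch{j}$). First I would use exactness of induction and right exactness of $\pr{i}$ to turn the given surjection $\ind\Tii{p}{k}\boxtimes D\twoheadrightarrow A$ into a surjection
\begin{equation*}
\pr{i}\ind \bigl(\ind \Tii{p}{k}\boxtimes\Lii{j}\bigr)\boxtimes \etilch{j}D \twoheadrightarrow A,
\end{equation*}
using associativity of induction and Remark~\ref{sujrection-onto-pr-remark} (since $A$ is simple in $\repL{i}$). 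Then I would invoke Proposition~\ref{pr-facts}.\ref{pr-fact-4}: since $\pr{i}\ind\Tii{p}{k}\boxtimes\Lii{j}\cong \Tii{p\extheadby{j}}{k+1}$ by Proposition~\ref{triv-pr-lemma}, the module $C:=\ind\Tii{p}{k}\boxtimes\Lii{j}$ has $\pr{i}C\cong \Tii{p\extheadby{j}}{k+1}$ — at least in the case $j\ne p(-1)$, where Proposition~\ref{triv-pr-lemma}.\ref{triv-pr-lemma-p(-1)neqp(k)} gives an isomorphism — and so $\pr{i}\ind C\boxtimes \etilch{j}D\cong\pr{i}\ind\Tii{p\extheadby{j}}{k+1}\boxtimes\etilch{j}D$, yielding the desired surjection after applying Remark~\ref{sujrection-onto-pr-remark} once more to drop the $\pr{i}$.

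The hypothesis $\epch{j}(D)\ne 0$ is exactly what is needed for the surjection $\ind\Lii{j}\boxtimes\etilch{j}D\twoheadrightarrow D$ to exist, so that composing gives a surjection $\ind\Tii{p}{k}\boxtimes\Lii{j}\boxtimes\etilch{j}D\twoheadrightarrow A$. The real subtlety is the case $j=p(-1)$, where by hypothesis $\epch{j}(D)=\phcyctriv{j}{p}{k}$. Here Proposition~\ref{triv-pr-lemma}.\ref{triv-pr-when-j=p(-1)} only gives a surjection $\pr{i}\ind\Tii{p}{k}\boxtimes\Lii{j}\twoheadrightarrow\Tii{p\extheadby{j}}{k+1}$ rather than an isomorphism, with ``extra'' composition factors $K$ satisfying $\phcyc{i}{j}(K)\le\phcyctriv{j}{p}{k}-2$. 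So I cannot directly apply Proposition~\ref{pr-facts}.\ref{pr-fact-4}. Instead I would argue that any such extra factor $K$ contributes nothing: in $\ind K\boxtimes\etilch{j}D$ one has, by Frobenius reciprocity / the Shuffle Lemma, a bound $\epch{j}\le \phcyc{i}{j}(K)+\epch{j}(\etilch{j}D) \le (\phcyctriv{j}{p}{k}-2)+(\phcyctriv{j}{p}{k}-1)$; combined with $\wt_j$ and the cyclotomic condition $\cycloI{\Lambda_i}$ — more precisely, applying Proposition~\ref{pr-facts}.\ref{part-1-pr-facts} together with the observation that $\pr{i}K=\zero$ when $\epch{j}(K)$ is too large (Proposition~\ref{cyclotomic-char}) — one gets $\pr{i}\ind K\boxtimes\etilch{j}D=\zero$. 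Hence in the filtration of $\ind\Tii{p}{k}\boxtimes\Lii{j}$ by composition factors, only the top factor $\Tii{p\extheadby{j}}{k+1}$ survives after applying $\pr{i}$ and then inducing with $\etilch{j}D$, which reduces this case to the previous one.

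The step I expect to be the main obstacle is precisely this last accounting in the case $j=p(-1)$: controlling $\pr{i}\ind K\boxtimes\etilch{j}D$ for the spurious composition factors $K$. I would handle it by combining the weight estimate $\phcyc{i}{j}(K)\le\phcyctriv{j}{p}{k}-2$ from Proposition~\ref{triv-pr-lemma}.\ref{triv-pr-when-j=p(-1)} with the bound $\epch{j}(D)=\phcyctriv{j}{p}{k}$, so that $\epch{j}(\etilch{j}D)=\phcyctriv{j}{p}{k}-1$, and then using Proposition~\ref{pr-facts}.\ref{vanishing-pr-prop} (with $c$ chosen just above $\phcyc{i}{j}(K)$) to conclude vanishing. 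A cleaner alternative, which I would try first, is to bypass composition factors entirely: observe that $\ind\Tii{p}{k}\boxtimes\Lii{j}$ has $\ftil{j}^{\,?}\Tii{p}{k}=\Tii{p\extheadby{j}}{k+1}$ as its unique simple quotient with maximal $\phcyc{i}{j}$, and use Remark~\ref{factoring-through-convention} to argue that the composite surjection onto $A$ must factor through $\pr{i}\ind\Tii{p\extheadby{j}}{k+1}\boxtimes\etilch{j}D$ because $A\in\repL{i}$ forces the relevant $\epch{j}$-bound. Either way the final output is the surjection~\eqref{eq-lemma-exist-surj}.
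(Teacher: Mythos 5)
Your proposal is correct and follows essentially the same route as the paper's proof: for $j\neq p(-1)$ you use $\pr{i}\ind \Tii{p}{k}\boxtimes\Lii{j}\cong\Tii{p\extheadby{j}}{k+1}$ (Proposition \ref{triv-pr-lemma}.\ref{triv-pr-lemma-p(-1)neqp(k)}) together with Proposition \ref{pr-facts}.\ref{pr-fact-4}, and for $j=p(-1)$ you let the surjection factor through a composition factor $K$ of $\ind\Tii{p}{k}\boxtimes\Lii{j}$ (Remark \ref{factoring-through-convention}) and kill every $K\not\cong\Tii{p\extheadby{j}}{k+1}$ via $\epch{j}(\etilch{j}D)=\phcyctriv{j}{p}{k}-1>\phcyc{i}{j}(K)$ and Proposition \ref{pr-facts}.\ref{vanishing-pr-prop}/\ref{pr-fact-5}, exactly as the paper does. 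The only blemish is the loose intermediate step (the bound $\epch{j}\le\phcyc{i}{j}(K)+\epch{j}(\etilch{j}D)$ and the phrase ``$\pr{i}K=\zero$ when $\epch{j}(K)$ is too large'' are not the operative facts), but your final paragraph replaces this with the correct mechanism, so the argument stands.
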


\begin{proof}
Since $\epch{j}(D) \neq 0$, there is a surjection
\begin{equation} 
\ind \Lii{j} \boxtimes \etilch{j}D \twoheadrightarrow D.
\end{equation}
Then by the exactness of induction we get
\begin{equation} \label{surjection-not-K}
\ind \Tii{p}{k} \boxtimes \Lii{j} \boxtimes \etilch{j}D \twoheadrightarrow A.
\end{equation}
First suppose that $p(-1) \neq j$. Then by Proposition \ref{triv-pr-lemma}.\ref{triv-pr-lemma-p(-1)neqp(k)}
\begin{equation} 
\pr{i} \ind \Tii{p}{k} \boxtimes \Lii{j} \cong \Tii{p \extheadby{j}}{k+1}
\end{equation}
and by Proposition \ref{pr-facts}.\ref{pr-fact-4} applying $\pr{i}$ to \eqref{surjection-not-K} gives
\begin{equation} \label{surjection-through-Tp}
\ind \Tii{p\extheadby{j}}{k+1} \boxtimes \etilch{j}D \twoheadrightarrow \pr{i} \ind \Tii{p \extheadby{j}}{k+1} \boxtimes \etilch{j}D \twoheadrightarrow \pr{i}A \cong A.
\end{equation}
Second, if $j = p(-1)$ then both $\Nexttermminus{p}{k}$ and $\Nexttermplus{p}{k}$ have $\Lambda_j$ in their support so that $\epch{j}(D) = \phcyctriv{j}{p}{k} \geq 2$. By Proposition \ref{building-T}, $\pr{i} \ind \Tii{p}{k} \boxtimes \Lii{j}$ has cosocle isomorphic to $\Tii{p \extheadby{j}}{k+1}$. So if \eqref{surjection-through-Tp} doesn't hold then by Remark \ref{factoring-through-convention} there exists some other composition factor $K$ of $\ind \Tii{p}{k} \boxtimes \Lii{j}$ such that 
\begin{equation}
\ind K \boxtimes \etilch{j}D \twoheadrightarrow A.
\end{equation}
By Proposition \ref{triv-pr-lemma}.\ref{triv-pr-when-j=p(-1)}, $K$ satisfies $\phcyc{i}{j}(K) = \phcyc{i}{j}(\Tii{p}{k}) - 2 =: d$. Observe that 
\begin{equation}
\epch{j}(\etilch{j}D) = \epch{j}(D) - 1 = \phcyctriv{j}{p}{k} -1 = d+1 > \phcyc{i}{j}(K),
\end{equation}
which implies that
\begin{equation}
\pr{i} \ind K \boxtimes \Lii{j^{d+1}} \boxtimes (\etilch{j})^{d+2} D \twoheadrightarrow A.
\end{equation}
But $\pr{i} \ind K \boxtimes \Lii{j^{d+1}} \boxtimes (\etilch{j})^{d+2}
D = \zero$ by Proposition \ref{pr-facts}.\ref{pr-fact-5}. Hence the
surjection \eqref{surjection-not-K} must factor through $\ind \Tii{p
\extheadby{j}}{k+1} \boxtimes \etilch{j} D$, i.e.
\eqref{eq-lemma-exist-surj}
holds. 
\end{proof}

\begin{lemma} \label{existence-of-difficult-surj-lemma}
Suppose $i$ is of class $\ClassB$ and that locally in $B^{1,1}$
\begin{center}
\begin{tikzpicture}
\draw (0,0) ellipse (.3cm and .2cm);
\draw (4,0) ellipse (.3cm and .2cm);
\draw (2,0) ellipse (.3cm and .2cm);
\draw[->,thick] (.35,0) -- (1.65,0);
\draw[->,thick] (2.35,0) -- (3.65,0);
\draw[->,thick] (-1.7,0) -- (-.35,0);
\draw[->,thick] (4.35,0) -- (5.85,0);
\node at (1.1,.4) {$i$};
\node at (2.9,.4) {$i$};
\node at (-1,.4) {$h$};
\node at (4.95,.4) {$h$};
\end{tikzpicture}
\end{center}
Set $\Lambda = \Lambda_i + \Lambda_h$ and suppose that $D \in \repL{}$
is  simple,
$\etilch{h}D \in \rep{2\Lambda_i}$, and
$\etilch{h}D \notin \rep{\Lambda_i}$. Then \begin{enumerate}
\item \label{difficult-surjection-a} $\epch{}(\etilch{ih}D) = \Lambda_i$,
\item There exists a surjection
\begin{equation} \label{first-lemma-surjection}
\ind \Tiip{hi} \boxtimes \etilch{ih}D \twoheadrightarrow D
\end{equation}
\end{enumerate}

\end{lemma}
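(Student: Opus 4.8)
The plan is to establish part (1) first and then feed it into a three-step construction for part (2).

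\emph{Part (1).} First pin down all the relevant values of $\epch{}$. Since $\etilch{h}D \neq \zero$ we need $\epch{h}(D)>0$, and as $D \in \rep{\Lambda}$ with $\Lambda = \Lambda_i + \Lambda_h$, Proposition \ref{cyclotomic-char} gives $\epch{h}(D)=1$, $\epch{i}(D)\le 1$, and $\epch{j}(D)=0$ for $j \notin\{i,h\}$. Because $\etilch{h}D \in \rep{2\Lambda_i}$, all $\epch{j}(\etilch{h}D)$ with $j\ne i$ vanish, and $\etilch{h}D \notin \rep{\Lambda_i}$ then forces $\epch{i}(\etilch{h}D)=2$. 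By the $\sigma$-twisted form of Proposition \ref{crystal-op-facts}.\ref{etil-is-only-special-compfactor}, $\etilch{ih}D=\etilch{i}(\etilch{h}D)$ is simple with $\epch{i}(\etilch{ih}D)=1$; what remains of part (1) is the assertion $\epch{j}(\etilch{ih}D)=0$ for all $j\ne i$. By Remark \ref{rem-char-epsilon} this is a statement about $\supp{\etilch{h}D}$ — no weight of the form $(i,j,\dots)$ with $j\ne i$ survives into $\soc(\ech{i}\etilch{h}D)$ — which I would check by inspecting $\Char(\etilch{h}D)$ under the constraints $\epch{i}(\etilch{h}D)=2$, $\epch{j}(\etilch{h}D)=0$ $(j\ne i)$, together with the local shape of $B^{1,1}$ at the class $\ClassB$ node $i$ (two consecutive $i$-arrows flanked by $h$-arrows), which controls which labels may follow a leading $i$ in such a module. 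This yields $\epch{}(\etilch{ih}D)=\Lambda_i$.

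\emph{Part (2).} Since $\epch{h}(D)=1>0$, the $\sigma$-form of Proposition \ref{crystal-op-facts}.\ref{e-and-f-undo-eachother} gives $D\cong\ftilch{h}\etilch{h}D=\cosoc(\ind\Lii{h}\boxtimes\etilch{h}D)$, hence a surjection $\ind\Lii{h}\boxtimes\etilch{h}D\twoheadrightarrow D$; likewise, as $\epch{i}(\etilch{h}D)=2>0$ and $\etilch{ih}D=\etilch{i}(\etilch{h}D)$, there is a surjection $\ind\Lii{i}\boxtimes\etilch{ih}D\twoheadrightarrow\etilch{h}D$. Composing and using exactness of $\ind$ gives
\[
\ind\Lii{h}\boxtimes\Lii{i}\boxtimes\etilch{ih}D\twoheadrightarrow D .
\]
Now $\ind\Lii{h}\boxtimes\Lii{i}$ is $2$-dimensional with simple cosocle $\ftil{i}\Lii{h}\cong\Tiip{hi}$ (Proposition \ref{crystal-op-facts}.\ref{cosoc-is-simple} and Proposition \ref{building-T}), so its radical is the unique $1$-dimensional simple with character $[i,h]$, namely $\Tiip{ih}$ (the path $(i,h)$ occurs in $B^{1,1}$ just after the class $\ClassB$ pair of $i$-arrows). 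Inducing $\zero\to\Tiip{ih}\to\ind\Lii{h}\boxtimes\Lii{i}\to\Tiip{hi}\to\zero$ by $-\boxtimes\etilch{ih}D$ gives
\[
\zero\to\ind\Tiip{ih}\boxtimes\etilch{ih}D\to\ind\Lii{h}\boxtimes\Lii{i}\boxtimes\etilch{ih}D\to\ind\Tiip{hi}\boxtimes\etilch{ih}D\to\zero ,
\]
so it suffices to show the composite surjection above annihilates the submodule $\ind\Tiip{ih}\boxtimes\etilch{ih}D$; as $D$ is simple, this is equivalent to there being no surjection $\ind\Tiip{ih}\boxtimes\etilch{ih}D\twoheadrightarrow D$.

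The main obstacle is this last elimination, which I would run in the spirit of Lemma \ref{existence-of-surj}. Assuming such a surjection exists, apply the cyclotomic projection $\pro_{\Lambda}$; since $D\in\rep{\Lambda}$, Remark \ref{sujrection-onto-pr-remark} yields $\pro_{\Lambda}(\ind\Tiip{ih}\boxtimes\etilch{ih}D)\twoheadrightarrow D\ne\zero$. On the other hand, using $\epch{i}(\etilch{ih}D)=1$ from part (1), the value $\p^{\Lambda}_{i}(\Tiip{ih})$ computed from \eqref{phcyclo-formula} and the Cartan integers at the class $\ClassB$ node, and the vanishing criteria Proposition \ref{pr-facts}.\ref{pr-fact-5} and \ref{pr-facts}.\ref{vanishing-pr-prop}, one forces $\pro_{\Lambda}(\ind\Tiip{ih}\boxtimes\etilch{ih}D)=\zero$, a contradiction. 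The delicate point (and the reason the class $\ClassB$ hypothesis is essential) is that one must know the local $B^{1,1}$ structure, hence the relevant $a_{ih}$, to make the $\p^{\Lambda}_{i}$ bound bite; when a multiple bond is present one first strips further $\etilch{i}$'s off $\etilch{ih}D$ before applying the bound. The remaining $\supp$/character bookkeeping in part (1) is routine.
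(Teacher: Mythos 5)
Your skeleton does match the paper's: part (1) reduces to showing $\epch{h}(\etilch{ih}D)=0$ (the containment $\etilch{ih}D\in\rep{\Lambda_i+\Lambda_h}$ follows, as you note, from the surjection $\ind\Lii{i}\boxtimes\etilch{ih}D\twoheadrightarrow\etilch{h}D$ and Proposition \ref{pr-facts}), and part (2) reduces, exactly as in the paper, to ruling out that the surjection from $\ind\Lii{h}\boxtimes\Lii{i}\boxtimes\etilch{ih}D$ factors through $\ind\Lcal{ih}\boxtimes\etilch{ih}D$. But both of your elimination steps have genuine gaps. In part (1) the only nontrivial claim is precisely $\epch{h}(\etilch{ih}D)=0$, and ``inspect $\Char(\etilch{h}D)$ together with the local shape of $B^{1,1}$'' is not an argument: the local $B^{1,1}$ picture does not by itself constrain which letters can follow the leading $i$'s in an arbitrary simple $D\in\repL{}$. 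The paper's proof of this point is structural: it chooses a surjection $\ind K\boxtimes N\twoheadrightarrow D$ with $K$ a simple module over a parabolic $R(m_i\alpha_i+m_h\alpha_h)$ absorbing all leading $i$'s and $h$'s and with $\epch{i}(N)=\epch{h}(N)=0$, uses the classification of simples in $\rep{\Lambda_i+\Lambda_h}$ (the rank-two crystal in the Appendix) to see $K$ is one of $\Lcal{hii}$, $\ftil{h}\Lcal{hii}$, $\ind\Lcal{hi}\boxtimes\Lcal{ihi}$, computes $\etilch{ih}K$ in each case, and transfers the vanishing to $D$ via Proposition \ref{when-can-apply-etil-directly} and the Shuffle Lemma. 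Nothing in your sketch replaces this step.

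In part (2) your mechanism for killing the unwanted factor fails. You claim $\pro_{\Lambda}(\ind\Tiip{ih}\boxtimes\etilch{ih}D)=\zero$ via a $\p^{\Lambda}_{i}$ bound, but with $a_{ih}=-2$, $a_{hi}=-1$ one computes $\p^{\Lambda}_{i}(\Lcal{ih})=1$ and $\p^{\Lambda}_{h}(\Lcal{ih})=1$, while $\epch{}(\etilch{ih}D)=\Lambda_i$, so Proposition \ref{pr-facts} gives no vanishing; and the vanishing is in fact false in cases covered by the lemma: for $D\cong\Lcal{hii}$ one has $\etilch{h}D\cong\Lii{i^2}$ (so the hypotheses hold) and $\etilch{ih}D\cong\Lii{i}$, yet $\ind\Lcal{ih}\boxtimes\Lii{i}$ surjects onto its cosocle $\ftil{i}\Lcal{ih}\cong\Lcal{ihi}\in\rep{\Lambda}$, so $\pro_{\Lambda}(\ind\Lcal{ih}\boxtimes\Lii{i})\neq\zero$. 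So no correct argument can force the projection to vanish, and your contradiction evaporates. The correct elimination is the paper's, and it is where part (1) gets used: $\epch{h}(D)=1$, whereas $\epch{h}(\Lcal{ih})=0$ and $\epch{h}(\etilch{ih}D)=0$, so by the Shuffle Lemma $\ech{h}(\ind\Lcal{ih}\boxtimes\etilch{ih}D)=\zero$, and exactness of $\ech{h}$ forbids any surjection from this module onto $D$; hence the map factors through $\ind\Lcal{hi}\boxtimes\etilch{ih}D\cong\ind\Tiip{hi}\boxtimes\etilch{ih}D$ as required.
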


\begin{proof} 
\begin{enumerate}
\item As $\epch{}(\etilch{h}D) = 2\Lambda_i$ there is a surjection
\begin{equation}
\ind \Lii{i} \boxtimes \etilch{ih}D \twoheadrightarrow \etilch{h}D.
\end{equation}
Because $i$ is a class $\ClassB$ node, $a_{ij} = 0$ unless $j = h$ or $j = i$ in which case
$a_{hi} = -1$ or $a_{ii} = 2$ respectively. Using equation \eqref{phcyclo-formula}, $\p^{2\Lambda_i}(\Lii{i}) = \Lambda_i + \Lambda_h$, and it follows from Proposition \ref{pr-facts}.\ref{pr-fact-5} that $\etilch{ih}D \in \rep{\Lambda}$. We therefore only need to show that $\epch{h}(\etilch{ih}D) = 0$.

There exist $m_i, m_h \in \MB{Z}_{> 0}$, a simple
$R(m_i \alpha_i + m_h \alpha_h)$-module $K \in \repL{}$, and a simple
$R(\nu - (m_i \alpha_i + m_h \alpha_h))$-module $N$ such that
$\epch{i}N = \epch{h}N = 0$, and a surjection
\begin{equation}
\ind K \boxtimes N \twoheadrightarrow D.
\end{equation}
Then $K \in \rep{\Lambda}$, $\etilch{h}K \in \rep{2\Lambda_i}$, and $\etilch{h}K \notin \rep{\Lambda_i}$. An analysis of the classification of simple
modules in $\rep{\Lambda_i + \Lambda_h}$
 in the Appendix shows that $K$ must be isomorphic to one of the following:
\begin{enumerate}
\item \label{case1} $\Lcal{hii} \cong \ftilch{h}\ftilch{i}\Lii{i}$, 
\item \label{case2} $\ftil{h}\Lcal{hii} \cong \ftilch{h}\ftilch{i}\Lcal{ih}$, which has character $(1+q_i^2)[hiih]$ (up to overall grading shift),
\item \label{case3} $\ind \Lcal{hi} \boxtimes \Lcal{ihi} \cong \ftilch{h}\ftilch{i}\Lcal{ihi}$.
\end{enumerate}  
By Proposition \ref{when-can-apply-etil-directly}, it follows that there is a surjection,
\begin{equation}
\ind \etilch{ih}K \boxtimes N \twoheadrightarrow \etilch{ih}D.
\end{equation}
From the three cases above, $\etilch{ih}K$ is isomorphic to either $\Lii{i}$, $\Lcal{ih}$, or $\Lcal{ihi}$. In all three cases $\epch{h}(\etilch{ih}K) = 0$. Since $\epch{h}(N) = 0$, it follows from the Shuffle Lemma that $\epch{h}(\etilch{hi}D) = 0$.

\item There is a nonzero map 
\begin{equation} \label{Lij-or-Lji-factorization-thru-surj}
\ind \Lii{h} \boxtimes \Lii{i} \boxtimes \etilch{ih}D \twoheadrightarrow D.
\end{equation}
This surjection factors through either $\ind \Lcal{ih} \boxtimes \etilch{ih}D$ or $\ind \Lcal{hi} \boxtimes \etilch{ih}D$. Since $\epch{h}(D) = 1$ but $\epch{h}(\Lcal{ih}) = 0$ and $\epch{h}(\etilch{ih}D) = 0$ by part \eqref{difficult-surjection-a}, then $\ech{h}(D) \neq \zero$ but $\ech{h}(\ind \Lcal{ih} \boxtimes \etilch{hi}D) = \zero$ a contradiction to the exactness of $\ech{h}$. It follows that \eqref{Lij-or-Lji-factorization-thru-surj} factors through
\begin{equation}
\ind \Lcal{hi} \boxtimes \etilch{ih}D \twoheadrightarrow D.
\end{equation}

\end{enumerate}

\end{proof}

\begin{lemma} \label{application-of-difficult-surjection}
Let $i$ belong to class $\ClassB$ such that locally in $B^{1,1}$
\begin{center}
\begin{tikzpicture}
\draw (0,0) ellipse (.3cm and .2cm);
\draw (4,0) ellipse (.3cm and .2cm);
\draw (2,0) ellipse (.3cm and .2cm);
\draw (-2,0) ellipse (.3cm and .2cm);
\draw (6,0) ellipse (.3cm and .2cm);
\draw[->,thick] (-3.6,0) -- (-2.35,0);
\draw[->,thick] (.35,0) -- (1.65,0);
\draw[->,thick] (6.35,0) -- (7.65,0);
\draw[->,thick] (2.35,0) -- (3.65,0);
\draw[->,thick] (-1.65,0) -- (-.35,0);
\draw[->,thick] (4.35,0) -- (5.65,0);
\node at (1.1,.4) {$i$};
\node at (2.9,.4) {$i$};
\node at (-1,.4) {$h$};
\node at (4.95,.4) {$h$};
\node at (7,.4) {$h'$};
\node at (-2.8,.4) {$h'$};
\end{tikzpicture}
\end{center}
Suppose that $p$ is a cyclotomic path in $B^{1,1}$ of tail weight $(\Lambda_i,\Lambda_i)$ and length $k+2$ such that $p(k-2) = h'$, $p(k-1) = h$, $p(k) = p(k+1) = i$. 
Suppose further that $A$ is a simple $R(\nu)$-module
in $\repL{i}$,
$D$ is a simple $R(\nu-\gammaplus{p}{k})$-module
such that $D \in \rep{2\Lambda_i}$, $D \notin \rep{\Lambda_i}$,
and $\ftilch{h}D \in \rep{\Lambda_i + \Lambda_h}$,
and there are surjections
\begin{equation}
\ind \Tii{p}{k} \boxtimes D \twoheadrightarrow A,
\end{equation}
\begin{equation} \label{double-ii-ext-second-surj}
\ind \Tii{p}{k-1} \boxtimes \ftilch{h}D \twoheadrightarrow A.
\end{equation}
Then $\etilch{i}D \in \rep{\Lambda_i}$ and there is a surjection
\begin{equation} \label{ext-of-surjection}
\ind \Tii{p}{k+1} \boxtimes \etilch{i}D \twoheadrightarrow A
\end{equation}
\end{lemma}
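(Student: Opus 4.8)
The plan is to feed the hypothesis $\ftilch{h}D\in\rep{\Lambda_i+\Lambda_h}$ into Lemma \ref{existence-of-difficult-surj-lemma} and then propagate the resulting surjection one $h$-step and one $i$-step along the head of the path $p$.

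First, since $D\in\rep{2\Lambda_i}$ but $D\notin\rep{\Lambda_i}$, Proposition \ref{cyclotomic-char} forces $\epch{i}(D)=2$ and $\epch{j}(D)=0$ for $j\neq i$; in particular $\etilch{i}D\neq\zero$. Put $E:=\ftilch{h}D$. By the $\sigma$-twisted form of Proposition \ref{crystal-op-facts}.\ref{e-and-f-undo-eachother}, $\etilch{h}E\cong D$, so $E$ is a simple module with $\etilch{h}E=D\in\rep{2\Lambda_i}$, $\etilch{h}E\notin\rep{\Lambda_i}$, and $E\in\rep{\Lambda_i+\Lambda_h}$ by hypothesis. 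Thus $E$ satisfies the hypotheses of Lemma \ref{existence-of-difficult-surj-lemma} for the sub-configuration $\bullet\xrightarrow{\,h\,}\bullet\xrightarrow{\,i\,}\bullet\xrightarrow{\,i\,}\bullet\xrightarrow{\,h\,}\bullet$ of the local picture. That lemma gives $\epch{}(\etilch{ih}E)=\Lambda_i$; as $\etilch{ih}E=\etilch{i}\etilch{h}E=\etilch{i}D$, this is precisely the statement $\etilch{i}D\in\rep{\Lambda_i}$, the first conclusion. It also gives a surjection $\ind\Tiip{hi}\boxtimes\etilch{i}D\twoheadrightarrow E=\ftilch{h}D$.

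Next, inducing this surjection inside the second hypothesis $\ind\Tii{p}{k-1}\boxtimes\ftilch{h}D\twoheadrightarrow A$ and using exactness of induction and transitivity, we obtain $\ind\Tii{p}{k-1}\boxtimes\Tiip{hi}\boxtimes\etilch{i}D\twoheadrightarrow A$; since $\ind\Lii{h}\boxtimes\Lii{i}\twoheadrightarrow\Tiip{hi}$ (this is the cosocle, by Proposition \ref{building-T}), also $\ind\Tii{p}{k-1}\boxtimes\Lii{h}\boxtimes\Lii{i}\boxtimes\etilch{i}D\twoheadrightarrow A$. Apply $\pr{i}$. Since $h=p(k-1)\neq i=p(-1)$ and the head of the length $k-1$ truncation of $p$ can be extended by an $h$-arrow, Proposition \ref{triv-pr-lemma}.\ref{triv-pr-lemma-p(-1)neqp(k)} gives $\pr{i}\ind\Tii{p}{k-1}\boxtimes\Lii{h}\cong\Tii{p}{k}$, and iterating Proposition \ref{pr-facts}.\ref{pr-fact-4} (together with Remark \ref{sujrection-onto-pr-remark}) yields a surjection
\[
\ind\bigl(\pr{i}\ind\Tii{p}{k}\boxtimes\Lii{i}\bigr)\boxtimes\etilch{i}D\twoheadrightarrow A.
\]

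It remains to factor this through $\ind\Tii{p}{k+1}\boxtimes\etilch{i}D$. Because $p(k)=i=p(-1)$, Proposition \ref{triv-pr-lemma}.\ref{triv-pr-when-j=p(-1)} says $\pr{i}\ind\Tii{p}{k}\boxtimes\Lii{i}$ has simple cosocle $\Tii{p}{k+1}=\ftil{i}\Tii{p}{k}$, and every composition factor $K\not\cong\Tii{p}{k+1}$ of $\ind\Tii{p}{k}\boxtimes\Lii{i}$ satisfies $\phcyc{i}{i}(K)\leq\phcyctriv{i}{p}{k}-2=1$ (here $\phcyctriv{i}{p}{k}=3$ is the class $\ClassB$ value from Table \ref{jump-table}). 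Filtering $\pr{i}\ind\Tii{p}{k}\boxtimes\Lii{i}$ with $\Tii{p}{k+1}$ as its top quotient and invoking Remark \ref{factoring-through-convention}, the surjection factors through $\ind K'\boxtimes\etilch{i}D$ for some composition factor $K'$; if $K'\cong\Tii{p}{k+1}$ we obtain \eqref{ext-of-surjection} and are done. Otherwise $\phcyc{i}{i}(K')\leq1$. If $\phcyc{i}{i}(K')=0$, then using $\epch{i}(\etilch{i}D)=1$ we have $\ind K'\boxtimes\Lii{i}\boxtimes(\etilch{i})^2D\twoheadrightarrow\ind K'\boxtimes\etilch{i}D\twoheadrightarrow A$, while Proposition \ref{pr-facts}.\ref{pr-fact-5} gives $\pr{i}\ind K'\boxtimes\Lii{i}=\zero$ and hence, by Proposition \ref{pr-facts}.\ref{part-1-pr-facts}, $\pr{i}$ of the whole is $\zero$, contradicting $\pr{i}A=A$. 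So necessarily $\phcyc{i}{i}(K')=1$, i.e. $\jump{i}(K')=1$. The main obstacle is ruling out this last possibility: here one must combine the \emph{first} hypothesis $\ind\Tii{p}{k}\boxtimes D\twoheadrightarrow A$ (which pins down $\epch{i}(A)=1$) with the explicit description of the few simple modules supported near a class $\ClassB$ node --- the same classification used in the proof of Lemma \ref{existence-of-difficult-surj-lemma} --- to show that no composition factor $K'$ with $\jump{i}(K')=1$ admits a surjection $\ind K'\boxtimes\etilch{i}D\twoheadrightarrow A$. This forces $K'\cong\Tii{p}{k+1}$ and completes the proof.
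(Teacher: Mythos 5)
Your first half coincides with the paper's argument: applying Lemma \ref{existence-of-difficult-surj-lemma} to $\ftilch{h}D$ yields both $\etilch{i}D \in \rep{\Lambda_i}$ and the surjection $\ind \Tiip{hi} \boxtimes \etilch{i}D \twoheadrightarrow \ftilch{h}D$, and inducing this into \eqref{double-ii-ext-second-surj} gives $\ind \Tii{p}{k-1} \boxtimes \Tiip{hi} \boxtimes \etilch{i}D \twoheadrightarrow A$. The problem is your endgame. After breaking $\Tiip{hi}$ back into $\Lii{h}\boxtimes\Lii{i}$ and applying $\pr{i}$, you must exclude factoring through $\ind K' \boxtimes \etilch{i}D$ for composition factors $K'$ of $\pr{i}\ind\Tii{p}{k}\boxtimes\Lii{i}$ with $\phcyc{i}{i}(K')\leq 1$. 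Your vanishing trick only disposes of $\phcyc{i}{i}(K')=0$, because $\epch{i}(\etilch{i}D)=1$; for the remaining case $\phcyc{i}{i}(K')=1$ you only assert that ``one must combine'' the first hypothesis with a classification of simples near a class $\ClassB$ node, without giving the argument (and the rank-two classification used inside Lemma \ref{existence-of-difficult-surj-lemma} concerns modules over $R(m_h\alpha_h+m_i\alpha_i)$, whereas $K'$ lives over $R(\gammaplus{p}{k}+\alpha_i)$ for arbitrary $k$, so it does not apply directly). Since this is exactly the delicate case the lemma exists to handle --- it is the situation where $\epch{i}$ of the companion module is $2$ while $\phcyctriv{i}{p}{k}=3$, which Lemma \ref{existence-of-surj} cannot treat --- the proposal has a genuine gap.

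The paper avoids this case altogether by never splitting $\Tiip{hi}=\Lcal{hi}$ at the last stage: it first proves the isomorphism $\pr{i}\ind\Tii{p}{k-1}\boxtimes\Tiip{hi}\cong\Tii{p}{k+1}$. The point is that, since $(h,h')$ is never a class $\ClassD$ pair, every composition factor of $\pr{i}\ind\Tii{p}{k}\boxtimes\Lii{i}$ other than the cosocle $\Tii{p}{k+1}$ has $\ep{}=\Lambda_h$, while $\e{h}\big(\ind\Tii{p}{k-1}\boxtimes\Tiip{hi}\big)=\zero$; hence the quotient $\pr{i}\ind\Tii{p}{k-1}\boxtimes\Tiip{hi}$ of $\pr{i}\ind\Tii{p}{k}\boxtimes\Lii{i}$ contains none of those factors and must equal $\Tii{p}{k+1}$. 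Proposition \ref{pr-facts}.\ref{pr-fact-4} then converts $\ind\Tii{p}{k-1}\boxtimes\Tiip{hi}\boxtimes\etilch{i}D\twoheadrightarrow A$ directly into $\pr{i}\ind\Tii{p}{k+1}\boxtimes\etilch{i}D\twoheadrightarrow A$, with no case analysis on composition factors. To salvage your route you should prove this isomorphism (or the equivalent $\e{h}$-vanishing statement) rather than attempt to exclude a hypothetical factor with $\jump{i}(K')=1$ by hand.
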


\begin{proof}
The claim that $\etilch{i}D \in \rep{\Lambda_i}$ follows immediately from Lemma \ref{existence-of-difficult-surj-lemma} and the fact that $\ftilch{h}D \in \rep{\Lambda_i + \Lambda_h}$, $D \in \rep{2\Lambda_i}$, $D \notin \rep{\Lambda_i}$.

To construct the surjection \eqref{ext-of-surjection} we first 
prove that
\begin{equation} \label{pr-joins-3-step-triv}
\pr{i} \ind \Tii{p}{k-1} \boxtimes \Tiip{hi} \cong \Tii{p}{k+1}.
\end{equation}
Recall the notation $\Tiip{hi} = \Lcal{hi}$.
Examining Section \ref{Description-of-family}, we see
$\ep{h}(\Tii{p}{k+1}) = 0$, so recalling Proposition 
\ref{e-and-short-exact-sequence}
the map 
$$ \ind \Tii{p}{k-1} \boxtimes \Lii{h} \boxtimes \Lii{i}
\twoheadrightarrow \Tii{p}{k+1}$$
cannot factor through $\ind \Tii{p}{k-1} \boxtimes\Lcal{ih}$
yielding surjections
\begin{equation}
\ind \Tii{p}{k-1} \boxtimes \Lii{h} \boxtimes \Lii{i} \twoheadrightarrow \ind \Tii{p}{k-1} \boxtimes \Tiip{hi} \twoheadrightarrow \Tii{p}{k+1}.
\end{equation}
Since $i = p(0) = p(-1) \neq h$, Proposition
\ref{pr-facts}.\ref{pr-fact-4}, Proposition
\ref{triv-pr-lemma}.\ref{triv-pr-lemma-p(-1)neqp(k)},
the fact
$\pr{i}\Tii{p}{k+1} \cong \Tii{p}{k+1}$, and
the right exactness of $\pr{}$  imply we have 
\begin{equation} \label{surjection-for-ii-extension}
\begin{tikzpicture}
\node at (0,0) {
$\pr{i} \ind \Tii{p}{k-1} \boxtimes \Lii{h} \boxtimes \Lii{i} \twoheadrightarrow \pr{i} \ind \Tii{p}{k-1} \boxtimes \Tiip{hi} \twoheadrightarrow \Tii{p}{k+1}$.
};
\node[rotate=90] at (-1.8,-.5) {$\cong$};
\node at (-2.3,-1) {$\pr{i} \ind \Tii{p}{k} \boxtimes \Lii{i} $};
\end{tikzpicture}
\end{equation}
Keeping in mind the lower bounds on $\ell$ and  by inspection
of the types $X_\ell$ that have class $\ClassB$ nodes, we see
that $(h,h')$ do {\em not} form a class $\ClassD$ pair.
Hence
all composition factors $K \not\cong \cosoc(\ind \Tii{p}{k} \boxtimes \Lii{i}) \cong \Tii{p}{k+1}$ of $\pr{i} \ind \Tii{p}{k} \boxtimes \Lii{i}$ have $\ep{}(K) = \Lambda_{h}$.
As $\e{h}(\ind \Tii{p}{k-1} \boxtimes \Tiip{hi}) = \zero$ it
follows that \begin{equation} \label{gluing-together-trivs}
\pr{i} \ind \Tii{p}{k-1} \boxtimes \Tiip{hi} \cong \Tii{p}{k+1}. 
\end{equation}

By Lemma \ref{existence-of-difficult-surj-lemma} there is a surjection
\begin{equation} \label{assumption-surjection-2}
\ind \Tiip{h i} \boxtimes \etilch{i}D \twoheadrightarrow \ftilch{h}D.
\end{equation}
The exactness of induction and \eqref{double-ii-ext-second-surj}
then give
\begin{equation}
\ind \Tii{p}{k-1} \boxtimes \Tiip{h i} \boxtimes \etilch{i}D \twoheadrightarrow A.
\end{equation}
By \eqref{gluing-together-trivs},
Proposition \ref{pr-facts}.\ref{pr-fact-4} implies that 
\begin{equation}
\pr{i} \ind \Tii{p}{k+1} \boxtimes \etilch{i}D \twoheadrightarrow A.
\end{equation}

\end{proof}

We now prove our first main theorem,  Theorem \ref{exist-theorem}.

\begin{proof}
Since $i$ is not a forbidden element of $I$ then Lemma \ref{cyclotomic-path-lemma} states that we can find at least one cyclotomic path of length 1 and tail weight $(\Lambda_{p(-1)},\Lambda_i)$ in $B^{1,1}$ for some $p(-1) \in I$. Choose one of these cyclotomic paths and call it $p_1$. We will construct the path $p$ in the statement of the theorem by repeatedly extending the head of $p_1$. In particular $p_t$ will be the length $t$ path that results from extending the head of $p_{t-1}$ by some $j \in \pextplus{p_{t-1}}$ so that $p_t := p_{t-1} \extheadby{j}$. Since the tail is fixed for $p_1, p_2, \dots, p_t, \dots$, all $p_t$ will be cyclotomic paths of tail weight $(\Lambda_{p(-1)},\Lambda_i)$, and therefore for all $t \geq 1$, $\Nexttermminus{p_t}{t} = \Lambda_{p(-1)}$. When $i$ is part of a class $\ClassD$ pair $(i,j)$ then $p(-1) = j$, and we add the requirement to $p_t$ that we \emph{favor} extension by $i$-arrows over extension by $j$-arrows. In other words when $\pextplus{p_t} = \{i,j\}$ set 
\begin{equation}
p_{t+1} = p_t \extheadby{i} \quad \text{and} \quad p_{t+2} = p_{t+1} \extheadby{j}
\end{equation}
unless the algorithm terminates before $t+2$. 

Set $\Rcalj{0}{A} := A$, and 
\begin{equation}
\Rcalj{t}{A} := \etilch{p_t(t-1) p_t(t-2) \dots p_t(1) p_t(0)}A = \etilch{p_t;t}A.
\end{equation}
We will show inductively that there exist surjections
\begin{equation}
\pr{i} \ind \Tii{p_t}{t} \boxtimes \Rcalj{t}{A} \twoheadrightarrow A.
\end{equation}
For each of these surjections Lemma \ref{where-is-D} implies that $\Rcalj{t}{A} \in \rep{\Lambda_{p(-1)} + \Nexttermplus{p_t}{t}}$. The induction will end at the smallest $k$ such that $\Rcalj{k}{A} \in \rep{\Lambda_{p(-1)}}$. Then we set $p = p_k$, $r(A) = k$, and $\Rcal{A} = \Rcalj{k}{A}$.

In the base case $t = 0$, $\Rcalj{0}{A} = A$. If $|\nu| =1$, then $A = \Lii{i}$ and $\etilch{p_1(0)}\Lii{i} \cong \UnitModule \in \rep{\Lambda_{p(-1)}}$, so $r(A) = 1$. The existence of the surjection in this case is vacuous. 

Assume that $|\nu| > 1$. Then $\Rcalj{1}{A} = \etilch{p_1(0)}A \neq \zero, \UnitModule$. By Proposition \ref{crystal-op-facts}.\ref{ftil-and-ep} there is a surjection
\begin{equation} 
\ind \Tiip{p_1(0)} \boxtimes \Rcalj{1}{A} \cong \ind \Lii{p_1(0)} \boxtimes \etilch{p_1(0)}A \twoheadrightarrow A.
\end{equation}
As noted above $\Rcalj{1}{A} \in \rep{\Nexttermplus{p_1}{1} + \Lambda_{p(-1)}}$.

Now suppose that we have shown that for $1 \leq t \leq r$ there exist surjections
\begin{equation} \label{inductive-hypthesis}
\pr{i} \ind \Tii{p_t}{t} \boxtimes \Rcalj{t}{A} \twoheadrightarrow A
\end{equation}
with $\Rcalj{t}{A} \in \rep{\Lambda_{p(-1)} + \Nexttermplus{p_t}{t}}$ but for $t<r$,
$\Rcalj{t}{A} \notin \rep{\Lambda_{p(-1)}}$. If in fact $\Rcalj{r}{A} \in \rep{\Lambda_{p(-1)}}$ then we are done and we set $p_r = p$, $r(A) = r$, and $\Rcal{A} = \Rcalj{r}{A}$. If not, then there is some $j \in I$ with $\Lambda_j$ in the support of $\Nexttermplus{p_{r}}{r}$ such that either $j \neq p(-1)$ and $\epch{j}(\Rcalj{r}{A}) \geq 1$, or $j = p(-1)$ and $\epch{p(-1)}(\Rcalj{r}{A}) \ge 2$.
By Remark \ref{path-interp-phcyc} in either case we can extend the head of $p_{r}$ by a $j$-arrow  and set $p_{r+1} := p_{r} \extheadby{j}$. $p_{r+1}$ is a cyclotomic path of tail weight $(\Lambda_{p(-1)},\Lambda_i)$ and length $r+1$. Recall that if $(p(-1),i)$ is a class $\ClassD$ pair, then we always favor extension by $i$-arrows. Set $\Rcalj{r+1}{A} := \etilch{p_{r+1}(r)}\Rcalj{r}{A} = \etilch{j}\Rcalj{r}{A}$. When $j \neq p(-1)$, Lemma \ref{existence-of-surj} implies that there is a surjection
\begin{equation} \label{inductive-achieved-surjection}
\pr{i} \ind \Tii{p_{r+1}}{r+1} \boxtimes \Rcalj{r+1}{A} \twoheadrightarrow A,
\end{equation}
and then Lemma \ref{where-is-D} and \eqref{inductive-achieved-surjection} imply that $\Rcalj{r+1}{A} \in \rep{\Lambda_{p(-1)} + \Nexttermplus{p_{r+1}}{r+1}}$. 

If $j = p(-1)$ and $\epch{p(-1)}(\Rcalj{r}{A}) > 1$, then from the formulas for $\Nexttermplus{p}{r}$ in Table \ref{phcycplus-table}, $\epch{p(-1)}(\Rcalj{r}{A}) \leq \phcyctrivplus{p(-1)}{p}{r} + 1 \leq 3$. If $\epch{p(-1)}(\Rcalj{r}{A}) = 3$ then $\epch{p(-1)}(\Rcalj{r}{A}) = \phcyctriv{p(-1)}{p_{r}}{r}$. In this case Lemma \ref{existence-of-surj} implies that surjection \eqref{inductive-achieved-surjection} exists and as before $\Rcalj{r+1}{A} \in \rep{\Lambda_{p(-1)} + \Nexttermplus{p_{r+1}}{r+1}}$. 

The only remaining case is when $\epch{p(-1)}(\Rcalj{r}{A}) = 2$,
and
 either $\phcyctriv{p(-1)}{p_{r}}{r} = 2$ or $\phcyctriv{p(-1)}{p_{r}}{r} = 3$. When the former holds again Lemma \ref{existence-of-surj} gives surjection  \eqref{inductive-achieved-surjection} and
hence
 $\Rcalj{r+1}{A} \in \rep{\Lambda_{p(-1)} + \Nexttermplus{p_{r+1}}{r+1}}$. If $\phcyctriv{p(-1)}{p_{r}}{r} = 3$, then because $p_{r+1}$ is a cyclotomic path, $j = p(-1) = p(0) = i$ is a class $\ClassB$ node, $p_{r}(r-1) \neq p(-1)$ and $r \geq 4$. Because $r \geq 4$ the inductive hypothesis implies that there is a surjection
\begin{equation}
\ind \Tii{p_{r}}{r-1} \boxtimes \Rcalj{r-1}{A} \twoheadrightarrow A
\end{equation}
(here we make the identification $\Tii{p_{r-1}}{r-1} \cong \Tii{p_{r}}{r-1}$ as $p_r(t) = p_{r-1}(t)$ for all $0 \leq t < r-1$). Then Lemma \ref{application-of-difficult-surjection} implies that there is a surjection
\begin{equation}
\pr{i}\ind \Tii{p_{r+1}}{r+1} \boxtimes \Rcalj{r+1}{A} \twoheadrightarrow A
\end{equation}
and $\Rcalj{r+1}{A} \in \rep{\Lambda_{p(-1)}}$ so $\Rcal{A} := \Rcalj{r+1}{A}$ and $r(A) = r+1$ and the induction terminates. This proves the inductive step. 

We continue the induction until we reach the first $k$ such that $\Rcalj{k}{A} \in \rep{\Lambda_{p(-1)}}$. At this point we set $p = p_k$, $r(A) = k$, $\Rcal{A} = \Rcalj{k}{A}$. $\Rcalj{k}{A}$ is a $R(\nu - \gammaplus{p_k}{k})$-module. As $|\nu - \gammaplus{p}{k}| = |\nu| - k$, the algorithm necessarily terminates at or before $k = |\nu|$. If $k = |\nu|$ then $\Rcalj{|\nu|}{A} = \UnitModule \in \rep{\Lambda_{p(-1)}}$. In this case 
\begin{equation}
\Tii{p_{|\nu|}}{|\nu|} \cong \pr{i} \ind \Tii{p_{|\nu|}}{|\nu|} \boxtimes \UnitModule \twoheadrightarrow A
\end{equation}
so that in fact $A \cong \Tii{p_{|\nu|}}{|\nu|}$.

 Note that for all $r \leq r(A)$ and $t < r$, $p(t) = p_r(t)$ so all intermediate paths constructed above are compatible.

\end{proof}

In Theorem \ref{exist-theorem} we chose $p = p_k$ such that whenever $(i,j)$ is a class $\ClassD$ pair and $\{p(t-1),p(t) \} = \{i,j\}$ we had $p(t-1) = i,$ $p(t) = j$. However if $p'$ agree with $p$ except that for $t \in \adjac{j}{i}{\und{\trivseq{p}{k}}}$ we allow $\{p'(t-1),p'(t)\} = \{i,j\}$ in either order then in fact $\Tii{p}{k} \cong \Tii{p'}{k}$ even though $p \neq p'$.
This also applies to class $\ClassD$ pairs $(j, j')$ when $i \notin 
\{j,j'\}$. 
Here $p$ and $p'$ take different routes around the bifurcation in $B^{1,1}$ generated by the class $\ClassD$ pair $(i,j)$. (Of course if $p(k-2) \notin \{i,j\}$ and $p(k-1) = i$ we must also have $p'(k-1) = i$ since $p$ ends before it traverses the entire bifurcation). This phenomenon only occurs in types $D^{(1)}_\ell$, $B^{(1)}_\ell$, and $A^{(2)}_{2\ell-1}$. (See Sections \ref{perfect-crystal-section} and \ref{Description-of-family}.)

The following proposition shows that choosing route $p'$ over $p$ does not affect the output of the algorithm from Theorem \ref{exist-theorem}. In other words, after the initial choice of length 1 cyclotomic path $p_1$, $\Rcal{A}$ and $r(A)$ are well-defined. Part \ref{taking-different-paths-1} of Proposition \ref{taking-different-paths} shows that $\etilch{p;k}A = \etilch{p';k}A$ while part \ref{taking-different-paths-2} in conjunction with part \ref{taking-different-paths-1} shows that $r(A)$ is well-defined. It follows from this that in fact $\Rcal{A}$ is well-defined.

\begin{proposition} \label{taking-different-paths}
Suppose that locally in $B^{1,1}$
\begin{center}
\begin{tikzpicture}
\draw (0,0) ellipse (.3cm and .2cm);
\draw (4,0) ellipse (.3cm and .2cm);
\draw (2,1) ellipse (.3cm and .2cm);
\draw (2,-1) ellipse (.3cm and .2cm);
\draw[->,thick] (.35,0) -- (1.65,.95);
\draw[->,thick] (.35,0) -- (1.65,-.95);
\draw[->,thick] (2.35,.95) -- (3.65,.05);
\draw[->,thick] (2.35,-.95) -- (3.65,-.05);
\draw[->,thick] (-1.2,0) -- (-.35,0);
\draw[->,thick] (4.35,0) -- (5.2,0);
\node at (.9,.7) {$j'$};
\node at (.9,-.7) {$j$};
\node at (3.18,.7) {$j$};
\node at (3.1,-.7) {$j'$};
\node at (-.7,.25) {$h$};
\node at (4.7,.25) {$h$};

\end{tikzpicture}
\end{center}
so that $(j,j')$ is a class $\ClassD$ pair. Let $A \in \repL{i}$
be a simple $R(\nu)$-module, and suppose $r$ steps of the algorithm from Theorem \ref{exist-theorem} have been executed so 
we have constructed 
a simple $R(\nu - \gammaplus{p}{r})$-module $\Rcalj{r}{A}$, a cyclotomic path $p_r$ with length $r$ and tail weight $(\Lambda_{p(-1)},\Lambda_i)$, and a surjection
\begin{equation}
\ind \Tii{p_r}{r} \boxtimes \Rcalj{r}{A} \twoheadrightarrow A
\end{equation}
Finally, suppose that $p_r(r-1) = h$, $\Nexttermplus{p_r}{r} = \Lambda_{j} + \Lambda_{j'}$. Then
\begin{enumerate}
\item \label{taking-different-paths-1} $\etilch{j'}\etilch{j}\Rcalj{r}{A} \cong \etilch{j}\etilch{j'}\Rcalj{r}{A}$.
\item \label{taking-different-paths-2}  If $\etilch{j}\Rcalj{r}{A} \in \rep{\Lambda_{p(-1)}}$ then $\Rcalj{r}{A} \in \rep{\Lambda_{p(-1)} + \Lambda_j}$.
\end{enumerate}
\end{proposition}

\begin{proof}
Part \ref{taking-different-paths-1} follows directly from Proposition \ref{when-ei-ej-commute} and the fact that for class $\ClassD$ pairs $(j,j')$, $a_{jj'} = 0$. Part \ref{taking-different-paths-2} is a direct application of part \ref{taking-different-paths-1} after noting that by Lemma \ref{where-is-D}, $\Rcalj{r}{A} \in \rep{\Lambda_{p(-1)} + \Lambda_j + \Lambda_{j'}}$.
\end{proof}

\begin{remark} \label{technical-aspects-multiple-paths-remark}
Let $(j,j')$ be a class $\ClassD$ pair.
The attentive reader will notice that the case where $\Rcalj{r}{A} \cong \ind \Lii{j} \boxtimes \Lii{j'}$ with $p(-1) = j$ and $i = j'$ could potentially give two different values of $r(A)$ depending on whether one chooses to extend $p_r$ by $j'$ or $j$. In the former case $\Rcal{A} = \Rcalj{r+1}{A} = \Lii{j} \in \rep{\Lambda_{p(-1)}}$ and $r(A) = r+1$. In the latter case $\Rcalj{r+1}{A} = \Lii{j'} \notin \rep{\Lambda_{p(-1)}}$ and the algorithm tells us to continue to get $\Rcal{A} = \Rcalj{r+2}{A} = \UnitModule$, yielding $r(A) = r+2$. Recall that when $(i,p(-1))$ is a class $\ClassD$ pair, then the algorithm requires that when building
our path $p$, we always favor extending by $i$-arrows so that actually, Theorem \ref{exist-theorem} will always give $\Rcal{A} \cong \Lii{j}$ and $r(A) = r+1$ in this case. We require this favoring precisely to avoid the above situation.
\end{remark}


\subsection{The action of the crystal operators}
\label{sec-main2}


Next we study the action of the crystal operators $\etil{j}$ and $\ftil{j}$ on \eqref{exist-theorem-2-surj} to show that the map in 
part 2 of Theorem \ref{exist-theorem} categorifies our crystal isomorphism $\crystalmap$. 

\begin{remark}
\label{rem-typeC}
In some types, having chosen $p(0) = i$, the choice of $p(1)$ and 
consequently $p_t$, $t < r(A)$ is forced upon us.
In other types, such as $C_\ell^{(1)}$, there can be 2 choices
for $p(1)$ (and hence $p(-1)$).
This choice is mirrored by the combinatorial
structure of $B^{1,1} \otimes B(\Lambda_i)$. See Example
\ref{ex-typeC} and Example \ref{ex-modC}.
\end{remark}

Compare the theorems below with the crystal-theoretic statements
\eqref{eq_ei_tensor} 
and
\eqref{eq_fi_tensor}.
As in \cite{LV11} simple 
modules in $\repL{i}$ correspond to nodes in
the highest weight crystal $B(\Lambda_i)$. Each node $b$ of the KR crystal
$B^{1,1}$ (respectively $B^{\ell,1}$ for type $A^{(1)}_\ell$)
corresponds to an infinite family of $R(\gammaplus{p}{k})$-modules 
$\Tii{p}{k}, k \in \MB{Z}_{\geq 0}$ that satisfy $\ep{}(\Tii{p}{k}) = \ep{}(b)$.
It is in this manner that the main theorems of this paper
give a categorification of the crystal isomorphism $\crystalmap$.

\begin{theorem} \label{thm-action-etil}
Let $A \in \repL{i}$ be a simple $R(\nu)$-module and
$j \in I$ be such that $\etil{j}A \neq \zero$. When $i$ is class $\ClassB$ we furthermore require that $|\nu| > 1$. Let $p$ be a cyclotomic path of tail weight $(\Lambda_{p(-1)},\Lambda_i)$ and length $k = r(A)$, and $\Rcal{A} = \etilch{p;k}A$, as constructed 
by
the algorithm in Theorem \ref{exist-theorem}. Then there exists a surjection 
  \begin{align} 
    \ind \; \etil{j} \Tii{p}{k} \; \boxtimes \; \Rcal{A} \twoheadrightarrow \etil{j}A  \quad \text{if   } \;\;\;\; \ep{j}(\Tii{p}{k} )> \phcyc{p(-1)}{j}(\Rcal{A}), \label{E1}
\\
    \ind \; \Tii{p}{k} \; \boxtimes \; \etil{j}\Rcal{A} \twoheadrightarrow \etil{j}A  \quad \text{if  }\;\;\;\; \ep{j}(\Tii{p}{k}) \leq \phcyc{p(-1)}{j}(\Rcal{A}). \label{E2}
  \end{align} 
\end{theorem}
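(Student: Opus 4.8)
The plan is to mirror the structure of the tensor product rule for crystal operators, using the module-theoretic tools set up in the excerpt. The key input is the surjection from Theorem~\ref{exist-theorem},
\[
\pr{i}\ind \Tii{p}{k} \boxtimes \Rcal{A} \twoheadrightarrow A,
\]
which by Remark~\ref{sujrection-onto-pr-remark} is equivalent to a surjection $\ind \Tii{p}{k} \boxtimes \Rcal{A} \twoheadrightarrow A$. The idea is to apply the exact functor $\e{j}$ to this surjection, use Proposition~\ref{e-and-short-exact-sequence} to split $\e{j}(\ind \Tii{p}{k} \boxtimes \Rcal{A})$ into an extension of $\ind \Tii{p}{k} \boxtimes \e{j}\Rcal{A}$ by $\ind \e{j}\Tii{p}{k}\boxtimes \Rcal{A}$, and then compose with the surjection $\e{j}A \twoheadrightarrow \etil{j}A = \soc(\e{j}A)$ coming from Proposition~\ref{cosoc-soc-isomorphic}. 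At that point I would apply Remark~\ref{factoring-through-convention}: the composed surjection $\e{j}(\ind \Tii{p}{k}\boxtimes \Rcal{A})\twoheadrightarrow \etil{j}A$ must factor through one of the two ``ends'' of the filtration, and I need to show which one, governed by the inequality between $\ep{j}(\Tii{p}{k})$ and $\phcyc{p(-1)}{j}(\Rcal{A})$.

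First I would pin down $\ep{j}(\Rcal{A})$ and $\epch{j}(\Rcal{A})$ using that $\Rcal{A}\in \rep{\Lambda_{p(-1)}}$ (so $\epch{j}(\Rcal{A})\le \delta_{j,p(-1)}$ by Proposition~\ref{cyclotomic-char}), and then use formula~\eqref{special-phcyclo-formula}, i.e.\ $\phcyc{p(-1)}{j}(\Rcal{A}) = \delta_{p(-1),j} + \ep{j}(\Rcal{A}) + \wti{j}(\Rcal{A})$. Next, in case~\eqref{E2}, when $\ep{j}(\Tii{p}{k}) \le \phcyc{p(-1)}{j}(\Rcal{A})$, I expect the argument to parallel Proposition~\ref{when-can-apply-etil-directly}.\ref{ei-case-for-direct-application-of-etili-a}: the tensor-product rule~\eqref{eq_ei_tensor} says $\etil{j}$ acts on the right tensor factor, which corresponds module-theoretically to the surjection $\ind \Tii{p}{k}\boxtimes \etil{j}\Rcal{A}\twoheadrightarrow \etil{j}A$. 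To get this I would check via Frobenius reciprocity and the Shuffle Lemma that $\ep{j}(\etil{j}A)$ forces the composed map to kill the submodule $\ind \e{j}\Tii{p}{k}\boxtimes \Rcal{A}$ (or, when $\ep{j}(\Tii{p}{k})=0$, Proposition~\ref{e-and-short-exact-sequence} directly gives $\e{j}(\ind \Tii{p}{k}\boxtimes \Rcal{A}) \cong \ind \Tii{p}{k}\boxtimes \e{j}\Rcal{A}$ and then I extract the composition factor $\etil{j}\Rcal{A}$ as in that proposition). Symmetrically, in case~\eqref{E1}, when $\ep{j}(\Tii{p}{k}) > \phcyc{p(-1)}{j}(\Rcal{A})$, the map should factor through $\ind \e{j}\Tii{p}{k}\boxtimes \Rcal{A}$, and then I would use that $\etil{j}\Tii{p}{k}$ is the ``top'' composition factor of $\e{j}\Tii{p}{k}$ in the sense of Proposition~\ref{crystal-op-facts}.\ref{etil-is-only-special-compfactor}, together with a count of $\ep{j}$, $\epch{j}$, $\wt_j$ to rule out all other composition factors surjecting onto $\etil{j}A$.

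The main obstacle I anticipate is the bookkeeping that identifies \emph{which} composition factor of $\e{j}\Tii{p}{k}$ (respectively $\e{j}\Rcal{A}$) survives. This is where the precise values of $\ep{j}(\Tii{p}{k})$ and $\phcyc{p(-1)}{j}(\Rcal{A})$ enter, and where I expect to need the class-by-class data of $\jump{j}(\Tii{p}{k})$ and $\phcyctriv{j}{p}{k}$ from Tables~\ref{jump-table} and~\ref{phcyc-table}, plus the interpretation via path extensions in Remark~\ref{path-interp-phcyc}. In particular I would need to handle separately the class $\ClassB$ case (where $\etil{j}\Tii{p}{k}$ can have $\ep{}$-value $\Lambda_{p(k-1)}$ forcing the hypothesis $|\nu|>1$), the class $\ClassD$ case (using the commutation Proposition~\ref{when-ei-ej-commute} and Proposition~\ref{taking-different-paths}), and the generic class $\ClassA$ case. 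I expect the translation ``the boundary inequality $\ep{j}(\Tii{p}{k}) \le \phcyc{p(-1)}{j}(\Rcal{A})$ is exactly the crystal-theoretic condition $\ep{j}(b_1)\le \cryphi{j}(b_2)$ in~\eqref{eq_ei_tensor}'' to be conceptually clean once the $\ep$/$\wt$ identities are in hand, but making the factoring rigorous in the boundary case $\ep{j}(\Tii{p}{k}) = \phcyc{p(-1)}{j}(\Rcal{A})$ — where a priori the map could factor either way — will require the most care; I would resolve it by the same $\ep{j}$-counting trick used in the proof of Proposition~\ref{when-can-apply-etil-directly}, showing that the factor through $\ind \e{j}\Tii{p}{k}\boxtimes \Rcal{A}$ would force $\ep{j}(\etil{j}A)$ to be too large.
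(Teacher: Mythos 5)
Your plan — apply the exact functor $\e{j}$ to $\ind \Tii{p}{k}\boxtimes\Rcal{A}\twoheadrightarrow A$, use the Mackey-type short exact sequence of Proposition \ref{e-and-short-exact-sequence}, compose with $\e{j}A\twoheadrightarrow\etil{j}A$, and decide by an $\ep{j}$-count which end of the filtration carries the map — is not the paper's argument, and the decisive step does not go through as described. The trouble is exactly the interacting cases, where both $\e{j}\Tii{p}{k}\neq\zero$ and $\e{j}\Rcal{A}\neq\zero$: if the composed map were nonzero on the submodule you would get $\ind\Tii{p}{k}\boxtimes K\twoheadrightarrow\etil{j}A$ for a composition factor $K$ of $\e{j}\Rcal{A}$, and if it vanished there you would get $\ind K'\boxtimes\Rcal{A}\twoheadrightarrow\etil{j}A$ for a composition factor $K'$ of $\e{j}\Tii{p}{k}$; but both branches yield the \emph{same} upper bound $\ep{j}(\etil{j}A)\le\ep{j}(\Tii{p}{k})+\ep{j}(\Rcal{A})-1$, and the lower bound from Frobenius reciprocity is also compatible with either, so the counting trick of Proposition \ref{when-can-apply-etil-directly} (which works only because one factor has $\ep{j}=0$ there) cannot distinguish them. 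In particular your closing claim — that in the boundary case factoring through $\ind\e{j}\Tii{p}{k}\boxtimes\Rcal{A}$ would make $\ep{j}(\etil{j}A)$ ``too large'' — is backwards: such a factoring only bounds $\ep{j}(\etil{j}A)$ from above, and exploiting it would presuppose the additivity $\ep{j}(A)=\ep{j}(\Tii{p}{k})+\ep{j}(\Rcal{A})$, which is itself part of what is at stake. The invariant that actually decides the dichotomy is $\phcyc{p(-1)}{j}(\Rcal{A})=\jump{j}(\Rcal{A})$, i.e.\ left-end data of $\Rcal{A}$, and no amount of $\ep{j}$/$\wt_j$ bookkeeping on the single length-$k$ surjection extracts it.

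What the paper does instead — and what your outline is missing — is to use the \emph{intermediate} surjections produced by the algorithm of Theorem \ref{exist-theorem}, namely $\ind\Tii{p}{k-1}\boxtimes\ftilch{j}\Rcal{A}\twoheadrightarrow A$ (and, when $\ep{j}(\Tii{p}{k})=2$ or $\phcyc{p(-1)}{j}(\Rcal{A})\ge 1$, the analogues at steps $k-2$, $k-3$), where the left factor has $\ep{j}=0$ so that Proposition \ref{when-can-apply-etil-directly} applies with no ambiguity. One then rewrites $\etil{j}\ftilch{p(k-2)}\ftilch{j}\cdots\Rcal{A}$ using the jump-controlled commutation results (Proposition \ref{commuting-functors}.\ref{commuting-f-same-i}, Lemma \ref{ei-app-does-not-alter-algo}) — this is precisely where the hypothesis $\jump{j}(\Rcal{A})=0$ versus $\jump{j}(\Rcal{A})\ge 1$ enters and produces $\Rcal{A}$ versus $\etil{j}\Rcal{A}$ — and finally re-extends the path back to length $k$ (or $k-1$) via Lemma \ref{existence-of-surj}, with Lemma \ref{application-of-difficult-surjection} needed in the class $\ClassB$ situation $\phcyctriv{j}{p}{k-2}=3$ and Proposition \ref{taking-different-paths} in the class $\ClassD$ situation. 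You also need to treat separately the case $\Rcal{A}\cong\UnitModule$, where $A\cong\Tii{p}{k}$ and the minimality of $r(A)$ forces $j\neq p(-1)$, hence $\phcyc{p(-1)}{j}(\UnitModule)=0$ and \eqref{E1} holds trivially; your proposal does not address this. So the gap is concrete: without invoking the algorithm's earlier-stage surjections and the commutation machinery tied to $\jump{j}$, the factorization step at the heart of your argument cannot be decided.
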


\begin{theorem} \label{thm-action-ftil}
Let $i$, $p$, $k$, $A$, and $\Rcal{A}$ be as in Theorem
\ref{exist-theorem}. Let $j \in I$ be such that $\pr{i} \ftil{j} A \neq \zero$.  Then there exists a surjection
  \begin{align}
    \ind \; \ftil{j} \Tii{p}{k} \; \boxtimes \; \Rcal{A}  \twoheadrightarrow \ftil{j}A  \quad \text{if} \;\;\;\; \ep{j}(\Tii{p}{k}) \geq \phcyc{p(-1)}{j}(\Rcal{A}), \label{F1}
\\
    \ind \; \Tii{p}{k} \; \boxtimes \; \ftil{j}\Rcal{A} \twoheadrightarrow \ftil{j}A  \quad \text{if} \;\;\;\; \ep{j}(\Tii{p}{k}) < \phcyc{p(-1)}{j}(\Rcal{A}). \label{F2}
  \end{align} 
\end{theorem}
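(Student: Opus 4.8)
The plan is to run the argument in parallel with the proof of Theorem \ref{thm-action-etil}, exploiting that both theorems are really statements about how the tensor product rule for crystal graphs (\eqref{eq_fi_tensor}) lifts to the surjection \eqref{exist-theorem-2-surj}. First I would recall the setup: we have a surjection $\pr{i} \ind \Tii{p}{k} \boxtimes \Rcal{A} \twoheadrightarrow A$ from Theorem \ref{exist-theorem}, with $p$ a cyclotomic path of tail weight $(\Lambda_{p(-1)}, \Lambda_i)$. By Remark \ref{unique-ext-and-phi}, $\phcyc{p(-1)}{j}(\Rcal{A}) = \jump{j}(\Rcal{A})$ when $\Rcal{A} \neq \UnitModule$, so the dichotomy $\ep{j}(\Tii{p}{k}) \geq \phcyc{p(-1)}{j}(\Rcal{A})$ versus $<$ is genuinely the tensor-product-rule dichotomy from \eqref{eq_fi_tensor} (with $\Tii{p}{k}$ playing the role of $b_1$ and $\Rcal{A}$ the role of $b_2$, since the reverse Kashiwara convention is in force and $\ftil{j}$ is realized by $\cosoc \ind (-) \boxtimes \Lii{j}$).

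For case \eqref{F2}, where $\ep{j}(\Tii{p}{k}) < \phcyc{p(-1)}{j}(\Rcal{A})$, I would apply $\ftil{j}$ to $A$ via the surjection $\ind \Tii{p}{k} \boxtimes \Rcal{A} \twoheadrightarrow A$ and use Proposition \ref{crystal-op-facts}.\ref{ftil-and-ep} together with Proposition \ref{e-and-short-exact-sequence} (or more directly Proposition \ref{when-can-apply-etil-directly}, the $\ftil{j}$ version of which follows by the same Frobenius-reciprocity/exactness argument) to push $\ftil{j}$ onto the right tensor factor $\Rcal{A}$. The point is that, since $\ep{j}(\Tii{p}{k})$ is strictly smaller than $\jump{j}(\Rcal{A}) = \phcyc{p(-1)}{j}(\Rcal{A})$, the character-theoretic count (Remark \ref{rem-char-epsilon}, Proposition \ref{crystal-op-facts}) forces $\ep{j}(\ftil{j}A)$ to be realized by appending the new $j$ to a weight word coming from $\Rcal{A}$ rather than from $\Tii{p}{k}$; then $\ind \Tii{p}{k} \boxtimes \Lii{j^{\ep_j(\Tii{p}{k})+1}}$ already has trivial $\pr{i}$-image (or has $\ep_j$ too small) and the surjection $\ind \Tii{p}{k} \boxtimes \Rcal{A} \boxtimes \Lii{j} \twoheadrightarrow \ftil{j}A$ must factor, by Remark \ref{factoring-through-convention}, through $\ind \Tii{p}{k} \boxtimes (\ind \Rcal{A} \boxtimes \Lii{j}) \twoheadrightarrow \ind \Tii{p}{k} \boxtimes \ftil{j}\Rcal{A}$; here one uses associativity of induction and that $\ftil{j}\Rcal{A} = \cosoc \ind \Rcal{A} \boxtimes \Lii{j}$.

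For case \eqref{F1}, where $\ep{j}(\Tii{p}{k}) \geq \phcyc{p(-1)}{j}(\Rcal{A})$, the symmetric argument should push $\ftil{j}$ onto $\Tii{p}{k}$, using that $\ftil{j}\Tii{p}{k} \cong \Tii{p \extheadby{j}}{k+1}$ when $j \in \pextplus{p}$ (the remark after Proposition \ref{building-T}), or handling the $\ClassB$/$\ClassD$ subtleties via Lemmas \ref{existence-of-difficult-surj-lemma} and \ref{application-of-difficult-surjection} exactly as in the proof of Theorem \ref{exist-theorem}. One subtlety to check: we need $j$ to actually extend the head of $p$, i.e.\ $j \in \pextplus{p}$, which should follow from $\pr{i}\ftil{j}A \neq \zero$ combined with $\ep_j(\Tii{p}{k}) \ge \phcyc{p(-1)}{j}(\Rcal{A})$ and the $\phcyc{}{}$ bookkeeping in Tables \ref{jump-table}--\ref{phcycplus-table}; if $j \notin \pextplus{p}$ then $\ftil{j}\Tii{p}{k} = \zero$ on the crystal level but one must verify the module-theoretic surjection degenerates consistently. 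I expect the main obstacle to be precisely this case analysis around the $\ClassB$ and $\ClassD$ nodes where $\ep_j(\Tii{p}{k})$ can equal $1$ or $2$ and $\phcyc{p(-1)}{j}$ can take the boundary value: one must show the composition factor of $\ind \Tii{p}{k} \boxtimes \Lii{j}$ other than $\Tii{p \extheadby{j}}{k+1}$ cannot carry $\ftil{j}A$, which is where Proposition \ref{triv-pr-lemma}.\ref{triv-pr-when-j=p(-1)} (the bound $\phcyc{p(0)}{j}(K) \leq \phcyctriv{j}{p}{k} - 2$) and Proposition \ref{pr-facts}.\ref{pr-fact-5} do the heavy lifting, combined with the observation that $\etil{j}$ and $\ftil{j}$ undo each other (Proposition \ref{crystal-op-facts}.\ref{e-and-f-undo-eachother}) so that \eqref{F1}--\eqref{F2} can also be deduced directly from \eqref{E1}--\eqref{E2} by applying $\etil{j}$ and invoking Theorem \ref{thm-action-etil} at the module $\ftil{j}A$ — this last route is probably the cleanest and I would lead with it, reserving the direct argument as a cross-check.
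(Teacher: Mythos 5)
Your final route---deducing \eqref{F1}--\eqref{F2} from \eqref{E1}--\eqref{E2} by using that $\etil{j}$ and $\ftil{j}$ undo each other (Proposition \ref{crystal-op-facts}.\ref{e-and-f-undo-eachother}) applied to $\ftil{j}A$---is exactly the paper's proof, which consists solely of the observation that Theorem \ref{thm-action-ftil} follows directly from Theorem \ref{thm-action-etil}. The lengthy direct case analysis you sketch first is unnecessary for this statement (it essentially re-runs the proof of Theorem \ref{thm-action-etil}), but since you explicitly say you would lead with the deduction, your approach matches the paper's.
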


Theorem \ref{thm-action-ftil} follows directly from Theorem  \ref{thm-action-etil}, therefore we will only prove \ref{thm-action-etil}.

\begin{remark}
In the case $\ep{j}(\Tii{p}{k}) \geq \phcyc{p(-1)}{j}(\Rcal{A})$,
$\pr{i}\ftil{j}A \neq 0$ it will hold that $\ftil{j}\Tii{p}{k} \cong \Tii{p \extheadby{j}}{k+1}$ and $r(\ftil{j}A) = r(A) + 1$. 
(This relies in part on the properties of cyclotomic paths.)
\end{remark}

We state a few lemmas below that follow directly from propositions in Section \ref{jump-section}.

\begin{lemma} \label{cutting-ej-looking-at-epchi}
Let $M$ be a simple $R(\nu)$-module and $h \neq j$. If $\etil{j}M \neq \zero$ then $\epch{h}(\etil{j}M) = \epch{h}(M)$.
\end{lemma}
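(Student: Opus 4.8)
\textbf{Proof plan for Lemma \ref{cutting-ej-looking-at-epchi}.}

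The plan is to deduce this from the commutation of $\etil{j}$ with $\etilch{h}$ for $h \neq j$, which is recorded in Lemma \ref{commuting-functors}.\ref{2-es}, together with the fact that $\epch{h}$ counts how many times $\etilch{h}$ can be applied. First I would assume $\etil{j}M \neq \zero$ and set $c = \epch{h}(M)$ and $c' = \epch{h}(\etil{j}M)$; the goal is $c = c'$. The key observation is that since $h \neq j$, formula \eqref{eqn-jump} with Lemma \ref{jump-lemma} (or directly the Shuffle Lemma and Frobenius reciprocity as used in Remark \ref{ep-and-ftil-for-i-neq-j}) shows that applying $\etil{j}$ does not change $\wti{h}$ in a way that affects $\epch{h}$; more precisely, I want to invoke that $\etil{j}$ and $\etilch{h}$ commute whenever the relevant module is nonzero.

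For the inequality $c' \geq c$: apply $\etilch{h}$ to $M$ a total of $c$ times; each intermediate module $(\etilch{h})^t M$ is nonzero for $0 \le t \le c$, and also $\etil{j}(\etilch{h})^t M \cong (\etilch{h})^t \etil{j} M$ is nonzero for each such $t$ by Lemma \ref{commuting-functors}.\ref{2-es} (applied iteratively, using that $\etil{j}$ of the intermediate module is nonzero — which we get because the top of the tower, $\etil{j}M$, is nonzero and the chain of isomorphisms $(\etilch{h})^t \etil{j} M \cong \etil{j}(\etilch{h})^t M$ propagates nonvanishing downward). Hence $(\etilch{h})^c(\etil{j}M) \cong \etil{j}((\etilch{h})^c M) \neq \zero$, so $\epch{h}(\etil{j}M) \geq c$. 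For the reverse inequality $c' \leq c$, run the same argument starting from $\etil{j}M$: if $(\etilch{h})^{c'}(\etil{j}M) \neq \zero$, commute the operators to get $\etil{j}((\etilch{h})^{c'}M) \neq \zero$, which forces $(\etilch{h})^{c'}M \neq \zero$, i.e.\ $\epch{h}(M) \geq c'$. Combining gives $c = c'$.

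The main obstacle — really the only delicate point — is making sure the commutation isomorphism of Lemma \ref{commuting-functors}.\ref{2-es} applies at each stage, since that statement has the hypothesis that both $\etil{j}$ and $\etilch{h}$ are nonzero on the module in question. This is handled by an easy induction: one propagates the nonvanishing of $\etil{j}$ along the tower $M, \etilch{h}M, (\etilch{h})^2 M, \dots$ using the isomorphism itself, so no separate verification is needed at each step. Everything else is a routine bookkeeping of how $\epch{h}$ is defined as the maximal power of $\etilch{h}$ with nonzero image. I would also note the statement (and proof) is the exact $\sigma$-twisted mirror of Remark \ref{ep-and-ftil-for-i-neq-j}, so one could alternatively just cite that remark after applying $\sigma^*$, but spelling out the short argument above is cleaner.
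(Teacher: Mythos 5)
Your proposal is correct and is essentially the paper's own argument: the paper proves this lemma in a single line by citing the commutation isomorphism $\etil{j}\etilch{h}M \cong \etilch{h}\etil{j}M$ for $h \neq j$ (Lemma \ref{commuting-functors}.\ref{2-es}) together with the definition of $\epch{h}$ as the maximal number of applications of $\etilch{h}$, and your two-inequality bookkeeping is just an expansion of that citation. The nonvanishing propagation you flag is handled at the same (implicit) level of detail in the paper, so no further work is required.
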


\begin{proof}
This follows from Proposition \ref{commuting-functors}.\ref{2-es}, and the definition of $\epch{h}(M)$.
\end{proof}

\begin{lemma} \label{cutting-ej-looking-at-epchi-2}
Suppose that $D$ is a simple $R(\nu)$-module. If $\etil{j}D \neq \zero$ and $\jump{j}(D) \geq 1$, then $\epch{j}(D) = \epch{j}(\etil{j}D)$. 
\end{lemma}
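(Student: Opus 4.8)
The plan is to reduce the statement about $\epch{j}(D)$ versus $\epch{j}(\etil{j}D)$ to the already-established relationship between $\jump{j}$, $\ftil{j}$ and $\etil{j}$. First I would set $m = \ep{j}(D)$, which is positive since $\etil{j}D \neq \zero$, and $c = \epch{j}(D)$; the goal is to show $\epch{j}(\etil{j}D) = c$ as well. Write $D' = \etil{j}D$, so that $D \cong \ftil{j}D'$ by Proposition \ref{crystal-op-facts}.\ref{e-and-f-undo-eachother}. Then $\epch{j}(D) = \epch{j}(\ftil{j}D')$, and I want to conclude $\epch{j}(\ftil{j}D') = \epch{j}(D')$.

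The key observation is the dichotomy controlled by $\jump{j}(D')$. By Lemma \ref{jump-lemma}, equivalence \ref{jump-lemma-1} with the last item, $\epch{j}(\ftil{j}D') = \epch{j}(D') + 1$ precisely when $\jump{j}(D') \geq 1$, and $\epch{j}(\ftil{j}D') = \epch{j}(D')$ when $\jump{j}(D') = 0$. So I need to pin down $\jump{j}(D')$ from the hypothesis $\jump{j}(D) \geq 1$. Using the behavior of $\jump$ under $\ftil{j}$ recorded in \eqref{ftil-and-jump-ob}, namely $\jump{j}(\ftil{j}D') = \max\{0, \jump{j}(D') - 1\}$, and the fact that $\jump{j}(D) = \jump{j}(\ftil{j}D') \geq 1$, we get $\max\{0, \jump{j}(D') - 1\} \geq 1$, hence $\jump{j}(D') \geq 2$. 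In particular $\jump{j}(D') \geq 1$, which by Lemma \ref{jump-lemma} forces $\epch{j}(\ftil{j}D') = \epch{j}(D') + 1$.

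This appears to go the wrong way, so the real content is to compute $\epch{j}(\etil{j}D)$ directly rather than through $D = \ftil{j}D'$. Instead I would argue: since $\jump{j}(D) \geq 1$, equivalence \ref{jump-lemma-1}$\Leftrightarrow$\ref{jump-lemma-6} (contrapositive) of Lemma \ref{jump-lemma} together with Remark \ref{ep-and-ftil-for-i-neq-j} — which states that when $\jump{j}(M) \neq 0$ one has $\ep{j}(\ftilch{j}M) = \ep{j}(M)$ — can be dualized under $\sigma^*$. Applying $\sigma^*$ throughout interchanges $\ep{j} \leftrightarrow \epch{j}$, $\ftil{j} \leftrightarrow \ftilch{j}$, $\etil{j} \leftrightarrow \etilch{j}$, and preserves $\jump{j}$ (since $\jump{j}$ is defined symmetrically, cf. Table \ref{jump-table}). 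Thus $\jump{j}(D) \geq 1$ gives $\epch{j}(\ftilch{j}(\sigma^*D')) = \epch{j}(\sigma^*D')$ where now $\sigma^*D' = \etilch{j}(\sigma^*D)$; translating back, $\epch{j}(D)$ — which equals $\epch{j}(\ftilch{j}\etilch{j}D)$ when $\jump{j}(D) \geq 1$ because then $\etilch{j}\ftilch{j}$ and the relevant composition behave well — reduces to $\epch{j}(\etil{j}D)$ after unwinding. The main obstacle will be getting the $\sigma^*$-dualization and the precise bookkeeping of which $\jump$-value triggers which equality exactly right; concretely, the cleanest route is to invoke Lemma \ref{commuting-functors}.\ref{etilchi-ftil-same} or its $\sigma^*$-image directly, noting $\jump{j}(\etil{j}D) \geq 1$ forces the commutation $\etil{j}\ftilch{j}D \cong \ftilch{j}\etil{j}D$ is not what we need, but rather that $\etil{j}$ and $\epch{j}$ interact trivially, which is exactly the dual of Remark \ref{ep-and-ftil-for-i-neq-j} applied with $\jump{j}(D) \neq 0$.
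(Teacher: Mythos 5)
Your first route was in fact the paper's proof, and you abandoned it because you read the jump dichotomy backwards. By the definition $\jump{j}(M) = \max\{J \geq 0 \mid \epch{j}(M) = \epch{j}(\ftil{j}^J M)\}$ (equivalently, Lemma \ref{jump-lemma}, item \ref{jump-lemma-1} $\Leftrightarrow$ the last item), one has $\epch{j}(\ftil{j}M) = \epch{j}(M) + 1$ precisely when $\jump{j}(M) = 0$, and $\epch{j}(\ftil{j}M) = \epch{j}(M)$ precisely when $\jump{j}(M) \geq 1$ --- the opposite of what you wrote. So your correct computation, namely $D \cong \ftil{j}D'$ with $D' = \etil{j}D$ and $\jump{j}(D) = \max\{0,\jump{j}(D')-1\} \geq 1$ forcing $\jump{j}(D') \geq 2$, finishes the proof immediately: $\jump{j}(D') \geq 1$ means by definition that $\epch{j}(\ftil{j}D') = \epch{j}(D')$, i.e.\ $\epch{j}(D) = \epch{j}(\etil{j}D)$. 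This is exactly the argument in the paper (set $N = \etil{j}D$, deduce $\jump{j}(N) = \jump{j}(D)+1 \geq 2$ from \eqref{ftil-and-jump-ob}, conclude via Lemma \ref{jump-lemma}). It does not ``go the wrong way.''

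The fallback argument you substitute is not a proof as written. The $\sigma^*$-dual of Remark \ref{ep-and-ftil-for-i-neq-j} does say that $\jump{j}(M) \neq 0$ implies $\epch{j}(\ftil{j}M) = \epch{j}(M)$, but to relate $\epch{j}(D)$ to $\epch{j}(\etil{j}D)$ you must apply it with $M = \etil{j}D$, which requires knowing $\jump{j}(\etil{j}D) \neq 0$ --- precisely the fact you derived and then discarded; applying it ``with $\jump{j}(D) \neq 0$'' only compares $D$ with $\ftil{j}D$ and says nothing about $\etil{j}D$. In addition, the intermediate steps replace $\etil{j}D$ by $\etilch{j}D$ and $\ftilch{j}\etilch{j}D$, which are different modules (the statement concerns $\etil{j}$, not its $\sigma$-twisted analogue), and the claimed equality $\epch{j}(D) = \epch{j}(\ftilch{j}\etilch{j}D)$ is never justified. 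So the proposal contains the right idea but, as submitted, has a genuine gap created by the reversed reading of Lemma \ref{jump-lemma}.
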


\begin{proof}
Set $N = \etil{j}D$. By \eqref{ftil-and-jump-ob} $\jump{j}(N) = \jump{j}(\ftil{j}N) + 1 \geq 2$. Proposition \ref{jump-lemma} then implies, $\epch{j}(N) = \epch{j}(\ftil{j}N)$, which is equivalent to $\epch{j}(\etil{j}D) = \epch{j}(D)$.
\end{proof}

In the following lemma we allow $i =j$.

\begin{lemma} \label{ei-app-does-not-alter-algo}
Let $i,j \in I$. Suppose that $D \in \repL{i}$
is a simple $R(\nu)$-module and $ \etil{j}D \neq \zero$. 

\begin{enumerate}

\item \label{ech-commutes-with-f} If $\jump{j}(D) \geq 1$, then 
\begin{enumerate} 
\item $\epch{h}(\etil{j}D) = \epch{h}(D)$ for all $h \in I$,
\item $\etil{j}D \in \rep{\Lambda_{i}}$,
\item $\epch{j}(\etil{j}\ftilch{j}D) = \epch{j}(\ftilch{j}D)$
\end{enumerate}

\item If $\jump{j}(D) \geq 2$, then furthermore $\epch{j}(\etil{j}\ftilch{j}\ftilch{j}D) = \epch{j}(\ftilch{j}\ftilch{j}D)$.

\end{enumerate}
\end{lemma}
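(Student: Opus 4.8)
The statement collects several compatibility facts about how $\etil{j}$ interacts with $\epch{h}$, membership in $\rep{\Lambda_i}$, and iterated $\ftilch{j}$. The approach is to reduce everything to the two preliminary lemmas just stated, Lemma \ref{cutting-ej-looking-at-epchi} and Lemma \ref{cutting-ej-looking-at-epchi-2}, together with the characterization of $\rep{\Lambda_i}$ in Proposition \ref{cyclotomic-char} and the jump-shifting identity \eqref{ftil-and-jump-ob}. I would handle the clauses in the order (1)(a), (1)(b), (1)(c), then (2), since each later clause leans on the earlier ones.

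For (1)(a): split into $h \neq j$ and $h = j$. When $h \neq j$, this is exactly Lemma \ref{cutting-ej-looking-at-epchi} (which needs only $\etil{j}D \neq \zero$, not the jump hypothesis). When $h = j$, this is exactly Lemma \ref{cutting-ej-looking-at-epchi-2}, which is where the hypothesis $\jump{j}(D) \geq 1$ is used. For (1)(b): since $D \in \rep{\Lambda_i}$, Proposition \ref{cyclotomic-char} gives $\epch{h}(D) \leq \delta_{ih}$ for all $h \in I$; by part (1)(a) just proved, $\epch{h}(\etil{j}D) = \epch{h}(D) \leq \delta_{ih}$ for all $h$, so $\etil{j}D \in \rep{\Lambda_i}$ again by Proposition \ref{cyclotomic-char}. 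For (1)(c): set $D' = \ftilch{j}D$. By \eqref{ftil-and-jump-ob} applied to $\ftilch{j}$, $\jump{j}(D') = \max\{0, \jump{j}(D) - 1\}$, which together with $\jump{j}(D) \geq 1$ gives only $\jump{j}(D') \geq 0$ — not quite enough to invoke Lemma \ref{cutting-ej-looking-at-epchi-2} directly. Instead I would argue via Lemma \ref{jump-lemma}: from $\jump{j}(D) \geq 1$ and equivalence \ref{jump-lemma-6} (or \eqref{eqn-jump} with $\epch{j}(\ftilch{j}D) = \epch{j}(D) + 1$), compute $\jump{j}(D') = \wti{j}(D') + \ep{j}(D') + \epch{j}(D')$ using $\wti{j}(\ftilch{j}D) = \wti{j}(D) - 1$, $\ep{j}(\ftilch{j}D) = \ep{j}(D)$ (Remark \ref{ep-and-ftil-for-i-neq-j}, valid since $\jump{j}(D) \neq 0$), and $\epch{j}(\ftilch{j}D) = \epch{j}(D) + 1$, which yields $\jump{j}(D') = \jump{j}(D) - 1 \geq 0$; then I still need $\etil{j}D' \neq \zero$ and $\jump{j}(D') \geq 1$ to apply Lemma \ref{cutting-ej-looking-at-epchi-2}. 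The cleanest route is to observe that the desired conclusion $\epch{j}(\etil{j}\ftilch{j}D) = \epch{j}(\ftilch{j}D)$ is, by Lemma \ref{jump-lemma-4} equivalence of \ref{jump-lemma-1} and \ref{switch-ftil-ftilch} style reasoning, equivalent to $\jump{j}(\etil{j}D') \geq 1$ (noting $\ftil{j}\etil{j}D' \cong D'$), and $\jump{j}(\etil{j}D') = \jump{j}(D') + 1 = \jump{j}(D) \geq 1$ by \eqref{ftil-and-jump-ob} applied with $\ftil{j}$. So the hypothesis $\jump{j}(D) \geq 1$ is exactly what is needed; I just need to check $\etil{j}D' \neq \zero$, which follows because $\epch{j}$ or $\ep{j}$ of $D'$ is positive (indeed $\ep{j}(D') = \ep{j}(D) \geq 1$ since $\etil{j}D \neq \zero$).

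For (2): the same computation with two applications of $\ftilch{j}$. Writing $D'' = \ftilch{j}\ftilch{j}D$, the hypothesis $\jump{j}(D) \geq 2$ gives, via two applications of \eqref{ftil-and-jump-ob}, $\jump{j}(D'') = \jump{j}(D) - 2 \geq 0$, and then $\jump{j}(\etil{j}D'') = \jump{j}(D'') + 1 = \jump{j}(D) - 1 \geq 1$, which by Lemma \ref{cutting-ej-looking-at-epchi-2} (or the $\jump \geq 1 \Leftrightarrow$ fixed-$\epch{j}$ reasoning applied to $\etil{j}D''$) gives $\epch{j}(\etil{j}D'') = \epch{j}(D'')$. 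Again one must check $\etil{j}D'' \neq \zero$, which follows from $\ep{j}(D'') = \ep{j}(D) \geq 1$ using Remark \ref{ep-and-ftil-for-i-neq-j} twice (each $\ftilch{j}$ leaves $\ep{j}$ unchanged since the relevant jump values stay nonzero). I expect the main obstacle to be the bookkeeping in part (1)(c) and (2): tracking how $\jump{j}$, $\ep{j}$, and $\epch{j}$ each shift under $\ftilch{j}$ and $\etil{j}$, and making sure the nonvanishing $\etil{j}(\cdot) \neq \zero$ holds at each stage so that Lemma \ref{cutting-ej-looking-at-epchi-2} actually applies — none of it is deep, but it is easy to be off by one.
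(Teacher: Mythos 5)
Your handling of (1)(a) and (1)(b) matches the paper exactly (Lemma \ref{cutting-ej-looking-at-epchi} for $h\neq j$, Lemma \ref{cutting-ej-looking-at-epchi-2} for $h=j$, then Proposition \ref{cyclotomic-char}), but there is a genuine gap in (1)(c), and the same defect recurs in (2). Writing $D'=\ftilch{j}D$, you correctly reduce (1)(c) to showing $\jump{j}(\etil{j}D')\geq 1$, but your justification ``$\jump{j}(\etil{j}D')=\jump{j}(D')+1$ by \eqref{ftil-and-jump-ob}'' only works when $\jump{j}(D')\geq 1$: since $D'\cong\ftil{j}\etil{j}D'$, \eqref{ftil-and-jump-ob} gives $\jump{j}(D')=\max\{0,\jump{j}(\etil{j}D')-1\}$, and in the borderline case $\jump{j}(D)=1$, i.e.\ $\jump{j}(D')=0$, this yields only $\jump{j}(\etil{j}D')\in\{0,1\}$. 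Worse, by \eqref{eqn-jump} one has $\jump{j}(\etil{j}D')-\jump{j}(D')=1+\epch{j}(\etil{j}D')-\epch{j}(D')$, so the assertion $\jump{j}(\etil{j}D')\geq 1$ is \emph{equivalent} to the equality $\epch{j}(\etil{j}D')=\epch{j}(D')$ you are trying to prove; in the borderline case your argument is circular, not merely incomplete. The identical problem appears in (2) when $\jump{j}(D)=2$, where $\jump{j}(D'')=0$. (A small side slip: $\wti{j}(\ftilch{j}D)=\wti{j}(D)-2$, not $-1$, though your value $\jump{j}(D')=\jump{j}(D)-1$ is correct and follows from \eqref{ftil-and-jump-ob} anyway.)

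The missing ingredient is module-theoretic rather than numerical, and it is exactly what the paper uses. From part (a), $D\cong\ftil{j}\etil{j}D$ and $\jump{j}(D)\geq 1$ give $\jump{j}(\etil{j}D)=\jump{j}(D)+1\geq 2$ (here the inversion of \eqref{ftil-and-jump-ob} is legitimate because $\jump{j}(D)\geq 1>0$). Then Lemma \ref{commuting-functors}.\ref{etil-ftilch-same} yields the isomorphism $\etil{j}\ftilch{j}D\cong\ftilch{j}\etil{j}D$, from which $\epch{j}(\etil{j}\ftilch{j}D)=\epch{j}(\etil{j}D)+1=\epch{j}(D)+1=\epch{j}(\ftilch{j}D)$ is immediate; for (2), $\jump{j}(\etil{j}D)\geq 3$ and $\jump{j}(\etil{j}\ftilch{j}D)\geq 2$ give $\etil{j}\ftilch{j}\ftilch{j}D\cong\ftilch{j}\ftilch{j}\etil{j}D$ and the same count. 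Your bookkeeping does recover (1)(c) when $\jump{j}(D)\geq 2$ and (2) when $\jump{j}(D)\geq 3$, but to cover the boundary cases you must import the commutation isomorphism (or some equivalent module-theoretic input such as the argument behind Lemma \ref{commuting-functors}); that is the content your proposal omits.
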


\begin{proof}
\begin{enumerate}

\item When $h \neq j$ the equality $\epch{h}(\etil{j}D) = \epch{h}(D)$ holds by Lemma \ref{cutting-ej-looking-at-epchi}, and when $h = j$ it holds by Lemma \ref{cutting-ej-looking-at-epchi-2}. Since $\epch{h}(\etil{j}D) = \epch{h}(D)$ for all $h \in I$ then by Proposition \ref{cyclotomic-char}, $\etil{j}D \in \rep{\Lambda_{i}}$. 

Since $\jump{j}(\etil{j}D) \geq 2$ then by Proposition \ref{commuting-functors}.\ref{etil-ftilch-same} $\etil{j}\ftilch{j}D \cong \ftilch{j}\etil{j}D$. With the equality $\epch{j}(\etil{j}D) = \epch{j}(D)$ this implies that 
 \begin{equation}
 \epch{j}(\etil{j}\ftilch{j}D) = \epch{j}(\ftilch{j}\etil{j}D) = \epch{j}(\etil{j}D) +1 = \epch{j}(D) + 1 = \epch{j}(\ftilch{j}D).
\end{equation} 

\item In the second case we have $\jump{j}(\etil{j}D) \geq 3$, $\jump{j}(\etil{j}\ftilch{j}D) \geq 2$, so that by Proposition \ref{commuting-functors}.\ref{etil-ftilch-same}, $\etil{j}\ftilch{j}\ftilch{j}D \cong \ftilch{j}\etil{j}\ftilch{j}D \cong \ftilch{j}\ftilch{j}\etil{j}D$. Then
\begin{equation}
\epch{j}(\etil{j}\ftilch{j}\ftilch{j}D) = \epch{j}(\ftilch{j}\ftilch{j}\etil{j}D) = \epch{j}(\etil{j}D) + 2 = \epch{j}(D) + 2 = \epch{j}(\ftilch{j}\ftilch{j}D).
\end{equation}

\end{enumerate}
\end{proof}

We are now ready to prove Theorem \ref{thm-action-etil}.

\begin{proof}
By Theorem \ref{exist-theorem} we have a surjection
\begin{equation}
\ind \Tii{p}{k} \boxtimes \Rcal{A} \twoheadrightarrow A
\end{equation}
where $k = r(A)$. 
We will 
first treat the case where $\Rcal{A} \cong \UnitModule$. This implies $A \cong \Tii{p}{k}$, so $\zero \neq \etil{j}A \cong \etil{j}\Tii{p}{k}$. Note $\phcyc{p(-1)}{j}(\Rcal{A}) = \delta_{j,p(-1)}$. However, we can never have $\etil{p(-1)}A = \etil{p(-1)}\Tii{p}{k} \neq \zero$ as then $\ind \Tii{p}{k-1} \boxtimes \Lii{p(-1)} \twoheadrightarrow A$ has $\Lii{p(-1)} \in \rep{\Lambda_{p(-1)}}$ and so $r(A) < k$,
violating its minimality.
(Recall $|\nu| = k > 1$ in this case.)
Therefore $j \neq p(-1)$ and hence $\phcyc{p(-1)}{j}(\Rcal{A}) = 0$. Thus as $\ep{j}(A) = \ep{j}(\Tii{p}{k}) > 0 = \phcyc{p(-1)}{j}(\UnitModule)$ and we always have trivial surjections
\begin{equation}
\ind \Tii{p}{k} \boxtimes \UnitModule \twoheadrightarrow A \quad \text{and} \quad \ind \etil{j}\Tii{p}{k} \boxtimes \UnitModule \twoheadrightarrow \etil{j}A,
\end{equation}
\eqref{E1} always holds and the theorem holds in this case.

For the rest of the proof
assume $\Rcal{A} \neq \UnitModule$. We divide the proof into the three possible values of $\ep{j}(\Tii{p}{k})$: $0$, $1$, or $2$.

\begin{itemize}

\item \emph{Case 1}: $\ep{j}(\Tii{p}{k}) = 0$ 

Since $\ep{j}(A) \neq 0$, by Proposition \ref{when-can-apply-etil-directly} there is a surjection
\begin{equation}
\ind \Tii{p}{k} \boxtimes \etil{j}\Rcal{A} \twoheadrightarrow \etil{j}A.
\end{equation} 
Note $0 \le \phcyc{p(-1)}{j}(\Rcal{A})$ always.

\item \emph{Case 2}: $\ep{j}(\Tii{p}{k}) = 1$

\begin{itemize}

\item \label{Case-2-Crystal-op-proof} \emph{Case 2a}: $\phcyc{p(-1)}{j}(\Rcal{A}) = 0 < 1 = \ep{j}(\Tii{p}{k})$

Since $\Rcal{A} \neq \UnitModule$, Remark \ref{when-phcyc-jump-the-same} implies $\jump{j}(\Rcal{A}) = 0$. Because $\ep{j}(\Tii{p}{k}) = 1$, then $k \geq 1$ and therefore step $(k-1)$ of the algorithm in Theorem \ref{exist-theorem} provides a surjection
\begin{equation} \label{case-2-surjection}
\ind \Tii{p}{k-1} \boxtimes \ftilch{j}\Rcal{A} \twoheadrightarrow A.
\end{equation}
If $j$ is part of a class $\ClassD$ pair $(j,j')$ and $\ep{}(\Tii{p}{k}) = \Lambda_{j} + \Lambda_{j'}$ then by Proposition \ref{taking-different-paths} we can assume that $p(k-1) = j$, $p(k-2) = j'$ so that $\etil{j}\Tii{p}{k} = \Tii{p}{k-1}$ and $\etil{j}\Tii{p}{k-1} = \etil{j}^2\Tii{p}{k} = \zero$.
By Proposition \ref{when-can-apply-etil-directly}.\ref{ei-case-for-direct-application-of-etili-b}, as $\ep{j}(\Tii{p}{k-1}) = 0$,
 we have
\begin{equation}
\ind \Tii{p}{k-1} \boxtimes \etil{j}\ftilch{j} \Rcal{A} \twoheadrightarrow \etil{j}A.
\end{equation}
Because $\jump{j}(\Rcal{A}) = 0$, by Proposition \ref{jump-lemma} $\ftilch{j}\Rcal{A} \cong \ftil{j}\Rcal{A}$, and 
so $\etil{j}\ftilch{j}\Rcal{A} \cong \Rcal{A}$. Since $\Tii{p}{k-1} \cong \etil{j}\Tii{p}{k}$ this gives \eqref{E1}. 

\item \emph{Case 2b}: $\phcyc{p(-1)}{j}(\Rcal{A}) \geq 1 = \ep{j}(\Tii{p}{k})$

Again by Remark \ref{when-phcyc-jump-the-same} this implies that $\jump{j}(\Rcal{A}) \geq 1$ and $\jump{j}(\etil{j}\Rcal{A}) \geq 2$. Note $\etil{j}\Rcal{A} \neq \zero$ because $\jump{j}(\Rcal{A}) \geq 1$ and \eqref{case-2-surjection} still holds as well as $\ep{j}(\Tii{p}{k-1}) = 0$.

If $k = 1$, then $\Tii{p}{k} = \Lii{i}$ which forces $j = i$ as $\ep{j}(\Lii{i}) = \delta_{ij}$. Further $\Rcal{A} = \etilch{j}A$. We have
\begin{equation}
\ind \Lii{j} \boxtimes \etilch{j}A \twoheadrightarrow A.
\end{equation}
Because $\jump{j}(\etilch{j}A) = \jump{j}(\Rcal{A}) \geq 1$ then $\jump{j}(\etil{j}\etilch{j}A) \geq 2$. Proposition \ref{commuting-functors}.\ref{etil-ftilch-same} then implies that $\etil{j}A \cong \etil{j}\ftilch{i}\etilch{i}A \cong \etil{j}\ftilch{j}\etilch{j}A \cong \ftilch{j}\etil{j}\etilch{j}A$. So there is a surjection
\begin{center}
\begin{tikzpicture}
\node at (0,0) {$\ind \Lii{j} \boxtimes \etil{j}\etilch{j}A  \twoheadrightarrow \etil{j}A$};
\node[rotate=90] at (-.6,-.5) {$\cong$};
\node at (-.6,-1) {$\ind \Tii{p}{1} \boxtimes \etil{j}\Rcal{A}$};
\end{tikzpicture}
\end{center}
and \eqref{E2} holds.

Now assume that $k > 1$ so that by the algorithm in Theorem \ref{exist-theorem}, there is a surjection
\begin{equation} \label{case-2-k-bigger-than-1}
\ind \Tii{p}{k-2} \boxtimes \ftilch{p(k-2)}\ftilch{j}\Rcal{A} \twoheadrightarrow A. 
\end{equation}
Note in Case 2 
that $p(k-2) \neq j$ and $\ep{j}(\Tii{p}{k-2}) = 0$ and therefore by Proposition \ref{when-can-apply-etil-directly}
\begin{equation}
\ind \Tii{p}{k-2} \boxtimes \etil{j}\ftilch{p(k-2)}\ftilch{j}\Rcal{A} \twoheadrightarrow \etil{j}A.
\end{equation}
Furthermore since $\jump{j}(\etil{j}\Rcal{A}) \geq 2$, then by Proposition \ref{commuting-functors}.\ref{f-and-e} and \ref{commuting-functors}.\ref{etil-ftilch-same}
\begin{equation} \label{applying-ej-to-fp(k-2)fjR(a)}
\etil{j}\ftilch{p(k-2)}\ftilch{j}\Rcal{A} \cong \ftilch{p(k-2)}\etil{j}\ftilch{j}\Rcal{A} \cong \ftilch{p(k-2)}\ftilch{j}\etil{j}\Rcal{A}.
\end{equation}
This gives us
\begin{equation} \label{pulled-off-two-sur}
\ind \Tii{p}{k-2} \boxtimes \ftilch{p(k-2)}\ftilch{j}\etil{j}\Rcal{A} \twoheadrightarrow \etil{j}A.
\end{equation}
By Lemma \ref{ei-app-does-not-alter-algo},  
\begin{equation}
\epch{p(-1)}(\ftilch{j}\etil{j}\Rcal{A}) = \epch{p(-1)}(\etil{j}\ftilch{j}\Rcal{A}) = \epch{p(-1)}(\ftilch{j}\Rcal{A}),
\end{equation}
and $\etil{j}\Rcal{A} \in \rep{\Lambda_i}$. By the minimality of $r(A)$, $\ftilch{j}\Rcal{A}, \ftilch{p(k-2)}\ftilch{j}\Rcal{A} \notin \rep{\Lambda_{p(-1)}}$, hence also
considering $\jump{j}$,
 $ \etil{j}(\ftilch{j}\Rcal{A}) = \ftilch{j}\etil{j}\Rcal{A} \notin \rep{\Lambda_{p(-1)}}$ and $\ftilch{p(k-2)}\ftilch{j}\etil{j}\Rcal{A} \notin \rep{\Lambda_{p(-1)}}$. When $j,p(k-2) \neq p(-1)$ then $\epch{p(k-2)}(\ftilch{p(k-2)}\ftilch{j}\etil{j}\Rcal{A}) = 1 = \epch{j}(\ftilch{j}\etil{j}\Rcal{A})$ so that applying Lemma \ref{existence-of-surj} twice gives 
\begin{equation} \label{last-surjection-in-crystal-proof} 
\ind \Tii{p}{k} \boxtimes \etil{j}\Rcal{A} \twoheadrightarrow \etil{j}A,
\end{equation}
agreeing with \eqref{E2} of Theorem \ref{thm-action-etil},
which we are in the process of proving.

 When $p(k-2) = p(-1)$, then $j \neq p(-1)$. By the minimality of $r(A)$ and Lemma \ref{where-is-D}, it follows in this case that $\epch{p(-1)}(\ftilch{p(-1)}\ftilch{j}\Rcal{A}) = 2 = \phcyctriv{p(-1)}{p}{k-2}$. By Lemma \ref{cutting-ej-looking-at-epchi} and \eqref{applying-ej-to-fp(k-2)fjR(a)} then $\epch{p(-1)}(\ftilch{p(-1)}\ftilch{j}\etil{j}\Rcal{A}) = 2$ so that again by Lemma \ref{existence-of-surj} applied to \eqref{pulled-off-two-sur} we get 
\begin{equation} \label{last-surjection-in-crystal-proof-2}
\ind \Tii{p}{k-1} \boxtimes \ftilch{j}\etil{j}\Rcal{A} \twoheadrightarrow \etil{j}A
\end{equation}
and another application of Lemma \ref{existence-of-surj} gives \eqref{E2}. 

The last case to consider of Case 2b  is when $j = p(-1)$. Then $p(k-2) \neq p(-1)$ since $\ep{j}(\Tii{p}{k}) = 1$. Using Lemma \ref{existence-of-surj} gives \eqref{last-surjection-in-crystal-proof-2}
as above. Then we either have $\epch{p(-1)}(\ftilch{p(-1)}\etil{p(-1)}\Rcal{A}) = 2 = \phcyctriv{p(-1)}{p}{k-1}$ if $p(-1)$ is class $\ClassA$ or class $\ClassD$, or $\epch{p(-1)}(\ftilch{p(-1)}\etil{p(-1)}\Rcal{A}) = 2$ and $\phcyctriv{p(-1)}{p}{k-1} = 3$ when $p(-1)$ is class $\ClassB$. In the former case we can apply Lemma \ref{existence-of-surj} to \eqref{last-surjection-in-crystal-proof-2}, and in the latter case apply Lemma \ref{application-of-difficult-surjection} to \eqref{last-surjection-in-crystal-proof-2} and \eqref{pulled-off-two-sur}, to get the desired surjection \eqref{E2}.

\end{itemize}

\item \emph{Case 3}: $\ep{j}(\Tii{p}{k}) = 2$

Note that in this case $j$ must be class $\ClassB$ and $p(k-3) \neq j$.

Note $k > 2$ because $p$ is a cyclotomic path (if $k = 2$ we would have $p(0) = p(1)$). From Theorem \ref{exist-theorem} we have a surjection
\begin{equation} \label{surjection-echi-2}
\ind \Tii{p}{k-3} \boxtimes \ftilch{p(k-3)}\ftilch{j}\ftilch{j}\Rcal{A} \twoheadrightarrow A.
\end{equation}
By Proposition \ref{when-can-apply-etil-directly}.\ref{eich-case-for-direct-application-of-etili} we can apply $\etil{j}$ to \eqref{surjection-echi-2} to get
\begin{equation} \label{surjection-echi-2-2}
\ind \Tii{p}{k-3} \boxtimes \etil{j}\ftilch{p(k-3)}\ftilch{j}\ftilch{j}\Rcal{A}  \twoheadrightarrow \etil{j}A.
\end{equation}

\begin{itemize}

\item \emph{Case} 3a: $\phcyc{p(-1)}{j}(\Rcal{A}) < 2 = \ep{j}(\Tii{p}{k})$

Then $\jump{j}(\Rcal{A}) \leq 1$ and by formula \eqref{ftil-and-jump-ob}, $\jump{j}(\ftil{j}\Rcal{A}) = 0$. Hence by Proposition \ref{jump-lemma}.\ref{switch-ftil-ftilch}, $\ftil{j}\ftil{j}\Rcal{A} \cong \ftilch{j}\ftil{j}\Rcal{A}$. Furthermore because $p(k-3) \neq j$, by Proposition \ref{commuting-functors}.\ref{f-and-e}, $\etil{j}$ and $\ftilch{p(k-3)}$ commute. Thus we have
\begin{equation}
\etil{j}\ftilch{p(k-3)}\ftilch{j}\ftilch{j}\Rcal{A} \cong \ftilch{p(k-3)}\etil{j}\ftilch{j}\ftilch{j}\Rcal{A} \cong \ftilch{p(k-3)}\etil{j}\ftil{j}\ftilch{j}\Rcal{A} \cong \ftilch{p(k-3)}\ftilch{j}\Rcal{A}.
\end{equation}
The map in \eqref{surjection-echi-2-2} then becomes
\begin{equation} \label{pull-off-two-remove-one}
\ind \Tii{p}{k-3} \boxtimes \ftilch{p(k-3)}\ftilch{j}\Rcal{A} \twoheadrightarrow \etil{j} A.
\end{equation}
Note that since $j$ is class $\ClassB$ then $p(k-3)$ must be class $\ClassA$. If $p(k-3), j \neq p(-1)$, then $\epch{p(k-3)}(\ftilch{p(k-3)}\ftilch{j}\Rcal{A}) = 1$ and $\epch{j}(\ftilch{j}\Rcal{A}) = 1$ so that two applications of Lemma \ref{existence-of-surj} give
\begin{equation}
 \label{second-goal-surjection}
\begin{tikzpicture}
\node at (0,0) {$\ind \Tii{p}{k-1} \boxtimes \Rcal{A} \twoheadrightarrow \etil{j}A$};
\node[rotate=90] at (-.9,-.5) {$\cong$};
\node at (-.9,-1) {$\etil{j}\Tii{p}{k}$};
\end{tikzpicture}
\end{equation}
yielding \eqref{E1} of Theorem \ref{thm-action-etil}. 

If $p(k-3) = p(-1)$ then by Lemma \ref{where-is-D} and the Serre relations, $\epch{p(k-3)}(\ftilch{p(k-3)}\ftilch{j}\Rcal{A}) = 2 =\phcyctriv{p(k-3)}{p}{k-3}$. Then we may again apply Lemma \ref{existence-of-surj} twice to \eqref{pull-off-two-remove-one} to get \eqref{second-goal-surjection}. 

Finally, in the case where $j = p(-1)$ then because $p(k-3) \neq j = p(-1)$ we can again apply Lemma \ref{existence-of-surj} to \eqref{pull-off-two-remove-one} to get
\begin{equation}
\ind \Tii{p}{k-2} \boxtimes \ftilch{j}\Rcal{A} \twoheadrightarrow \etil{j}A.
\end{equation}
Then $\epch{p(-1)}(\ftilch{j}\Rcal{A}) = \epch{p(-1)}(\ftilch{p(-1)}\Rcal{A}) = 2$ and $\phcyctriv{j}{p}{k-2} = 3$ so that by Lemma \ref{application-of-difficult-surjection} a surjection like \eqref{second-goal-surjection} exists. 
(Recall the definition of cyclotomic path does not allow
$\phcyctriv{j}{p}{k-2} = 4$.)

\item \emph{Case} 3b: $\phcyc{p(-1)}{j}(\Rcal{A}) \geq 2 = \ep{j}(\Tii{p}{k})$ 

Then $\jump{j}(\Rcal{A}) \geq 2$. This implies $\etil{j}\Rcal{A} \neq \zero$ by \eqref{surjection-echi-2}. It also follows that $\jump{j}(\ftilch{j}\Rcal{A}) \geq 1$, $\jump{j}(\etil{j}\ftilch{j}\Rcal{A}) \geq 2$ so that by Proposition \ref{commuting-functors}.\ref{etil-ftilch-same}, $\etil{j}\ftilch{j}\ftilch{j}\Rcal{A} \cong \ftilch{j}\etil{j}\ftilch{j}\Rcal{A}$. Also, $\jump{j}(\etil{j}\Rcal{A}) \geq 3$ so again by Proposition \ref{commuting-functors}.\ref{etil-ftilch-same} $\etil{j}\ftilch{j}\Rcal{A} \cong \ftilch{j}\etil{j}\Rcal{A}$. Again because $p(k-3) \neq j$, by Proposition \ref{commuting-functors}.\ref{f-and-e}, $\ftilch{p(k-3)}$ and $\etil{j}$ commute. Together these imply
\begin{equation}
\etil{j}\ftilch{p(k-3)}\ftilch{j}\ftilch{j}\Rcal{A} \cong \ftilch{p(k-3)}\etil{j}\ftilch{j}\ftilch{j}\Rcal{A} \cong \ftilch{p(k-3)}\ftilch{j}\ftilch{j}\etil{j}\Rcal{A}.
\end{equation}
Considering cases similar to those in Case 3a, applying Lemma
\ref{existence-of-surj} and Lemma
\ref{application-of-difficult-surjection} to
\eqref{surjection-echi-2-2},
we get
\begin{equation}
\ind \Tii{p}{k} \boxtimes \etil{j}\Rcal{A} \twoheadrightarrow \etil{j}A
\end{equation}
agreeing with \eqref{E2}.

This concludes the proof of Theorem \ref{thm-action-etil}
apart from observing that 
 in the case $p(k-3) = p(-1) \neq j$ it is useful to note 
\begin{equation}
\epch{p(-1)}(\ftilch{p(k-1)}\ftilch{j}\ftilch{j}\etil{j}\Rcal{A}) = \epch{p(-1)}(\ftilch{p(k-1)}\ftilch{j}\ftilch{j}\Rcal{A})
\end{equation}
by Lemma \ref{where-is-D} and Lemma \ref{ei-app-does-not-alter-algo}.

	\end{itemize}
\end{itemize}
\end{proof}

\begin{example}
\label{ex-modC}
Recall that in type $C_\ell^{(1)}$,
$B^{1,1} \otimes B(\Lambda_1) \simeq B(\Lambda_0) \oplus B(\Lambda_2)$.
We illustrate the module-theoretic phenomena that mirrors this fact.
It holds that
\begin{gather}
\Lii{1} \in \repL{1}
\qquad
\ftilch{0} \Lii{1} \simeq \ftil{1} \Lii{0} \simeq  \Lcal{01} \in \repL{0}
\qquad
\ftilch{2} \Lii{1} \simeq \Lcal{21} \in \repL{2}.
\end{gather}
In other words, modules $A$ both from $\repL{0}$ and $\repL{2}$
can lead to modules  $\Rcal{A} \in \repL{1}$ when one performs
the algorithm of Theorem \ref{exist-theorem}.
Regarding
Example \ref{ex-typeC}, Remark \ref{rem-typeC},
and Theorem \ref{thm-action-etil},
this corresponds to the fact that
\begin{gather}
\ftil{1} \ftil{0} \bi{0} = \crystalmap(\ftil{1} \ftil{0}
(\begin{tikzpicture}[baseline=-2pt]
        \node at (0,0) {\;\;{\scriptsize{$\overline 1$}}\;\;};
        \draw (0,0) ellipse (.38cm and .2cm);
\end{tikzpicture} \; \otimes\; \bi{1} )) =
\crystalmap(\begin{tikzpicture}[baseline=-2pt]
        \node at (0,0) {\;\;{\scriptsize{$1$}}\;\;};
        \draw (0,0) ellipse (.38cm and .2cm);
\end{tikzpicture}  \; \otimes\; \bi{1} )
\end{gather}
whereas
\begin{gather}
\ftil{1} \ftil{2} \bi{2} = \crystalmap(\ftil{1} \ftil{2}
(\begin{tikzpicture}[baseline=-2pt]
        \node at (0,0) {\;\;{\scriptsize{$1$}}\;\;};
        \draw (0,0) ellipse (.38cm and .2cm);
\end{tikzpicture} \; \otimes\; \bi{1} )) =
\crystalmap(\begin{tikzpicture}[baseline=-2pt]
        \node at (0,0) {\;\;{\scriptsize{$2$}}\;\;};
        \draw (0,0) ellipse (.38cm and .2cm);
\end{tikzpicture}  \; \otimes\; \ftil{1} \bi{1} ).
\end{gather}
A more trivial example of the same phenomenon is
given by
$\UnitModule \in \repL{1}$,
$\ftilch{0} \UnitModule  \simeq  \Lii{0}  \in \repL{0}$,
$\ftilch{2} \UnitModule  \simeq  \Lii{2}  \in \repL{2}$.
This corresponds to
$ \ftil{0} \bi{0} = \crystalmap( \ftil{0}
(\begin{tikzpicture}[baseline=-2pt]
        \node at (0,0) {\;\;{\scriptsize{$\overline 1$}}\;\;};
        \draw (0,0) ellipse (.38cm and .2cm);
\end{tikzpicture} \; \otimes\; \bi{1} )) =
\crystalmap(\begin{tikzpicture}[baseline=-2pt]
        \node at (0,0) {\;\;{\scriptsize{$0$}}\;\;};
        \draw (0,0) ellipse (.38cm and .2cm);
\end{tikzpicture}  \; \otimes\; \bi{1} )$
and 
$\ftil{2} \bi{2} = \crystalmap( \ftil{2}
(\begin{tikzpicture}[baseline=-2pt]
        \node at (0,0) {\;\;{\scriptsize{$1$}}\;\;};
        \draw (0,0) ellipse (.38cm and .2cm);
\end{tikzpicture} \; \otimes\; \bi{1} )) =
\crystalmap(\begin{tikzpicture}[baseline=-2pt]
        \node at (0,0) {\;\;{\scriptsize{$2$}}\;\;};
        \draw (0,0) ellipse (.38cm and .2cm);
\end{tikzpicture}  \; \otimes\; \bi{1} )$.

\end{example}

\section{Appendix} \label{Appendix-Section}
\subsection{Simple modules in
{\protect$\rep{\Lambda_{h} + \Lambda_{i}}$}   in rank $2$}

In Figure \ref{Simple-R(nalphai+malphaj)-modues-without-labels}
we draw the highest weight crystal $B(\Lambda_h + \Lambda_i)$ in type $B_2/C_2$. That is, the associated Cartan matrix is
\begin{equation}
\begin{pmatrix}
2 & -2 \\
-1 & 2 \\
\end{pmatrix}
\end{equation}
with $a_{hi} = -1$ and $a_{ih} = -2$.

In Figure \ref{Simple-R(nalphai+malphaj)-modues} we redraw this crystal
with nodes the corresponding simple
$R(m_h\alpha_h + m_i\alpha_i)$-modules in $\rep{\Lambda_h + \Lambda_i}$,
$m_h, m_i \geq 0$. The
cyclotomic condition $\cycloI{\Lambda_h + \Lambda_i}M = \zero$ forces
$m_h + m_i \leq 7$. Indeed we can see this from Figure
\ref{Simple-R(nalphai+malphaj)-modues-without-labels} or Figure
\ref{Simple-R(nalphai+malphaj)-modues}. 

In Figure \ref{Simple-R(nalphai+malphaj)-modues}
we indicate $\jump{i}(M), \jump{h}(M)$, in part so one
can see when $\ftil{*}M \in \rep{\Lambda_i  +\Lambda_h}$.
Furthermore
when $\jump{h}(M) = 0$ we draw $M
\begin{tikzpicture}
\draw[ultra thick,->,blue] (0,0)--(.6,0);
\node at (.3,.25) {\scriptsize{$\ftil{h}$}};
\node at (0,-.01) {};
\end{tikzpicture}
\ind M \boxtimes \Lii{h}$ (similarly for $i$) to mark the fact $\ftil{h}M \cong \ftilch{h}M$ is irreducibly induced. In other cases when modules can be realized as induced from other modules (without needing to take cosocle) we also express the module in this form. The only exception to this is the 2-dimensional module $\ftil{h}\Lcal{hii}$, whose character has support $[hiih]$.

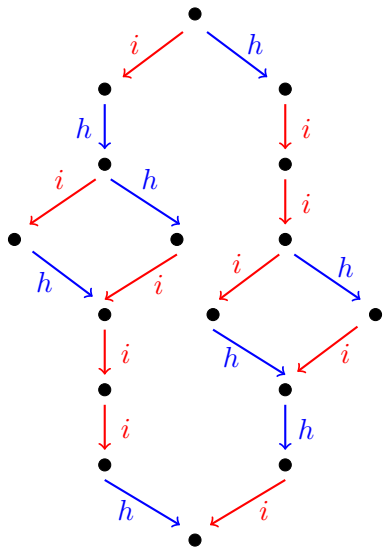
\begin{figure}
\begin{center}
\begin{tikzpicture} [scale=.4]
\draw[fill=black] (0,12) circle (.2cm);
\draw[fill=black] (-3,9.5) circle (.2cm);
\draw[fill=black] (3,9.5) circle (.2cm);
\draw[fill=black] (-3,7) circle (.2cm);
\draw[fill=black] (3,7) circle (.2cm);
\draw[fill=black] (-6,4.5) circle (.2cm);
\draw[fill=black] (-.6,4.5) circle (.2cm);
\draw[fill=black] (3,4.5) circle (.2cm);
\draw[fill=black] (-3,2) circle (.2cm);
\draw[fill=black] (6,2) circle (.2cm);
\draw[fill=black] (.6,2) circle (.2cm);
\draw[fill=black] (-3,-.5) circle (.2cm);
\draw[fill=black] (3,-.5) circle (.2cm);
\draw[fill=black] (-3,-3) circle (.2cm);
\draw[fill=black] (3,-3) circle (.2cm);
\draw[fill=black] (0,-5.5) circle (.2cm);
\draw[thick,->,red] (-.4,11.4)--(-2.4,9.9); 
\node at (-2,11) {$\redj$};
\draw[thick,->,blue] (.4,11.4)--(2.4,9.9); 
\node at (2,11) {$\blueh$};
\draw[thick,->,red] (3,9)--(3,7.5); 
\node at (-3.7,8.2) {$\blueh$};
\draw[thick,->,blue] (-3,9)--(-3,7.5); 
\node at (3.7,8.2) {$\redj$};
\draw[thick,->,red] (3,6.5)--(3,5); 
\node at (3.7,5.7) {$\redj$};
\draw[thick,->,blue] (-2.8,6.5)--(-.5,5); 
\node at (-1.5,6.5) {$\blueh$};
\draw[thick,->,red] (-3.3,6.5)--(-5.5,5); 
\node at (-4.5,6.5) {$\redj$};
\draw[thick,->,blue] (-5.4,4.1)--(-3.4,2.6);
\node at (-5,3.1) {$\blueh$};
\draw[thick,->,blue] (3.3,4)--(5.5,2.5); 
\node at (5,3.6) {$\blueh$};
\draw[thick,->,red] (5.4,1.6)--(3.4,.1); 
\node at (5,.7) {$\redj$};
\draw[thick,->,red] (2.8,4)--(.8,2.5); 
\node at (1.4,3.7) {$\redj$};
\node at (-1.2,3) {$\redj$};
\draw[thick,->,red] (-3,1.5)--(-3,0); 
\draw[thick,->,blue] (.6,1.5)--(3,0); 
\node at (1.2,.6) {$\blueh$};
\node at (-2.3,.7) {$\redj$};
\draw[thick,->,red] (-.6,4)--(-3,2.5); 
\draw[thick,->,blue] (3,-1)--(3,-2.5); 
\draw[thick,->,red] (-3,-1)--(-3,-2.5); 
\node at (-2.3,-1.8) {$\redj$};
\node at (3.7,-1.8) {$\blueh$};
\draw[thick,->,red] (3,-3.5)--(.5,-5); 
\draw[thick,->,blue] (-3,-3.5)--(-.5,-5); 
\node at (-2.3,-4.5) {$\blueh$};
\node at (2.3,-4.5) {$\redj$};
\end{tikzpicture}
\end{center}
\caption{\label{Simple-R(nalphai+malphaj)-modues-without-labels}
Crystal graph $B(\Lambda_{\blueh} + \Lambda_{\redj})$ in type $B_2$.}
\end{figure}

\begin{figure}
\begin{center}
\begin{tikzpicture}
\node at (0,12) {$\UnitModule$};
\node at (-3,9.5) {$\Lii{i}$};
\node at (-1.8,9.2) {\textcolor{red}{\scriptsize{$\jump{i} = 0$}}};
\node at (-1.8,8.8) {\textcolor{blue}{\scriptsize{$\jump{h} = 1$}}};
\node at (3,9.5) {$\Lii{h}$};
\node at (4.2,9.2) {\textcolor{red}{\scriptsize{$\jump{i} = 2$}}};
\node at (4.2,8.8) {\textcolor{blue}{\scriptsize{$\jump{h} = 0$}}};
\node at (-3,7) {$\Lcal{ih}$};
\node at (-1.6,7) {\textcolor{red}{\scriptsize{$\jump{i} = 1$}}};
\node at (-1.6,6.6) {\textcolor{blue}{\scriptsize{$\jump{h} = 0$}}};
\node at (3,7) {$\Lcal{hi}$};
\node at (4.2,6.7) {\textcolor{red}{\scriptsize{$\jump{i} = 1$}}};
\node at (4.2,6.3) {\textcolor{blue}{\scriptsize{$\jump{h} = 0$}}};
\node at (-6,4.5) {$\Lcal{ihi}$};
\node at (-6.9,4.1) {\textcolor{red}{\scriptsize{$\jump{i} = 0$}}};
\node at (-6.9,3.7) {\textcolor{blue}{\scriptsize{$\jump{h} = 0$}}};
\node at (-.6,4.5) {$\ind \Lcal{ih} \boxtimes \Lii{h}$};
\node at (.3,4.1) {\textcolor{red}{\scriptsize{$\jump{i} = 3$}}};
\node at (.3,3.7) {\textcolor{blue}{\scriptsize{$\jump{h} = 0$}}};
\node at (3,4.5) {$\Lcal{hii}$};
\node at (4.6,4.7) {\textcolor{red}{\scriptsize{$\jump{i} = 0$}}};
\node at (4.6,4.3) {\textcolor{blue}{\scriptsize{$\jump{h} = 1$}}};
\node at (-3,2) {$\ind \Lcal{ihi}\boxtimes \Lii{h}$};
\node at (-4.4,1.6) {\textcolor{red}{\scriptsize{$\jump{i} = 2$}}};
\node at (-4.4,1.2) {\textcolor{blue}{\scriptsize{$\jump{h} = 0$}}};
\node at (6,2) {$\ftil{h}\Lcal{hii}$};
\node at (-.3,1.6) {\textcolor{red}{\scriptsize{$\jump{i} = 0$}}};
\node at (-.3,1.2) {\textcolor{blue}{\scriptsize{$\jump{h} = 2$}}};
\node at (.6,2) {$\ind \Lcal{hii} \boxtimes \Lii{i}$};
\node at (6.8,1.6) {\textcolor{red}{\scriptsize{$\jump{i} = 0$}}};
\node at (6.8,1.2) {\textcolor{blue}{\scriptsize{$\jump{h} = 0$}}};
\node at (-3,-.5) {$\ind \Lcal{ihi} \boxtimes \Lcal{hi}$};
\node at (-4.4,-.9) {\textcolor{red}{\scriptsize{$\jump{i} = 1$}}};
\node at (-4.4,-1.2) {\textcolor{blue}{\scriptsize{$\jump{h} = 0$}}};
\node at (3,-.5) {$\ind \ftil{h}\Lcal{hii} \boxtimes \Lii{i}$};
\node at (1.6,-1) {\textcolor{red}{\scriptsize{$\jump{i} = 0$}}};
\node at (1.6,-1.3) {\textcolor{blue}{\scriptsize{$\jump{h} = 1$}}};
\node at (-3,-3) {$\ind \Lcal{ihi} \boxtimes \Lcal{hii}$};
\node at (4.4,-3.4) {\textcolor{red}{\scriptsize{$\jump{i} = 1$}}};
\node at (4.4,-3.7) {\textcolor{blue}{\scriptsize{$\jump{h} = 0$}}};
\node at (3,-3) {$\ind \ftil{h}\Lcal{hii} \boxtimes \Lcal{ih}$};
\node at (-4.4,-3.4) {\textcolor{red}{\scriptsize{$\jump{i} = 0$}}};
\node at (-4.4,-3.7) {\textcolor{blue}{\scriptsize{$\jump{h} = 1$}}};
\node at (0,-5.5) {$\ind \ftil{h}\Lcal{hii} \boxtimes \Lcal{ihi}$};
\node at (2.8,-5.3) {\textcolor{red}{\scriptsize{$\jump{i} = 0$}}};
\node at (2.8,-5.7) {\textcolor{blue}{\scriptsize{$\jump{h} = 0$}}};
\node at (-6,11) {\scriptsize{$\jump{\blueh}$ (source) = 0}};
\node at (-6,10.2) {\scriptsize{$\jump{\redj}$ (source) = 0}};
\draw[ultra thick,->,red] (-8,10.6)--(-4,10.6);  
\draw[ultra thick,->,blue] (-8,11.4)--(-4,11.4);
\draw[ultra thick,->,red] (-.4,11.4)--(-2.4,9.9); 
\node at (-2,11) {$\ftil{\redj}$};
\draw[ultra thick,->,blue] (.4,11.4)--(2.4,9.9); 
\node at (2,11) {$\ftil{\blueh}$};
\draw[thick,->,red] (3,9)--(3,7.5); 
\node at (-3.5,8.2) {$\ftil{\blueh}$};
\draw[thick,->,blue] (-3,9)--(-3,7.5); 
\node at (3.5,8.2) {$\ftil{\redj}$};
\draw[thick,->,red] (3,6.5)--(3,5); 
\node at (3.5,5.7) {$\ftil{\redj}$};
\draw[ultra thick,->,blue] (-2.8,6.5)--(-.5,5); 
\node at (-2.5,5.7) {$\ftil{\blueh}$};
\draw[thick,->,red] (-3.3,6.5)--(-5.5,5); 
\node at (-5,6) {$\ftil{\redj}$};
\draw[ultra thick,->,blue] (-5.4,4.1)--(-3.4,2.6);
\node at (-5,3.1) {$\ftil{\blueh}$};
\draw[thick,->,blue] (3.3,4)--(5.5,2.5); 
\node at (5,3.4) {$\ftil{\blueh}$};
\draw[ultra thick,->,red] (5.4,1.6)--(3.4,.1); 
\node at (5,.7) {$\ftil{\redj}$};
\draw[ultra thick,->,red] (2.8,4)--(.8,2.5); 
\node at (2.5,3.2) {$\ftil{\redj}$};
\node at (-2.5,3.2) {$\ftil{\redj}$};
\draw[thick,->,red] (-3,1.5)--(-3,0); 
\draw[thick,->,blue] (.6,1.5)--(3,0); 
\node at (2.5,.7) {$\ftil{\blueh}$};
\node at (-2.5,.7) {$\ftil{\redj}$};
\draw[thick,->,red] (-.6,4)--(-3,2.5); 
\draw[thick,->,blue] (3,-1)--(3,-2.5); 
\draw[thick,->,red] (-3,-1)--(-3,-2.5); 
\node at (-2.5,-1.8) {$\ftil{\redj}$};
\node at (3.5,-1.8) {$\ftil{\blueh}$};
\draw[thick,->,red] (3,-3.5)--(.5,-5); 
\draw[thick,->,blue] (-3,-3.5)--(-.5,-5); 
\node at (-3,-4.2) {$\ftil{\blueh}$};
\node at (3,-4.2) {$\ftil{\redj}$};
\end{tikzpicture}
\end{center}
\caption{\label{Simple-R(nalphai+malphaj)-modues}
The crystal graph
$B(\Lambda_{\blueh} + \Lambda_{\redj})$ in type $B_2$ with nodes
labeled by corresponding simple modules in $\rep{\Lambda_h + \Lambda_i}$
and jump indicated. When $\jump{\redj}(M) = 0$
($\jump{\blueh}(M) = 0$) we indicate with a thick ${\redj}$-arrow
(respectively ${\blueh}$-arrow).}
\end{figure}
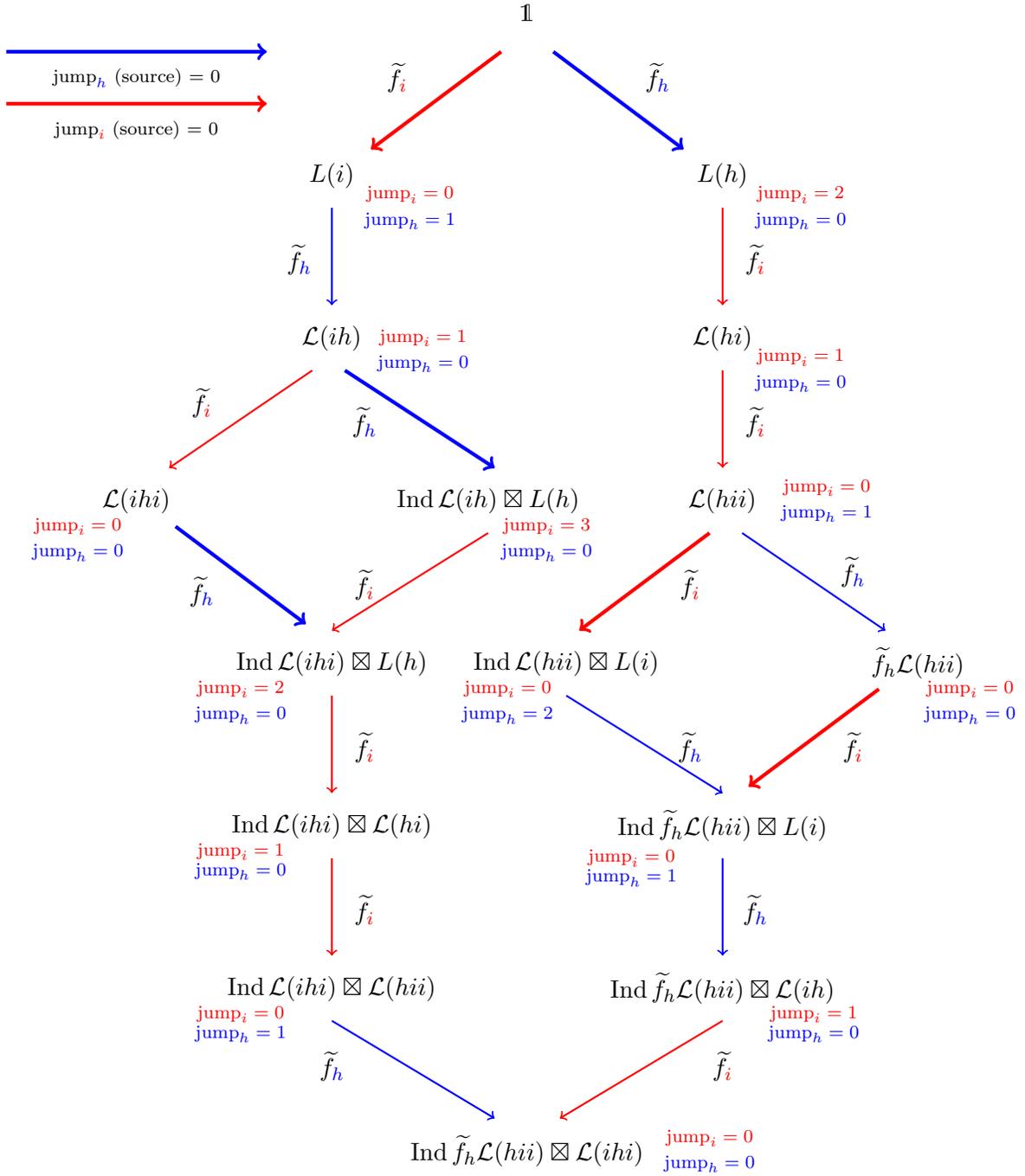

\bibliographystyle{amsplain}

\begin{thebibliography}{10}

\bibitem{AK94}
Susumu Ariki and Kazuhiko Koike, \emph{A {H}ecke algebra of {$({\bf Z}/r{\bf
  Z})\wr{\germ S}_n$} and construction of its irreducible representations},
  Adv. Math. \textbf{106} (1994), no.~2, 216--243.

\bibitem{BKOP}
Georgia Benkart, Seok-Jin Kang, Se-jin Oh, and Euiyong Park, \emph{Construction
  of irreducible representations over {K}hovanov-{L}auda-{R}ouquier algebras of
  finite classical type}, Int. Math. Res. Not. IMRN (2014), no.~5, 1312--1366.
  \MR{3178600}

\bibitem{BM94}
Michel Brou{\'e} and Gunter Malle, \emph{Zyklotomische {H}eckealgebren},
  Ast\'erisque (1993), no.~212, 119--189, Repr{\'e}sentations unipotentes
  g{\'e}n{\'e}riques et blocs des groupes r{\'e}ductifs finis.

\bibitem{BK09a}
Jonathan Brundan and Alexander Kleshchev, \emph{Blocks of cyclotomic {H}ecke
  algebras and {K}hovanov-{L}auda algebras}, Invent. Math. \textbf{178} (2009),
  no.~3, 451--484.

\bibitem{BK09b}
\bysame, \emph{Graded decomposition numbers for cyclotomic {H}ecke algebras},
  Adv. Math. \textbf{222} (2009), no.~6, 1883--1942.

\bibitem{Che87}
I.~V. Cherednik, \emph{A new interpretation of {G}el\cprime fand-{T}zetlin
  bases}, Duke Math. J. \textbf{54} (1987), no.~2, 563--577.

\bibitem{FOS10}
Ghislain Fourier, Masato Okado, and Anne Schilling, \emph{Perfectness of
  {K}irillov-{R}eshetikhin crystals for nonexceptional types}, Quantum affine
  algebras, extended affine {L}ie algebras, and their applications, Contemp.
  Math., vol. 506, Amer. Math. Soc., Providence, RI, 2010, pp.~127--143.

\bibitem{Gro99}
I.~{Grojnowski}, \emph{{Affine sl\_p controls the representation theory of the
  symmetric group and related Hecke algebras}}, arXiv:9907129 (1999).

\bibitem{Kac85}
Victor~G. Kac, \emph{Infinite-dimensional {L}ie algebras}, second ed.,
  Cambridge University Press, Cambridge, 1985.

\bibitem{KM98}
Seok-Jin Kang and Masaki Kashiwara, \emph{Quantized affine algebras and
  crystals with core}, Comm. Math. Phys. \textbf{195} (1998), no.~3, 725--740.

\bibitem{KK12}
\bysame, \emph{Categorification of highest weight modules via
  {K}hovanov-{L}auda-{R}ouquier algebras}, Invent. Math. \textbf{190} (2012),
  no.~3, 699--742.

\bibitem{KKMMNN92}
Seok-Jin Kang, Masaki Kashiwara, Kailash~C. Misra, Tetsuji Miwa, Toshiki
  Nakashima, and Atsushi Nakayashiki, \emph{Affine crystals and vertex models},
  Infinite analysis, {P}art {A}, {B} ({K}yoto, 1991), Adv. Ser. Math. Phys.,
  vol.~16, World Sci. Publ., River Edge, NJ, 1992, pp.~449--484.

\bibitem{KKMMNN}
\bysame, \emph{Perfect crystals of quantum affine {L}ie algebras}, Duke Math.
  J. \textbf{68} (1992), no.~3, 499--607.

\bibitem{Kas91}
M.~Kashiwara, \emph{On crystal bases of the {$Q$}-analogue of universal
  enveloping algebras}, Duke Math. J. \textbf{63} (1991), no.~2, 465--516.

\bibitem{Kas90b}
Masaki Kashiwara, \emph{Bases cristallines}, C. R. Acad. Sci. Paris S\'er. I
  Math. \textbf{311} (1990), no.~6, 277--280.

\bibitem{Kas95}
\bysame, \emph{On crystal bases}, Representations of groups ({B}anff, {AB},
  1994), CMS Conf. Proc., vol.~16, Amer. Math. Soc., Providence, RI, 1995,
  pp.~155--197.

\bibitem{KS97}
Masaki Kashiwara and Yoshihisa Saito, \emph{Geometric construction of crystal
  bases}, Duke Math. J. \textbf{89} (1997), no.~1, 9--36.

\bibitem{KL09}
Mikhail Khovanov and Aaron~D. Lauda, \emph{A diagrammatic approach to
  categorification of quantum groups. {I}}, Represent. Theory \textbf{13}
  (2009), 309--347.

\bibitem{KL11}
\bysame, \emph{A diagrammatic approach to categorification of quantum groups
  {II}}, Trans. Amer. Math. Soc. \textbf{363} (2011), no.~5, 2685--2700.

\bibitem{KR90}
A.~N. Kirillov and N.~Yu. Reshetikhin, \emph{Representations of {Y}angians and
  multiplicities of the inclusion of the irreducible components of the tensor
  product of representations of simple {L}ie algebras}, Zap. Nauchn. Sem.
  Leningrad. Otdel. Mat. Inst. Steklov. (LOMI) \textbf{160} (1987), no.~Anal.
  Teor. Chisel i Teor. Funktsii. 8, 211--221, 301.

\bibitem{K14}
A.~{Kleshchev}, \emph{{Modular Representation Theory of Symmetric Groups}},
  arXiv:1405.3326 (2014).

\bibitem{Klesh}
A.~S. Kleshchev, \emph{Branching rules for modular representations of symmetric
  groups. {I}}, J. Algebra \textbf{178} (1995), no.~2, 493--511.

\bibitem{LV11}
Aaron~D. Lauda and Monica Vazirani, \emph{Crystals from categorified quantum
  groups}, Adv. Math. \textbf{228} (2011), no.~2, 803--861.

\bibitem{SL13}
Cristian Lenart and Anne Schilling, \emph{Crystal energy functions via the
  charge in types {$A$} and {$C$}}, Math. Z. \textbf{273} (2013), no.~1-2,
  401--426.

\bibitem{LW}
Ivan Losev and Ben Webster, \emph{On uniqueness of tensor products of
  irreducible categorifications}, Selecta Math. (N.S.) \textbf{21} (2015),
  no.~2, 345--377. \MR{3338680}

\bibitem{Lus10}
George Lusztig, \emph{Introduction to quantum groups}, Modern Birkh\"auser
  Classics, Birkh\"auser / Springer, New York, 2010, Reprint of the 1994
  edition.

\bibitem{OS08}
Masato Okado and Anne Schilling, \emph{Existence of {K}irillov-{R}eshetikhin
  crystals for nonexceptional types}, Represent. Theory \textbf{12} (2008),
  186--207.

\bibitem{Vnotes}
M.~Vazirani, \emph{A {H}ecke theoretic shadow of tensoring the crystal of the
  basic representation with a level 1 perfect crystal},  (1999), Unpublished
  notes.

\bibitem{Vaz1}
\bysame, \emph{Irreducible modules over the affine {H}ecke algebra: a strong
  multiplicity one result}, Ph.D. thesis, UC Berkeley, 1999.

\bibitem{V02}
\bysame, \emph{Filtrations on the {M}ackey decomposition for cyclotomic {H}ecke
  algebras}, J. Algebra \textbf{252} (2002), no.~2, 205--227.

\bibitem{V03}
\bysame, \emph{A {H}ecke theoretic shadow of tensoring the crystal of the basic
  representation with a level 1 perfect crystal}, Mathematisches
  Forschungsinstitut Oberwolfach Report \textbf{Report No. 14/2003} (2003).

\bibitem{V07}
\bysame, \emph{An observation on highest weight crystals}, J. Algebra
  \textbf{315} (2007), no.~2, 483--501.

\bibitem{V15}
\bysame, \emph{Categorifying the tensor product of a level 1 highest weight and
  perfect crystal in type ${A}$}, Proceedings of the 2012-2014 Southeastern Lie
  Theory Workshops, Proc. Sympos. Pure Math., Amer. Math. Soc., Providence, RI,
  To appear, arXiv.

\bibitem{Web}
B.~{Webster}, \emph{{Knot invariants and higher representation theory I:
  diagrammatic and geometric categorification of tensor products}},
  arXiv:1001.2020 (2010).

\bibitem{Web2}
\bysame, \emph{{Knot invariants and higher representation theory}}, Memoirs of
  the AMS (to appear).

\end{thebibliography}

\def\cprime{$'$} \def\cprime{$'$}
\providecommand{\bysame}{\leavevmode\hbox to3em{\hrulefill}\thinspace}
\providecommand{\MR}{\relax\ifhmode\unskip\space\fi MR }
\providecommand{\MRhref}[2]{%
  \href{http://www.ams.org/mathscinet-getitem?mr=#1}{#2}
}
\providecommand{\href}[2]{#2}

\end{document}